\newcounter{todocounter}
\DeclareDocumentCommand\addreference{g}{\stepcounter{todocounter}\todo[color = blue!30]{\thetodocounter. Add reference\IfNoValueF{#1}{: #1}}\xspace}
\DeclareDocumentCommand\checkthis{g}{\stepcounter{todocounter}\todo[color = red!50]{\thetodocounter. Check this\IfNoValueF{#1}{: #1}}\xspace}
\DeclareDocumentCommand\fixthis{g}{\stepcounter{todocounter}\todo[color = orange!50]{\thetodocounter. Fix this\IfNoValueF{#1}{: #1}}\xspace}
\DeclareDocumentCommand\expand{g}{\stepcounter{todocounter}\todo[color = green!50]{\thetodocounter. Expand\IfNoValueF{#1}{: #1}}\xspace}
\declaretheoremstyle[
  spaceabove = 3pt,
  spacebelow = 3pt,
  bodyfont = \itshape,
]{first}
\declaretheoremstyle[
  spaceabove = 3pt,
  spacebelow = 3pt,
]{second}
\declaretheorem[numberwithin=section, style=first]{theorem}
\declaretheorem[sibling=theorem, style=first]{conjecture}
\declaretheorem[sibling=theorem, style=first]{corollary}
\declaretheorem[sibling=theorem, style=first]{lemma}
\declaretheorem[sibling=theorem, style=first]{proposition}
\declaretheorem[sibling=theorem, style=second]{example}
\declaretheorem[sibling=theorem, style=second]{remark}
\declaretheorem[sibling=theorem, style=second]{definition}
\declaretheorem[sibling=theorem, style=second]{notation}
\crefname{notation}{Notation}{Notations}
\declaretheorem[numberwithin=section, style=first, title=Theorem]{alphatheorem}
\declaretheorem[sibling=alphatheorem, style=first, title=Conjecture]{alphaconjecture}
\declaretheorem[sibling=alphatheorem, style=first, title=Corollary]{alphacorollary}
\crefname{alphatheorem}{Theorem}{Theorems}
\crefname{alphaconjecture}{Conjecture}{Conjectures}
\crefname{alphacorollary}{Corollary}{Corollaries}
\crefname{alphaproposition}{Proposition}{Propositions}
\Crefname{conjecture}{Conjecture}{Conjectures}
\def\gitfootnote{\gdef\@thefnmark{}\@footnotetext}
\newcommand\code{}
\renewcommand\code{\inputminted{macaulay2}{code/chi-tritangent.m2}}
\mathchardef\mhyphen="2D
\newcommand\dash{\nobreakdash-\hspace{0pt}}
\newcommand\caldararu{\texorpdfstring{C\u{a}ld\u{a}raru}{Caldararu}\xspace}
\let\oldbigwedge\bigwedge
\renewcommand\bigwedge{\oldbigwedge\nolimits}
\newcommand\hilbn[2]{\ensuremath{\operatorname{Hilb}^#1#2}}
\DeclareMathOperator\hochserre{HS}
\DeclareMathOperator\hochschild{HH}
\DeclareMathOperator\hochschilddim{hh}
\DeclareMathOperator\Ad{Ad}
\DeclareMathOperator\age{age}
\DeclareMathOperator\alb{alb}
\DeclareMathOperator\Aut{Aut}
\DeclareMathOperator\derived{\mathbf{D}}
\DeclareMathOperator\Def{Def}
\DeclareMathOperator\Sym{Sym}
\DeclareMathOperator\symgr{\mathfrak{S}}
\DeclareMathOperator\Ho{H} 
\DeclareMathOperator\HH{H} 
\DeclareMathOperator\hh{h}
\DeclareMathOperator\HHHH{HH}
\DeclareMathOperator\Ind{Ind}
\DeclareMathOperator\Map{Map}
\DeclareMathOperator\Ext{Ext}
\DeclareMathOperator\orb{orb}
\DeclareMathOperator\ord{ord}
\DeclareMathOperator\Pic{Pic}
\DeclareMathOperator\pr{pr}
\DeclareMathOperator\Hom{Hom}
\DeclareMathOperator\RHom{\mathbf{R}Hom}
\DeclareMathOperator\RGamma{\mathbf{R}\Gamma}
\DeclareMathOperator\serre{S}
\DeclareMathOperator\Spec{Spec}
\DeclareMathOperator\id{id}
\DeclareMathOperator\Conj{Conj}
\DeclareMathOperator\Perf{Perf}
\DeclareMathOperator\orbits{O}
\DeclareMathOperator\centraliser{C}
\DeclareMathOperator\Proj{Proj}
\newcommand\bounded{\ensuremath{\mathrm{b}}}
\newcommand\IN{\mathbb N}
\newcommand\IZ{\mathbb Z}
\newcommand\IC{\mathbb C}
\newcommand\tangent{\mathrm{T}}
\newcommand\identity{\ensuremath{\mathrm{id}}}
\newcommand\inertia{\ensuremath{\mathrm{I}}}
\newcommand\loops{\ensuremath{\mathrm{L}}}
\newcommand\sgn{\ensuremath{\mathrm{sgn}}}
\newcommand\RRR{\ensuremath{\mathbf{R}}}
\newcommand\LLL{\ensuremath{\mathbf{L}}}
\newcommand\RR{\ensuremath{\mathrm{R}}}
\newcommand\field{\mathbf{k}}
\newcommand\env{\ensuremath{\mathrm{e}}}
\newcommand\opp{\ensuremath{\mathrm{op}}}
\newcommand{\cA}{\mathcal{A}}
\newcommand{\cB}{\mathcal{B}}
\newcommand{\cE}{\mathcal{E}}
\newcommand{\cP}{\mathcal{P}}
\newcommand{\cX}{\mathcal{X}}
\newcommand{\cY}{\mathcal{Y}}
\newcommand{\cZ}{\mathcal{Z}}
\newcommand{\sym}{\mathfrak S}
\let\autoref\undefined
\title{Hochschild cohomology of Hilbert schemes of points on surfaces}
\author{Pieter Belmans \and Lie Fu \and Andreas Krug}
\begin{document}
\maketitle


\begin{abstract}
  We compute the Hochschild cohomology of Hilbert schemes of points on surfaces
  and observe that it is, in general, not determined solely by the Hochschild cohomology of the surface,
  but by its ``Hochschild--Serre cohomology''---the bigraded vector space
  obtained by taking Hochschild homologies with coefficients in powers of the Serre functor.
  As applications, we obtain various consequences on the deformation theory of the Hilbert schemes;
  in particular, we recover and extend results of Fantechi, Boissi\`ere, and Hitchin.

  Our method is to compute more generally for any smooth proper algebraic variety $X$
  the Hochschild--Serre cohomology
  of the symmetric quotient stack $[X^n/\symgr_n]$,
  in terms of the Hochschild--Serre cohomology of $X$.
\end{abstract}

\tableofcontents

\section{Introduction}
\subsection{Hochschild cohomology of Hilbert schemes of points on surfaces}
For a smooth projective surface~$S$,
the Hilbert scheme of points~$\hilbn{n}{S}$
is again a smooth projective variety \cite{MR0237496}.
The geometry and invariants of~$\hilbn{n}{S}$ are controlled by those of~$S$.
Probably the most famous result in this direction is the identification of
the direct sum of the singular cohomologies of all the Hilbert schemes of points on~$S$
with the Fock space representation of the Heisenberg Lie algebra associated with the cohomology of the surface $S$;
see \cite{MR1219901,MR1312161, MR1032930, MR1441880}.
This means,
in particular,
that we have an isomorphism of graded vector spaces (up to some degree shift)
\begin{equation}
  \label{equation:fock-vector-space-isomorphism}
  \bigoplus_{n\geq 0}\HH^*(\hilbn{n}{S},\IC)t^n
  \cong
  \Sym^\bullet\left(\bigoplus_{i\geq 1}\HH^{*}(S,\IC)t^i\right)\,,
\end{equation}
where the symmetric power $\Sym^\bullet$ on the right-hand side
is graded for the grading of the cohomology,
but ordinary for the grading given by exponents of the formal variable $t$;
see \cref{subsection:sym-of-vector-spaces} for details on graded symmetric powers.

Due to the identification of singular cohomology and Hochschild homology
via the Hochschild--Kostant--Rosenberg isomorphism,
we get an analogous isomorphism  of graded vector spaces (without degree shift)
for the \emph{Hochschild homology} of all the Hilbert schemes taken together.
\begin{equation}
  \label{equation:hochschild-homology--Hilb-introduction}
  \bigoplus_{n\geq 0}\hochschild_{*}(\hilbn{n}{S})t^n
  \cong
  \Sym^\bullet\left( \bigoplus_{i\geq 1}\hochschild_{*}(S)t^i  \right)\,.
\end{equation}
In particular, the Hochschild homology of $\hilbn{n}{S}$ is determined by that of $S$.

\emph{Hochschild cohomology} is another invariant of varieties,
whose definition is similar to that of Hochschild homology,
but whose behavior is, in general, quite different.
For some explicit examples of calculations
and its behavior in families,
the reader is referred to \cite{1911.09414v1,MR4578397}.
Hochschild cohomology plays an important role in the deformation theory of varieties
and their (derived) categories of coherent sheaves,
as discussed in \cref{subsection:deformation-theory}.

The motivation for this paper was whether
the Hochschild cohomology of the Hilbert schemes of points $\hochschild^*(\hilbn{n}{S})$
can be expressed by a formula similar to \eqref{equation:fock-vector-space-isomorphism} and \eqref{equation:hochschild-homology--Hilb-introduction},
or at least determined by the Hochschild cohomology $\hochschild^*(S)$ of the surface.
It turns out that the information of the Hochschild cohomology of $S$,
even jointly with its Hochschild homology,
is not enough for this purpose in general
(see \cref{subsection:bielliptic} for concrete examples using bielliptic surfaces),
but one needs the full non-positive part ($k\le 0$) of the \emph{Hochschild--Serre cohomology}
\begin{equation}
  \label{equation:intro-hochschild-serre}
  \hochserre_\bullet(S)\colonequals\bigoplus_{k\in \IZ} \hochserre_k(S)
  \quad\text{where}\quad
  \hochserre_k(S)\colonequals\RHom_{S\times S}(\Delta_*\mathcal{O}_S, \Delta_*\omega_S^{\otimes k}[k \dim S]).
\end{equation}
This invariant involving all powers of the Serre functor was first defined for varieties
and shown to be a derived invariant in \cite[page~535]{MR1998775} (see also \cite[page~139]{MR2244106}).
The definition of Hochschild--Serre cohomology, as well as its derived invariance,
can be naturally extended to orbifolds (\cref{definition:hochschild-serre}, \cref{cor:DerivedInvariance}),
and even to smooth proper dg categories (\cref{appendix:hochschild-serre-dg}).
In \cite{MR1998775,MR2244106} this is referred to as the \emph{Hochschild algebra},
as it is a bigraded algebra
which contains~the Hochschild cohomology $\hochserre_0(S)=\hochschild^*(S)$ as a graded subalgebra,
and Hochschild homology $\hochserre_1(S)=\hochschild_*(S)$ as a graded submodule.
But to avoid confusion with the usual algebra structure on Hochschild cohomology,
we will refer to the entire bigraded structure in \eqref{equation:intro-hochschild-serre}
as \emph{Hochschild--Serre cohomology}.

Our first main result
is a formula expressing the Hochschild--Serre cohomology of
all the Hilbert schemes of points
using the Hochschild--Serre cohomology of the surface $S$.
We should point out that we only compute the Hochschild--Serre cohomology of
Hilbert schemes as a bigraded vector space.
So for now, we have to leave the computation of the algebra structure as an open problem.

\begin{alphatheorem}[\cref{corollary:hochschild-serre-hilbert-scheme}]
  \label{theorem:intro-main}
  For any smooth projective surface $S$ defined over a field of characteristic zero, and any integer $k$, we have
  \begin{equation}
    \label{equation:hilbert-hochschild-serre-even-all-n-intro}
    \bigoplus_{n\geq 0}\hochserre_{k}(\hilbn{n}{S} )t^n
    \cong
    \Sym^\bullet\left(\bigoplus_{i\geq 1}\hochserre_{1+(k-1)i}(S)t^i\right).
  \end{equation}
  In particular, considering $k=0$, the Hochschild cohomology of the Hilbert schemes is given by
  \begin{equation}\label{equation:hochschild-cohomology-hilbert-scheme}
    \bigoplus_{n\geq 0}\hochschild^*(\hilbn{n}{S})t^n
    \cong
    \Sym^\bullet\left(\bigoplus_{i\geq 1}\hochserre_{1-i}(S)t^i\right).
  \end{equation}
\end{alphatheorem}

Notice on the right-hand side of \eqref{equation:hochschild-cohomology-hilbert-scheme},
pieces of Hochschild--Serre cohomology of $S$ other than $\hochschild^{*}(S)$ do come into play.

We are in particular interested in~$\hochschild^1(\hilbn{n}{S})$ and~$\hochschild^2(\hilbn{n}{S})$,
given the role of these spaces in understanding symmetries and deformations of~$\derived^\bounded(\hilbn{n}{S})$,
and thus also of~$\hilbn{n}{S}$,
as recalled in \cref{subsection:deformation-theory}.
As an application of \cref{theorem:intro-main}, the following corollary bootstraps the calculation of \cref{theorem:intro-main}
to describe one of the summands in the Hochschild--Kostant--Rosenberg decomposition of~$\hochschild^2(\hilbn{n}{S})$
which describes the \emph{classical} deformation theory of~$\hilbn{n}{S}$.
It reproves (with different methods) and generalizes results of Fantechi \cite[Theorems~0.1 and~0.3]{MR1354269}
and Hitchin \cite[\S4.1]{MR3024823},
so that arbitrary surfaces can now be considered.

\begin{alphacorollary}
  \label{corollary:tangent-hilbn-S}
  Let~$S$ be a smooth projective surface defined over a field $\field$ of characteristic zero.
  For all~$n\geq 2$, there exists an isomorphism
  \begin{equation}
    \label{equation:tangent-hilbn-S}
    \HH^1(\hilbn{n}{S},\tangent_{\hilbn{n}{S}})
    \cong
    \HH^1(S,\tangent_S)\oplus \left(\HH^0(S,\tangent_S)\otimes_\field\HH^1(S,\mathcal{O}_S)\right) \oplus\HH^0(S,\omega_S^\vee).
  \end{equation}
\end{alphacorollary}

This corollary suggests that one should study the deformation theory of
Hilbert schemes of points on bielliptic surfaces more closely.
In \cref{example:bielliptic-hilbert-conjecture} we explain how they admit
an additional deformation direction.

Another applications of \cref{theorem:intro-main} is \cref{corollary:boissiere},
where we give an alternative proof of Boissi\`ere's result \cite[Corollaire~1]{MR2932167}
stating that the infinitesimal automorphisms of~$S$ and~$\hilbn{n}{S}$ agree,
by means of the equality~$\dim\Aut^0(S)=\dim\Aut^0(\hilbn{n}{S})$.

\subsection{Hochschild--Serre cohomology for symmetric quotient stacks}
Our approach to \cref{theorem:intro-main} is ``non-commutative''.
We deduce it from the following more general result involving the derived category of
the \emph{symmetric quotient stack} $[\Sym^nX]$, namely, the quotient stack~$[X^n/\symgr_n]$
where the symmetric group~$\symgr_n$ acts on the cartesian power $X^n$ by permuting the factors.
The result works for varieties of arbitrary dimension, not only for surfaces.

\begin{alphatheorem}[\cref{corollary:hochschild-serre}]
  \label{theorem:hochschild-serre}
  Let $X$ be a smooth proper algebraic variety of dimension $d_X$ over a field $\field$ of characteristic zero.
  Let $k$ be a fixed positive integer.
  \begin{enumerate}
    \item[(i)] If $(k-1)d_X$ is even, we have
    \begin{equation}
      \label{equation:hochschild-serre-even-all-n-intro}
      \bigoplus_{n\geq 0}\hochserre_{k}([\Sym^nX])t^n
      \cong
      \Sym^\bullet\left(\bigoplus_{i\geq 1}\hochserre_{1+(k-1)i}(X)t^i\right)\,.
    \end{equation}
    \item[(ii)] If $(k-1)d_X$  is odd, we have
    \begin{equation}
      \label{equation:hochschild-serre-odd-all-n-intro}
      \bigoplus_{n\geq 0}\hochserre_{k}([\Sym^nX])t^n
      \cong
      \Sym^\bullet\left(\bigoplus_{\substack{i\geq 1\\ i \text{ odd}}}\hochserre_{1+(k-1)i}(X)t^i\right)\,.
    \end{equation}
  \end{enumerate}
\end{alphatheorem}
\cref{theorem:hochschild-serre} is a special case of the even more general \cref{theorem:main-hochschild}, which deals with general Hochschild homology with coefficients, not only Hochschild--Serre cohomology.

The relation between \cref{theorem:intro-main} and \cref{theorem:hochschild-serre} stems from the equivalence of categories
\begin{equation}
  \label{equation:derived-mckay}
  \derived^\bounded(\hilbn{n}{S})\cong\derived^\bounded([\Sym^nS])
\end{equation}
obtained by combining Bridgeland--King--Reid's derived McKay correspondence \cite{MR1824990}
with Haiman's description of the isospectral Hilbert scheme \cite{MR1839919}.
Thus if one is interested in derived invariants
of Hilbert schemes of points on surfaces,
such as Hochschild--Serre cohomology,
one can use the symmetric quotient stack for computations.
In particluar, \cref{theorem:intro-main}
becomes a special case of \cref{theorem:hochschild-serre}.


Taking $k=1$ in \cref{theorem:hochschild-serre}, we deduce the following higher-dimensional generalization of \eqref{equation:hochschild-homology--Hilb-introduction}:
\begin{alphacorollary}
  \label{corollary:Hochschild-homology-Fock}
  Let $X$ be a smooth proper algebraic variety of dimension $d_X$ over a field $\field$ of characteristic zero. We have
  \begin{equation}
    \label{equation:hochschild-homology-Fock}
    \bigoplus_{n\geq 0}\hochschild_{*}([\Sym^nX])t^n
    \cong
    \Sym^\bullet\left( \bigoplus_{i\geq 1}\hochschild_{*}(X)t^i  \right).
  \end{equation}
    Consequently, the vector space $\bigoplus_{n\geq 0}\hochschild_{*}([\Sym^nX])$
  can be identified with the Fock space representation of the Heisenberg algebra associated with $\hochschild_{*}(X)$.
\end{alphacorollary}

Despite the similarity between the formulas in \cref{theorem:hochschild-serre} for Hochschild--Serre cohomology and \eqref{equation:hochschild-homology-Fock} for Hochschild homology, we do not see a way to equip $\bigoplus_{n\geq 0}\hochserre_{k}([\Sym^nX])t^n$ with the structure of a Fock space for $k\neq 1$;
see \cref{subsection:fock-space} for some further discussion on this.

\begin{remark}
  We expect \cref{corollary:Hochschild-homology-Fock} to hold more generally (\cref{conjecture:HH-Fock-dg}): for any smooth proper dg category $\mathcal{T}$, we should have
  \begin{equation}
  \label{equation:hochschild-homology-Fock-dg}
\bigoplus_{n\geq 0}\hochschild_{*}(\Sym^n\mathcal{T})t^n
\cong
\Sym^\bullet\left( \bigoplus_{i\geq 1}\hochschild_{*}(\mathcal{T})t^i  \right).
  \end{equation}
  We can indeed provide ample evidence to \eqref{equation:hochschild-homology-Fock-dg}, for instance, for various Kuznetsov components of (Fano) varieties. See \cref{corollary:Kuznetsov-Component}.
\end{remark}

\subsection{Twisted Hodge groups: Boissi\`ere's conjecture and a revised version}
As mentioned before,
our most general statement, \cref{theorem:main-hochschild},
computes the \emph{Hochschild homology with coefficients} $\hochschild_{*}([\Sym^nX], F^{\{n\}})$ of symmetric quotient stacks $[\Sym^nX]$
with coefficients in external tensor powers of objects $F\in \derived^\bounded(X)$;
see \cref{definition:hochschild-with-coefficients} and \cref{notation:cyclic-invariants} for the definitions.
The proof uses the orbifold version of the Hochschild--Kostant--Rosenberg isomorphism due to Arinkin--\caldararu--Hablicsek \cite{MR4003476}
(actually, we need a straightforward generalisation of their result; see \cref{proposition:orbifold-hkr}).

Taking the coefficient $F$ to be some powers of the appropriately shifted canonical bundle $\omega_X[d_X]$,
we obtain Hochschild--Serre cohomology,
and hence \cref{theorem:intro-main}.
Another interesting special case of \cref{theorem:main-hochschild} occurs
when we take the coefficient $F$ to be a line bundle ${L}$,
hence ${L}^{\{n\}}$ is the naturally induced line bundle on the symmetric quotient stack
(corresponding to, in the surface case, the line bundle ${L}_n$ on the Hilbert scheme).
More precisely, we prove in \cref{corollary:HH-Hilb-LineBundle} the following more general version of \cref{theorem:intro-main}:
\begin{equation}
\label{eqn:HH-Hilb-line-bundle-intro}
  \bigoplus_{n\geq 0}\hochschild_{*}(\hilbn{n}{S} , {L}_n)t^n
  \cong\Sym^\bullet\left(\bigoplus_{i\geq 1}\hochschild_{*}(S, {L}^{\otimes i})t^i\right)\,.
\end{equation}
Via the Hochschild--Konstant--Rosenberg isomorphism,
this is closely related to a conjecture of Boissi\`ere \cite[Conjecture~1]{MR2932167}
(disproven in \cite[Appendix~B]{MR3778120})
on twisted Hodge numbers of Hilbert schemes $\HH^{p,q}(\hilbn{n}{S}, {L}_n)$.

In \cref{subsection:boissiere}, an alternative counterexample to Boissi\`ere's conjecture (see \cref{example:counterexample-boissiere}) is provided,
and we speculate about the bigraded version of the decomposition \eqref{eqn:HH-Hilb-line-bundle-intro}
and propose the following revision of Boissi\`ere's conjecture.

\begin{alphaconjecture}[\cref{conjecture:corrected}]
  \label{conjecture:intro-corrected}
  Let $S$ be a smooth projective surface,
  and ${L}\in \Pic S$.
  Then
  \begin{equation}
    \label{equation:intro-corrected}
    \begin{aligned}
      \sum_{n\geq 0}\sum_{p=0}^{2n}\sum_{q=0}^{2n}\hh^{p,q}(\hilbn{n}{S},{L}_n)x^py^qt^n
      &=
      \prod_{k\ge 1}\prod_{p=0}^2\prod_{q=0}^2\left( 1-(-1)^{p+q}x^{p+k-1}y^{q+k-1}t^k\right)^{-(-1)^{p+q}\hh^{p,q}(S,{L}^{\otimes k})}.
    \end{aligned}
  \end{equation}
\end{alphaconjecture}

We verify the conjecture for $x=0$, $y=0$, $y=-1$, or $x=y^{-1}$, and finally and most notably, in \cref{subsection:nieper-wisskirchen}, we use a result of Nieper--Wi\ss kirchen \cite{MR2578804} to show that \cref{conjecture:corrected} holds for line bundles ${L}$ admitting a unitary flat connection; see \cref{theorem:TwistHodgeHilb}.

\paragraph{Examples}
We work out some examples of the formula \eqref{equation:hochschild-cohomology-hilbert-scheme}
(and its generalization for arbitrary~$X$)
to illustrate the methods.

In \cref{subsection:symmetric-square-P1} we compute~$\hochschild^*([\Sym^2\mathbb{P}^1])$
using our main result,
and verify it by calculating the Hochschild cohomology of
a derived equivalent finite-dimensional algebra.

In \cref{subsection:hilbert-square-P2} we compute~$\hochschild^*([\Sym^2\mathbb{P}^2])\cong\hochschild^*(\hilbn{2}{\mathbb{P}^2})$
using our main result,
and improve upon it by calculating the Hochschild--Kostant--Rosenberg decomposition
using an explicit geometric model for the Hilbert square of~$\mathbb{P}^2$.

In \cref{subsection:bielliptic} we show by means of an example involving bielliptic surfaces
that~$\hochschild^*(S)$ and $\hochschild_{*}(S)$ do not determine~$\hochschild^*(\hilbn{n}{S})$ in general.
In other words,
the Hochschild--Serre cohomology of a surface contains strictly more information
than just its Hochschild (co)homology.
See \cref{corollary:bielliptic-disagreement} for the precise statement.


\paragraph{Conventions}
In this paper, $\field$ is a base field. An \textit{orbifold} means a tame separated Deligne--Mumford stack
of finite type over $\field$.
In contrast to the usual definition,
we do not impose trivial generic stabilizer.
Unless otherwise specified, all the fiber products are over $\Spec\field$.
We denote by $d_X$ the dimension of a variety (or stack) $X$.

\paragraph{Acknowledgements}
We would like to thank Samuel Boissi\`ere, Martijn Kool, and
Theo Raedschelders for interesting discussions.

The second author is supported by the University of Strasbourg Institute for Advanced Study (USIAS)
and by the Agence Nationale de la Recherche (ANR), under the project number ANR-20-CE40-0023.

\section{Hochschild (co)homology, Hochschild--Serre cohomology and their decomposition}
\label{section:hochschild}
\subsection{Hochschild (co)homology with coefficients and Hochschild--Serre cohomology}
\label{subsection:hochschild-serre}
Hochschild (co)homology of varieties, and more generally orbifolds, can be defined in various ways,
we will use the approach using Fourier--Mukai transforms as used, e.g., in \cite{MR2141853}.
In \cite[page~535]{MR1998775} (see also \cite[page~139]{MR2244106})
a generalisation is introduced,
which incorporates all powers of the Serre functor,
and which we will call \emph{Hochschild--Serre cohomology}; see \cref{definition:hochschild-serre}.
We will use the slightly more general notion of \emph{Hochschild (co)homology with coefficients},
which includes all parts of the Hochschild--Serre cohomology as special cases.

Throughout, let~$\cX$ be a smooth \emph{proper} orbifold. We denote by $\omega_{\cX}$ its canonical bundle and $d_{\cX}$ its dimension.

The Serre functor of $\derived^\bounded(\cX)$ is given by
\begin{equation}
  \serre_\cX\colonequals-\otimes \omega_\cX[d_\cX],
\end{equation}
see \cite[Section 2.2]{MR2657369}, \cite{0811.1955v2},
and also \cite{MR4514209} in the presence of a projective coarse moduli space.
If~$\cX=[M/G]$ is a global quotient stack by a finite group $G$,
then under the usual identification of coherent sheaves on the quotient stack~$[M/G]$
with $G$-equivariant coherent sheaves on $M$,
the canonical bundle $\omega_\cX$ corresponds to $\omega_M$
equipped with the linearisation given by pullback of top forms along the group action.

\begin{definition}
  \label{definition:hochschild-with-coefficients}
  For $\cE\in \derived^\bounded(\cX)$, we define the
  \emph{Hochschild homology of~$\cX$ with coefficients in $\cE$} as
   \begin{equation}
    \hochschild_*(\cX, \cE) \colonequals\RGamma(\mathcal{X}\times \mathcal{X}, \Delta_{*}\mathcal{O}_{\mathcal{X}}\otimes^{\mathbf{L}} \Delta_*\mathcal{E})
    =\RHom_{\cX\times \cX}(\mathcal{O}_{\cX\times \cX}, \Delta_{*}\mathcal{O}_{\mathcal{X}}\otimes^{\mathbf{L}}  \Delta_*\mathcal{E}),
  \end{equation}
  and the \emph{Hochschild cohomology of~$\cX$ with coefficients in $\cE$} as
   \begin{equation}
    \hochschild^*(\cX, \cE)\colonequals
    \RHom_{\cX\times \cX}(\Delta_*\mathcal{O}_{\cX}, \Delta_*\cE)\,,
  \end{equation}
  where $\Delta\colon \cX\to \cX\times \cX$ is the diagonal morphism.
  By Grothendieck duality, these two invariants are related as follows (see, for example, \cite[Lemma 2.1]{kuznetsov2009hochschild}):
  \begin{equation}
    \hochschild^*(\cX, \serre_{\cX}\cE)\cong \hochschild_*(\cX, \cE)\,.
  \end{equation}
  Concretely,
  \begin{equation}
    \hochschild_*(\cX, \cE)
   \cong
    \RHom_{\cX\times \cX}(\Delta_*\mathcal{O}_{\cX}, \Delta_*(\serre_{\cX}\cE))
    =
    \RHom_{\cX\times \cX}(\Delta_*\mathcal{O}_{\cX}, \Delta_*(\omega_{\cX}[d_{\cX}]\otimes \cE))\,,
  \end{equation}
\end{definition}
There is a slight abuse of notation happening here,
where we will not always distinguish between an object in~$\derived^\bounded(\field)$
and its cohomology,
which is a graded vector space.

If we spell out what happens when we take cohomology, then for any $j\in \mathbb{Z}$,
\begin{equation}
  \hochschild_j(\cX, \cE)
  =
  \Ext^j_{\cX\times \cX}(\Delta_*\mathcal{O}_{\cX}, \Delta_*(\omega_{\cX}[d_{\cX}]\otimes \cE)).
\end{equation}

\begin{remark}
  It is possible to define Hochschild (co)homology with values in bimodules,
  i.e., elements of~$\derived^\bounded(\cX\times\cX)$,
  but we will not consider this.
  The reason is that the Hochschild--Kostant--Rosenberg decomposition
  only works for symmetric bimodules (for the affine setting of this statement, see \cite[\S1.3]{MR1600246}),
  and thus only for objects in the essential image of~$\Delta_*$.
\end{remark}

For the special case where~$\cE$ is a tensor power of the shifted canonical bundle $\omega_{\cX}[d_{\cX}]$,
we introduce the following terminology.

\begin{definition}
  \label{definition:hochschild-serre}
  The \emph{Hochschild--Serre cohomology} of $\cX$ is
  \begin{equation}
    \hochserre_\bullet(\cX)
    \colonequals
    \bigoplus_{k\in\mathbb{Z}}\hochserre_k(\cX)
  \end{equation}
  where for any $k\in \mathbb{Z}$,
  \begin{equation}
    \hochserre_k(\cX)
    \colonequals
    \RHom_{\cX\times \cX}(\Delta_*\mathcal{O}_\cX, \Delta_*\omega_\cX^{\otimes k}[kd_\cX])
  \end{equation}
  and thus
  \begin{equation}
    \hochserre_k(\cX)
    \cong
    \hochschild^*(\cX, \omega_\cX^{\otimes k}[kd_\cX])
    \cong
    \hochschild_{*}(\cX, \omega_\cX^{\otimes k-1}[(k-1)d_\cX]).
  \end{equation}
  Taking cohomology, we get the bigraded algebra
  \begin{equation}
    \hochserre_\bullet^*(\cX)
    \colonequals
    \bigoplus_{j, k\in\mathbb{Z}}\hochserre_{k}^j(\cX),
  \end{equation}
  with
  \begin{equation}
    \hochserre_{k}^j(\cX)\colonequals \hochschild^j(\cX, \omega_\cX^{\otimes k}[kd_\cX])
    =
    \Ext_{\cX\times \cX}^{j+kd_{\cX}}(\Delta_*\mathcal{O}_{\cX},\Delta_*\omega_{\cX}^{\otimes k}),
  \end{equation}
  where the algebra structure is given by composition in~$\derived^\bounded(\cX\times\cX)$.
\end{definition}

The objective of this paper is to compute,
given a smooth and proper variety $X$,
the Hochschild--Serre cohomology of the symmetric quotient stack~$[X^n/\symgr_n]$,
in terms of the Hochschild--Serre cohomology of~$X$.

\begin{remark}
  \label{remark:hochschild-serre-generalises-classical}
  We recover some classical invariants as certain graded pieces of Hochschild--Serre cohomology:
  \begin{itemize}
    \item Hochschild cohomology, namely~$\hochschild^*(\cX)\cong\hochserre_{0}^*(\cX)$,
    \item Hochschild homology, namely~$\hochschild_*(\cX)\cong\hochserre_{1}^*(\cX)$,
    \item the canonical ring, namely~$\mathrm{R}(\cX)=\bigoplus_{k\geq 0}\hochserre_{k}^{-kd_\cX}(\cX)$.
  \end{itemize}
  A generalisation of Hochschild--Serre cohomology for varieties is used in \cite{MR3218801}
  to find new derived invariants, through the formalism of cohomological support loci.
\end{remark}

\begin{remark}
  In \cite[\S2.1]{MR1998775}
  and \cite[\S6.1]{MR2244106} this definition is given for smooth and projective varieties,
  but a different notation and grading is used.
  The notation in op.~cit.~is~$\operatorname{HA}_{\bullet,*}(X)$
  for what we denote~$\hochserre_\bullet^*(X)$,
  and the Serre functor is not used with the shift.
  To make the definition generalise to the more general setting of smooth and proper dg~categories
  (see \cref{appendix:hochschild-serre-dg} and the next remark)
  it is necessary to use the \emph{actual} Serre functor,
  and not just~$-\otimes\omega_{\cX}$,
  as there is no notion of an unshifted Serre functor in general.
\end{remark}

\begin{remark}[Generalisation to dg categories]
  \label{remark:dg-generalisation}
  This definition has an obvious generalisation for a smooth and proper dg category,
  so that a Serre functor exists.
  This is discussed in \cref{appendix:hochschild-serre-dg}.
  See also \cite[Section 4]{MR4229602} where similar notions are discussed
  in the context of stable $\infty$-categories,
  but the coefficients used are related to group actions
  and not powers of the Serre functor.
\end{remark}

\begin{remark}[Derived invariance]
  \label{remark:functoriality}
  Hochschild--Serre cohomology of a variety is a derived invariant \cite[Theorem~2.1.8]{MR1998775},
  thus implying that Hochschild cohomology and Hochschild homology are derived invariants.
  Using the general definition of \cref{appendix:hochschild-serre-dg} for dg~categories,
  it follows from \cref{theorem:agreement,theorem:hochschild-serre-morita-invariant}
  that Hochschild--Serre cohomology, as a bigraded algebra, is also a derived invariant for smooth proper Deligne--Mumford stacks,
  which is important for geometric applications to Hilbert schemes of points on surfaces. See also \cref{cor:DerivedInvariance} for a direct proof in the geometric setting without appealing to dg categories. This is a special case of \cref{proposition:hochschild-with-values-invariant}, which shows that, under a certain compatibility condition, Hochschild homology with coefficients is a derived invariant.
\end{remark}

It is moreover possible to upgrade the functoriality properties of Hochschild--Serre cohomology of varieties
so that it becomes functorial for \'etale morphisms,
but we will not need this in the main body of the text,
so this discussion is relegated to \cref{section:etale-functoriality}.


\subsection{The Hochschild--Kostant--Rosenberg decomposition}
\label{subsection:hkr}
In this section we consider the case where $\mathcal{X}$ is isomorphic to
a global quotient stack by a finite group.
Let $M$ be a smooth proper variety defined over $\field$,
and $G$ a finite group acting on $M$.
Let $\mathcal{X}\colonequals[M/G]$ be the quotient stack,
which is a smooth proper orbifold.

For any $g\in G$, we denote by $M^g$ the fixed locus of $g\in \Aut(M)$,
which is a smooth closed subvariety of $M$.
Their disjoint union is denoted by
\begin{equation}
  \inertia_GM\coloneq \coprod_{g\in G}M^g,
\end{equation}
which is sometimes called the \textit{inertia variety} of the action $G$ on $M$.
The inertia variety~$\inertia_GM$ admits a natural $G$-action as follows:
for any $g, h\in G$, there is an isomorphism
\begin{equation}
  \label{equation:isomorphism-conjugacy}
  h\cdot\colon M^g\xrightarrow{\simeq} M^{hgh^{-1}}.
\end{equation}
Taking the disjoint union for $g$ running through $G$,
we can define the action of $h$ (and~$G$) on $\inertia_GM$.

Using this action, the \emph{inertia stack}
$\inertia\mathcal{X}\coloneq\mathcal{X}\times_{\mathcal{X}\times\mathcal{X}}\mathcal{X}$
can itself be described as a global quotient stack by the same finite group $G$:
\begin{equation}
  \inertia\mathcal{X}\cong [\inertia_GM/G].
\end{equation}

The usual Hochschild--Kostant--Rosenberg isomorphism for varieties
(which we state in \cref{proposition:hkr-varieties})
admits the following orbifold version\footnote{
  The tools to prove the Hochschild--Kostant--Rosenberg decomposition
  for arbitrary smooth Deligne--Mumford stacks,
  which are not global quotients by finite groups,
  seem to be lacking from the literature.
},
essentially due to Arinkin--\caldararu--Hablicsek \cite{MR4003476}.
\begin{proposition}[Hochschild--Kostant--Rosenberg for global quotient stacks]
  \label{proposition:orbifold-hkr}
  Let $\mathcal{X}=[M/G]$ be a smooth global quotient orbifold defined over $\field$ as above.
  Let $\mathcal{E}\in \derived^\bounded([M/G])$ be given by a $G$-linearised object $E\in \derived^\bounded(M)$.
  Then
  \begin{equation}
    \label{equation:orbifold-hkr}
    \hochschild_*(\mathcal{X}, \mathcal{E})
    \cong
    \left(\bigoplus_{g\in G} \Ho^*\left(M^g, \Sym^\bullet(\Omega_{M^g}^1[1])\otimes i_g^*E\right)\right)^G.
  \end{equation}
  Equivalently,
  \begin{equation}
    \label{equation:orbifold-hkr-conjugacy}
    \hochschild_*(\mathcal{X}, \mathcal{E})
    \cong
    \bigoplus_{[g]\in \Conj(G)} \Ho^*\left(M^g, \Sym^\bullet(\Omega_{M^g}^1[1])\otimes i_g^* E\right)^{\centraliser(g)}.
  \end{equation}
 Here, $M^g$ is the fixed locus of $g$ and $i_g\colon M^g\hookrightarrow M$ is the closed immersion,
  $\Conj(G)$ is the set of conjugacy classes of $G$,
  and $\centraliser(g)$ is the centraliser of $g$ in $G$.
  Here the action on the right-hand sides of \eqref{equation:orbifold-hkr} and \eqref{equation:orbifold-hkr-conjugacy}
  are induced by the action \eqref{equation:isomorphism-conjugacy}.
\end{proposition}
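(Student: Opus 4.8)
The plan is to deduce the proposition from the orbifold Hochschild--Kostant--Rosenberg theorem of Arinkin--\caldararu--Hablicsek \cite{MR4003476} after a purely formal reduction; the only thing one must add to \emph{loc.\ cit.}\ is the bookkeeping of the coefficient object~$\mathcal{E}$ and the passage from the quasi-projective to the smooth proper setting. First I would reduce everything to the ``HKR object'' $\mathbf{L}\Delta^*\Delta_*\mathcal{O}_{\mathcal{X}}$. The projection formula gives $\Delta_*\mathcal{E}\cong\Delta_*\mathcal{O}_{\mathcal{X}}\otimes^{\mathbf L}\pr_2^*\mathcal{E}$ (using $\pr_2\circ\Delta=\id_{\mathcal{X}}$), so from \cref{definition:hochschild-with-coefficients}
\[
  \hochschild_*(\mathcal{X},\mathcal{E})\cong\RGamma(\mathcal{X},\mathbf{L}\Delta^*\Delta_*\mathcal{E})\cong\RGamma\bigl(\mathcal{X},\,\mathbf{L}\Delta^*\Delta_*\mathcal{O}_{\mathcal{X}}\otimes^{\mathbf L}\mathcal{E}\bigr),
\]
and it suffices to identify the $G$-equivariant object $\mathbf{L}\Delta^*\Delta_*\mathcal{O}_{\mathcal{X}}\in\derived^\bounded(M)^G$, equivalently the structure sheaf of the derived self-intersection $\mathcal{X}\times^{\mathbf L}_{\mathcal{X}\times\mathcal{X}}\mathcal{X}$, whose classical truncation is the inertia stack $\inertia\mathcal{X}\cong[\inertia_GM/G]$.

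The crux is then the formality theorem of \cite{MR4003476}: over a field of characteristic zero the derived self-intersection $\mathcal{X}\times^{\mathbf L}_{\mathcal{X}\times\mathcal{X}}\mathcal{X}$ is formal, which --- after translating coherent sheaves on~$\mathcal{X}$ into $G$-equivariant sheaves on~$M$ and combining formality with the elementary computation of the cohomology sheaves --- amounts to a $G$-equivariant isomorphism
\[
  \mathbf{L}\Delta^*\Delta_*\mathcal{O}_{\mathcal{X}}\;\cong\;\bigoplus_{g\in G}(i_g)_*\Sym^\bullet\bigl(\Omega^1_{M^g}[1]\bigr),
\]
with $G$ permuting the summands through the conjugation isomorphisms~\eqref{equation:isomorphism-conjugacy}. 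Conceptually, the full inertia variety $\coprod_{g}M^g$ appears because the derived restriction of $\Delta_*\mathcal{O}_{\mathcal{X}}$ to the diagonal detects, for each~$g$, the locus~$M^g$ on which~$g$ acts trivially (the toy case $M=\Spec\field$ being the regular representation~$\field[G]$ with its conjugation action), while the factor $\Sym^\bullet(\Omega^1_{M^g}[1])$ on the $g$-sector is the ordinary HKR isomorphism of the smooth variety~$M^g$, made canonical and split by formality, which is exactly where characteristic zero is used.

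To finish, I would tensor the displayed isomorphism with~$\mathcal{E}$ and apply the projection formula on each sector, obtaining a $G$-equivariant isomorphism $\mathbf{L}\Delta^*\Delta_*\mathcal{E}\cong\bigoplus_{g\in G}(i_g)_*\bigl(\Sym^\bullet(\Omega^1_{M^g}[1])\otimes i_g^*E\bigr)$ whose $G$-action combines~\eqref{equation:isomorphism-conjugacy} with the linearisation of~$E$; then $\RGamma(\mathcal{X},-)\cong\RGamma(M,-)^G$ together with $\RGamma(M,(i_g)_*(-))\cong\RGamma(M^g,-)$ gives~\eqref{equation:orbifold-hkr}. This is precisely the ``straightforward generalisation'' of \cite{MR4003476} referred to in the text: the only new input beyond \emph{loc.\ cit.}\ is carrying along~$\mathcal{E}$ (a projection-formula matter) and allowing~$M$ to be smooth proper rather than quasi-projective (harmless, as the HKR splitting is canonical and therefore globalises). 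Finally~\eqref{equation:orbifold-hkr-conjugacy} follows from~\eqref{equation:orbifold-hkr} by the elementary fact that, for a $G$-representation of the shape $\bigoplus_{g\in G}V_g$ on which $h\in G$ acts by isomorphisms $V_g\xrightarrow{\sim}V_{hgh^{-1}}$, grouping the summands by conjugacy classes identifies $\bigl(\bigoplus_{g\in G}V_g\bigr)^G$ with $\bigoplus_{[g]\in\Conj(G)}V_g^{\centraliser(g)}$.

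The main obstacle is entirely concentrated in the formality theorem of \cite{MR4003476}, which we cite rather than reprove: without formality one would obtain only a filtered object (a spectral sequence) in place of the displayed direct sum, and formality genuinely uses both characteristic zero --- to kill the Atiyah-class obstruction to splitting on each smooth fixed locus~$M^g$ --- and the finiteness of~$G$, which is what guarantees that $\inertia_GM=\coprod_{g\in G}M^g$ is a disjoint union of smooth varieties so that the only remaining structure to track is the $G$-action. Everything else --- the projection-formula manipulations, the passage to $G$-invariants, and the conjugacy-class reindexing --- is routine.
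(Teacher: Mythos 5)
Your proposal is correct and follows essentially the same route as the paper: reduce to the object $\mathbf{L}\Delta^*\Delta_*\mathcal{O}_{\mathcal{X}}$ via the projection formula, invoke the Arinkin--C\u{a}ld\u{a}raru--Hablicsek results to identify it ($G$-equivariantly) with the pushforward from the inertia of $\Sym^\bullet(\Omega^1[1])$, tensor with $\mathcal{E}$, take $\RGamma$ and $G$-invariants (using characteristic zero), and regroup by conjugacy classes. The only cosmetic difference is that the paper phrases the key input as the homotopy between the two projections of the free loop space plus the identification of the loop space with the shifted tangent bundle of the inertia, whereas you package the same citations as ``formality of the derived self-intersection''; the mathematical content is identical.
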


The graded vector spaces
occuring on
the right-hand sides of \eqref{equation:orbifold-hkr} and \eqref{equation:orbifold-hkr-conjugacy}
denote (hyper-)cohomology of the complex $\Sym^\bullet(\Omega_{M^g}^1[1])\otimes i_g^* E$.
In other words,
\begin{equation}
  \Ho^*\left(M^g, \Sym^\bullet(\Omega_{M^g}^1[1])\otimes i_g^*E\right)
  \colonequals
  \mathbf{R}\Gamma\left(M^g, \Sym^\bullet(\Omega_{M^g}^1[1])\otimes i_g^*E\right).
\end{equation}

\begin{proof}
  Since in \cite{MR4003476} only the cases $\cE=\mathcal O_\cX$ and $\cE=\omega_\cX^{-1}[-d_\cX]$
  (which give Hochschild homology and Hochschild cohomology without coefficients) are explicitly worked out,
  we include a full proof here for the sake of completeness.


  We have the following commutative diagram
  \begin{equation}
    \begin{tikzcd}
      \loops\mathcal{X}
      \arrow[drr, bend left, "p'"]
      \arrow[ddr, bend right, "q'" ']
      & & \\
      & \inertia\mathcal{X}  \arrow[r, "p"] \arrow[d, "q"] \arrow[ul, ]
      & \mathcal{X} \arrow[d, "\Delta"] \\
      & \mathcal{X} \arrow[r, "\Delta"]
      & \mathcal{X}\times \mathcal{X}
    \end{tikzcd}
  \end{equation}
  where $\loops\mathcal{X}\coloneq \mathcal{X}\times^{\mathbf{R}}_{\mathcal{X}\times  \mathcal{X}}  \mathcal{X}$
  is the derived fiber product, called the \textit{free loop space}.

  By \cite[Corollary 1.17]{MR4003476}, $p'$ and $q'$ are homotopic,
  hence by base change and the projection formula, we have
  \begin{equation}
    \label{equation:Delta-Delta}
    \mathbf{L}\Delta^*\Delta_*\cong \mathbf{R}q'_*\mathbf{L}p'^*
    \cong
    \mathbf{R}p'_*\mathbf{L}p'^*\cong -\otimes \mathbf{R}p'_*(\mathcal{O}_{\loops\mathcal{X}})\cong -\otimes \mathbf{R}p_*\Sym^\bullet(\Omega_{\inertia\mathcal{X}}^1[1]),
  \end{equation}
  where the last isomorphism uses \cite[Theorem 1.15]{MR4003476},
  which says that $\loops\mathcal{X}$ is isomorphic (over $\mathcal{X}\times \mathcal{X}$)
  to the total space of the shifted tangent bundle of $\inertia\mathcal{X}$.

  Therefore,
  \begin{equation}
    \begin{aligned}
      \hochschild_*(\mathcal{X}, \mathcal{E})
      &\cong \RGamma(\cX\times \cX,\Delta_{*}\mathcal{O}_{\cX}\otimes \Delta_{*}\mathcal{E}) \\
      &\cong \RGamma(\cX\times \cX,\Delta_{*}\mathbf{L}\Delta^*\Delta_{*}\mathcal{E}) \\
      &\cong \RGamma(\cX,\mathbf{L}\Delta^*\Delta_{*}\mathcal{E}) \\
      &\cong \RGamma(\cX,\mathcal{E}\otimes \mathbf{R}p_*\Sym^\bullet(\Omega_{\inertia\mathcal{X}}^1[1])) \\
      &\cong \RGamma(\inertia\cX,p^*\mathcal{E}\otimes \Sym^\bullet(\Omega_{\inertia\mathcal{X}}^1[1])) \\
      &\cong\left(\bigoplus_{g\in G} \mathbf{R}\Gamma\left(M^g, \Sym^\bullet(\Omega^1_{M^g}[1])\otimes i_g^* E\right) \right)^G
    \end{aligned}
  \end{equation}
  where the fourth isomorphism uses  \eqref{equation:Delta-Delta} and the last isomorphism follows  the fact that $-^G$ is an exact functor since we are in characteristic~zero.  This proves \eqref{equation:orbifold-hkr},
  and \eqref{equation:orbifold-hkr-conjugacy} follows immediately.
\end{proof}

\begin{remark}
  For later use, let us make the group action on
  the right-hand sides of \eqref{equation:orbifold-hkr} and \eqref{equation:orbifold-hkr-conjugacy} more precise.
  In the statement of \cref{proposition:orbifold-hkr},
  for any $h\in G$,
  the action of $h$ sends the summand indexed by $g$ isomorphically
  to the one indexed by $hgh^{-1}$ via \eqref{equation:isomorphism-conjugacy}:
  \begin{equation}
    h\colon M^g\xrightarrow{\simeq} M^{hgh^{-1}}.
  \end{equation}
  In particular, $\centraliser(g)$ acts on $M^g$.
  The isomorphisms $\Omega_{M^g}^1\xrightarrow{\simeq} h^*\Omega_{M^{hgh^{-1}}}^1$ are the canonical ones.
  The isomorphisms $h^*i_{hgh^{-1}}^*(E)\cong i_g^*h^*(E)\xrightarrow{\simeq}i_g^*E$ come from the linearisation of $E$.
\end{remark}

\begin{remark}
  In the case that $M=\Spec R$ is affine,
  coherent sheaves on $[M/G]$ are the same as finitely generated modules over the crossed product algebra $R\#G$.
  For the Hochschild cohomology of these crossed product algebras,
  decompositions analogous to \eqref{equation:orbifold-hkr-conjugacy}
  are given in \cite[Section~3]{MR2058782} and \cite{anno2005HH}.
  There, it is also described what happens with the ring structure of Hochschild cohomology under this decomposition.
  For $M$ non-affine this remains an open problem;
  see \cite{arXiv:2101.06276} and \cite{MR4057490} for some partial result and speculation on this.
\end{remark}

\subsection{Orbifold Hochschild--Kostant--Rosenberg in terms of Hodge groups of inertia}
As a motivation for the definitions that will follow,
and calculations in the surface case,
we first spell out \cref{proposition:orbifold-hkr}
in the special case where~$G$ is the trivial group.
\begin{proposition}[Hochschild--Kostant--Rosenberg with coefficients on a variety]
  \label{proposition:hkr-varieties}
  Let $X$ be a smooth proper variety over $\field$ of dimension $d_X$. For any $E\in \derived^\bounded(X)$, we have
  \begin{equation}
    \label{equation:hkr-varieties}
    \hochschild_*(X, E)\cong \mathbf{R}\Gamma(X, \Sym^\bullet(\Omega_X^1[1])\otimes E),
  \end{equation}
  where $\Sym^\bullet$ is taken in the graded sense,
  so that~$\Sym^\bullet(\Omega_X^1[1])=\bigoplus_{p\geq 0} \bigwedge^p\Omega_X^1[p]$.
  Taking cohomology, for any $j\in \mathbb{Z}$,
  \begin{equation}
    \label{equation:hkr-varieties-degree}
    \hochschild_j(X, E)\cong \bigoplus_{q-p=j} \HH^q(X, \Omega_X^p\otimes E).
  \end{equation}
  In particular,
  \begin{equation}
    \label{equation:hochschild-serre-decomposition}
    \hochserre_k^j(X)\cong \bigoplus_{p+q=j+kd_X}\HH^q\left(X, \bigwedge^p\tangent_X\otimes \omega_X^{\otimes k}\right),
  \end{equation}
  which is (possibly) non-zero only in degrees~$[-kd_X,2d_X-kd_X]$.
\end{proposition}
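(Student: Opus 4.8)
The plan is to deduce this directly from \cref{proposition:orbifold-hkr} by specialising to the trivial group, and then to translate the resulting statement through the conventions on graded symmetric powers and the definition of Hochschild--Serre cohomology. First I would take $G=\{e\}$ and $M=X$ in \cref{proposition:orbifold-hkr}: then $\inertia_GM=X^e=X$, the inertia stack is $X$ itself with trivial induced action, and forming $G$-invariants is the identity, so \eqref{equation:orbifold-hkr} collapses to
\[
  \hochschild_*(X, E)\cong\RGamma\bigl(X, \Sym^\bullet(\Omega_X^1[1])\otimes E\bigr),
\]
which is \eqref{equation:hkr-varieties}. (Equivalently, one could bypass the orbifold statement and simply invoke the classical Hochschild--Kostant--Rosenberg isomorphism $\mathbf{L}\Delta^*\Delta_*\mathcal{O}_X\cong\Sym^\bullet(\Omega_X^1[1])$, which is the content to which \cref{proposition:orbifold-hkr} reduces in this case, together with the projection formula as in the proof of that proposition.)

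Next I would unwind the graded symmetric algebra. Since $\Omega_X^1[1]$ sits in the odd cohomological degree $-1$, the Koszul sign rule identifies the graded symmetric power with the exterior power, $\Sym^p(\Omega_X^1[1])\cong(\bigwedge^p\Omega_X^1)[p]=\Omega_X^p[p]$, so $\Sym^\bullet(\Omega_X^1[1])\cong\bigoplus_{p\geq 0}\Omega_X^p[p]$. Taking the $j$-th hypercohomology of $\RGamma(X,\bigoplus_{p\geq 0}\Omega_X^p[p]\otimes E)$ then produces $\bigoplus_{p\geq 0}\HH^{j+p}(X,\Omega_X^p\otimes E)$, which upon writing $q=j+p$ is precisely \eqref{equation:hkr-varieties-degree}.

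For the statement about $\hochserre_k^j(X)$ I would start from the isomorphism $\hochserre_k(X)\cong\hochschild_*(X,\omega_X^{\otimes k-1}[(k-1)d_X])$ recorded in \cref{definition:hochschild-serre}, substitute $E=\omega_X^{\otimes k-1}[(k-1)d_X]$ into \eqref{equation:hkr-varieties-degree}, and absorb the shift, obtaining $\hochserre_k^j(X)\cong\bigoplus_{q-p=j}\HH^{q+(k-1)d_X}(X,\Omega_X^p\otimes\omega_X^{\otimes k-1})$. I would then use the perfect pairing $\bigwedge^p\Omega_X^1\otimes\bigwedge^{d_X-p}\Omega_X^1\to\bigwedge^{d_X}\Omega_X^1=\omega_X$ to rewrite $\Omega_X^p\cong\bigwedge^{d_X-p}\tangent_X\otimes\omega_X$, hence $\Omega_X^p\otimes\omega_X^{\otimes k-1}\cong\bigwedge^{d_X-p}\tangent_X\otimes\omega_X^{\otimes k}$, replace the summation index $p$ by $d_X-p$, and collect cohomological degrees; this yields exactly \eqref{equation:hochschild-serre-decomposition}. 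The vanishing range drops out at once: $\HH^q(X,-)=0$ unless $0\leq q\leq d_X$ and $\bigwedge^p\tangent_X=0$ unless $0\leq p\leq d_X$, so the equation $p+q=j+kd_X$ forces $0\leq j+kd_X\leq 2d_X$, i.e.\ $j\in[-kd_X,2d_X-kd_X]$.

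The heavy lifting has already been done in \cref{proposition:orbifold-hkr}, so I do not expect a genuine obstacle here; the only places requiring a little care are the sign-convention identification $\Sym^\bullet(\Omega_X^1[1])\cong\bigoplus_p\Omega_X^p[p]$ and the Serre-duality reindexing $p\leftrightarrow d_X-p$ in the final step (where one should also keep track of the degree shift by $(k-1)d_X$), both of which are routine bookkeeping rather than substantive difficulties.
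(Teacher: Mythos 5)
Your proposal is correct and follows essentially the same route as the paper: the paper presents this proposition precisely as the specialization of \cref{proposition:orbifold-hkr} to the trivial group, with no further argument given. Your unwinding of $\Sym^\bullet(\Omega_X^1[1])\cong\bigoplus_p\Omega_X^p[p]$ and the reindexing $E=\omega_X^{\otimes k-1}[(k-1)d_X]$, $\Omega_X^p\cong\bigwedge^{d_X-p}\tangent_X\otimes\omega_X$ for \eqref{equation:hochschild-serre-decomposition} is exactly the routine bookkeeping the paper leaves implicit, and your indices check out.
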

For~$k=0$ resp.~$k=1$ we obtain the usual Hochschild--Kostant--Rosenberg decomposition for Hochschild cohomology resp.~Hochschild homology.

\begin{definition}
  Let~$X$ be a smooth and proper variety.
  Let $E\in \derived^\bounded(X)$.
  For any integers $p, q$, define the \emph{twisted Hodge group}
  \begin{equation}
    \label{equation:twisted-hodge-variety}
    \HH^{p,q}(X, E)\coloneqq \HH^q(X, \Omega_X^p\otimes E).
  \end{equation}
\end{definition}
Then \eqref{equation:hkr-varieties-degree} says that
\begin{equation}
  \label{equation:hkr-twisted-hodge}
  \hochschild_j(X, E)\cong \bigoplus_{q-p=j} \HH^{p,q}(X, E).
\end{equation}
In order to give an analogue of \eqref{equation:hkr-twisted-hodge} for quotient stacks,
we introduce the following analogue of twisted Hodge groups for orbifolds.
\begin{definition}
  Let $\mathcal{X}$ be a smooth and proper orbifold.
  Let $\mathcal{E}\in \derived^\bounded(\mathcal{X})$.
  For any integer $p, q$, define the \emph{twisted Hodge group}
  \begin{equation}
    \HH^{p,q}(\mathcal{X}, \mathcal{E})\coloneqq \HH^q(\mathcal{X}, \Omega^p_{\mathcal{X}}\otimes \mathcal{E}).
  \end{equation}
\end{definition}
Of particular interest to us are the twisted Hodge groups of the inertia stack.
In the sequel, we often abuse notation to denote $\HH^{p,q}(\inertia\mathcal{X}, p^*\mathcal{E})$
simply by $  \HH^{p,q}(\inertia\mathcal{X}, \mathcal{E})$,
where $p\colon \inertia\mathcal{X}\to \mathcal{X}$ is the canonical map.

When $\mathcal{X}=[M/G]$, $\mathcal{E}\in \derived^\bounded(\mathcal{X})$,
and $E\in \derived^\bounded(M)$ be as in \cref{proposition:orbifold-hkr},
we have
\begin{equation}
  \label{eq:InertiaHodgeGroups}
  \HH^{p,q}(\inertia\mathcal{X}, \mathcal{E})
  =
  \left(\bigoplus_{g\in G} \HH^{p,q}(M^g, E|_{M^g})\right)^G
  \cong
  \bigoplus_{[g]\in \Conj(G)} \HH^{p,q}\left(M^g, E|_{M^g}\right)^{\centraliser(g)}.
\end{equation}

\begin{corollary}
  \label{corollary:orbifold-hkr-degree}
  With notation as in \cref{proposition:orbifold-hkr}
  and for any $j\in \mathbb{Z}$,
  we have
  \begin{equation}
    \hochschild_j(\mathcal{X}, \mathcal{E})\cong \bigoplus_{q-p=j}\HH^{p,q}(\inertia\mathcal{X}, \mathcal{E}).
  \end{equation}
\end{corollary}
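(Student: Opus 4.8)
The plan is to deduce this directly from the orbifold Hochschild--Kostant--Rosenberg isomorphism \eqref{equation:orbifold-hkr} of \cref{proposition:orbifold-hkr} by taking cohomology in degree $j$ and unwinding the graded symmetric power. Recall that $\Sym^\bullet(\Omega^1_{M^g}[1])$ is the \emph{graded} symmetric power of a complex placed in odd cohomological degree, so that $\Sym^p(\Omega^1_{M^g}[1])\cong\bigl(\bigwedge^p\Omega^1_{M^g}\bigr)[p]$, exactly as in \cref{proposition:hkr-varieties}. Consequently
\[
  \Sym^\bullet(\Omega^1_{M^g}[1])\otimes i_g^*E
  \;\cong\;
  \bigoplus_{p\geq 0}\Bigl(\bigwedge^p\Omega^1_{M^g}\otimes i_g^*E\Bigr)[p],
\]
and applying $\RGamma(M^g,-)$ and then passing to cohomology in degree $j$ yields $\bigoplus_{p\geq 0}\HH^{j+p}\bigl(M^g,\bigwedge^p\Omega^1_{M^g}\otimes i_g^*E\bigr)$. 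Writing $q\colonequals j+p$, this reads $\bigoplus_{q-p=j}\HH^{p,q}(M^g, i_g^*E)$ in the notation of twisted Hodge groups.

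Next I would note that this $(p,q)$-decomposition is compatible with the $G$-action on the right-hand side of \eqref{equation:orbifold-hkr}: that action permutes the summands indexed by $g$ through the isomorphisms \eqref{equation:isomorphism-conjugacy}, and the induced identifications $\Omega^1_{M^g}\xrightarrow{\simeq}h^*\Omega^1_{M^{hgh^{-1}}}$ preserve the exterior-power degree $p$, while the shift $[p]$ is natural. Since taking $G$-invariants is exact in characteristic zero, it commutes with cohomology and with finite direct sums, so
\[
  \hochschild_j(\mathcal{X},\mathcal{E})
  \;\cong\;
  \Bigl(\bigoplus_{g\in G}\,\bigoplus_{q-p=j}\HH^{p,q}(M^g, i_g^*E)\Bigr)^{G}
  \;\cong\;
  \bigoplus_{q-p=j}\Bigl(\bigoplus_{g\in G}\HH^{p,q}(M^g, i_g^*E)\Bigr)^{G}.
\]
By the description \eqref{eq:InertiaHodgeGroups} of the twisted Hodge groups of the inertia stack, the inner $G$-invariants are precisely $\HH^{p,q}(\inertia\mathcal{X},\mathcal{E})$, which gives the asserted formula.

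I do not expect a genuine obstacle: the argument is pure bookkeeping. The one point deserving attention is the cohomological shift by $[p]$ coming from $\Sym^p(\Omega^1[1])\cong(\bigwedge^p\Omega^1)[p]$ --- it is precisely this shift that converts cohomological degree $j$ on the Hochschild side into the condition $q-p=j$ on the (twisted) Hodge side, in exact parallel with \eqref{equation:hkr-twisted-hodge} in the case $G=\{1\}$.
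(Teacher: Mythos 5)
Your argument is correct and is exactly the route the paper takes: its proof consists of taking cohomology in the orbifold Hochschild--Kostant--Rosenberg isomorphism \eqref{equation:orbifold-hkr} and plugging in \eqref{eq:InertiaHodgeGroups}, which is precisely your bookkeeping with the shift $\Sym^p(\Omega^1_{M^g}[1])\cong\bigl(\bigwedge^p\Omega^1_{M^g}\bigr)[p]$ and the exactness of $G$-invariants in characteristic zero spelled out. Nothing is missing; you have simply made explicit the details the paper leaves to the reader.
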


\begin{proof}
  We take cohomology in \eqref{equation:orbifold-hkr} and plug in \eqref{eq:InertiaHodgeGroups}.
\end{proof}

We can collect the Hodge groups with coefficients in the bigraded vector space
\begin{equation}
  \label{eq:Hodgegradedstack}
  \Ho^{\#,\star}(\inertia\mathcal{X},\mathcal{E})
  \coloneqq
  \left(\bigoplus_{g\in G} \Ho^{\#,\star}(M^g, E|_{M^g})\right)^G
  \cong
  \bigoplus_{[g]\in \Conj(G)} \Ho^{\#,\star}\left(M^g, E|_{M^g}\right)^{\centraliser(g)},
\end{equation}
where $\Ho^{\#,\star}(M^g, E|_{M^g}))=\Ho^{\star}(M^g,\Omega^{\#}_{M^g}\otimes E_{\mid M^g})$.
Then we can rephrase \cref{corollary:orbifold-hkr-degree} as
\begin{corollary}
  \label{corollary:orbifold-hkr-total}
  Turning the bigraded vector space $\Ho^{\#,\star}(\mathcal{X}, \mathcal{E})$ into a (single-)graded vector space with grading $*=\star-\#$,
  this graded vector space is isomorphic to the Hochschild homology with coefficients in $\cE$:
  \begin{equation}
    \Ho^{\#,\star}(\inertia\mathcal{X}, \mathcal{E})\cong \hochschild_*(\cX, \cE)\quad\text{for $*=\star-\#$}\,.
  \end{equation}
\end{corollary}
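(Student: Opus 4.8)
The statement is a direct repackaging of \cref{corollary:orbifold-hkr-degree}, so the plan is essentially bookkeeping of gradings. First I would recall that, by the definition \eqref{eq:Hodgegradedstack}, the bidegree-$(\#,\star)$ piece of the bigraded vector space $\Ho^{\#,\star}(\inertia\mathcal{X},\mathcal{E})$ is the twisted Hodge group $\HH^{\#,\star}(\inertia\mathcal{X},\mathcal{E})$ of the inertia stack in the sense of \eqref{eq:InertiaHodgeGroups}. Hence the single-graded vector space obtained by declaring the total degree to be $*=\star-\#$ has its degree-$*$ component equal to
\begin{equation*}
  \bigoplus_{\star-\#=*}\Ho^{\#,\star}(\inertia\mathcal{X},\mathcal{E})
  =
  \bigoplus_{q-p=*}\HH^{p,q}(\inertia\mathcal{X},\mathcal{E}).
\end{equation*}

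Now I would simply invoke \cref{corollary:orbifold-hkr-degree} with $j=*$, which identifies the right-hand side with $\hochschild_*(\cX,\cE)$. Since the isomorphisms produced there are obtained by taking cohomology in the orbifold Hochschild--Kostant--Rosenberg isomorphism \eqref{equation:orbifold-hkr}, they are compatible as $*$ ranges over $\IZ$, and assembling them over all $*$ yields the asserted isomorphism of single-graded vector spaces. There is no genuine obstacle here; the one point requiring care is purely notational, namely to match the two indexing conventions so that the cohomological index $\star$ corresponds to $q$ and the form-degree index $\#$ corresponds to $p$, making the sign in $*=\star-\#$ agree with the constraint $q-p=j$ appearing in \cref{corollary:orbifold-hkr-degree}. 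The same argument applies verbatim to the conjugacy-class form of \eqref{eq:Hodgegradedstack}, using the second isomorphism in \cref{corollary:orbifold-hkr-degree}.
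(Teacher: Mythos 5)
Your proposal is correct and matches the paper, which introduces this corollary simply as a rephrasing of \cref{corollary:orbifold-hkr-degree}: collapsing the bigrading via $*=\star-\#$ and summing the identifications $\hochschild_j(\cX,\cE)\cong\bigoplus_{q-p=j}\HH^{p,q}(\inertia\cX,\cE)$ over $j$ is exactly the intended argument. Your explicit bookkeeping of the indexing conventions is just a spelled-out version of the same step.
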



\section{Symmetric quotient stacks}
\label{section:proofs}
This section contains the proof of our result \cref{theorem:main-hochschild} describing the Hochschild homology of symmetric quotient stacks with a certain type of coefficients. Afterwards, we work out most of the results stated in the introduction as corollaries.

Let $X$ be a smooth proper variety defined over a field $\field$ of characteristic zero.
For any integer $n\geq 1$, let the symmetric group $\symgr_n$ act on $X^n$ by permuting the factors.
Denote by $[\Sym^nX]\coloneq [X^n/\symgr_n]$ the quotient stack,
called the \emph{$n$th symmetric quotient stack of $X$}.

\begin{notation}
  \label{notation:cyclic-invariants}
  For any $F\in \derived^\bounded(X)$,
  endow $F^{\boxtimes n}\in \derived^\bounded(X^n)$ with the natural $\symgr_n$-linearisation,
  giving rise to an object $F^{\{n\}}\in \derived^\bounded([\Sym^nX])$. The pull-back of $F^{\{n\}}$ to the inertia stack $\inertia[\Sym^nX]$ via the natural morphism $\inertia[\Sym^nX]\to [\Sym^nX]$ is also denoted by the same notation.
\end{notation}

For any element $g\in \symgr_n$,
we view it as a permutation of the set $\{1, \dots, n\}$,
and denote by $\orbits(g)$ the set of the orbits of the permutation $g$.
For each orbit $o\in\orbits(g)$,
let $\#o$ be the cardinality (or length) of the orbit $o$.
In what follows, for any finite set $S$,
we use the notation $X^S$ to denote the variety $\Map(\Spec(\prod_S \field), X)$
parametrizing $\field$-morphisms from $S\coloneq\coprod_S\Spec\field$ to $X$.

\subsection{Yoga on symmetric powers of (multi)graded vector spaces}
\label{subsection:sym-of-vector-spaces}
We recall some basic properties of symmetric powers of (multi)graded vector spaces.

\paragraph{Graded case}
Given a graded vector space $V=\bigoplus_{i\in \IZ} V_i$,
there are two ways to form the symmetric product
\begin{equation}
  \Sym^\bullet(V)=\bigoplus_{n\ge 0}\Sym^nV.
\end{equation}

\begin{itemize}
  \item For the \emph{ordinary} symmetric product,
    we consider the symmetric group $\sym_n$ acting on $V^{\otimes n}$
    by permutation of the tensor factors:
    $\sigma\cdot(v_1\otimes\dots\otimes v_n)\colonequals v_{\sigma^{-1}(1)}\otimes \dots\otimes v_{\sigma^{-1}(n)}$.
    Then $\Sym^nV\subset V^{\otimes n}$ is defined to be the subspace of the invariants under this action.
    As we are working in characteristic zero, this is isomorphic to the coinvariants.

  \item For the \emph{graded} symmetric product,
    we consider the $\sym_n$-action on $V^{\otimes n}$
    by permutation of the tensor factors
    together with a minus sign whenever two factors of odd degree switch positions.
    Concretely, for a transposition of neighbours $\tau=(j,\, j+1)$,
    we set
    $\tau\cdot(v_1\otimes\dots\otimes v_n)\colonequals(-1)^{\deg (v_j) \cdot\deg(v_{j+1})} v_{\tau^{-1}(1)}\otimes \dots\otimes v_{\tau^{-1}(n)}$.
    Then $\Sym^nV\subset V^{\otimes n}$ denotes the space of the invariants under this action.
\end{itemize}
If not stated otherwise $\Sym^\bullet$ always denotes the graded symmetric product in what follows.

\paragraph{Multigraded case}
In several places in this paper,
we will have multigraded vector spaces,
that is,
a $\mathbb{Z}^m$-graded (also referred to as~$m$-multigraded) vector space,
for some integer $m\geq 1$.

Given an $m$-multigraded vector space $V$,
and some multi-index $\mathbf{d}\in \IZ^m$,
we define the \emph{shift} of $V$ by $\mathbf{d}$ as the $m$-multigraded vector space~$V[\mathbf{d}]$
whose graded pieces are given by~$(V[\mathbf{d}])_{\mathbf{i}}\colonequals V_{\mathbf{i}+\mathbf{d}}$,
for any $\mathbf{i}\in\mathbb{Z}^m$.

In the sequel, when we take the symmetric product of some multigraded vector space,
it is taken in the graded sense with respect to some degrees,
but in the ordinary sense with respect to the other degrees.
Let us make this precise.
Let $m\in \IN$, and let
\begin{equation}
  V=\bigoplus_{(i_1,\dots,i_m)\in \IZ^m} V_{(i_1,\dots,i_m)}
\end{equation}
be an $m$-multigraded vector space.
For an homogeneous element $v\in V_{(i_1,\dots,i_m)}$,
and $k\in \{1,\dots, m\}$,
we set $\deg_k(v)\colonequals i_k$.
Let $K\subset \{1,\dots, m\}$.
We define the symmetric product~$\Sym^nV$ which is
\emph{graded with respect to the $K$-degrees, and ordinary with respect to the other degrees}
as the subspace of $\sym_n$-invariants of $V^{\otimes n}$ under the action given for $\tau=(j,\, j+1)$ by
\begin{equation}
  \label{equation:tau-action}
  \tau\cdot(v_1\otimes\dots\otimes v_n)
  \colonequals
  (-1)^{\sum_{k\in K}\deg_k(v_j) \cdot\deg_k(v_{j+1})} v_{1}\otimes \cdots v_{j+1}\otimes v_j\otimes\cdots\otimes v_{n}.
\end{equation}
We stress the dependence of $\Sym^nV$ on the choice of the ``super-degrees'' $K\subset\{1,\dots,m\}$, although not reflected in the notation.

For a fixed subset $K\subset \{1,\dots, m\}$, it is straightforward to check that for two $m$-multigraded vector spaces $U$ and $V$, we have the following canonical isomorphism of $m$-multigraded vector spaces:
\begin{equation}
  \label{eqn:TotalSymOfSum}
  \Sym^\bullet(U\oplus V)\cong \Sym^\bullet (U)\otimes \Sym^\bullet(V).
\end{equation}

\paragraph{Generating series}
The \emph{generating series} of a $m$-multigraded vector space $V$
which is finite-dimensional in each degree
is the Laurent series in $m$ variables $s_1,\dots,s_m$ given by
\begin{equation}
  \mathbb{E}_{V}
  \colonequals
  \sum_{\mathbf{i}\in \IZ^m} \dim_\field(V_{\mathbf{i}}) s^{\mathbf{i}}
  \quad\text{where}\quad s^{\mathbf{i}}=s_1^{i_1}\cdots s_m^{i_m}
  \quad\text{for}\quad\mathbf{i}=(i_1,\dots, i_m)\,.
\end{equation}
For the following two lemmas,
we fix some subset $K\subset \{1,\dots ,m\}$.
The symmetric product of any $m$-multigraded vector space is then meant to be graded with respect to the $K$-degrees
and ordinary with respect to the other degrees.

\begin{lemma}
  \label{lemma:shifted-sym}
  Let $V$ be a finite-dimensional vector space, which we consider as an $m$-multigraded vector space concentrated in degree $\mathbf{0}=(0,\dots,0)$.
  For $\mathbf{d}=(d_1,\dots,d_m)\in \IZ^m$,
  we set $|\mathbf{d}|_K\colonequals\sum_{k\in K} d_k$ to be total $K$-degree.
  We have
  \begin{equation}
    \mathbb{E}_{\Sym^\bullet(V[-\mathbf{d}])}=(1-(-1)^{|\mathbf{d}|_K}s^{\mathbf{d}})^{-(-1)^{|\mathbf{d}|_K}\dim_\field V}\,.
  \end{equation}
\end{lemma}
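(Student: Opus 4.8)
The plan is to reduce everything to the rank-one case and then a direct computation. Write $r \colonequals \dim_\field V$ and $\varepsilon \colonequals (-1)^{|\mathbf{d}|_K} \in \{+1,-1\}$. First I would observe that the sign with which a transposition $\tau=(j,j+1)$ acts on $V^{\otimes n}[-\mathbf{d}]$ in \eqref{equation:tau-action} depends only on the $K$-degrees of the two swapped factors; since every nonzero homogeneous element of $V[-\mathbf{d}]$ sits in multidegree $\mathbf{d}$, each such transposition acts by $(-1)^{\sum_{k\in K} d_k d_k}$. Because $d_k^2 \equiv d_k \pmod 2$, this is $(-1)^{\sum_{k\in K} d_k} = \varepsilon$. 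Hence the $\symgr_n$-action on $(V[-\mathbf{d}])^{\otimes n}$ is the permutation action when $\varepsilon = +1$ and the permutation action twisted by the sign character when $\varepsilon = -1$. Consequently, as $\symgr_n$-representations, $(V[-\mathbf{d}])^{\otimes n} \cong V^{\otimes n}$ with either the plain or the sign-twisted permutation action, sitting in multidegree $n\mathbf{d}$.

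Next I would split off the rank: choosing a basis gives $V \cong \field^{\oplus r}$, and by \eqref{eqn:TotalSymOfSum} we get $\Sym^\bullet(V[-\mathbf{d}]) \cong \Sym^\bullet(\field[-\mathbf{d}])^{\otimes r}$, so $\mathbb{E}_{\Sym^\bullet(V[-\mathbf{d}])} = \bigl(\mathbb{E}_{\Sym^\bullet(\field[-\mathbf{d}])}\bigr)^{r}$. It therefore suffices to treat $r=1$. For $r=1$ and $\varepsilon=+1$: the plain symmetric powers of a one-dimensional space are all one-dimensional, with $\Sym^n(\field[-\mathbf{d}])$ concentrated in multidegree $n\mathbf{d}$, so the generating series is $\sum_{n\ge 0} s^{n\mathbf{d}} = (1-s^{\mathbf{d}})^{-1}$, which matches $(1-\varepsilon s^{\mathbf{d}})^{-\varepsilon \cdot 1}$. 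For $r=1$ and $\varepsilon=-1$: the sign-twisted symmetric powers of a one-dimensional space are $\field$ in degree $0$ and in degree $1$ (i.e.\ $n=0,1$) and vanish for $n\ge 2$, so the generating series is $1 + s^{\mathbf{d}} = (1-(-1)s^{\mathbf{d}})^{+1}$, again matching $(1-\varepsilon s^{\mathbf{d}})^{-\varepsilon}$. Raising to the $r$-th power gives the claimed formula in both cases.

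There is no serious obstacle here; the only point that needs a little care is the reduction of the transposition sign to $\varepsilon$ using $d_k^2 \equiv d_k \pmod 2$, together with the bookkeeping that in the $\varepsilon=-1$ case the relevant "symmetric power" of a line is genuinely exterior-like (an odd line), so only $n \le 1$ contributes. Everything else is the multiplicativity \eqref{eqn:TotalSymOfSum} and a one-line geometric-series (resp.\ binomial) identity.
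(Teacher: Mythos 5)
Your proof is correct and takes essentially the same route as the paper: the crucial step in both is that the transposition in \eqref{equation:tau-action} acts on $(V[-\mathbf{d}])^{\otimes n}$ by the single sign $(-1)^{\sum_{k\in K}d_k^2}=(-1)^{|\mathbf{d}|_K}$, so the graded symmetric power is the ordinary symmetric power when $|\mathbf{d}|_K$ is even and the exterior power when it is odd. The only difference is cosmetic: the paper then cites the standard identities $\sum_{a}\dim_\field(\Sym^a V)s^a=(1-s)^{-\dim_\field V}$ and $\sum_{a}\dim_\field\bigl(\bigwedge^a V\bigr)s^a=(1+s)^{\dim_\field V}$, whereas you rederive them by splitting $V$ into lines via \eqref{eqn:TotalSymOfSum} and multiplicativity of generating series, which is equally valid.
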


\begin{proof}
  The multigraded vector space $\Sym^\bullet(V[-\mathbf{d}])$ is concentrated in degrees
  which are non-negative integral multiples of $\mathbf{d}$.
  Furthermore, for any $a \in \mathbb{N}$, we have
  \begin{equation}
    \Sym^\bullet(V[-\mathbf{d}])_{a\mathbf{d}}
    =
    \begin{cases}
      \Sym^a V & \text{if $|\mathbf{d}|_K$ is even,} \\
      \bigwedge^a V & \text{if $|\mathbf{d}|_K$ is odd.}
    \end{cases}
  \end{equation}
  Indeed, by \eqref{equation:tau-action}, the action of $\tau=(j,\,j+1)$
  on $v_1\otimes \dots\otimes v_n$ with $v_i\in V[-\mathbf{d}]$ is given by
  \begin{equation}
    \tau\cdot(v_1\otimes\dots\otimes v_n)
    \colonequals
    (-1)^{\sum_{k\in K} d_k^2}  v_{1}\otimes \cdots v_{j+1}\otimes v_j\otimes\cdots\otimes v_{n};
  \end{equation}
  and we have
  \begin{equation}
    (-1)^{\sum_{k\in K} d_k^2}=(-1)^{\sum_{k\in K} d_k}=(-1)^{|\mathbf{d}|_K}\,.
  \end{equation}

  Therefore, we get the asserted
  \begin{equation}
    \sum_{\mathbf{i}\in \IZ^{m}} \dim_\field\bigl(\Sym^\bullet(V[-\mathbf{d}])_{\mathbf{i}}\bigr) s^{\mathbf{i}}
    =
    \begin{cases}
      \sum_{a\in\IN}\dim_\field\bigl(\Sym^a V\bigr) s^{a\mathbf{d}}=(1-s^{\mathbf{d}})^{-\dim_\field V} & \text{for $|\mathbf{d}|_K$ even,}\\
      \sum_{a\in\IN}\dim_\field\bigl(\bigwedge^a V\bigr) s^{a\mathbf{d}}= (1+s^{\mathbf{d}})^{\dim_\field V} & \text{for $|\mathbf{d}|_K$ odd,}
    \end{cases}
  \end{equation}
  where the second equality (in each case) is well-known; see, e.g., \cite[Lemma~A.3]{MR3788855}.
\end{proof}

\begin{lemma}
  \label{lemma:graded-sym-dim}
  Let $W$ be a finite dimensional $m$-multigraded vector space.
  Then the generating series of its total symmetric power $\Sym^\bullet W$ is given as follows:
  \begin{equation}
    \mathbb{E}_{\Sym^\bullet (W)}
    =
    \prod_{\mathbf{d}\in \IZ^m}(1-(-1)^{|\mathbf{d}|_K} s^{\mathbf{d}})^{-(-1)^{|\mathbf{d}|_K}\dim_\field(W_{\mathbf{d}})},
  \end{equation}
  where again $|\mathbf{d}|_K=\sum_{k\in K} d_k$ for $\mathbf{d}=(d_1,\dots, d_m)$.
\end{lemma}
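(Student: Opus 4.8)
The plan is to reduce the multigraded statement of \cref{lemma:graded-sym-dim} to the one-dimensional computation already carried out in \cref{lemma:shifted-sym}, using the multiplicativity of the symmetric power under direct sums, namely \eqref{eqn:TotalSymOfSum}.

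First, I would decompose $W$ as a direct sum of its graded pieces: writing $W = \bigoplus_{\mathbf{d}\in \IZ^m} W_{\mathbf{d}}$, where each $W_{\mathbf{d}}$ is the (ordinary, finite-dimensional) vector space sitting in multidegree $\mathbf{d}$. Since $W$ is finite-dimensional, only finitely many summands are nonzero, so the direct sum is finite and there are no convergence subtleties. Applying \eqref{eqn:TotalSymOfSum} repeatedly (for the fixed choice of super-degrees $K$) gives a canonical isomorphism of $m$-multigraded vector spaces
\begin{equation}
  \Sym^\bullet(W) \cong \bigotimes_{\mathbf{d}\in \IZ^m} \Sym^\bullet(W_{\mathbf{d}}).
\end{equation}

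Next I would observe that the generating series is multiplicative under tensor product: for multigraded vector spaces $U$ and $V$ (finite-dimensional in each degree) one has $\mathbb{E}_{U\otimes V} = \mathbb{E}_U \cdot \mathbb{E}_V$, since $\dim_\field((U\otimes V)_{\mathbf{i}}) = \sum_{\mathbf{a}+\mathbf{b}=\mathbf{i}} \dim_\field(U_{\mathbf{a}})\dim_\field(V_{\mathbf{b}})$ is exactly the coefficient convolution computed by the product of Laurent series. Hence
\begin{equation}
  \mathbb{E}_{\Sym^\bullet(W)} = \prod_{\mathbf{d}\in \IZ^m} \mathbb{E}_{\Sym^\bullet(W_{\mathbf{d}})}.
\end{equation}
Finally, for each $\mathbf{d}$, the summand $W_{\mathbf{d}}$ is a plain vector space placed in multidegree $\mathbf{d}$, i.e.\ it is $(W_{\mathbf{d}})[-\mathbf{d}]$ in the notation of \cref{lemma:shifted-sym} (a vector space concentrated in degree $\mathbf{0}$, shifted by $-\mathbf{d}$). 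Plugging in \cref{lemma:shifted-sym} gives $\mathbb{E}_{\Sym^\bullet(W_{\mathbf{d}})} = (1-(-1)^{|\mathbf{d}|_K}s^{\mathbf{d}})^{-(-1)^{|\mathbf{d}|_K}\dim_\field(W_{\mathbf{d}})}$, and substituting this into the product yields the claimed formula.

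I do not anticipate a genuine obstacle here; the only point that needs a word of care is the bookkeeping of the sign convention, namely checking that the identification $W_{\mathbf{d}} \cong (W_{\mathbf{d}})[-\mathbf{d}]$ carries the relevant $K$-super-degree of the ambient multigraded structure correctly, so that \cref{lemma:shifted-sym} applies verbatim with the total $K$-degree $|\mathbf{d}|_K$ attached to the summand in multidegree $\mathbf{d}$. Since $W$ is finite-dimensional the infinite product over $\mathbf{d}\in\IZ^m$ is really finite (all but finitely many factors equal $1$), so no analytic justification of convergence is needed. Thus the proof is essentially a two-line assembly: multiplicativity of $\Sym^\bullet$ over direct sums via \eqref{eqn:TotalSymOfSum}, multiplicativity of $\mathbb{E}_{(-)}$ over tensor products, and \cref{lemma:shifted-sym} applied factor by factor.
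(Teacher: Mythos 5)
Your proposal is correct and follows essentially the same route as the paper's proof: decompose $W$ into its graded pieces, use \eqref{eqn:TotalSymOfSum} to write $\Sym^\bullet(W)$ as a tensor product, invoke multiplicativity of the generating series under tensor products, and apply \cref{lemma:shifted-sym} factor by factor. The sign-bookkeeping point you flag is indeed the only thing to check, and it works out exactly as you describe.
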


\begin{proof}
  The compatibility of the symmetric product with direct sums \eqref{eqn:TotalSymOfSum} gives an isomorphism of graded vector spaces
  \begin{equation}
    \Sym^\bullet W
    =
    \Sym^\bullet\left(\bigoplus_{\mathbf{d} \in  \IZ^m}W_{\mathbf{d}}[-\mathbf{d}]\right)
    \cong
    \bigotimes_{\mathbf{d}\in \IZ^m} \Sym^\bullet(W_{\mathbf{d}}[-\mathbf{d}]).
  \end{equation}
  Since the generating series of a tensor product is the product of generator series of its factors, we obtain
  \begin{equation}
    \mathbb{E}_{\Sym^\bullet(W)}
    =
    \prod_{\mathbf{d}\in \mathbb{Z}^m}\mathbb{E}_{\Sym^\bullet(W_{\mathbf{d}}[-\mathbf{d}])}.
  \end{equation}
  Combining this with \cref{lemma:shifted-sym} gives the desired formula.
\end{proof}

\subsection{Twisted Hodge groups of the inertia}
\label{subsection:main-result}
We first express the twisted Hodge groups of the inertia stack of the symmetric quotient stack $[\Sym^nX]$ in terms of the twisted Hodge groups of $X$ and the action of the symmetric group.
\begin{proposition}
  \label{prop:HA-Sym-Invariant-Expression}
  Let $F\in \derived^\bounded(X)$ and $F^{\{n\}}$ be as in \cref{notation:cyclic-invariants}.
  We have the following isomorphism of bigraded vector spaces:
  \begin{equation}
  \label{eqn:HA-Sym-1}
    \Ho^{\#,\star}\left(\inertia[\Sym^nX], F^{\{n\}}\right)\cong
    \left(\bigoplus_{g\in \symgr_n}\bigotimes_{o\in \orbits(g)} \Ho^{\#,\star}\left(X, F^o|_X \right)\right)^{\symgr_n}.
  \end{equation}
  Here $F^o=\bigotimes_{t\in o}\pr_t^*(F)\in \derived^\bounded(X^o)$
  and $F^o|_X$ is its restriction to the small diagonal (identified with $X$) of $X^o$.
\end{proposition}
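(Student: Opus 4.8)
The plan is to specialise the description \eqref{eq:Hodgegradedstack} of the twisted Hodge groups of an inertia stack to $M=X^n$, $G=\symgr_n$ and $E=F^{\boxtimes n}$ equipped with its tautological $\symgr_n$-linearisation, then to identify the fixed loci $(X^n)^g$ together with the restriction of $F^{\boxtimes n}$ to them, then to apply the Künneth formula on each fixed locus, and finally to match up the $\symgr_n$-actions on both sides.

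First I would invoke \eqref{eq:Hodgegradedstack}, which is a consequence of \cref{proposition:orbifold-hkr}, in the form
\begin{equation*}
  \Ho^{\#,\star}\bigl(\inertia[\Sym^nX], F^{\{n\}}\bigr)
  \cong
  \biggl(\,\bigoplus_{g\in\symgr_n}\Ho^{\#,\star}\bigl((X^n)^g, F^{\boxtimes n}|_{(X^n)^g}\bigr)\biggr)^{\symgr_n}.
\end{equation*}
Next I would compute the fixed locus of a permutation $g\in\symgr_n$ acting on $X^n$: a point $(x_1,\dots,x_n)$ is $g$-fixed precisely when $x_i=x_{g(i)}$ for all $i$, i.e.\ when its coordinates are constant along the orbits of $g$. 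This yields a canonical isomorphism $(X^n)^g\cong X^{\orbits(g)}=\prod_{o\in\orbits(g)}X$, under which the closed immersion $(X^n)^g\hookrightarrow X^n$ restricts, on the factor indexed by an orbit $o$, to the small diagonal $X\hookrightarrow X^o$. Consequently $F^{\boxtimes n}|_{(X^n)^g}$ becomes the external tensor product $\boxtimes_{o\in\orbits(g)}\bigl(F^o|_X\bigr)$ with $F^o=\bigotimes_{t\in o}\pr_t^*F$ restricted to the small diagonal, exactly as in the statement, and the cotangent sheaf splits as $\Omega^1_{X^{\orbits(g)}}=\bigoplus_{o\in\orbits(g)}\pr_o^*\Omega^1_X$.

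I would then apply the Künneth formula: since $X$ is smooth and proper over $\field$ and the complexes $F^o|_X$ are bounded, for every $g\in\symgr_n$ there is an isomorphism of bigraded vector spaces
\begin{equation*}
  \Ho^{\#,\star}\bigl((X^n)^g, F^{\boxtimes n}|_{(X^n)^g}\bigr)
  \cong
  \bigotimes_{o\in\orbits(g)}\Ho^{\#,\star}\bigl(X, F^o|_X\bigr),
\end{equation*}
and the splitting of $\Omega^1$ above ensures that the first ($\#$) grading, which records exterior powers of the cotangent sheaf, is matched up correctly on the two sides. Summing over $g\in\symgr_n$ recovers the bigraded vector space inside the invariants on the right-hand side of \eqref{eqn:HA-Sym-1}.

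It remains to identify the $\symgr_n$-actions. By the description of the group action following \cref{proposition:orbifold-hkr}, an element $h\in\symgr_n$ carries the $g$-summand to the $hgh^{-1}$-summand via the isomorphism $(X^n)^g\xrightarrow{\simeq}(X^n)^{hgh^{-1}}$ of \eqref{equation:isomorphism-conjugacy}, composed with the comparison isomorphisms coming from the linearisation of $F^{\boxtimes n}$. Under the identifications of the previous steps the former is exactly the bijection $\orbits(g)\to\orbits(hgh^{-1})$ that $h$ induces on the set of orbits, permuting the tensor factors accordingly, while the latter is the tautological permutation isomorphism, since the linearisation of $F^{\boxtimes n}$ is the one permuting the $\boxtimes$-factors. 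Hence the $\symgr_n$-action on $\bigoplus_{g}\bigotimes_{o\in\orbits(g)}\Ho^{\#,\star}(X,F^o|_X)$ is the natural combinatorial one, and passing to invariants yields \eqref{eqn:HA-Sym-1}. The one point that requires genuine care is this last bookkeeping — checking that the canonical Künneth isomorphism intertwines the geometric $\symgr_n$-action with the combinatorial action permuting summands by conjugation and tensor factors by the induced bijection on orbits — but it is a direct unwinding of definitions and involves no further input.
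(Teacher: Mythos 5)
Your proposal is correct and follows essentially the same route as the paper: specialise the orbifold Hochschild--Kostant--Rosenberg description \eqref{eq:Hodgegradedstack} to $M=X^n$, identify the fixed loci $(X^n)^g$ with the partial diagonals $X^{\orbits(g)}$ along with the splitting of $\Omega^1$ and the restriction of $F^{\boxtimes n}$, and then combine the wedge-power splitting with the K\"unneth formula before passing to $\symgr_n$-invariants. Your extra care in spelling out the equivariance of the K\"unneth isomorphism is a point the paper leaves implicit (in the remark following \cref{proposition:orbifold-hkr}), but it is the same argument.
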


\begin{proof}
  We first observe the following canonical identifications: for any $g\in \symgr_n$,
  \begin{itemize}
    \item $(X^n)^g=X^{\orbits(g)}$, which is a partial diagonal of $X^n$; we denote the closed immersion $i_g\colon X^{\orbits(g)}\hookrightarrow X^n$;
    \item $\Omega_{X^{\orbits(g)}}=\bigoplus_{o\in \orbits(g)}p_o^*\Omega_X$, where the projection $p_o\colon X^{\orbits(g)}\to X$ is induced by $\{o\}\subset \orbits(g)$;
    \item $i^*_g(F^{\boxtimes n})=\bigboxtimes_{o\in \orbits(g)}F^{o}|_X$ where for each $o$, $X$ is identified with the small diagonal of $X^o$. Note that $F^o|_X$ is isomorphic to $F^{\otimes \# o}$, but we keep the canonical identification here to remember the action.
  \end{itemize}

  Plugging these into \eqref{eq:Hodgegradedstack}, we find that
  \begin{equation}
    \Ho^{\#,\star}(\inertia[\Sym^nX], F^{\{n\}})\cong \left(\bigoplus_{g\in \symgr_n} \Ho^\star\left(X^{\orbits(g)}, \bigwedge^\#\left(\bigoplus_{o\in \orbits(g)}p_o^*\Omega_X\right)\otimes \left( \bigboxtimes_{o\in \orbits(g)} F^o|_X\right)\right)\right)^{ \symgr_n}.
  \end{equation}
  Using that the total wedge power of a direct sum is the tensor product of the total wedge powers of the summands,
  and the Künneth formula, we have the following isomorphisms of bigraded vector spaces:
  \belowdisplayskip=-12pt
  \begin{equation}
    \begin{aligned}
      \Ho^{\#,\star}(\inertia[\Sym^nX], F^{\{n\}})
      &\cong\left(\bigoplus_{g\in \symgr_n} \Ho^\star\left(X^{\orbits(g)}, \bigboxtimes_{o\in \orbits(g)} \left(\bigwedge^\#\Omega_X\otimes F^o|_X\right)\right)\right)^{\symgr_n} \\
      &\cong\left(\bigoplus_{g\in \symgr_n}\bigotimes_{o\in \orbits(g)} \Ho^\star\left(X, \bigwedge^\#\Omega_X\otimes F^o|_X\right)\right)^{ \symgr_n} \\
      &\cong\left(\bigoplus_{g\in \symgr_n}\bigotimes_{o\in \orbits(g)} \Ho^{\#,\star}\left(X, F^o|_X\right)\right)^{\symgr_n}\,.
    \end{aligned}
  \end{equation}\qedhere
\end{proof}

Before further simplifying \eqref{eqn:HA-Sym-1},
we first recall the following well-known result on centralisers of permutations.

\begin{lemma}\label{lemma:centraliser}
  Let $g\in \symgr_n$ viewed as a permutation on $\{1, \dots, n\}$,
  and let $\orbits(g)$ be the set of its orbits.
  Let $\lambda_j$ be the number of length-$j$ orbits.
  Then the centraliser of $g$ is
  \begin{equation}
    \centraliser(g)=\left(\prod_{o\in \orbits(g)} \mu_o\right)\rtimes \prod_{j=1}^n\symgr_{\lambda_j},
  \end{equation}
  where $\mu_o$ is the cyclic group of order $\#o$ generated by $g|_o$
  (the permutation that is $g$ on $o$ but $\id$ elsewhere),
  and $\symgr_{\lambda_j}$ permutes the length-$j$ orbits (with fixed bijections between elements among those orbits).
\end{lemma}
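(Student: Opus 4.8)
The plan is to read off $\centraliser(g)$ from the effect of conjugation on cycle decompositions. Recall the classical fact that, for $h\in\symgr_n$, conjugating a cycle $(a_1\,a_2\,\cdots\,a_k)$ of $g$ by $h$ yields the cycle $(h(a_1)\,h(a_2)\,\cdots\,h(a_k))$. Hence $h\in\centraliser(g)$ precisely when $h$ carries the cyclic decomposition of $g$ to itself: $h$ must permute the set $\orbits(g)$ in a way preserving orbit lengths, and on each orbit it must intertwine the cyclic action of $g$. First I would turn the first half of this into a homomorphism: sending $h\in\centraliser(g)$ to the induced permutation of $\orbits(g)$ defines a group homomorphism $\pi\colon\centraliser(g)\to\prod_{j=1}^{n}\symgr_{\lambda_j}$, whose target is the subgroup of the full symmetric group on $\orbits(g)$ respecting the partition of the orbits by their length.

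Next I would compute $\ker\pi$. An element $h\in\ker\pi$ fixes every orbit $o$ setwise, so $h|_o$ lies in the symmetric group on $o$ and commutes with the $\#o$-cycle $g|_o$; since the centraliser of a $k$-cycle inside $\symgr_k$ is exactly the cyclic group it generates, $h|_o\in\mu_o$. As the orbits partition $\{1,\dots,n\}$ and $h$ preserves each of them, $h=\prod_{o\in\orbits(g)}h|_o\in\prod_{o\in\orbits(g)}\mu_o$, and conversely every such product commutes with $g$. Thus $\ker\pi=\prod_{o\in\orbits(g)}\mu_o$.

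Then I would split $\pi$. For each length $j$, order the length-$j$ orbits as $o_1,\dots,o_{\lambda_j}$, choose a basepoint in each, and let $\phi_i\colon o_1\xrightarrow{\ \sim\ }o_i$ be the unique bijection sending basepoint to basepoint and intertwining $g|_{o_1}$ with $g|_{o_i}$ (matching $k$-th powers of the two cycles). A permutation $\sigma\in\symgr_{\lambda_j}$ then lifts to the permutation of $\{1,\dots,n\}$ carrying $o_i$ to $o_{\sigma(i)}$ via $\phi_{\sigma(i)}\circ\phi_i^{-1}$ and fixing everything outside the length-$j$ orbits; one checks this assignment is a group homomorphism $\symgr_{\lambda_j}\to\centraliser(g)$ which $\pi$ inverts, and taking the product over $j$ produces a section $s$ of $\pi$. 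Consequently the short exact sequence
\begin{equation}
  1\longrightarrow\prod_{o\in\orbits(g)}\mu_o\longrightarrow\centraliser(g)\xrightarrow{\ \pi\ }\prod_{j=1}^n\symgr_{\lambda_j}\longrightarrow 1
\end{equation}
splits via $s$, giving $\centraliser(g)=\bigl(\prod_{o}\mu_o\bigr)\rtimes\prod_j\symgr_{\lambda_j}$, where $\symgr_{\lambda_j}$ acts by permuting the cyclic factors $\mu_o$ over the length-$j$ orbits through the chosen bijections, exactly as asserted. As a numerical check, both sides have order $\prod_j j^{\lambda_j}\lambda_j!$, which is $n!$ divided by the size of the conjugacy class of $g$.

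There is no genuine obstacle here: this is a classical computation. The only point demanding care is verifying that the section $s$ is a bona fide group homomorphism landing inside $\centraliser(g)$, which comes down to checking that the intertwining bijections $\phi_i$ are compatible with composition; once basepoints are fixed this is immediate.
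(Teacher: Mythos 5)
Your proof is correct, and it is the standard argument: conjugation acts on cycles by relabelling, the map to the permutations of same-length orbits has kernel $\prod_o \mu_o$ because the centraliser of a $k$-cycle in $\symgr_k$ is the cyclic group it generates, and fixing basepoints yields intertwining bijections that split the resulting short exact sequence. The paper itself offers no argument here — it simply cites \cite[Proposition~1.1.1]{MR1824028} — so your write-up fills in precisely the classical computation that reference contains, including the order check $\prod_j j^{\lambda_j}\lambda_j!$ and the only delicate point, namely that the chosen section is a genuine homomorphism into $\centraliser(g)$.
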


\begin{proof}
  See, e.g., \cite[Proposition~1.1.1]{MR1824028}.
\end{proof}

\begin{notation}
  \label{notation-cyclic}
  For $n\in \IN\setminus\{0\}$, we consider the cyclic permutation $\sigma_n\colonequals(1\,2\, \cdots \, n)\in \sym_n$ of order $n$.
  For $F\in \derived^\bounded(X)$, we consider $F^{\otimes n} \in \derived^\bounded(X)$
  equipped with the $\symgr_n$-action given by permuting the tensor factors,
  and denote its subobject of $\sigma_n$-invariants by
  \begin{equation}
    F^{\langle n\rangle}\colonequals\bigl(F^{\otimes n}\bigr)^{\langle \sigma_n \rangle}\in \derived^\bounded(X)\,.
  \end{equation}
  To avoid confusion, we stress that $F^{\langle n\rangle}$ is an object on the variety $X$, but not on the orbifold $[\Sym^nX]$.
\end{notation}

\begin{theorem}
  \label{theorem:main}
  Let $X$ be a smooth proper algebraic variety over a field $\field$ of characteristic zero, and let $F\in \derived^\bounded(X)$.
  For every $n\in \mathbb{N}^*$, we have
  \begin{equation}
    \label{eqn:HodgeInertiaSym}
   \Ho^{\#,\star}(\inertia[\Sym^nX], F^{\{ n\}})
    \cong\bigoplus_{\nu\dashv n}\left(\bigotimes_{i\geq 1} \Sym^{\lambda_i}\Ho^{\#,\star}(X, F^{\langle i\rangle})\right)\,.
  \end{equation}
  where $\nu$ runs through all partitions of $n$, and $\lambda_i$ is the number of $i$'s in the partition $\nu$.
\end{theorem}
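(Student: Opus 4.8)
The plan is to start from the expression obtained in \cref{prop:HA-Sym-Invariant-Expression},
namely
$\Ho^{\#,\star}(\inertia[\Sym^nX], F^{\{n\}})\cong\bigl(\bigoplus_{g\in\symgr_n}\bigotimes_{o\in\orbits(g)}\Ho^{\#,\star}(X,F^o|_X)\bigr)^{\symgr_n}$,
and to reorganize the inner direct sum over $\symgr_n$ by conjugacy classes, exactly as in the second form \eqref{equation:orbifold-hkr-conjugacy} of orbifold HKR.
A conjugacy class in $\symgr_n$ is a partition $\nu\dashv n$, and by \cref{lemma:centraliser} the centraliser of a representative $g$ of type $\nu$ is $\centraliser(g)=\bigl(\prod_{o\in\orbits(g)}\mu_o\bigr)\rtimes\prod_{j\geq 1}\symgr_{\lambda_j}$.
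So the first step is to rewrite the $\symgr_n$-invariants of the big sum as
$\bigoplus_{\nu\dashv n}\bigl(\bigotimes_{o\in\orbits(g)}\Ho^{\#,\star}(X,F^o|_X)\bigr)^{\centraliser(g)}$,
which is the standard induction-restriction bookkeeping (invariants of an induced representation over a groupoid).

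The second step is to compute the $\centraliser(g)$-invariants using the semidirect product structure.
Taking invariants is iterated, so I would first take invariants under $\prod_{o}\mu_o$ and then under $\prod_j\symgr_{\lambda_j}$.
The factor $\mu_o$ (cyclic of order $\#o=i$, generated by $g|_o$) acts on the tensor factor $\Ho^{\#,\star}(X,F^o|_X)$; here $F^o|_X$ is $F^{\otimes i}$ with the cyclic permutation action, so $(F^o|_X)^{\mu_o}=F^{\langle i\rangle}$ in the notation of \cref{notation-cyclic}, and hence $\bigl(\Ho^{\#,\star}(X,F^o|_X)\bigr)^{\mu_o}\cong\Ho^{\#,\star}(X,F^{\langle i\rangle})$ — this uses exactness of invariants in characteristic zero, so that $\mu_o$-invariants commute with taking (hyper)cohomology and with the wedge-power construction.
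After this, for each length $i$ we are left with $\lambda_i$ mutually isomorphic tensor factors $\Ho^{\#,\star}(X,F^{\langle i\rangle})$, on which $\symgr_{\lambda_i}$ acts by permutation (compatibly with the fixed bijections between the orbits of equal length).
Taking $\symgr_{\lambda_i}$-invariants of $\Ho^{\#,\star}(X,F^{\langle i\rangle})^{\otimes\lambda_i}$ gives, by definition of the (graded, with respect to the cohomological super-degree) symmetric power, the space $\Sym^{\lambda_i}\Ho^{\#,\star}(X,F^{\langle i\rangle})$.
Assembling over all $i$ yields $\bigotimes_{i\geq1}\Sym^{\lambda_i}\Ho^{\#,\star}(X,F^{\langle i\rangle})$ for each $\nu$, and summing over $\nu\dashv n$ gives the claimed formula.

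The point that needs the most care is the sign/grading convention in the permutation action of $\symgr_{\lambda_i}$ on the tensor factors $\Ho^{\#,\star}(X,F^{\langle i\rangle})^{\otimes\lambda_i}$: one must check that the Koszul signs coming from permuting cohomology classes of odd total degree (with respect to the degree $\star-\#$, i.e.\ the Hochschild degree) match precisely the signs built into the graded symmetric power $\Sym^\bullet$ as defined in \cref{subsection:sym-of-vector-spaces}, and that no extra sign is introduced by the identification $i^*_g(F^{\boxtimes n})\cong\bigboxtimes_{o}F^o|_X$ and the wedge-power identifications of \cref{prop:HA-Sym-Invariant-Expression}.
Concretely, permuting two length-$i$ orbit factors permutes the underlying $i$ copies of $X$ by a product of $i$ transpositions and permutes the $i$ copies of $\Omega_X$ correspondingly; tracking the Künneth signs shows the total sign is $(-1)^{i\cdot(\text{product of the Hochschild degrees of the two factors})}$, which is what the super-degree convention prescribes when $F^{\langle i\rangle}$ sits in the appropriate parity — and, more usefully, this is exactly the bookkeeping that makes the later passage (in the corollaries) to the $i$-graded formula come out with the shift $1+(k-1)i$.
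Everything else — base change over $\Spec\field$, Künneth, exactness of $(-)^G$ — is routine and has already been set up earlier in the section, so the substantive content is this reorganization plus the sign check.
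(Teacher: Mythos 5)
Your proposal follows essentially the same route as the paper's proof: rewrite the $\symgr_n$-invariants from \cref{prop:HA-Sym-Invariant-Expression} as a sum over conjugacy classes (i.e.\ partitions $\nu\dashv n$) of centraliser-invariants, use the semidirect-product structure of \cref{lemma:centraliser} to first take $\mu_o$-invariants (giving $\Ho^{\#,\star}(X,F^{\langle i\rangle})$ since $\mu_o$ acts trivially on $X$ and $\Omega_X$ and by cyclic permutation on $F^{\otimes i}$) and then $\symgr_{\lambda_i}$-invariants (giving the graded $\Sym^{\lambda_i}$). The only quibble is your side remark on signs: swapping two length-$i$ orbits acts on the fixed locus $X^{\orbits(g)}$ by exchanging two single factors, so the Künneth sign is just the Koszul sign of the two total degrees (no factor of $i$), but this does not affect the argument.
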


\begin{proof}
  We use \cref{prop:HA-Sym-Invariant-Expression}. In \eqref{eqn:HA-Sym-1}, by passing to the conjugacy classes, we obtain that
  \begin{equation}
    \begin{aligned}
      \Ho^{\#,\star}(\inertia[\Sym^nX], F^{\{n\}})
      &\cong\bigoplus_{[g]\in \Conj(\symgr_n)} \left(\bigotimes_{o\in \orbits(g)} \Ho^{\#,\star}\left(X,F^o|_X \right)\right)^{\centraliser(g)} \\
      &\cong\bigoplus_{[g]\in \Conj(\symgr_n)} \left(\bigotimes_{o\in \orbits(g)} \Ho^{\#,\star}\left(X,F^o|_X \right)^{\mu_o}\right)^{\symgr_\lambda},
    \end{aligned}
  \end{equation}
  where the groups $\mu_o$ and $\sym_\lambda\colonequals\prod_{j=1}^n\symgr_{\lambda_j}$
  are the semi-direct factors of $\centraliser(g)$ as described in \cref{lemma:centraliser}.

  The cyclic group $\mu_o\cong \mu_{\# o}$ acts on $F^o|_X\cong F^{\#o}$ by cyclic permutation of the tensor factors, and acts trivially on $X$, hence also on $\Omega_X$.
  Hence,
  \begin{equation}
    \Ho^{\#,\star}\left(X,F^o|_X \right)^{\mu_o}\cong \Ho^{\#,\star}\left(X, F^{\langle\#o\rangle}\right)\,,
  \end{equation}
  where $F^{\langle\#o\rangle}$ is as in \cref{notation-cyclic}.
  The group $\symgr_\lambda$ acts by permuting the orbits of the same lengths. Hence,
  \begin{equation}
    \left(\bigotimes_{o\in \orbits(g)} \Ho^{\#,\star}\left(X,F^o|_X \right)^{\mu_o}\right)^{\symgr_\lambda}
    \cong
    \left(\bigotimes_{o\in \orbits(g)} \Ho^{\#,\star}\left(X,F^{\langle\#o\rangle} \right)\right)^{\symgr_\lambda}
    \cong
    \bigotimes_{i\geq 1} \Sym^{\lambda_i}\Ho^{\#,\star}(X, F^{\langle i\rangle})\,.
  \end{equation}
  The assertion follows by the well-known bijection between conjugacy classes of $\sym_n$ and partitions of $n$.
\end{proof}

We can summarise the formulae \eqref{eqn:HodgeInertiaSym} for fixed $X$ and $F$ but varying $n$ in the following way.

\begin{corollary}
  \label{corollary:main}
  We have an isomorphism of trigraded vector spaces
  \begin{equation}\label{eq:FockHodge}
      \bigoplus_{n\geq 0}\Ho^{\#,\star}(\inertia[\Sym^nX], F^{\{ n\}})t^n
    \cong
    \Sym^\bullet\left(\bigoplus_{i\geq 1}\Ho^{\#,\star}(X, F^{\langle i\rangle})t^i\right)\,,
  \end{equation}
  where the symmetric power on the right-hand side is graded with respect to the double grading of  $\Ho^{\#,\star}$, but ordinary with respect to the grading given by exponents of the formal variable $t$.
\end{corollary}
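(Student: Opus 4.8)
The plan is to deduce \eqref{eq:FockHodge} directly from \cref{theorem:main} by assembling the isomorphisms \eqref{eqn:HodgeInertiaSym}, one for each $n$, into a single generating-series statement, using nothing beyond the elementary manipulations of (multi)graded symmetric powers from \cref{subsection:sym-of-vector-spaces}. No further geometric input is needed; the remaining content is bookkeeping of gradings and of the group actions.

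First I would eliminate the partition index in \eqref{eqn:HodgeInertiaSym}. A partition $\nu\dashv n$ is the same datum as a finitely supported sequence $(\lambda_i)_{i\geq 1}$ of non-negative integers, with $n=\sum_{i\geq 1}i\lambda_i$. Hence, attaching the variable $t$ in degree $n$ and summing over all $n\geq 0$ rewrites $\bigoplus_{n\geq 0}\bigoplus_{\nu\dashv n}(\cdots)\,t^n$ as $\bigoplus_{(\lambda_i)}\bigl(\bigotimes_{i\geq 1}\Sym^{\lambda_i}\Ho^{\#,\star}(X,F^{\langle i\rangle})\bigr)\,t^{\sum_i i\lambda_i}$, where the sum now runs over all finitely supported sequences $(\lambda_i)$ with no reference to a fixed $n$. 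The term $n=0$ is not covered by \cref{theorem:main}, so I would treat it separately: $[\Sym^0X]\cong\Spec\field$ has inertia $\Spec\field$ and contributes $\field$ in tridegree $(0,0,0)$, matching both $\Sym^0$ of the right-hand side and the empty partition of $0$.

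Next I would interchange the outer direct sum with the tensor product over $i$: since the $i$-th factor depends only on $\lambda_i$, one obtains $\bigoplus_{(\lambda_i)}\bigotimes_{i\geq 1}(\cdots)\cong\bigotimes_{i\geq 1}\bigl(\bigoplus_{\lambda_i\geq 0}\Sym^{\lambda_i}(\Ho^{\#,\star}(X,F^{\langle i\rangle})t^i)\bigr)=\bigotimes_{i\geq 1}\Sym^\bullet(\Ho^{\#,\star}(X,F^{\langle i\rangle})t^i)$, where assigning the weight $t^i$ to the $i$-th factor records the $t$-degree correctly precisely because $\Sym^\bullet$ is taken in the ordinary (additive) sense in $t$. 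This interchange is harmless at the level of a fixed tridegree $(\#_0,\star_0,n_0)$: only the finitely many factors with $i\leq n_0$ can contribute there, each finite-dimensionally, since $X$ is proper and $F$ is a bounded complex of coherent sheaves. Iterating \eqref{eqn:TotalSymOfSum}, namely $\Sym^\bullet(U\oplus V)\cong\Sym^\bullet U\otimes\Sym^\bullet V$, over those finitely many factors then turns $\bigotimes_{i\geq 1}\Sym^\bullet(\cdots)$ into $\Sym^\bullet(\bigoplus_{i\geq 1}\Ho^{\#,\star}(X,F^{\langle i\rangle})t^i)$, the desired right-hand side.

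The one point requiring care — and the closest thing to an obstacle — is to keep the sign conventions consistent, so that every symmetric power occurring is taken in the same graded sense: graded with respect to the $(\#,\star)$-bigrading in exactly the way underlying \cref{theorem:main} (where the signs come from the Künneth isomorphism and the permutation of tensor factors in $\derived^\bounded(\field)$, via $F^{\langle i\rangle}$ being the $\sigma_i$-invariants of $F^{\otimes i}$) and ordinary with respect to the $t$-grading. Once the super-degree subset $K$ of \cref{subsection:sym-of-vector-spaces} is fixed accordingly, the isomorphism \eqref{eqn:TotalSymOfSum} and the interchange of direct sum with tensor product above are all canonical and compatible with the full trigrading, so the assembled map is an isomorphism of trigraded vector spaces, as claimed.
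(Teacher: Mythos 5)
Your proposal is correct and follows essentially the same route as the paper: apply \cref{theorem:main} for each $n$, reindex the sum over $n$ and partitions $\nu\dashv n$ by finitely supported sequences $(\lambda_i)$ with the $t$-weight $t^{\sum_i i\lambda_i}$ absorbed into the factors, and identify the result with $\Sym^\bullet\bigl(\bigoplus_{i\geq 1}\Ho^{\#,\star}(X,F^{\langle i\rangle})t^i\bigr)$ via \eqref{eqn:TotalSymOfSum}. Your extra remarks (the $n=0$ term, finiteness in each tridegree, and the sign conventions for the graded symmetric power) are harmless elaborations of the same bookkeeping the paper leaves implicit.
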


\begin{proof}
  By \eqref{eqn:HodgeInertiaSym} in \cref{theorem:main}, we have
  \begin{equation}
    \bigoplus_{n\geq 0}\Ho^{\#,\star}(\inertia[\Sym^nX], F^{\{ n\}})t^n
    \cong \bigoplus_{\lambda_1, \lambda_2, \cdots}\left(\bigotimes_{i\geq 1} \Sym^{\lambda_i}\Ho^{\#,\star}(X, F^{\langle i\rangle})\right)t^{\sum_{i}i\lambda_i}\,
    \cong \bigoplus_{\lambda_1, \lambda_2, \cdots}\bigotimes_{i\geq 1} \Sym^{\lambda_i}\left(\Ho^{\#,\star}(X, F^{\langle i\rangle})t^i\right)\,
  \end{equation}
  which is the right-hand side of \eqref{eq:FockHodge}.
\end{proof}

\subsection{Hochschild homology with coefficients}
We can deduce our main result from \cref{theorem:main} by collapsing the bigrading to a single grading.

\begin{theorem}
  \label{theorem:main-hochschild}
  Let $X$ be a smooth proper algebraic variety over a field $\field$ of characteristic zero, and let $F\in \derived^\bounded(X)$. For every $n\in \mathbb{N}$, let $F^{\{n\}}\in \derived^\bounded([\Sym^nX])$ and $F^{\langle n \rangle}\in \derived^\bounded(X)$ as in \cref{notation:cyclic-invariants} and  \cref{notation-cyclic} respectively.
  we have
  \begin{equation}
    \label{eqn:HHbox}
    \hochschild_{*}([\Sym^nX], F^{\{ n\}})
    \cong\bigoplus_{\nu\dashv n}\left(\bigotimes_{i\geq 1} \Sym^{\lambda_i}\hochschild_{*}(X, F^{\langle i\rangle})\right)\,,
  \end{equation}
  where $\lambda_i$ is the number of $i$'s in the partition $\nu$. Furthermore, collecting these isomorphisms for varying $n$, we get
  \begin{equation}
    \label{eqn:HHbox-All_n}
    \bigoplus_{n\geq 0}\hochschild_{*}([\Sym^nX], F^{\{ n\}})t^n
    \cong
    \Sym^\bullet\left(\bigoplus_{i\geq 1}\hochschild_{*}(X, F^{\langle i\rangle})t^i\right)\,,
  \end{equation}
  where the symmetric power $\Sym^\bullet$ on the right-hand side is graded with respect to the grading of $\hochschild_*$, but ordinary with respect to the grading given by exponents of the formal variable $t$.
\end{theorem}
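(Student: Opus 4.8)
The plan is to obtain \cref{theorem:main-hochschild} by collapsing the bigraded computation of \cref{theorem:main} (equivalently \cref{corollary:main}) onto the single Hochschild grading $*=\star-\#$, via the orbifold Hochschild--Kostant--Rosenberg isomorphism. Concretely, \cref{corollary:orbifold-hkr-total}, applied to $\cX=[\Sym^nX]$ and $\cE=F^{\{n\}}$, identifies the left-hand side $\hochschild_*([\Sym^nX],F^{\{n\}})$ of \eqref{eqn:HHbox} with the singly-graded vector space obtained from the bigraded vector space $\Ho^{\#,\star}(\inertia[\Sym^nX],F^{\{n\}})$ by retaining only the degree $*=\star-\#$; the same statement for the trivial group, i.e.\ \cref{proposition:hkr-varieties}, identifies $\hochschild_*(X,F^{\langle i\rangle})$ with the corresponding collapse of $\Ho^{\#,\star}(X,F^{\langle i\rangle})$. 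So, granting \cref{theorem:main}, proving \eqref{eqn:HHbox} (for $n\geq 1$; the case $n=0$ is trivial) amounts to checking that applying this collapse to the right-hand side $\bigoplus_{\nu\dashv n}\bigotimes_{i\geq 1}\Sym^{\lambda_i}\Ho^{\#,\star}(X,F^{\langle i\rangle})$ of \eqref{eqn:HodgeInertiaSym} produces $\bigoplus_{\nu\dashv n}\bigotimes_{i\geq 1}\Sym^{\lambda_i}\hochschild_*(X,F^{\langle i\rangle})$.

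The only point in the argument requiring attention is that collapsing the Hodge bigrading onto the Hochschild grading commutes with the formation of graded symmetric powers. Following the proof of \cref{lemma:graded-sym-dim}, a graded symmetric power decomposes, compatibly with the grading, as a direct sum over the ways of distributing the exponent among the homogeneous pieces, of tensor products of \emph{ordinary} symmetric powers of the even pieces and exterior powers of the odd pieces; so the claim reduces to the observation that a Hodge class of bidegree $(p,q)$, which acquires Hochschild degree $q-p$ under the collapse, has the same parity in either grading, since $q-p\equiv p+q\pmod 2$. Thus the two decompositions of $\Sym^{\lambda_i}\Ho^{\#,\star}(X,F^{\langle i\rangle})$ and of $\Sym^{\lambda_i}\hochschild_*(X,F^{\langle i\rangle})$ match term by term, and since collapsing trivially commutes with finite tensor products and direct sums, \eqref{eqn:HHbox} follows. (If one wishes to bypass the sign discussion entirely, one may instead write both sides of \eqref{eqn:HHbox} in terms of twisted Hodge numbers of $X$ via \cref{proposition:hkr-varieties} and compare Poincar\'e series degree by degree using \cref{lemma:graded-sym-dim}.)

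For \eqref{eqn:HHbox-All_n}, the quickest route is to apply the same collapse directly to the trigraded isomorphism of \cref{corollary:main}: the collapse $(\#,\star)\mapsto\star-\#$ leaves the formal variable $t$ untouched and (by the previous paragraph) commutes with $\Sym^\bullet$, so \cref{corollary:main} becomes exactly \eqref{eqn:HHbox-All_n}. Alternatively, one may assemble \eqref{eqn:HHbox} over $n$: multiplying by $t^n$, summing, and reindexing by the bijection between partitions $\nu\dashv n$ and tuples $(\lambda_i)_{i\geq 1}$ of nonnegative integers with $n=\sum_{i\geq 1}i\lambda_i$, one gets
\[
  \bigoplus_{n\geq 0}\hochschild_*([\Sym^nX],F^{\{n\}})\,t^n
  \;\cong\;
  \bigotimes_{i\geq 1}\biggl(\bigoplus_{\lambda_i\geq 0}\Sym^{\lambda_i}\bigl(\hochschild_*(X,F^{\langle i\rangle})\,t^i\bigr)\biggr)
  \;=\;
  \bigotimes_{i\geq 1}\Sym^\bullet\bigl(\hochschild_*(X,F^{\langle i\rangle})\,t^i\bigr),
\]
which is $\Sym^\bullet\bigl(\bigoplus_{i\geq 1}\hochschild_*(X,F^{\langle i\rangle})t^i\bigr)$ by the multiplicativity \eqref{eqn:TotalSymOfSum} of $\Sym^\bullet$. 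In short: once \cref{theorem:main} is in hand --- and hence, ultimately, once the orbifold Hochschild--Kostant--Rosenberg decomposition of \cref{proposition:orbifold-hkr} is in hand --- this statement is pure bookkeeping, the only genuinely delicate point being the sign compatibility discussed above, which is governed entirely by the congruence $q-p\equiv p+q\pmod 2$.
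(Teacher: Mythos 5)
Your proposal is correct and follows essentially the same route as the paper: the paper's proof likewise collapses the bigraded isomorphisms of \cref{theorem:main} and \cref{corollary:main} to the single grading $*=\star-\#$ and invokes the orbifold Hochschild--Kostant--Rosenberg statement (\cref{corollary:orbifold-hkr-degree}, or \cref{corollary:orbifold-hkr-total}), asserting in one line that collapsing is compatible with direct sums, tensor products and symmetric products. Your parity observation $q-p\equiv p+q\pmod 2$ simply makes explicit the point the paper leaves implicit, so there is no substantive difference.
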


\begin{proof}
  Turn the isomorphisms of multigraded vector spaces in \eqref{eqn:HodgeInertiaSym} and \eqref{eq:FockHodge}
  into isomorphisms of (single-)graded (or bigraded if we take into account the powers of $t$ in \eqref{eq:FockHodge}) vector spaces by defining the new grading by $*=\star- \#$. Note that this process of collapsing gradings is compatible with the direct sums, tensor products and symmetric products. By the Hochschild--Kostant--Rosenberg isomorphism for orbifolds \cref{corollary:orbifold-hkr-degree} (or more directly, by \cref{corollary:orbifold-hkr-total}), we get the desired result on the Hochschild homology with coefficients.
\end{proof}

We now can reformulate \cref{theorem:main-hochschild} in terms of the generating function for the dimensions of the graded pieces.

\begin{corollary}
  \label{cor:HHgen}
  For $X$ a smooth proper variety defined over a field $\field$ of characteristic zero, and $F\in \derived^\bounded(X)$, we have the following formula for the generating series of the Hochschild homology of $[\Sym^nX]$:
  \begin{equation}
  \label{eq:HHgen}
  \sum_{n\geq 0}\sum_{i\in \mathbb{Z}}\dim_{\field} \hochschild_{i}([\Sym^nX], F^{\{ n\}})s^it^n
  =
  \prod_{k\geq 1}\prod_{j\in \mathbb{Z}}(1-(-s)^jt^k)^{-(-1)^j\hochschilddim_j(X, F^{\langle k\rangle})}\,,
  \end{equation}
  where $\hochschilddim_j(X,F^{\langle k\rangle})\colonequals\dim_{\field} \hochschild_j(X,F^{\langle k\rangle})$.
\end{corollary}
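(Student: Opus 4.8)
The plan is to derive \cref{eq:HHgen} directly from the ``all $n$ at once'' isomorphism \eqref{eqn:HHbox-All_n} of \cref{theorem:main-hochschild} by passing to generating series. Concretely, I would apply the multigraded-symmetric-power machinery of \cref{subsection:sym-of-vector-spaces}---specifically \cref{lemma:graded-sym-dim}---to the bigraded vector space appearing on the right-hand side of \eqref{eqn:HHbox-All_n}.

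First I would set up the bookkeeping. Consider $W \colonequals \bigoplus_{i\geq 1} \hochschild_*(X, F^{\langle i\rangle}) t^i$ as an $m=2$ multigraded vector space: the first grading records the Hochschild degree $j$ (tracked by the variable $s$), and the second grading records the $t$-exponent $k$ (tracked by $t$). So $W_{(j,k)} = \hochschild_j(X, F^{\langle k\rangle})$ for $k\geq 1$ and $W_{(j,k)}=0$ for $k\leq 0$, with dimension $\hochschilddim_j(X, F^{\langle k\rangle})$. The symmetric power $\Sym^\bullet(W)$ is to be taken graded with respect to the Hochschild degree and ordinary with respect to the $t$-exponent, i.e.\ in the notation of \cref{subsection:sym-of-vector-spaces} we take $K=\{1\}$. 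With these conventions, $\eqref{eqn:HHbox-All_n}$ identifies $\bigoplus_{n\geq 0}\hochschild_*([\Sym^nX], F^{\{n\}})t^n$ with $\Sym^\bullet(W)$ as bigraded vector spaces, so their generating series coincide; the left-hand side of \eqref{eq:HHgen} is by definition $\mathbb{E}_{\Sym^\bullet(W)}$ with $s_1=s$, $s_2=t$.

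Next I would invoke \cref{lemma:graded-sym-dim} with $m=2$, $K=\{1\}$, and the multi-index $\mathbf{d}=(j,k)$. For such $\mathbf{d}$ we have $|\mathbf{d}|_K = j$ and $s^{\mathbf{d}} = s^j t^k$, so the lemma gives
\begin{equation}
  \mathbb{E}_{\Sym^\bullet(W)}
  =
  \prod_{(j,k)\in\IZ^2}\bigl(1-(-1)^{j} s^j t^k\bigr)^{-(-1)^{j}\dim_\field(W_{(j,k)})}.
\end{equation}
Since $W_{(j,k)}=0$ unless $k\geq 1$, the product over $k\leq 0$ contributes trivial factors and can be dropped, leaving $\prod_{k\geq 1}\prod_{j\in\IZ}$. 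Rewriting $(-1)^j s^j t^k = (-s)^j t^k$ and $\dim_\field(W_{(j,k)})=\hochschilddim_j(X,F^{\langle k\rangle})$ turns this into exactly the right-hand side of \eqref{eq:HHgen}. That completes the proof.

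I do not anticipate a genuine obstacle here: the statement is essentially a translation of \cref{theorem:main-hochschild} into generating-function language, and all the required combinatorial input is already packaged in \cref{lemma:graded-sym-dim}. The only points requiring a little care are (a) matching conventions precisely---confirming that the super-sign in the symmetric power corresponds to the Hochschild grading and not the $t$-grading, so that the choice $K=\{1\}$ is correct---and (b) checking that the infinite product converges as a formal power series, which holds because for each fixed $(j,k)$ with $k\geq 1$ the dimension $\hochschilddim_j(X,F^{\langle k\rangle})$ is finite ($X$ smooth proper, so each $\hochschild_j(X,F^{\langle k\rangle})$ is finite-dimensional) and, for fixed $k$, vanishes for all but finitely many $j$, so each coefficient of $s^i t^n$ on the right-hand side is a finite sum. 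If one wanted to be maximally self-contained one could instead derive \eqref{eq:HHgen} from the finite-$n$ formula \eqref{eqn:HHbox} by the same application of \cref{lemma:graded-sym-dim} to each summand, but routing through \eqref{eqn:HHbox-All_n} is the cleanest path.
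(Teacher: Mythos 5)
Your proposal is correct and follows exactly the paper's own argument: the paper proves \cref{cor:HHgen} precisely by applying \cref{lemma:graded-sym-dim} to \eqref{eqn:HHbox-All_n}, with your bookkeeping (Hochschild degree as the super-degree, $t$-exponent ordinary) being the intended reading. Your additional checks on conventions and formal convergence are fine but not new ideas.
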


\begin{proof}
  This follows by applying \cref{lemma:graded-sym-dim} to \eqref{eqn:HHbox-All_n}.
\end{proof}

\paragraph{Line bundle coefficients}
In the case that $F=L$ is a line bundle on the variety $X$,
the $\langle \sigma_i\rangle$-action on $L^{\otimes i}$ permuting the tensor factors is the trivial action, hence
\begin{equation}
  L^{\langle i\rangle}=L^{\otimes i}.
\end{equation}
Therefore, \cref{theorem:main}, \cref{corollary:main} and \cref{theorem:main-hochschild} specialise to the following:

\begin{corollary}
  \label{corollary:line-bundle-coefficients}
  For $L\in \Pic(X)$ and $n\in \IN\setminus\{0\}$, we have isomorphisms of bigraded vector spaces
  \begin{equation}
    \label{equation:hodge-single-n}
    \HH^{\#,\star}(\inertia[\Sym^nX], L^{\{ n\}})
    \cong
    \bigoplus_{\nu\dashv n}\left(\bigotimes_{i\geq 1} \Sym^{\lambda_i}\HH^{\#,\star}(X, L^{\otimes i})\right)\,,
  \end{equation}
  where $\lambda_i$ is the number of $i$'s in the partition $\nu$.
  Collecting these isomorphisms for varying $n$ gives
  \begin{equation}
  \bigoplus_{n\geq 0}\HH^{\#,\star}(\inertia[\Sym^nX], L^{\{ n\}})t^n
  \cong
  \Sym^\bullet\left(\bigoplus_{i\geq 1}\Ho^{\#,\star}(X, L^{\otimes i})t^i\right),
  \end{equation}

  \begin{equation}
    \label{eq:Lgenerating-Sym}
    \bigoplus_{n\geq 0}\hochschild_{*}([\Sym^nX], L^{\{ n\}})t^n
    \cong\
    \Sym^\bullet\left(\bigoplus_{i\geq 1}\hochschild_{*}(X, L^{\otimes i})t^i\right).
  \end{equation}
  The symmetric product on the right-hand sides are graded with respect to the grading of the Hodge groups and the Hochschild homology,
  but ordinary with respect to the grading coming from the exponents of the formal variable $t$.
\end{corollary}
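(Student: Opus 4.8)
The plan is to obtain the corollary as a direct specialisation of \cref{theorem:main}, \cref{corollary:main} and \cref{theorem:main-hochschild} to the case where the coefficient object $F$ is a line bundle $L\in\Pic(X)$. The only point that requires an argument is the identity $L^{\langle i\rangle}=L^{\otimes i}$ recorded in the paragraph preceding the statement; once this is available, everything reduces to substituting $F=L$ into the general formulae and reading off the grading conventions, which are inherited unchanged.

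First I would verify that the $\langle\sigma_i\rangle$-action on $L^{\otimes i}$ by permutation of the tensor factors is trivial. Since $L$ is a genuine line bundle concentrated in cohomological degree zero, this is a local statement: locally $L\cong\mathcal O_X$, and the symmetry isomorphism $\tau\colon\mathcal O_X\otimes\mathcal O_X\to\mathcal O_X\otimes\mathcal O_X$, $a\otimes b\mapsto b\otimes a$, becomes the identity of $\mathcal O_X$ under the canonical identification $\mathcal O_X^{\otimes 2}\cong\mathcal O_X$. These local identifications glue, so the whole $\symgr_i$-action on $L^{\otimes i}$ is trivial; equivalently, restricting the natural $\symgr_i$-linearisation of $L^{\boxtimes i}$ on $X^i$ to the small diagonal yields the trivial linearisation on $L^{\otimes i}$, because the permutation isomorphisms of a rank-one sheaf are forced to be the identity. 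In particular $L^{\langle i\rangle}\colonequals\bigl(L^{\otimes i}\bigr)^{\langle\sigma_i\rangle}=L^{\otimes i}$, and correspondingly $\Ho^{\#,\star}(X,L^{\langle i\rangle})=\Ho^{\#,\star}(X,L^{\otimes i})$ and $\hochschild_*(X,L^{\langle i\rangle})=\hochschild_*(X,L^{\otimes i})$.

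With this in hand, substituting $F=L$ into \eqref{eqn:HodgeInertiaSym} gives \eqref{equation:hodge-single-n}; substituting into \eqref{eq:FockHodge} gives the middle displayed isomorphism; and substituting into \eqref{eqn:HHbox-All_n} gives \eqref{eq:Lgenerating-Sym}. In each case the symmetric products remain graded with respect to the Hodge, respectively Hochschild, degree and ordinary with respect to the $t$-degree, exactly as in the cited statements, so no additional bookkeeping is needed.

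I do not anticipate a genuine obstacle here; the only conceptual input — and the reason it is worth recording separately — is the triviality of the cyclic action on $L^{\otimes i}$, which is special to rank-one coefficients in degree zero and fails for a general object $F\in\derived^\bounded(X)$, for which $F^{\langle i\rangle}$ differs from $F^{\otimes i}$ both as an object and in its cohomology.
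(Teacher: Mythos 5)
Your proposal is correct and follows exactly the paper's route: the paper likewise observes (in the paragraph immediately preceding the corollary) that the cyclic permutation action on $L^{\otimes i}$ is trivial for a line bundle, so that $L^{\langle i\rangle}=L^{\otimes i}$, and then obtains the three displayed isomorphisms by specialising \cref{theorem:main}, \cref{corollary:main} and \cref{theorem:main-hochschild} to $F=L$. Your local verification of the triviality of the swap action is a fine way to justify the step the paper states without proof.
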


\subsection{Hochschild--Serre cohomology}
\label{subsection:hochschild-serre-description}
We will now specialise the previous results to the setting of Hochschild--Serre cohomology.   Let $X$ be a smooth proper variety of dimension $d_X$.

\begin{lemma}
  \label{lemma:CanonicalOfSym}
  In \cref{notation:cyclic-invariants}, we have:
  \begin{equation}
    \omega_{[\Sym^n X]}[nd_X]\cong (\omega_X[d_X])^{\{ n\}}\,.
  \end{equation}
\end{lemma}

\begin{proof}
  Note that the canonical bundle of the symmetric quotient stack $[\Sym^n X]$ is given by
  \begin{equation}\label{eq:omegasgn}
    \omega_{[\Sym^n X]}\cong
    \omega_X^{\{ n\}}\otimes \sgn^{\otimes d_X}
    =
    \begin{cases}
      \omega_X^{\{ n\}}             & \text{if $d_X$ is even,} \\
      \omega_X^{\{ n\}}\otimes \sgn & \text{if $d_X$ is odd},
    \end{cases}
  \end{equation}
  where $\sgn$ denotes the alternating representation of $\sym_n$
  (i.e., the one-dimensional representation on which $g\in \sym_n$ acts by multiplication by $\sgn(g)$); see \cite[Lem.\ 5.10]{MR3384503}.
  The lemma follows from the simple fact that for any $F\in \derived^\bounded(X)$ and $d\in \IZ$, we have
  \begin{equation}
    \label{eq:boxparity}
    (F[d_X])^{\{ n\}}\cong F^{\{ n\}}\otimes \sgn^{\otimes d_X} [nd_X]
    =
    \begin{cases}
      F^{\{ n\}}[nd_X]            & \text{for $d_X$ even,}\\
      F^{\{ n\}}\otimes\sgn[nd_X] & \text{for $d_X$ odd.}
    \end{cases}
  \end{equation}
Note that the dimension shift conveniently takes care of the sign, and \eqref{eq:omegasgn} and \eqref{eq:boxparity} together proof the assertion.
\end{proof}

\begin{lemma}
  \label{lemma:CanonicalCyclic}
  Let $i$ be a positive integer.\\
  If $(k-1)d_X$ is even, then we have an isomorphism of objects in $\derived^\bounded(X)$:
  \begin{equation}
    \Bigl(\omega^{\otimes k-1}_X[(k-1)d_X]\Bigr)^{\langle i\rangle}\cong \omega_X^{\otimes i(k-1)}[i(k-1)d_X].
  \end{equation}
   If $(k-1)d_X$ is odd, then
  \begin{equation}
    \Bigl(\omega^{\otimes k-1}_X[(k-1)d_X]\Bigr)^{\langle i\rangle}
    \cong
    \begin{cases}
      \omega_X^{\otimes i(k-1)}[i(k-1)d_X] & \text{for $i$ odd,} \\
      0                                    & \text{for $i$ even.}
    \end{cases}
  \end{equation}
\end{lemma}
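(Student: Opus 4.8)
The plan is to reduce the statement to a Koszul-sign computation for the cyclic permutation $\sigma_i$ acting on a tensor power of a \emph{shifted line bundle}. Write $G\colonequals\omega_X^{\otimes(k-1)}$ and $m\colonequals(k-1)d_X$, so that the object in question is $F\colonequals\omega_X^{\otimes k-1}[(k-1)d_X]=G[m]$ and $F^{\langle i\rangle}=\bigl((G[m])^{\otimes i}\bigr)^{\langle\sigma_i\rangle}$, the invariants under the cyclic group $\mu_i=\langle\sigma_i\rangle\subset\symgr_i$. First I would record the single-variety analogue of \eqref{eq:boxparity}: for any $H\in\derived^\bounded(X)$ and any $d\in\IZ$, the object $(H[d])^{\otimes i}$, with its factor-permuting $\symgr_i$-linearisation, is isomorphic to $H^{\otimes i}[di]$ with the $\symgr_i$-linearisation twisted by $\sgn^{\otimes d}$. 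This is exactly the Koszul-sign bookkeeping used in the proof of \cref{lemma:CanonicalOfSym}, and can also be obtained by pulling \eqref{eq:boxparity} back along the $\symgr_i$-equivariant diagonal $X\hookrightarrow X^i$ (with $\symgr_i$ acting trivially on $X$).

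Next I would specialise to $H=G$ and $d=m$. Since $G$ is a line bundle, the symmetry isomorphism identifying two of its tensor factors is the identity on the underlying object, so the factor-permuting action on $G^{\otimes i}=\omega_X^{\otimes i(k-1)}$ is trivial; the entire $\mu_i$-action therefore comes from the twist $\sgn^{\otimes m}$. Restricting to $\mu_i$, the generator $\sigma_i$ (a cycle of length $i$, hence of sign $(-1)^{i-1}$) acts on $\sgn^{\otimes m}$ by the scalar $\bigl((-1)^{i-1}\bigr)^{m}=(-1)^{m(i-1)}=(-1)^{(k-1)d_X(i-1)}$. Thus $\sigma_i$ acts on $F^{\otimes i}\cong\omega_X^{\otimes i(k-1)}[i(k-1)d_X]$ by this single scalar.

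Finally I would take $\mu_i$-invariants, using that in characteristic zero they are cut out by the averaging idempotent $\tfrac1i\sum_{j=0}^{i-1}\sigma_i^j$. If the scalar equals $+1$ this idempotent is the identity and $F^{\langle i\rangle}=F^{\otimes i}=\omega_X^{\otimes i(k-1)}[i(k-1)d_X]$; if it equals $-1$ (which forces $i$ even, consistently with $\sigma_i$ having order $i$) the idempotent becomes $\tfrac1i\sum_{j=0}^{i-1}(-1)^j\cdot\id=0$, so $F^{\langle i\rangle}=0$. The proof then closes with a one-line case split: when $(k-1)d_X$ is even the scalar is $+1$ for every $i$, giving the first assertion; when $(k-1)d_X$ is odd the scalar is $(-1)^{i-1}$, which is $+1$ for $i$ odd and $-1$ for $i$ even, giving the second.

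The only point requiring care — essentially the whole content of the lemma — is the exponent of the Koszul sign attached to the cyclic permutation: decomposing $\sigma_i$ into $i-1$ adjacent transpositions, each swap of two factors of parity $m$ contributes $(-1)^{m^2}=(-1)^m$, so the total sign is $(-1)^{m(i-1)}$, and it is the appearance of $i-1$ (rather than $i$) that produces the parity condition on $i$ in the odd case. Everything else is formal.
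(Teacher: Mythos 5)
Your proposal is correct and follows essentially the same route as the paper: both use the single-variety analogue of \eqref{eq:boxparity} to convert the shift into a twist by $\sgn^{\otimes(k-1)d_X}$, observe that the factor-permuting action on the tensor power of the line bundle $\omega_X^{\otimes(k-1)}$ is trivial, and then use $\sgn(\sigma_i)=(-1)^{i-1}$ so that the cyclic invariants are either everything or zero depending on the parity of $(k-1)d_X(i-1)$. Your spelling out of the averaging idempotent and the sign bookkeeping is just a more explicit version of the paper's one-line argument.
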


\begin{proof}
  In analogy with \eqref{eq:boxparity}, we also have
  \begin{equation}
  (F[d_X])^{\otimes i}\cong F^{\otimes i}\otimes \sgn^{\otimes d_X} [id_X]
  =
  \begin{cases}
    F^{\otimes i}[id_X]            & \text{for $d_X$ even,}\\
    F^{\otimes i}\otimes\sgn[id_X] & \text{for $d_X$ odd.}
  \end{cases}
  \end{equation}
  The case distinction for $(k-1)d_X$ in the statement comes from the fact that $\sgn(\sigma_i)=(-1)^{i-1}$, where $\sigma_i=(1\, 2\, \dots\, i)$ is the cyclic permutation; see \cref{notation-cyclic}.
  Hence, the $\langle \sigma_i\rangle$-action on $\omega_X^{\otimes i(k-1)}\otimes \sgn$
  is trivial for $i$ odd, but non-trivial for $i$ even .
  This leads to the invariants being the entire $\omega_X^{\otimes i(k-1)}$ or $0$, respectively.
\end{proof}

 Recall \cref{definition:hochschild-serre}:
\begin{equation}
  \hochserre_k^*(\cX)=\hochschild_*(\cX,\omega_{\cX}^{\otimes k-1}[(k-1)d_{\cX}])\,,
\end{equation}
which (for fixed $k$) is a graded vector space,
whereas we write~$\hochserre_k(\cX)$ for the object in~$\derived^\bounded(\field)$.
We can now show \cref{theorem:hochschild-serre}. We restate it with some more details:
\begin{corollary}
  \label{corollary:hochschild-serre}
  Let $X$ be a smooth proper algebraic variety of dimension $d_X$ over a field $\field$ of characteristic zero.
  Let $k$ be a fixed positive integer.
  \begin{enumerate}
    \item[(i)] If $(k-1)d_X$ is even, we have an isomorphism of graded vector spaces:
    \begin{equation}
      \label{equation:hochschild-serre-even}
      \hochserre_{k}([\Sym^nX])
      \cong
      \bigoplus_{\nu\dashv n}\left(\bigotimes_{i\geq 1} \Sym^{\lambda_i}\hochserre_{1+(k-1)i}(X)\right)\,,
    \end{equation}
    where $\lambda_i$ is the number of $i$'s in the partition $\nu$.
    Collecting these isomorphisms together by varying $n$, we have an isomorphism of bigraded vector spaces:
    \begin{equation}
      \label{equation:hochschild-serre-even-all-n}
      \bigoplus_{n\geq 0}\hochserre_{k}([\Sym^nX])t^n
      \cong
      \Sym^\bullet\left(\bigoplus_{i\geq 1}\hochserre_{1+(k-1)i}(X)t^i\right)\,.
    \end{equation}
    Here the $\Sym^\bullet$ is graded with respect to the grading of the Hochschild--Serre cohomology
    and ordinary with respect to the grading given by the exponents of the formal variable $t$.
    \item[(ii)] If $(k-1)d_X$  is odd, we have an isomorphism of graded vector spaces:
    \begin{equation}
      \label{equation:hochschild-serre-odd}
      \hochserre_{k}([\Sym^nX])
      \cong
      \bigoplus_{\substack{\nu\dashv n\\ \text{all $\nu_j$ odd} }}\left(\bigotimes_{\substack{i\geq 1\\ (i \text{ odd})}} \Sym^{\lambda_i}\hochserre_{1+(k-1)i}(X)\right) \\
    \end{equation}
    and
    \begin{equation}
      \label{equation:hochschild-serre-odd-all-n}
      \bigoplus_{n\geq 0}\hochserre_{k}([\Sym^nX])t^n
      \cong
      \Sym^\bullet\left(\bigoplus_{\substack{i\geq 1\\ i \text{ odd}}}\hochserre_{1+(k-1)i}(X)t^i\right)\,.
    \end{equation}
  \end{enumerate}
\end{corollary}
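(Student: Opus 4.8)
The plan is to specialise \cref{theorem:main-hochschild} to the coefficient object $F\colonequals\omega_X^{\otimes k-1}[(k-1)d_X]=(\omega_X[d_X])^{\otimes k-1}\in\derived^\bounded(X)$ and then translate the output back into Hochschild--Serre cohomology via \cref{lemma:CanonicalOfSym,lemma:CanonicalCyclic}. First I would note that $d_{[\Sym^nX]}=nd_X$, so unwinding \cref{definition:hochschild-serre} gives $\hochserre_k([\Sym^nX])\cong\hochschild_*\bigl([\Sym^nX],\,\omega_{[\Sym^nX]}^{\otimes k-1}[(k-1)nd_X]\bigr)$. By \cref{lemma:CanonicalOfSym} we have $\omega_{[\Sym^nX]}[nd_X]\cong(\omega_X[d_X])^{\{n\}}$, and since forming an external tensor power with its natural $\symgr_n$-linearisation commutes with forming an ordinary tensor power, i.e. $(G^{\{n\}})^{\otimes m}\cong(G^{\otimes m})^{\{n\}}$, one gets $\omega_{[\Sym^nX]}^{\otimes k-1}[(k-1)nd_X]\cong\bigl((\omega_X[d_X])^{\otimes k-1}\bigr)^{\{n\}}=F^{\{n\}}$. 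Hence $\hochserre_k([\Sym^nX])\cong\hochschild_*([\Sym^nX],F^{\{n\}})$, and both \eqref{eqn:HHbox} and \eqref{eqn:HHbox-All_n} of \cref{theorem:main-hochschild} apply verbatim.

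Next I would compute the cyclic invariants $F^{\langle i\rangle}$ that appear on the right-hand side using \cref{lemma:CanonicalCyclic}. If $(k-1)d_X$ is even, then $F^{\langle i\rangle}\cong\omega_X^{\otimes i(k-1)}[i(k-1)d_X]$ for every $i\geq1$, so by \cref{definition:hochschild-serre} $\hochschild_*(X,F^{\langle i\rangle})\cong\hochschild_*\bigl(X,\omega_X^{\otimes(1+(k-1)i)-1}[((1+(k-1)i)-1)d_X]\bigr)=\hochserre_{1+(k-1)i}(X)$. Substituting this into \eqref{eqn:HHbox} and \eqref{eqn:HHbox-All_n} produces exactly \eqref{equation:hochschild-serre-even} and \eqref{equation:hochschild-serre-even-all-n}, which is part~(i).

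For part~(ii), when $(k-1)d_X$ is odd, \cref{lemma:CanonicalCyclic} gives $F^{\langle i\rangle}\cong\omega_X^{\otimes i(k-1)}[i(k-1)d_X]$ when $i$ is odd and $F^{\langle i\rangle}\cong0$ when $i$ is even; accordingly $\hochschild_*(X,F^{\langle i\rangle})\cong\hochserre_{1+(k-1)i}(X)$ for $i$ odd and $\cong0$ for $i$ even. Since the graded symmetric power $\Sym^{\lambda_i}$ of the zero vector space vanishes as soon as $\lambda_i\geq1$ (and equals $\field$ when $\lambda_i=0$), in the sum $\bigoplus_{\nu\dashv n}\bigotimes_{i\geq1}\Sym^{\lambda_i}\hochschild_*(X,F^{\langle i\rangle})$ only the partitions $\nu$ all of whose parts are odd contribute, and after deleting the trivial factors one recovers \eqref{equation:hochschild-serre-odd}; similarly discarding the vanishing even-$i$ summands inside $\Sym^\bullet$ in \eqref{eqn:HHbox-All_n} yields \eqref{equation:hochschild-serre-odd-all-n}.

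I do not expect a genuine obstacle here: once the sign-representation bookkeeping of \cref{lemma:CanonicalOfSym,lemma:CanonicalCyclic} is in hand, the corollary is a substitution into \cref{theorem:main-hochschild}. The only point that demands care is keeping the shift conventions and the (super-)symmetric-power conventions aligned, so that it is genuinely the parity of $(k-1)d_X$ — not of $d_X$ or of $k$ separately — that governs the dichotomy between (i) and (ii), and so that the degree shifts $[i(k-1)d_X]$ reassemble correctly into the $\hochserre$-index $1+(k-1)i$.
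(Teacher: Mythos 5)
Your proposal is correct and is essentially the paper's own proof: the paper likewise plugs $F=\omega_X^{\otimes k-1}[(k-1)d_X]$ into \cref{theorem:main-hochschild} and applies \cref{lemma:CanonicalOfSym,lemma:CanonicalCyclic}, with your extra remarks (the identification $F^{\{n\}}\cong\omega_{[\Sym^nX]}^{\otimes k-1}[(k-1)nd_X]$ and the vanishing of $\Sym^{\lambda_i}$ of the zero space forcing all parts odd in case (ii)) just spelling out the bookkeeping the paper leaves implicit.
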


\begin{proof}
  We plug $F=\omega_X^{\otimes k-1}[(k-1)d_X]$ into \cref{theorem:main-hochschild}
  and apply \cref{lemma:CanonicalOfSym,lemma:CanonicalCyclic}.
\end{proof}

\subsection{Hochschild cohomology in low degrees and deformation}
\label{subsection:deformation-theory}
Let us briefly recall why we are interested in Hochschild cohomology.
Originating from the deformation theory of algebras,
introduced by Gerstenhaber,
it is now known that
Hochschild cohomology of abelian (or derived) categories
governs their deformation as a category \cite{MR2238922,MR2183254}.
The second Hochschild cohomology is the deformation space,
and third Hochschild cohomology is the obstruction space.
First Hochschild cohomology has an interpretation as the Lie algebra of autoequivalences \cite{MR2043327}.
There exist geometric definitions of Hochschild cohomology for varieties,
and their agreement with the categorical definition can be found in \cite{MR2183254}.

For a smooth variety $X$ defined over a field of characteristic zero,
by the Hochschild--Kostant--Rosenberg decomposition (see \cref{proposition:hkr-varieties}), we have a decomposition
\begin{equation}
  \hochschild^2(X)
  \cong
  \LaTeXunderbrace{\HH^0\left( X,\bigwedge\nolimits^2\mathrm{T}_X\right) }_{\text{noncommutative}}
  \oplus\LaTeXunderbrace{\HH^1(X,\mathrm{T}_X)}_{\text{geometric}}
  \oplus\LaTeXunderbrace{\HH^2(X,\mathcal{O}_X)}_{\text{gerby}},
\end{equation}
with the deformation-theoretic justification for the interpretation of these terms
given in \cite{MR2477894}.

The noncommutative deformations are also known as Poisson deformations.
A \emph{Poisson structure} on~$X$ is
a global section of~$\bigwedge^2\tangent_X$
for which the Schouten--Nijenhuis self-bracket~$[\sigma,\sigma]$ vanishes.
This obstruction class lives in the global sections of~$\bigwedge^3\tangent_X$.
By Kontsevich's celebrated formality theorem \cite{MR2062626}
this means that it admits a formal (and not just first-order) deformation.

In this paper, for applications in deformation theory we are particularly interested in the way $\hochschild^1([\Sym^nX])$
and~$\hochschild^2([\Sym^nX])$ are determined by the Hochschild--Serre cohomology of $X$.

We specialise \cref{corollary:hochschild-serre} to $k=0$:
\begin{corollary}
  \label{cor:Hochschild-cohomology-Sym}
  Let $X$ be a smooth proper algebraic variety of dimension $d_X$ over a field $\field$ of characteristic zero.
  Let $k$ be a fixed positive integer.
  \begin{enumerate}
    \item[(i)] If $d_X$ is even,
    \begin{equation}
      \label{equation:hochschild-cohomology-even-all-n}
      \bigoplus_{n\geq 0}\hochschild^*([\Sym^nX])t^n
      \cong
      \Sym^\bullet\left(\bigoplus_{i\geq 1}\hochserre_{1-i}(X)t^i\right)\,.
    \end{equation}
    \item[(ii)] If $d_X$  is odd,
    \begin{equation}
      \label{equation:hochschild-cohomology-odd-all-n}
      \bigoplus_{n\geq 0}\hochschild^*([\Sym^nX])t^n
      \cong
      \Sym^\bullet\left(\bigoplus_{\substack{i\geq 1\\ i \text{ odd}}}\hochserre_{1-i}(X)t^i\right)\,.
    \end{equation}
  \end{enumerate}
\end{corollary}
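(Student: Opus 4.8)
The plan is to recognize this statement as the $k=0$ instance of \cref{corollary:hochschild-serre}, after checking that the argument for that corollary never used the positivity of $k$. By \cref{definition:hochschild-serre} we have $\hochschild^*([\Sym^nX]) \cong \hochserre_0^*([\Sym^nX]) \cong \hochschild_*\bigl([\Sym^nX],\,\omega_{[\Sym^nX]}^{\otimes -1}[-nd_X]\bigr)$, so the left-hand sides of \eqref{equation:hochschild-cohomology-even-all-n} and \eqref{equation:hochschild-cohomology-odd-all-n} are exactly $\bigoplus_{n\ge 0}\hochserre_0([\Sym^nX])t^n$. Thus it suffices to re-run the proof of \cref{corollary:hochschild-serre} with the fixed integer $k$ allowed to be $0$: that proof plugs $F=\omega_X^{\otimes k-1}[(k-1)d_X]$ into \cref{theorem:main-hochschild} and invokes \cref{lemma:CanonicalOfSym} and \cref{lemma:CanonicalCyclic}.

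Concretely, I would set $F = \omega_X^{\vee}[-d_X]$ and feed it into \cref{theorem:main-hochschild}. For the coefficient on the symmetric-quotient side I need $F^{\{n\}}\cong \omega_{[\Sym^nX]}^{\vee}[-nd_X]$; this follows by dualizing the isomorphism $(\omega_X[d_X])^{\{n\}}\cong \omega_{[\Sym^nX]}[nd_X]$ of \cref{lemma:CanonicalOfSym}, using that $(-)^{\{n\}}$ is compatible with duals and that $d_{[\Sym^nX]}=nd_X$. For the coefficients on the variety side I need $\bigl(\omega_X^{\vee}[-d_X]\bigr)^{\langle i\rangle}$; the computation in the proof of \cref{lemma:CanonicalCyclic} used only $\sgn(\sigma_i)=(-1)^{i-1}$ and the parity of $(k-1)d_X$, so for $k=0$ it yields $\bigl(\omega_X^{\vee}[-d_X]\bigr)^{\langle i\rangle}\cong \omega_X^{\otimes -i}[-id_X]$ when $-d_X$ is even, and the same for $i$ odd (and $0$ for $i$ even) when $-d_X$ is odd. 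In either case $\hochschild_*\bigl(X,\bigl(\omega_X^{\vee}[-d_X]\bigr)^{\langle i\rangle}\bigr)\cong\hochserre_{1-i}(X)$ whenever it is nonzero, again by \cref{definition:hochschild-serre}.

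Finally I would record the substitution: $(k-1)d_X=-d_X$ has the same parity as $d_X$, and $1+(k-1)i$ becomes $1-i$. Hence part (i) of \cref{corollary:hochschild-serre} specializes, when $d_X$ is even, to
\[
  \bigoplus_{n\ge 0}\hochschild^*([\Sym^nX])t^n \cong \Sym^\bullet\Bigl(\bigoplus_{i\ge 1}\hochserre_{1-i}(X)t^i\Bigr),
\]
and part (ii) specializes, when $d_X$ is odd, to the same expression with the inner sum restricted to odd $i$; these are precisely \eqref{equation:hochschild-cohomology-even-all-n} and \eqref{equation:hochschild-cohomology-odd-all-n}. (The hypothesis that $k$ be a positive integer in the statement is vestigial and plays no role.)

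I do not anticipate a genuine obstacle. The only point requiring care is the bookkeeping check that neither the proof of \cref{corollary:hochschild-serre} nor the two lemmas it relies on ever used $k>0$, together with the observation that the even/odd dichotomy is controlled by the parity of $-d_X$, i.e., of $d_X$.
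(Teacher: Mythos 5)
Your proposal is correct and matches the paper's own route: the paper obtains this corollary simply by specialising \cref{corollary:hochschild-serre} to $k=0$, i.e.\ by plugging $F=\omega_X^{\vee}[-d_X]$ into \cref{theorem:main-hochschild} via \cref{lemma:CanonicalOfSym,lemma:CanonicalCyclic}, exactly as you do. Your added check that the positivity hypothesis on $k$ is never used (and that the parity dichotomy is governed by $-d_X$, hence by $d_X$) is precisely the bookkeeping the paper leaves implicit.
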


Let us give in the following corollary the explicit results for $\hochschild^1$ and $\hochschild^2$. The interested reader can write down the (more lengthy) description for~$\hochschild^3$, which we will not need it in what follows.
One should really understand the Gerstenhaber bracket too, which involves $\hochschild^3$,
so that the obstruction can be computed,
but this is an open question.
\begin{corollary}
  \label{corollary:first-and-second}
  Let $X$ be a geometrically connected smooth proper variety of dimension at least~1 defined over a field $\field$ of characteristic zero.
  For all~$n\geq 2$ we have that
  \begin{equation}
    \label{equation:first-hochschild}
    \hochschild^1([\Sym^nX])\cong\hochschild^1(X)
  \end{equation}
  and
  \begin{equation}
    \label{equation:second-hochschild}
    \hochschild^2([\Sym^nX])\cong
    \begin{cases}
      \hochschild^2(X)\oplus\bigwedge^2\hochschild^1(X) & d_X\geq 3 \\
      \hochschild^2(X)\oplus\bigwedge^2\hochschild^1(X)\oplus\hochserre_{-1}^2(X) & d_X=2 \\
      \hochschild^2(X)\oplus\bigwedge^2\hochschild^1(X)\oplus\hochserre_{-2}^2(X) & d_X=1
    \end{cases}
  \end{equation}
  except when~$d_X=1$ and~$n=2$, in which case
  \begin{equation}
    \label{equation:second-hochschild-n-2}
    \hochschild^2([\Sym^2X])\cong\hochschild^2(X)\oplus\bigwedge^2\hochschild^1(X).
  \end{equation}
\end{corollary}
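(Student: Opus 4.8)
The plan is to extract the degree-$2$ part from the generating identity of \cref{cor:Hochschild-cohomology-Sym}, treating the even- and odd-dimensional cases in parallel. First I would fix $n \geq 2$ and read off from \eqref{equation:hochschild-cohomology-even-all-n} and \eqref{equation:hochschild-cohomology-odd-all-n} that, since $\Sym^\bullet$ of a direct sum factors as a tensor product \eqref{eqn:TotalSymOfSum}, the degree-$2$ piece of $\hochschild^*([\Sym^nX])$ receives contributions only from: (a) the weight-one part $\hochserre_{0}(X) = \hochschild^*(X)$ (the $i=1$ summand, since $1-i=0$), contributing $\Sym^1$ of its degree-$2$ piece, i.e.\ $\hochschild^2(X)$; (b) the degree-$2$ part of $\Sym^2$ of the weight-one summand, which by the graded-symmetric convention equals $\bigwedge^2\hochschild^1(X)$ plus $\Sym^2 \hochschild^0(X) \otimes (\text{deg } 2) \oplus \hochschild^0(X)\otimes\hochschild^2(X)$—but here geometric connectedness forces $\hochschild^0(X) = \HH^0(X,\mathcal{O}_X) = \field$ to interact only trivially, and one must check the self-product terms collapse correctly; (c) possible contributions from the $i=2$ summand $\hochserre_{-1}(X)$, which is present only when $d_X$ is even and $n \geq 2$; (d) the $i=3$ summand $\hochserre_{-2}(X)$, present when $d_X$ is odd and $n \geq 3$. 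For $n = 2$, the partition $3$ of $2$ does not exist, so case (d) vanishes, which is exactly the exceptional clause \eqref{equation:second-hochschild-n-2}.

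The key bookkeeping step is to identify the degree-$2$ graded piece of each relevant $\hochserre_{1-i}(X)$ via the HKR decomposition \eqref{equation:hochschild-serre-decomposition}: $\hochserre_k^j(X) \cong \bigoplus_{p+q = j + k d_X} \HH^q(X, \bigwedge^p \tangent_X \otimes \omega_X^{\otimes k})$, which is supported in the degree range $[-k d_X, 2d_X - k d_X]$. For $i=2$ (so $k = -1$), degree $j = 2$ requires $p + q = 2 - d_X$; when $d_X = 2$ this is $p+q=0$, giving exactly $\HH^0(X, \omega_X^{\vee}) = \hochserre_{-1}^2(X)$, whereas for $d_X \geq 3$ we get $p + q < 0$, hence zero—this is precisely why the $d_X \geq 3$ case loses this summand. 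For $i=3$ (so $k=-2$), degree $j=2$ needs $p+q = 2 - 2d_X$; when $d_X = 1$ this is $p+q = 0$, giving $\hochserre_{-2}^2(X)$, and for $d_X \geq 2$ it is negative, hence zero. I would also need to confirm that in the odd-$d_X$ case the $i=2$ summand is genuinely absent from \eqref{equation:hochschild-cohomology-odd-all-n} (only odd $i$ appear), so no stray $\hochserre_{-1}$ term contaminates the $d_X = 1$ formula.

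The one subtlety deserving care—and the step I expect to be the main obstacle—is the internal structure of $\Sym^2$ of the weight-one summand $\hochschild^*(X)t^1$ in low total degree. Writing $A^\bullet = \hochschild^*(X)$, the degree-$2$ component of the graded-symmetric square $\Sym^2(A^\bullet t)$ is $\Sym^2 A^0 \otimes$ (nothing, since it sits in degree $0$) plus $A^0 \otimes A^2 \oplus \bigwedge^2 A^1 \oplus \Sym^2 A^1$? No: in degree $2$ one gets $(\Sym^2 A)^2 = \bigoplus_{a+b=2}$ with the Koszul sign, which is $\Sym^2 A^1$ in the \emph{super} sense $= \bigwedge^2 A^1$ (odd degree) together with $A^0 \otimes A^2 = A^2$. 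Since $A^0 = \field$ by connectedness, this extra $A^2 = \hochschild^2(X)$ must be reconciled with the $\Sym^1$ contribution from (a); the resolution is that the bookkeeping in \cref{cor:Hochschild-cohomology-Sym} is set up so these are counted once—I would verify this by a clean dimension count using \cref{cor:HHgen} specialised to the coefficient of $s^2 t^n$, which sidesteps the sign bookkeeping entirely and makes the case split on $d_X$ and the $n=2$ anomaly transparent. Assembling (a)–(d) then yields each branch of \eqref{equation:second-hochschild} and the exceptional case \eqref{equation:second-hochschild-n-2}; the $\hochschild^1$ statement \eqref{equation:first-hochschild} is immediate since only the $i=1$, $\Sym^1$ term can land in odd total degree $1$.
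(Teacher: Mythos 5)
Your route is the paper's own: extract the degree\nobreakdash-1 and degree\nobreakdash-2 parts of the $t^n$-coefficient of \cref{cor:Hochschild-cohomology-Sym}, kill the $i=2,3$ blocks in the relevant degrees via the bounds of \cref{proposition:hkr-varieties}, use the odd-$i$ restriction when $d_X$ is odd, and observe that the $t^3$-block cannot contribute to $t^2$, which gives the exceptional case \eqref{equation:second-hochschild-n-2}; your identifications $\hochserre_{-1}^2(X)\cong\HH^0(X,\omega_X^\vee)$ for $d_X=2$, $\hochserre_{-2}^2(X)$ for $d_X=1$, and the vanishing for larger $d_X$ are exactly the paper's degree argument.

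The only point to repair is your (a)/(b) bookkeeping, and the fix is simpler than the dimension-count detour you propose. In the $t^n$-coefficient with $n\geq 2$ there is no separate $\Sym^1(\hochschild^*(X)t)$ contribution at all: every block with $i\geq 2$ has strictly positive minimal degree, so the only way to complete the $t$-power to $t^n$ is to pad with copies of $\hochschild^0(X)t=\field\,t$ inside the $i=1$ block itself. Hence the $i=1$ block enters only through $\Sym^n(\hochschild^*(X)t)$ (the partition $(1^n)$), whose degree-2 part is $\hochschild^2(X)\otimes\Sym^{n-1}\hochschild^0(X)\,\oplus\,\bigwedge^2\hochschild^1(X)\otimes\Sym^{n-2}\hochschild^0(X)\cong\hochschild^2(X)\oplus\bigwedge^2\hochschild^1(X)$; the term $A^0\otimes A^2$ you feared was an extra copy \emph{is} the unique appearance of $\hochschild^2(X)$, so there is nothing to reconcile. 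This padding is precisely how the paper writes the terms, e.g.\ $\Sym^{n-2}(\hochschild^0(X)t)\otimes\Sym^2(\hochschild^1(X)t)\cong\bigwedge^2\hochschild^1(X)$ and $\Sym^{n-3}(\hochschild^0(X)t)\otimes\hochserre_{-2}^2(X)t^3$, the latter being what forces $n\geq 3$. Your fallback via \cref{cor:HHgen} would also settle it, since the statement only asserts isomorphisms of graded vector spaces. One further small point: for \eqref{equation:first-hochschild} with $d_X=1$ the degree bound alone does not suffice, because $\hochserre_{-1}^1(X)\cong\HH^0(X,\omega_X)$ need not vanish; as in your $\hochschild^2$ discussion you must invoke that only odd $i$ occur in \eqref{equation:hochschild-cohomology-odd-all-n}, which is also what the paper does.
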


\begin{proof}
  We apply \cref{cor:Hochschild-cohomology-Sym}.
  Note that under our assumption on $X$, we have $\hochschild^0(X)=\field$.

  Observe that~$\hochserre_{-k}(X)$ is concentrated in degrees~$\geq k d_X$,
  which explains the absence of those contributions in \eqref{equation:first-hochschild}
  resp.~\eqref{equation:second-hochschild}
  if~$d_X\geq 2$ resp.~$d_X\geq 3$,
  whereas~$\hochschild^1(X)$ is always present.
  To conclude the computation for \eqref{equation:first-hochschild} if~$d_X=1$
  it suffices to observe that we exclude even~$i$,
  therefore~$\hochserre_{-1}(X)$ does not contribute.

  For similar degree reasons we observe that~$\hochschild^2(X)$
  is always present in \eqref{equation:second-hochschild}.
  Now we wish to understand the other contributions to~$\hochschild^2([\Sym^nX])t^n$.
  The first one arises as a summand of~$\Sym^n$ in \eqref{equation:hochschild-serre-even-all-n}
  and \eqref{equation:hochschild-serre-odd-all-n},
  namely
  \begin{equation}
    \Sym^{n-2}(\hochschild^0(X)t)\otimes\Sym^2(\hochschild^1(X)t)\cong\bigwedge^2\hochschild^1(X)t^{n+2},
  \end{equation}
  so it is present for all~$d_X\geq 1$ and~$n\geq 2$.
  The summand
  \begin{equation}
    \Sym^{n-2}(\hochschild^0(X)t)\otimes\hochserre_{-1}^2(X)t^2\cong\hochserre_{-1}^2(X)t^n
  \end{equation}
  of~$\Sym^{n-1}$ in \eqref{equation:hochschild-serre-even-all-n} contributes for~$d_X=2$ only,
  as~$\hochserre_{-1}^2(X)=0$ for~$d_X\geq 3$ by \cref{proposition:hkr-varieties}
  or it is excluded in \eqref{equation:hochschild-serre-odd-all-n}.

  Finally, if~$d_X=1$ and~$n\geq 3$ then for~$i=2$ we have a contribution by~$\hochserre_{-2}^2(X)$ in degree~2,
  so~$\Sym^{n-2}$ in \eqref{equation:hochschild-serre-odd-all-n} will have the summand
  \belowdisplayskip=-12pt
  \begin{equation}
    \Sym^{n-3}(\hochschild^0(X)t)\otimes\hochserre_{-2}^2(X)t^3\cong\hochserre_{-2}^2(X)t^n.
  \end{equation}
  \qedhere
\end{proof}

\subsection{Consequences for Hilbert schemes of points on surfaces}
\label{subsection:hilbert-consequences}
In the case that $X$ is a smooth projective surface we will write~$X=S$,
and we have the \emph{derived McKay correspondence}
\begin{equation}
  \label{eqn:BridgelandKingReidHaiman}
   \Psi\colon \derived^\bounded([\Sym^nS])\xrightarrow\cong \derived^\bounded(\hilbn{n}{S})
\end{equation}
of \cite{MR1824990,MR1839919} for every $n\in \IN$,
identifying the derived categories of the symmetric quotient stacks
and that of the Hilbert schemes of points. Concretely, $\Psi$ is the Fourier--Mukai transform along \begin{equation}\mathcal O_{\mathcal Z}\in \derived^\bounded_{\mathfrak S_n}(S^n\times \hilbn{n}{S})\cong \derived^\bounded([\Sym^n X]\times \hilbn{n}{S})
\end{equation}
where $\cZ\subset S^n\times \hilbn{n}{S}$ is the universal family of $\sym_n$-clusters.
Using this, we can deduce formulas for Hochschild homology with values in natural line bundles
and the Hochschild--Serre cohomology of Hilbert schemes of points.

Given a line bundle $L\in \Pic S$,
the equivariant line bundle $L^{\{n\}}$ on $S^n$ descends
to a line bundle $L^{(n)}$ on the symmetric quotient \emph{variety} $S^{(n)}=S^n/\sym_n$.
Concretely, if $\pi\colon S^n\to S^{(n)}$ denotes the quotient morphism,
we have $L^{(n)}=\pi_*(L^{\{n\}})^{\sym_n}$.
Pulling back by the Hilbert--Chow morphism $\mu\colon \hilbn{n}{S} \to S^{(n)}$
gives the \emph{natural line bundle on $\hilbn{n}{S} $ induced by $L$} :
\begin{equation}
  L_n\colonequals\mu^*L^{(n)}\in \Pic(\hilbn{n}{S}).
\end{equation}
We first prove a technical result,
of a similar nature to \cref{corollary:hochschild-serre-comparison-hilbert-scheme},
but we explicitly use Fourier--Mukai transforms,
unlike the quoted result.
We will prove it using the formalism described in \cite[\S2.1]{MR1998775}.
Recall that a Fourier--Mukai equivalence~$\Phi_\cP\colon\derived^\bounded(\cX)\to\derived^\bounded(\cY)$ given by kernel $\cP\in \derived^\bounded(\cX\times\cY)$
induces an equivalence~$\Ad_\cP\colon\derived^\bounded(\cX\times\cX)\overset{\simeq}{\to}\derived^\bounded(\cY\times\cY)$ given by the kernel $\cP\boxtimes \cP\in \derived^\bounded(\cX\times \cY\times \cX\times \cY)\cong\derived^\bounded(\cX\times \cX\times \cY\times \cY)$
We denote by~$\circ$ the convolution product of Fourier--Mukai kernels.
The kernel for the inverse of~$\Phi_\cP$
will be denoted~$\cP^{\mathrm{T}}$,
so that~$\cP^{\mathrm{T}}\circ\cP\cong\Delta_*(\mathcal{O}_\cY)$.

\begin{proposition}
  \label{proposition:hochschild-with-values-invariant}
  Let~$\cX,\cY$ be smooth and proper orbifolds,
  such that~$\cP\in\derived^\bounded(\cX\times\cY)$ induces an equivalence~$\Phi_\cP$.
  Let~$\cE\in\derived^\bounded(\cX)$
  and consider a morphism
  \begin{equation}
    \label{equation:intertwining}
    \Delta_*\cE\circ\mathcal{P}
    \to
    \cP\circ\Delta_*(\Phi_\cP(\cE))
  \end{equation}
  in~$\derived^\bounded(\cX\times\cY)$.
  Then~$\Ad_\cP$ induces a morphism
  \begin{equation}
    \label{equation:hochschild-values-morphism}
    \hochschild_*(\cX,\cE)\to\hochschild_*(\cY,\Phi_\cP(\cE)),
  \end{equation}
  which is an isomorphism if \eqref{equation:intertwining} is an isomorphism.
\end{proposition}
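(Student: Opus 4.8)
The plan is to compute both Hochschild homologies as graded vector spaces via their Fourier–Mukai descriptions, and to use the kernel $\cP$ together with the morphism \eqref{equation:intertwining} to produce a map between them. Recall that, for any smooth proper orbifold $\cX$ and any $\cE\in\derived^\bounded(\cX)$, Hochschild homology with coefficients can be written as
\begin{equation}
  \hochschild_*(\cX,\cE)\cong\RHom_{\cX\times\cX}\bigl(\Delta_*\mathcal O_\cX,\Delta_*\mathcal O_\cX\circ\Delta_*\cE\bigr)
  \cong\RHom_{\cX\times\cX}\bigl(\Delta_*\mathcal O_\cX,\Delta_*\cE\circ\Delta_*\mathcal O_\cX\bigr),
\end{equation}
so that it appears as the graded vector space of morphisms in $\derived^\bounded(\cX\times\cX)$ from the identity kernel $\Delta_*\mathcal O_\cX$ to $\Delta_*\cE$, up to precomposition/postcomposition with the identity kernel (which is harmless). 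The first step is therefore to fix once and for all such a presentation, say $\hochschild_*(\cX,\cE)=\Hom_{\derived^\bounded(\cX\times\cX)}^\bullet(\Delta_*\mathcal O_\cX,\Delta_*\cE)$ and similarly $\hochschild_*(\cY,\Phi_\cP(\cE))=\Hom_{\derived^\bounded(\cY\times\cY)}^\bullet(\Delta_*\mathcal O_\cY,\Delta_*\Phi_\cP(\cE))$.

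Next I would use that $\Ad_\cP$ is an equivalence $\derived^\bounded(\cX\times\cX)\xrightarrow{\simeq}\derived^\bounded(\cY\times\cY)$ which, under the dictionary ``objects of $\derived^\bounded(\cX\times\cX)$ $=$ Fourier–Mukai kernels'', is given by $K\mapsto \cP\circ K\circ\cP^{\mathrm T}$ (convolution on the left with $\cP$ and on the right with its inverse kernel). In particular $\Ad_\cP$ sends the identity kernel $\Delta_*\mathcal O_\cX$ to $\cP\circ\Delta_*\mathcal O_\cX\circ\cP^{\mathrm T}\cong\cP\circ\cP^{\mathrm T}\cong\Delta_*\mathcal O_\cY$, canonically, since $\cP^{\mathrm T}\circ\cP\cong\Delta_*(\mathcal O_\cY)$ (and symmetrically; one invokes here that $\cP$ induces an equivalence, so $\cP^{\mathrm T}$ is a two-sided convolution-inverse). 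Applying $\Ad_\cP$ to morphisms therefore gives an isomorphism of graded vector spaces
\begin{equation}
  \Hom^\bullet_{\derived^\bounded(\cX\times\cX)}\bigl(\Delta_*\mathcal O_\cX,\Delta_*\cE\bigr)
  \xrightarrow{\ \simeq\ }
  \Hom^\bullet_{\derived^\bounded(\cY\times\cY)}\bigl(\Delta_*\mathcal O_\cY,\ \cP\circ\Delta_*\cE\circ\cP^{\mathrm T}\bigr).
\end{equation}
The remaining task is to relate the target, $\cP\circ\Delta_*\cE\circ\cP^{\mathrm T}$, to $\Delta_*\Phi_\cP(\cE)$. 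This is precisely where the intertwining morphism \eqref{equation:intertwining} enters: convolving \eqref{equation:intertwining} on the right by $\cP^{\mathrm T}$ and using $\cP\circ\cP^{\mathrm T}\cong\Delta_*\mathcal O_\cY$ yields a morphism $\cP\circ\Delta_*\cE\circ\cP^{\mathrm T}\to\Delta_*(\Phi_\cP(\cE))$ in $\derived^\bounded(\cY\times\cY)$, hence a map
\begin{equation}
  \Hom^\bullet_{\derived^\bounded(\cY\times\cY)}\bigl(\Delta_*\mathcal O_\cY,\ \cP\circ\Delta_*\cE\circ\cP^{\mathrm T}\bigr)
  \to
  \Hom^\bullet_{\derived^\bounded(\cY\times\cY)}\bigl(\Delta_*\mathcal O_\cY,\ \Delta_*\Phi_\cP(\cE)\bigr)=\hochschild_*(\cY,\Phi_\cP(\cE)).
\end{equation}
Composing the two displays gives the asserted map \eqref{equation:hochschild-values-morphism}; if \eqref{equation:intertwining} is an isomorphism, then so is the morphism $\cP\circ\Delta_*\cE\circ\cP^{\mathrm T}\to\Delta_*\Phi_\cP(\cE)$, the induced map on $\Hom$-complexes is an isomorphism, and the whole composite is an isomorphism of graded vector spaces.

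The main obstacle, and the step deserving the most care, is bookkeeping the identifications ``$\Ad_\cP$ on morphisms $=$ two-sided convolution'', matching the side on which the identity kernel is inserted in the two presentations of $\hochschild_*$, and checking that the map one gets is independent of these choices (up to the ambiguity already present in defining Hochschild homology via Fourier–Mukai kernels). In particular one must verify that convolving the intertwiner \eqref{equation:intertwining} by $\cP^{\mathrm T}$ on the correct side lands in the right place, and that the canonical isomorphisms $\cP\circ\cP^{\mathrm T}\cong\Delta_*\mathcal O_\cY\cong\cP^{\mathrm T}\circ\cP$ (the unit/counit of the adjoint equivalence) are used consistently; all of this is routine once the conventions of \cite[\S2.1]{MR1998775} are in force, but it is the only place where a genuine argument (rather than formal manipulation) is needed. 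No new input about the geometry of $\cX$ or $\cY$ is required beyond smoothness and properness, which guarantee that $\Delta_*$ has the expected adjoints and that all the functors involved preserve boundedness.
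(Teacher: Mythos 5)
There is a genuine gap, and it sits in your very first step. The presentation you ``recall'', namely
\begin{equation}
  \hochschild_*(\cX,\cE)\cong\RHom_{\cX\times\cX}(\Delta_*\mathcal{O}_\cX,\Delta_*\cE),
\end{equation}
is not Hochschild homology with coefficients but Hochschild \emph{cohomology} with coefficients, $\hochschild^*(\cX,\cE)$, in the sense of \cref{definition:hochschild-with-coefficients}. By Grothendieck duality the homology is
\begin{equation}
  \hochschild_*(\cX,\cE)\cong\RHom_{\cX\times\cX}\bigl(\Delta_*\mathcal{O}_\cX,\Delta_*(\omega_\cX[d_\cX]\otimes\cE)\bigr)
  =\RHom_{\cX\times\cX}\bigl(\Delta_*\mathcal{O}_\cX,\Delta_*(\omega_\cX[d_\cX])\circ\Delta_*\cE\bigr),
\end{equation}
so the kernel you must conjugate by $\cP$ carries an extra factor $\Delta_*(\omega_\cX[d_\cX])$, the kernel of the Serre functor. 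Your argument never sees this factor, and consequently it never uses the one non-formal input beyond the intertwining hypothesis: that the Serre functor commutes with equivalences, i.e.\ that $\Delta_*(\omega_\cX[d_\cX])\circ\cP\cong\cP\circ\Delta_*(\omega_\cY[d_\cY])$ at the level of kernels. Without that step, after conjugating and applying the intertwiner you land in $\RHom_{\cY\times\cY}(\Delta_*\mathcal{O}_\cY,\Delta_*(\Phi_\cP(\cE)))=\hochschild^*(\cY,\Phi_\cP(\cE))$ rather than in $\hochschild_*(\cY,\Phi_\cP(\cE))$, so what you have constructed is the analogous map on Hochschild cohomologies with coefficients, not the map asserted in the proposition.

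The repair is short and turns your argument into essentially the paper's proof: work throughout with $\Delta_*(\omega_\cX[d_\cX])\circ\Delta_*\cE$, conjugate by $\cP$ (the paper writes this as $\cP^{\mathrm{T}}\circ(-)\circ\cP$, and $\Ad_\cP$ fixes the identity kernel, exactly as you argue), use the intertwining morphism \eqref{equation:intertwining} to replace $\Delta_*(\cE)\circ\cP$ by $\cP\circ\Delta_*(\Phi_\cP(\cE))$, and then insert the isomorphism $\Delta_*(\omega_\cX[d_\cX])\circ\cP\cong\cP\circ\Delta_*(\omega_\cY[d_\cY])$ coming from uniqueness of the Serre functor before cancelling $\cP^{\mathrm{T}}\circ\cP\cong\Delta_*\mathcal{O}_\cY$. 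You should also be careful that the side on which you convolve with $\cP$ and $\cP^{\mathrm{T}}$ is consistent with the chosen convention for $\Ad_\cP$ (the intertwiner has $\cP$ on a specific side), but that is the bookkeeping you already flagged; the missing Serre twist is the substantive issue.
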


\begin{proof}
  We have the sequence of morphisms
  \begin{equation}
    \begin{aligned}
      \hochschild_*(\cX,\cE)
      &=\Hom_{\derived^\bounded(\cX\times\cX)}^*(\Delta_*\mathcal{O}_\cX,\Delta_*(\omega_\cX[d_\cX]\otimes\cE)) \\
      &=\Hom_{\derived^\bounded(\cX\times\cX)}^*(\Delta_*\mathcal{O}_\cX,\Delta_*(\omega_\cX[d_\cX])\circ\Delta_*(\cE))) \\
      &\cong\Hom^*_{\derived^\bounded(\cY\times\cY)}(\Delta_*\mathcal{O}_\cY,\cP^{\mathrm{T}}\circ\Delta_*(\omega_\cX[d_\cX])\circ\Delta_*(\cE)\circ\cP) \\
      &\to\Hom^*_{\derived^\bounded(\cY\times\cY)}(\Delta_*\mathcal{O}_\cY,\cP^{\mathrm{T}}\circ\Delta_*(\omega_\cX[d_\cX])\circ\cP\circ\Delta_*(\Phi_\cP(\cE))) \\
      &\cong\Hom^*_{\derived^\bounded(\cY\times\cY)}(\Delta_*\mathcal{O}_\cY,\cP^{\mathrm{T}}\circ\cP\circ\Delta_*(\omega_\cY[d_\cY])\circ\Delta_*(\Phi_\cP(\cE))) \\
      &=\Hom^*_{\derived^\bounded(\cY\times\cY)}(\Delta_*\mathcal{O}_\cY,\Delta_*(\omega_\cY[d_\cY])\circ\Delta_*(\Phi_\cP(\cE))) \\
      &=\hochschild_*(\cY,\Phi_\cP(\cE)),
    \end{aligned}
  \end{equation}
  where on the third line we used (in the first argument) that~$\Ad_\cP$ preserves the identity functor
  and on the fifth line we used (in the second argument) that the Serre functor commutes with equivalences.
  The morphism on the fourth line is an isomorphism if \eqref{equation:intertwining} is one.
\end{proof}

\begin{remark}
  The condition \eqref{equation:intertwining} implies the following isomorphism of functors:
  \begin{equation}
    \label{equation:intertwining-rephrased}
    \Phi_\cP(-\otimes\cE)\cong\Phi_\cP(-)\otimes\Phi_\cP(\cE).
  \end{equation}
\end{remark}

\begin{corollary}
  \label{cor:DerivedInvariance}
  Let~$\cX,\cY$ be smooth and proper orbifolds,
  such that~$\cP\in\derived^\bounded(\cX\times\cY)$ induces an equivalence~$\Phi_\cP$. Then $\Phi_\cP$ induces also an isomorphism of bigraded vector spaces\footnote{In fact $\Phi_{\cP}$ induces an isomorphism of bigraded \textit{algebras}, see \cref{theorem:hochschild-serre-morita-invariant} and  \cref{theorem:agreement}.} :
  \begin{equation}
    \hochserre_\bullet^*(\cX)\cong \hochserre_\bullet^*(\cY).
  \end{equation}
\end{corollary}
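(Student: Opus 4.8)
The plan is to run the argument behind the proof of \cref{proposition:hochschild-with-values-invariant}, but with the coefficient being a convolution power of the Serre kernel rather than a general object, so that the compatibility condition~\eqref{equation:intertwining} gets replaced by the tautological fact that equivalences commute with Serre functors. Recall from \cref{definition:hochschild-serre} that
\[
  \hochserre_k(\cX)\cong\RHom_{\cX\times\cX}\bigl(\Delta_*\mathcal{O}_\cX,\ \Delta_*\omega_\cX^{\otimes k}[kd_\cX]\bigr)\qquad(k\in\IZ),
\]
and observe that $\Delta_*\mathcal{O}_\cX$ is the Fourier--Mukai kernel of $\id_{\derived^\bounded(\cX)}$, while $\Delta_*(\omega_\cX[d_\cX])$ is the kernel of the Serre functor $\serre_\cX$; consequently $\Delta_*\omega_\cX^{\otimes k}[kd_\cX]$ is the $k$-fold convolution power of the latter (for $k<0$ one uses that the Serre kernel is invertible, with inverse $\Delta_*\omega_\cX^{-1}[-d_\cX]$).

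First I would note that the equivalence $\Ad_\cP\colon\derived^\bounded(\cX\times\cX)\xrightarrow{\simeq}\derived^\bounded(\cY\times\cY)$ is the conjugation $\mathcal{K}\mapsto\cP^{\mathrm{T}}\circ\mathcal{K}\circ\cP$; since $\cP\circ\cP^{\mathrm{T}}\cong\Delta_*\mathcal{O}_\cX$ and $\circ$ is associative, $\Ad_\cP$ is monoidal for $\circ$, and it sends the unit $\Delta_*\mathcal{O}_\cX$ to $\cP^{\mathrm{T}}\circ\cP\cong\Delta_*\mathcal{O}_\cY$. Next, $\Ad_\cP(\Delta_*(\omega_\cX[d_\cX]))$ is, by construction, the kernel of $\Phi_\cP\circ\serre_\cX\circ\Phi_\cP^{-1}$; this conjugated functor is again a Serre functor for $\derived^\bounded(\cY)$, hence isomorphic to $\serre_\cY=-\otimes\omega_\cY[d_\cY]$ by uniqueness of Serre functors, so $\Ad_\cP(\Delta_*(\omega_\cX[d_\cX]))\cong\Delta_*(\omega_\cY[d_\cY])$. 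Applying the monoidal functor $\Ad_\cP$ to $k$-fold convolution powers then gives
\[
  \Ad_\cP(\Delta_*\mathcal{O}_\cX)\cong\Delta_*\mathcal{O}_\cY
  \quad\text{and}\quad
  \Ad_\cP\bigl(\Delta_*\omega_\cX^{\otimes k}[kd_\cX]\bigr)\cong\Delta_*\omega_\cY^{\otimes k}[kd_\cY]\quad\text{for all }k\in\IZ.
\]

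Finally, since $\Ad_\cP$ is an equivalence of triangulated categories it induces isomorphisms on all $\RHom$-complexes; feeding in the two isomorphisms just obtained yields, for each fixed $k$,
\[
  \hochserre_k(\cX)\cong\RHom_{\cX\times\cX}\bigl(\Delta_*\mathcal{O}_\cX,\Delta_*\omega_\cX^{\otimes k}[kd_\cX]\bigr)\cong\RHom_{\cY\times\cY}\bigl(\Delta_*\mathcal{O}_\cY,\Delta_*\omega_\cY^{\otimes k}[kd_\cY]\bigr)\cong\hochserre_k(\cY)
\]
as objects of $\derived^\bounded(\field)$. Passing to cohomology yields the asserted isomorphism of bigraded vector spaces $\hochserre_\bullet^*(\cX)\cong\hochserre_\bullet^*(\cY)$: the $\bullet$-grading (the exponent $k$) is matched because the argument is run separately for each $k$, and the $*$-grading because the isomorphism above is one of complexes and hence compatible with taking cohomology in each degree. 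Since $\Ad_\cP$ moreover respects composition of morphisms in $\derived^\bounded(\cX\times\cX)$ and $\derived^\bounded(\cY\times\cY)$, which underlies the multiplication on $\hochserre_\bullet^*$, the isomorphism is in fact one of bigraded algebras, as recorded in the footnote.

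The only point demanding genuine care is the behaviour of Fourier--Mukai calculus on products of orbifolds: that convolution of kernels on products of smooth proper Deligne--Mumford stacks is associative with unit $\Delta_*\mathcal{O}$, that the composition of two Fourier--Mukai functors is again Fourier--Mukai, and that $\Ad_\cP$ is the corresponding conjugation operation. These facts are standard in the tame characteristic-zero setting considered here and are already used implicitly in the proof of \cref{proposition:hochschild-with-values-invariant}.
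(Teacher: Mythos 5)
Your proposal is correct and is essentially the paper's own argument: the paper obtains the corollary as the special case of \cref{proposition:hochschild-with-values-invariant} where the coefficient is a power of the shifted canonical bundle, and the proof of that proposition is precisely your conjugation-by-$\Ad_\cP$ chain of $\RHom$-isomorphisms, with the kernel-level identity $\Delta_*(\omega_\cX[d_\cX])\circ\cP\cong\cP\circ\Delta_*(\omega_\cY[d_\cY])$ (uniqueness of the Serre functor) doing the work. Your inlined version, which iterates this identity to all convolution powers via monoidality of $\Ad_\cP$, is the same mechanism, just spelled out without routing through the proposition.
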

\begin{proof}
  By the uniqueness of the Serre functor (hence it commutes with equivalences),  the condition \eqref{equation:intertwining-rephrased} is satisfied with $\cE=\omega^k_\cX[kd_\cX]$ for any $k\in \mathbb{Z}$.
\end{proof}

Without the intertwining compatibility \eqref{equation:intertwining}
between the derived equivalence and the objects of coefficients, the following example that we do not necessarily get an isomorphism of the Hochschild homologies with coefficients.
\begin{example}
  Let~$E$ be an elliptic curve,
  and consider the Fourier--Mukai equivalence given by the Poincar\'e bundle~$\mathcal{P}$ on~$E\times E$.
  If~$e\in E$ is the origin,
  then~$\Phi_{\mathcal{P}}(\mathcal{O}_e)=\mathcal{O}_E$,
  i.e., the skyscraper at the origin is sent to the structure sheaf.
  Then one can compute that
  \begin{equation}
    \hochschild_*(E,\mathcal{O}_e)\cong\field[0]\oplus\field[1]
  \end{equation}
  whereas
  \begin{equation}
    \hochschild_*(E,\mathcal{O}_E)\cong\hochschild_*(E)\cong\field[-1]\oplus\field^{\oplus2}[0]\oplus\field[1].
  \end{equation}
\end{example}

\begin{proposition}
  \label{proposition:hochschild-with-values-line-bundles-invariant}
  Let~$S$ be a smooth projective surface.
  Let~$L$ be a line bundle on~$S$.
  The derived invariance and agreement from \cref{proposition:hochschild-with-values-invariant}
  induce isomorphisms
  \begin{equation}
    \hochschild_*(\hilbn{n}{S},L_n)\cong\hochschild_*([\Sym^nS],L^{\{n\}}).
  \end{equation}
\end{proposition}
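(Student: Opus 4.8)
The plan is to apply \cref{proposition:hochschild-with-values-invariant} to the derived McKay equivalence $\Psi\colon\derived^\bounded([\Sym^nS])\xrightarrow{\cong}\derived^\bounded(\hilbn{n}{S})$ from \eqref{eqn:BridgelandKingReidHaiman}, with $\cX=[\Sym^nS]$, $\cY=\hilbn{n}{S}$, $\cP=\mathcal O_{\mathcal Z}$, and $\cE=L^{\{n\}}$. What needs to be checked is that the intertwining condition \eqref{equation:intertwining} holds for this particular equivalence and this particular coefficient object, i.e.\ that $\Psi$ is compatible with tensoring by $L^{\{n\}}$ in the sense of \eqref{equation:intertwining-rephrased}:
\begin{equation}
  \Psi(-\otimes L^{\{n\}})\cong\Psi(-)\otimes L_n .
\end{equation}
Given this, \cref{proposition:hochschild-with-values-invariant} produces the asserted isomorphism $\hochschild_*(\hilbn{n}{S},L_n)\cong\hochschild_*([\Sym^nS],L^{\{n\}})$, together with the identification $\Psi(L^{\{n\}})\cong L_n$ as a byproduct.

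First I would recall that the universal $\sym_n$-cluster $\cZ\subset S^n\times\hilbn{n}{S}$ sits over both factors, and that by construction the pullback of $L^{\{n\}}$ along $\cZ\to S^n\to[\Sym^nS]$ agrees with the pullback of $L_n$ along $\cZ\to\hilbn{n}{S}$. Concretely, $L_n=\mu^*L^{(n)}$ with $L^{(n)}=\pi_*(L^{\{n\}})^{\sym_n}$, and the square relating $\cZ$, $S^n$, $\hilbn{n}{S}$, $S^{(n)}$ commutes; this gives an isomorphism $(p_{S^n})^*L^{\{n\}}|_{\cZ}\cong (p_{\hilbn{n}{S}})^*L_n|_{\cZ}$ of $\sym_n$-equivariant line bundles on $\cZ$. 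Using the projection formula along the two projections out of $\cZ$, this isomorphism upgrades to the functorial statement \eqref{equation:intertwining-rephrased}: for $\mathcal F\in\derived^\bounded([\Sym^nS])$,
\begin{equation}
  \Psi(\mathcal F\otimes L^{\{n\}})
  =Rp_{\hilbn{n}{S},*}\bigl(p_{S^n}^*(\mathcal F\otimes L^{\{n\}})\otimes\mathcal O_{\cZ}\bigr)
  \cong Rp_{\hilbn{n}{S},*}\bigl(p_{S^n}^*\mathcal F\otimes\mathcal O_{\cZ}\bigr)\otimes L_n
  =\Psi(\mathcal F)\otimes L_n .
\end{equation}
Translating this chain of isomorphisms into the language of convolution kernels gives exactly a morphism (in fact an isomorphism) $\Delta_*L^{\{n\}}\circ\mathcal O_{\cZ}\to\mathcal O_{\cZ}\circ\Delta_*(\Psi(L^{\{n\}}))$ as in \eqref{equation:intertwining}, so that \cref{proposition:hochschild-with-values-invariant} applies.

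The main obstacle is the bookkeeping around the equivariant and stacky structures: one must be careful that $L^{\{n\}}$ is an object on the \emph{stack} $[\Sym^nS]$ (equivalently a $\sym_n$-equivariant bundle on $S^n$), that $\mathcal O_{\cZ}$ is viewed inside $\derived^\bounded_{\sym_n}(S^n\times\hilbn{n}{S})\cong\derived^\bounded([\Sym^nS]\times\hilbn{n}{S})$, and that the descent of $L^{\{n\}}$ to $L^{(n)}$ followed by $\mu^*$ is precisely what appears after taking $\sym_n$-invariants on $\cZ$. Once the identification of line bundles on $\cZ$ is pinned down equivariantly, the rest is the formal projection-formula manipulation above combined with \cref{proposition:hochschild-with-values-invariant}; no genuinely new input is needed beyond the commutativity of the Hilbert--Chow / quotient diagram and flatness of the relevant maps. (We note the "agreement" mentioned in the statement is the identification, via \cref{corollary:orbifold-hkr-degree}, of the Fourier--Mukai description of Hochschild homology with coefficients used here and the HKR description used elsewhere, so no separate argument is required.)
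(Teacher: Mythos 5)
Your proposal is correct and follows essentially the same route as the paper: both reduce to the intertwining condition of \cref{proposition:hochschild-with-values-invariant} for the kernel $\mathcal{O}_{\cZ}$, and both verify it via the commutative Hilbert--Chow/quotient square, using $L^{\{n\}}\cong\pi^*L^{(n)}$ and $L_n=\mu^*L^{(n)}$ to identify the pullbacks of the two line bundles on $\cZ$. The only cosmetic difference is that you phrase the verification through the projection formula at the level of functors before translating to kernels, whereas the paper directly computes the two convolutions as $i_*p^*L^{\{n\}}$ and $i_*q^*L_n$.
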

This result does not hold more generally with $L$ replaced by a higher-rank vector bundle $F$. The main reason is that the $\symgr_n$-equivariant vector bundle $F^{\{n\}}$ on $S^n$ is in general not the pull-back of a vector bundle on $S^{(n)}$ and, if we set $F_n\colonequals\Psi(F^{\{n\}})$, the condition \eqref{equation:intertwining-rephrased} does not hold in this situation.
\begin{proof}
  By  \cref{proposition:hochschild-with-values-invariant}, it suffices to check that the Bridgeland--King--Reid--Haiman equivalence \cite{MR1824990, MR1839919} satisfies
  the intertwining property that there is an isomorphism \eqref{equation:intertwining}.
  Recall that $\Psi$ is the Fourier--Mukai transform along $\mathcal O_\cZ$, and that we have a commutative diagram
  \begin{equation}
    \label{equation:derived-mckay-diagram}
    \begin{tikzcd}
      \mathcal{Z} \arrow[r, "p"] \arrow[d, "q"] & S^n \arrow[d, "\pi"] \\
      \hilbn{n}{S} \arrow[r, "\mu"] & S^{(n)}=S^n/\sym_n
    \end{tikzcd}
  \end{equation}
  where $p$ and $q$ are the restrictions of the projections of the product $S^n\times \hilbn{n}{S}$ to its factors.
  Denoting the embedding of the universal family by $i\colon \mathcal Z\hookrightarrow S^n\times \hilbn{n}{S}$,
  a standard computation\footnote{
    For example, one can note that \cite[Proposition~2.1.6]{MR1998775} still works for orbifolds in place of varieties
    and for convolution products of kernels in place of compositions of functors.
    Then \eqref{eq:FMLieberman} are just two instances of this general statement.
  } for Fourier--Mukai kernels shows that
  \begin{equation}
    \label{eq:FMLieberman}
    \Delta_*L^{\{n\}} \circ \mathcal O_\cZ\cong i_*p^*L^{\{n\}}\quad,\quad   \mathcal O_\cZ\circ \Delta_*L_n\cong i_*q^*L_n\,.
  \end{equation}
  As $L^{\{n\}}\cong \pi^*L^{(n)}$ and $L_n=\mu^*L^{(n)}$,
  commutativity of \eqref{equation:derived-mckay-diagram} gives an isomorphsim $p^*L^{\{n\}}\cong q^*L_n$.
  Hence, we have an isomorphism between the two objects in \eqref{eq:FMLieberman},
  which is exactly what we need to conclude by \cref{proposition:hochschild-with-values-invariant}.
\end{proof}

We can now state the two main corollaries of this subsection.
\begin{corollary}
  \label{corollary:HH-Hilb-LineBundle}
 Let $S$ be a smooth projective surface defined over a field $\field$ of characteristic zero.
   For $L\in \Pic(S)$ and $n\in \mathbb{N}\setminus\{0\}$, we have
  \begin{equation}
    \hochschild_{*}(\hilbn{n}{S}, L_n)
    \cong\bigoplus_{\nu\dashv n}\left(\bigotimes_{i\geq 1} \Sym^{\lambda_i}\hochschild_{*}(S, L^{\otimes i})\right)\,,
  \end{equation}
  where $\lambda_i$ is the number of $i$'s in the partition $\nu$.
  Collecting these isomorphisms for varying $n$, we get
  \begin{equation}
    \label{eq:Lgenerating}
    \bigoplus_{n\geq 0}\hochschild_{*}(\hilbn{n}{S} , L_n)t^n
    \cong\Sym^\bullet\left(\bigoplus_{i\geq 1}\hochschild_{*}(S, L^{\otimes i})t^i\right)\,.
  \end{equation}
  The symmetric product on the right-hand side of \eqref{eq:Lgenerating}
  is graded with respect to the grading of the Hochschild homology,
  but ordinary with respect to the grading coming from the exponents of $t$.
  In terms of generating functions for the dimensions of the graded pieces, this means
  \begin{equation}
    \label{equation:hilbert-hochschid-serre-line-bundle-series}
    \sum_{n\geq 0}\sum_{i=-2n}^{2n}\dim_\field\hochschild_{i}(\hilbn{n}{S}, L_n)s^it^n
    =
    \prod_{k\geq 1}\prod_{j=-2}^{2}(1-(-s)^jt^k)^{-(-1)^j\hochschilddim_j(S, L^{\otimes k})}
  \end{equation}
\end{corollary}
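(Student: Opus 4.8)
The plan is to assemble the corollary from results already in place, with essentially no new input. First I would invoke \cref{proposition:hochschild-with-values-line-bundles-invariant}, which provides, for every $n\geq 1$, an isomorphism
\[
\hochschild_*(\hilbn{n}{S}, L_n)\cong\hochschild_*([\Sym^nS], L^{\{n\}})
\]
induced by the Bridgeland--King--Reid--Haiman equivalence together with the intertwining property. This transports the whole question to the symmetric quotient stack, where \cref{corollary:line-bundle-coefficients} applies verbatim with $X=S$: equation \eqref{equation:hodge-single-n}, after collapsing the bigrading to the single grading $*=\star-\#$ exactly as in the proof of \cref{theorem:main-hochschild}, gives the partition formula $\hochschild_*([\Sym^nS], L^{\{n\}})\cong\bigoplus_{\nu\dashv n}\bigl(\bigotimes_{i\geq 1}\Sym^{\lambda_i}\hochschild_*(S,L^{\otimes i})\bigr)$, and \eqref{eq:Lgenerating-Sym} gives the collected version over all $n$. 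Combining with the displayed isomorphism above yields the first two equations of the corollary.

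For the numerical identity I would apply \cref{cor:HHgen} with $F=L$; since the cyclic action on $L^{\otimes k}$ permuting tensor factors is trivial we have $L^{\langle k\rangle}=L^{\otimes k}$, so that corollary reads
\[
\sum_{n\geq 0}\sum_{i\in\IZ}\dim_\field\hochschild_i([\Sym^nS], L^{\{n\}})s^it^n=\prod_{k\geq 1}\prod_{j\in\IZ}(1-(-s)^jt^k)^{-(-1)^j\hochschilddim_j(S,L^{\otimes k})},
\]
and transporting the left-hand side through the isomorphism of the previous paragraph replaces $\hochschild_i([\Sym^nS], L^{\{n\}})$ by $\hochschild_i(\hilbn{n}{S}, L_n)$. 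It then remains only to truncate the index ranges. On the right, by the Hochschild--Kostant--Rosenberg decomposition for the surface $S$ (\cref{proposition:hkr-varieties}), $\hochschild_j(S,L^{\otimes k})\cong\bigoplus_{q-p=j}\HH^q(S,\Omega_S^p\otimes L^{\otimes k})$ vanishes unless $0\leq p,q\leq 2$, hence unless $-2\leq j\leq 2$, so the product over $j\in\IZ$ reduces to $j\in\{-2,\dots,2\}$; on the left, $\hilbn{n}{S}$ is smooth projective of dimension $2n$, so again by \cref{proposition:hkr-varieties} its Hochschild homology is concentrated in degrees $[-2n,2n]$, which is the stated range for $i$.

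I do not expect a genuine obstacle here: the statement is a packaging of \cref{proposition:hochschild-with-values-line-bundles-invariant}, \cref{corollary:line-bundle-coefficients} and \cref{cor:HHgen}. The only points deserving care are bookkeeping ones---checking that the single grading $*=\star-\#$ is applied consistently and that both sides use symmetric products that are graded with respect to this degree but ordinary with respect to the $t$-grading---and the elementary degree-truncation argument above, which is where the hypothesis that $S$ is a surface (rather than a variety of arbitrary dimension) enters.
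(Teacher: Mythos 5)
Your proposal is correct and follows essentially the same route as the paper: transport via \cref{proposition:hochschild-with-values-line-bundles-invariant} and the Bridgeland--King--Reid--Haiman equivalence, then apply \cref{corollary:line-bundle-coefficients} (using $L^{\langle i\rangle}=L^{\otimes i}$), with the generating-series identity obtained from \cref{lemma:graded-sym-dim} (equivalently, \cref{cor:HHgen}) together with the degree bounds coming from the Hochschild--Kostant--Rosenberg decomposition. The paper's proof is just a terser citation of the same ingredients, so no further comment is needed.
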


\begin{proof}
  This follows from \cref{corollary:line-bundle-coefficients}, \eqref{eqn:BridgelandKingReidHaiman}, and \cref{proposition:hochschild-with-values-line-bundles-invariant}. For \eqref{equation:hilbert-hochschid-serre-line-bundle-series}, we apply \cref{lemma:graded-sym-dim} to \eqref{eq:Lgenerating}.
\end{proof}


We also get a formula for the Hochschild--Serre cohomology of the Hilbert schemes of points, which was our original goal.

\begin{corollary}
  \label{corollary:hochschild-serre-hilbert-scheme}
  For every $k\in \mathbb{Z}$ and $n\in \mathbb{N}$, we have
  \begin{equation}
    \label{equation:hilbert-hochschild-serre-even}
    \hochserre_{k}(\hilbn{n}{S} )
    \cong
    \bigoplus_{\nu\dashv n}\left(\bigotimes_{i\geq 1} \Sym^{\lambda_i}\hochserre_{1+(k-1)i}(S)\right)\,.
  \end{equation}
  Collecting the formula for varying $n$, but fixed $k$, we get
  \begin{equation}
    \label{equation:hilbert-hochschild-serre-even-all-n}
    \bigoplus_{n\geq 0}\hochserre_{k}(\hilbn{n}{S} )t^n
    \cong
    \Sym^\bullet\left(\bigoplus_{i\geq 1}\hochserre_{1+(k-1)i}(S)t^i\right)\,.
  \end{equation}
\end{corollary}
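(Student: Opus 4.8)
The plan is to deduce \cref{corollary:hochschild-serre-hilbert-scheme} from \cref{corollary:hochschild-serre} by transporting the latter across the derived McKay correspondence. All of the needed ingredients are already in place: the Fourier--Mukai equivalence $\Psi\colon\derived^\bounded([\Sym^nS])\xrightarrow{\cong}\derived^\bounded(\hilbn{n}{S})$ of \eqref{eqn:BridgelandKingReidHaiman}, and the derived invariance of Hochschild--Serre cohomology for smooth proper orbifolds, \cref{cor:DerivedInvariance}.

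First I would note that $[\Sym^nS]$ is a smooth proper orbifold and $\hilbn{n}{S}$ a smooth projective variety (hence also an orbifold in the sense of this paper), so \cref{cor:DerivedInvariance} applies to $\Psi$ and yields $\hochserre_k(\hilbn{n}{S})\cong\hochserre_k([\Sym^nS])$ as graded vector spaces, for every $k\in\IZ$. Next I would invoke \cref{corollary:hochschild-serre} with $X=S$: since $d_S=2$ is even, $(k-1)d_S$ is even for every integer $k$, so case~(i) of that corollary always applies, and \eqref{equation:hochschild-serre-even} combined with the previous isomorphism is exactly \eqref{equation:hilbert-hochschild-serre-even}. Finally, to obtain the generating-series form \eqref{equation:hilbert-hochschild-serre-even-all-n} I would reassemble the partition sums over all $n$ using the multiplicativity \eqref{eqn:TotalSymOfSum} of $\Sym^\bullet$ over direct sums, exactly as in the passage from \cref{theorem:main} to \cref{corollary:main} (or simply quote \eqref{equation:hochschild-serre-even-all-n} with $X=S$).

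The one point that needs a word is that \cref{corollary:hochschild-serre} is stated only for \emph{positive} $k$, whereas here $k$ ranges over all of $\IZ$. I do not expect this to be a genuine obstacle: the proof of \cref{corollary:hochschild-serre} uses only \cref{theorem:main-hochschild}, which holds for an arbitrary coefficient object $F\in\derived^\bounded(X)$, together with \cref{lemma:CanonicalOfSym,lemma:CanonicalCyclic}, and none of these requires $k>0$. Taking $F=\omega_S^{\otimes k-1}[(k-1)d_S]$ for arbitrary $k\in\IZ$ therefore reproduces the formula verbatim; since $d_S$ is even, \cref{lemma:CanonicalCyclic} always lands in its first branch, so $\hochschild_*(S,F^{\langle i\rangle})\cong\hochserre_{1+(k-1)i}(S)$ for all $i\geq1$, and the argument above goes through unchanged. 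Thus the only substantive content has already been established upstream --- in \cref{theorem:main-hochschild} via the orbifold Hochschild--Kostant--Rosenberg theorem \cref{proposition:orbifold-hkr}, and in the derived-invariance statement \cref{cor:DerivedInvariance} --- and what remains is bookkeeping. Alternatively, one could sidestep even this remark by applying \cref{proposition:hochschild-with-values-invariant} directly to $\Psi$ with coefficient $\omega_{[\Sym^nS]}^{\otimes k}[2nk]$, whose intertwining hypothesis \eqref{equation:intertwining-rephrased} holds automatically because Serre functors commute with equivalences.
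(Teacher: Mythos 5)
Your proposal is correct and matches the paper's own argument, which likewise deduces the statement from \cref{corollary:hochschild-serre}, the Bridgeland--King--Reid--Haiman equivalence \eqref{eqn:BridgelandKingReidHaiman}, and the derived invariance of Hochschild--Serre cohomology in \cref{cor:DerivedInvariance}. Your extra remark justifying the extension from positive $k$ to all $k\in\IZ$ (via \cref{theorem:main-hochschild} and \cref{lemma:CanonicalOfSym,lemma:CanonicalCyclic}, none of which needs $k>0$) is a point the paper glosses over, and it is handled correctly.
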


\begin{proof}
  This follows from  \cref{corollary:hochschild-serre}, \eqref{eqn:BridgelandKingReidHaiman}, and \cref{cor:DerivedInvariance}.
\end{proof}

\subsection{On the Fock space structure}
\label{subsection:fock-space}
In this section we work over~$\field=\mathbb{C}$.
Let $V$ be a vector space together with a bilinear form $\langle-,-\rangle$.
One associates the \emph{Heisenberg algebra} $H_V$ to the pair~$(V,\langle-,-\rangle)$.
It is generated by elements $p_\alpha^{(n)}$ and $q_{\alpha}^{(n)}$ for $\alpha\in V$ and $n\in\mathbb{N}$.
The $p$-generators commute among each other,
as do the $q$-generators.
In addition, there are more complicated relations between mixed two-letter words in the generators;
see, e.g., \cite[\S2.2]{2105.13334v3} for details.

Note that we do not require $\langle-,-\rangle$ to be symmetric,
and the Mukai pairing on Hochschild homology is indeed not symmetric in general.
This means that we have a well-defined Heisenberg algebra $H_V$,
but not necessarily the Heisenberg Lie algebra $\mathfrak h_V$; see again \cite[\S2.2]{2105.13334v3} for details.

The algebra $H_V$ has an irreducible representation called the \emph{Fock space}.
It is given by the quotient $H_V/I$ where $I$ is the left ideal generated by the $q$-generators.
The Fock space can be identified with the symmetric power of an infinite direct sum of copies of $V$ via the isomorphism of vector spaces
\begin{equation}
  \label{eq:Fockiso}
  F_V=H_V/I\xrightarrow{\simeq}\Sym^\bullet\left(\bigoplus_{i\ge 1} V t^i\right):\overline{ p_{\alpha_1}^{(\nu_1)} p_{\alpha_2}^{(\nu_2)}\cdots p_{\alpha_\ell}^{(\nu_\ell)}}\mapsto (\alpha_1t^{\nu_1})\cdot (\alpha_2t^{\nu_2})\cdots (\alpha_\ell t^{\nu_\ell})
\end{equation}

The main examples for us are:
\begin{itemize}
  \item $V=\Ho^*(S,\IC)$  the singular cohomology of a smooth projective surface together with the cup product pairing.
    By the work of G\"ottsche \cite{MR1032930}, Nakajima \cite{MR1441880}, and Grojnowski \cite{MR1386846},
  the cohomology~$\bigoplus_{n\geq 0}\HH^{*}(\hilbn{n}{S}, \IC)t^n$
  can be identified with the Fock space $F_{\HH^*(S, \IC)}$.
  \item $V=\hochschild_*(X)$ the Hochschild homology of a smooth projective variety $X$ equipped with the Mukai pairing. By \cref{corollary:Hochschild-homology-Fock},
   $ \bigoplus_{n\geq 0}\hochschild_{*}([\Sym^nX])t^n$ can be identified with the Fock space $F_{\hochschild_{*}(X)}$.
\end{itemize}
These two examples are related:
by the Hochschild--Kostant--Rosenberg isomorphism,
we have the isomorphism of ungraded vector spaces~$\hochschild_{*}(\hilbn{n}{S})\cong \HH^*(\hilbn{n}{S})$.
Hence, after summing over $n$,
we obtain the isomorphism
\begin{equation}
  F_{\HH^*(S, \IC)}\cong F_{\hochschild_*(S)}.
\end{equation}
Finally, the derived McKay correspondence of Bridgeland--King--Reid \cite{MR1824990} and Haiman \cite{MR1839919}
gives equivalences $\derived^\bounded(\hilbn{n}{S})\cong\derived^\bounded([\Sym^nS])$
which induce isomorphisms $\hochschild_{*}(\hilbn{n}{S})\cong\hochschild_{*}([\Sym^nS])$ for every $n$,
as discussed in \cref{subsection:hilbert-consequences}.

We formulate the following generalization of \cref{corollary:Hochschild-homology-Fock} as a conjecture.
In \cref{appendix:hochschild-serre-dg} we recall the definition of Hochschild homology in this generality
(and generalize it to Hochschild--Serre cohomology for smooth and proper dg categories,
which we will use shortly).
When~$\mathcal{T}$ is a dg category,
then~$\Sym^n\mathcal{T}$ denotes its~$n$th symmetric power in the sense of Ganter--Kapranov \cite{MR3177367}.
If~$\mathcal{T}$ is smooth and proper, then so is~$\Sym^n\mathcal{T}$.
\begin{conjecture}
  \label{conjecture:HH-Fock-dg}
  Let $\mathcal{T}$ be a smooth proper dg category.
  Then we have
  \begin{equation}
    \label{eq:Hhomsym-dg}
    \bigoplus_{n\geq 0}\hochschild_{*}(\Sym^n\mathcal T)t^n
    \cong
    \Sym^\bullet\left(\bigoplus_{i\geq 1}\hochschild_{*}(\mathcal T)t^i\right)\,.
  \end{equation}
\end{conjecture}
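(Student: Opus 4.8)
The plan is to transpose the proof of \cref{theorem:main} from the geometric setting to smooth proper dg categories, replacing the orbifold Hochschild--Kostant--Rosenberg isomorphism (\cref{proposition:orbifold-hkr}) by a ``noncommutative'' twisted-sector decomposition of Hochschild homology for quotients by finite groups. Recall that, for $\mathcal{T}$ a smooth proper dg category, the Ganter--Kapranov symmetric power $\Sym^n\mathcal{T}$ \cite{MR3177367} is (Morita equivalent to) the $\symgr_n$-equivariantization of $\mathcal{T}^{\otimes n}$, that is, $\Perf$ of the smash product $\mathcal{T}^{\otimes n}\#\symgr_n$, with $\symgr_n$ permuting the tensor factors; this is the dg counterpart of $\derived^\bounded([\Sym^nX])\cong\derived^\bounded(X^n)^{\symgr_n}$. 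In particular $\hochschild_*(\Sym^n\mathcal{T})\cong\hochschild_*(\mathcal{T}^{\otimes n}\#\symgr_n)$, and since everything below takes place in the derived category of $\field$-complexes, the Koszul signs governing the graded symmetric powers on the right-hand side of \eqref{eq:Hhomsym-dg} are produced automatically.

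The first step is to establish, for any finite group $G$ acting on a smooth proper dg category $\mathcal{D}$ over a field of characteristic zero, the twisted-sector decomposition
\begin{equation}
  \hochschild_*\bigl(\mathcal{D}\#G\bigr)
  \cong
  \Bigl(\bigoplus_{g\in G}\hochschild_*(\mathcal{D},{}_g\mathcal{D})\Bigr)^{G}
  \cong
  \bigoplus_{[g]\in\Conj(G)}\hochschild_*(\mathcal{D},{}_g\mathcal{D})^{\centraliser(g)},
\end{equation}
where ${}_g\mathcal{D}$ is the $\mathcal{D}$-bimodule twisted by the autoequivalence $g$, and $G$ permutes the summands through the conjugation equivalences ${}_g\mathcal{D}\simeq{}_{hgh^{-1}}\mathcal{D}$. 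For honest algebras this is classical (compare the decompositions for crossed product algebras in \cite{MR2058782}); the content here is a version functorial enough in the twisting element $g$ to record the residual action. One obtains it by resolving $\mathcal{D}\#G$ as a bimodule over itself, splitting the bar resolution as $\bigoplus_{g\in G}$ of $g$-twisted bar complexes, and using exactness of $(-)^G$ in characteristic zero --- exactly the pattern of the proof of \cref{proposition:orbifold-hkr}.

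Next, apply this with $\mathcal{D}=\mathcal{T}^{\otimes n}$ and $G=\symgr_n$. For $g\in\symgr_n$, a Künneth argument for Hochschild homology with coefficients in an external tensor product of bimodules gives $\hochschild_*(\mathcal{T}^{\otimes n},{}_g\mathcal{T}^{\otimes n})\cong\bigotimes_{o\in\orbits(g)}\hochschild_*(\mathcal{T}^{\otimes\#o},{}_{\sigma_{\#o}}\mathcal{T}^{\otimes\#o})$, with $\sigma_i$ the cyclic permutation of $i$ factors. The key lemma --- the categorical shadow of $\mathcal{O}_X^{\langle i\rangle}\cong\mathcal{O}_X$ from \cref{notation-cyclic} --- is the cyclic-invariance identification
\begin{equation}
  \hochschild_*\bigl(\mathcal{T}^{\otimes i},{}_{\sigma_i}\mathcal{T}^{\otimes i}\bigr)\cong\hochschild_*(\mathcal{T})\qquad(i\geq1),
\end{equation}
which follows from the edgewise-subdivision argument of B\"okstedt--Hsiang--Madsen: the $\sigma_i$-twisted cyclic bar complex of $\mathcal{T}^{\otimes i}$ is the $i$-fold edgewise subdivision of the cyclic bar complex of $\mathcal{T}$, hence has the same realisation and the same homology. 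Combining these three inputs and bookkeeping exactly as in the proof of \cref{theorem:main} --- passing to conjugacy classes, using \cref{lemma:centraliser} to split $\centraliser(g)\cong\bigl(\prod_o\mu_o\bigr)\rtimes\symgr_\lambda$ (the $\mu_o$-action being absorbed into the cyclic lemma), and taking $\symgr_\lambda$-invariants --- yields $\hochschild_*(\Sym^n\mathcal{T})\cong\bigoplus_{\nu\dashv n}\bigl(\bigotimes_{i\geq1}\Sym^{\lambda_i}\hochschild_*(\mathcal{T})\bigr)$, and summing over $n$ as in \cref{corollary:main} gives \eqref{eq:Hhomsym-dg}; note that no parity case distinction as in \cref{corollary:hochschild-serre} arises here, since we are in the untwisted case. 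The main obstacle is the first step in the generality required: for skew group algebras over a field the twisted decomposition is well documented, but one needs it for smooth proper dg categories and, crucially, with enough functoriality in the twisting element that the conjugation and residual-$\symgr_\lambda$ actions are correctly tracked; one must also confirm that the Ganter--Kapranov symmetric power really is the naive permutation equivariantization, with no extra twisting cocycle beyond the Koszul signs already encoded in $\mathcal{T}^{\otimes n}$. Granting this, the remaining steps are formal, and the upshot is that \cref{conjecture:HH-Fock-dg} is reduced to a purely categorical analogue of the orbifold Hochschild--Kostant--Rosenberg isomorphism underlying the whole paper.
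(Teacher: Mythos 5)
You are proposing a proof of a statement that the paper itself does not prove: \cref{conjecture:HH-Fock-dg} is left as a conjecture, supported only by the geometric case (\cref{corollary:Hochschild-homology-Fock}, via the orbifold Hochschild--Kostant--Rosenberg theorem) and by its stability under semiorthogonal decompositions (\cref{prop:HH-Fock-dg}, \cref{corollary:Kuznetsov-Component}). So there is no proof in the paper to compare with; the question is whether your argument closes the conjecture, and as written it does not --- it is a reduction whose essential inputs are asserted rather than proved, as you yourself acknowledge.

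The gaps are concrete. First, the twisted-sector decomposition $\hochschild_*(\mathcal{D}\#G)\cong\bigl(\bigoplus_{g\in G}\hochschild_*(\mathcal{D},{}_g\mathcal{D})\bigr)^G$ for a finite group acting on a smooth proper dg category, with enough naturality in $g$ to track both the conjugation action and the residual centraliser actions, is precisely the dg analogue of \cref{proposition:orbifold-hkr}; it is the hard input, not a formality. The references you lean on treat skew group \emph{algebras}, and the paper needed Arinkin--C\u{a}ld\u{a}raru--Hablicsek exactly because the non-affine case is not a routine extension; a categorical version at this level of functoriality would itself be a result requiring proof. Second, and more delicately, the cyclic-invariance step is incomplete: even granting the edgewise-subdivision identification $\hochschild_*(\mathcal{T}^{\otimes i},{}_{\sigma_i}\mathcal{T}^{\otimes i})\cong\hochschild_*(\mathcal{T})$ for dg categories, the centraliser of $g$ contains the cyclic factors $\mu_o$ of \cref{lemma:centraliser}, and you must compute their residual action on each twisted sector \emph{under that identification} and show its invariants are all of $\hochschild_*(\mathcal{T})$. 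In the paper's geometric argument this is exactly where the objects $F^{\langle i\rangle}$ of \cref{notation-cyclic} enter, and where the sign $\sgn(\sigma_i)=(-1)^{i-1}$ produces the parity phenomena of \cref{lemma:CanonicalCyclic} and \cref{corollary:hochschild-serre}(ii); your statement that ``no parity case distinction arises since we are in the untwisted case'' is an assertion of precisely the point that needs proof. The $\mu_i$-action is nontrivial on the chain level, and its triviality on homology (say, because it extends to the Connes circle action, or by a direct characteristic-zero homotopy) has to be established with the correct Koszul signs before the $\symgr_\lambda$-invariants yield graded symmetric powers rather than a mixture of symmetric and exterior powers. Third, the identification of the Ganter--Kapranov $\Sym^n\mathcal{T}$ with $\Perf(\mathcal{T}^{\otimes n}\#\symgr_n)$ with no extra cocycle is also only asserted. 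With these points supplied, your route would be a genuine (and attractive) alternative to the paper's evidence, which proceeds quite differently (additivity over semiorthogonal decompositions); without them, the proposal is a programme, not a proof.
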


\begin{proposition}
  \label{prop:HH-Fock-dg}
  Let $\mathcal{T}$ be a smooth proper dg category.
  Let $\mathcal{T}=\langle \mathcal{A}_1, \cdots, \mathcal{A}_m\rangle$ be a semiorthogonal decomposition.
  If \cref{conjecture:HH-Fock-dg} holds for all categories except one among $\mathcal{T}, \mathcal{A}_1, \cdots, \mathcal{A}_m$, then it also holds for the remaining one.
\end{proposition}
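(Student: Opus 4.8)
The plan is to show that both sides of \eqref{eq:Hhomsym-dg} are \emph{multiplicative} with respect to semiorthogonal decompositions, and then to conclude by a cancellation argument on generating series. For a smooth proper dg category $\mathcal{C}$ write $Z_{\mathcal{C}}\colonequals\bigoplus_{n\geq 0}\hochschild_*(\Sym^n\mathcal{C})\,t^n$ and $W_{\mathcal{C}}\colonequals\Sym^\bullet\bigl(\bigoplus_{i\geq 1}\hochschild_*(\mathcal{C})\,t^i\bigr)$, so that \cref{conjecture:HH-Fock-dg} for $\mathcal{C}$ is exactly the statement $Z_{\mathcal{C}}\cong W_{\mathcal{C}}$ as bigraded vector spaces. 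The right-hand side is multiplicative: since Hochschild homology is additive under semiorthogonal decompositions, $\hochschild_*(\mathcal{T})\cong\bigoplus_{j=1}^m\hochschild_*(\mathcal{A}_j)$, and feeding this into \eqref{eqn:TotalSymOfSum} (the graded $\Sym^\bullet$ sends direct sums to tensor products) yields $W_{\mathcal{T}}\cong\bigotimes_{j=1}^m W_{\mathcal{A}_j}$.

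The crux is the corresponding multiplicativity of the left-hand side, namely $Z_{\mathcal{T}}\cong\bigotimes_{j=1}^m Z_{\mathcal{A}_j}$. I would deduce this from the compatibility of the Ganter--Kapranov symmetric power \cite{MR3177367} with semiorthogonal decompositions: the tensor power $\mathcal{T}^{\otimes n}$ carries the semiorthogonal decomposition with components $\bigotimes_{\ell=1}^n\mathcal{A}_{i_\ell}$ indexed by maps $i\colon\{1,\dots,n\}\to\{1,\dots,m\}$, the group $\mathfrak{S}_n$ permutes these components, and the orbit of $\bigotimes_j\mathcal{A}_j^{\otimes a_j}$ has stabiliser $\prod_j\mathfrak{S}_{a_j}$; passing to the $\mathfrak{S}_n$-equivariant structure and collecting contributions orbit by orbit produces a semiorthogonal decomposition $\Sym^n\mathcal{T}=\bigl\langle\,\bigotimes_{j=1}^m\Sym^{a_j}\mathcal{A}_j\,\bigr\rangle$ indexed by the tuples $(a_1,\dots,a_m)$ with $a_1+\dots+a_m=n$. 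Applying additivity of $\hochschild_*$ under semiorthogonal decompositions together with the Künneth isomorphism $\hochschild_*(\mathcal{C}\otimes\mathcal{D})\cong\hochschild_*(\mathcal{C})\otimes\hochschild_*(\mathcal{D})$ then gives $\hochschild_*(\Sym^n\mathcal{T})\cong\bigoplus_{a_1+\dots+a_m=n}\bigotimes_j\hochschild_*(\Sym^{a_j}\mathcal{A}_j)$, i.e.\ $Z_{\mathcal{T}}\cong\bigotimes_j Z_{\mathcal{A}_j}$. I expect this step to be the main technical obstacle; in particular the bookkeeping of $\mathfrak{S}_n$-equivariant structures, including the Koszul signs that make the \emph{graded} $\Sym^\bullet$ appear on the right-hand side of \eqref{eq:Hhomsym-dg} rather than the naive symmetric power, needs to be set up carefully via the Ganter--Kapranov formalism. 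Everything else is formal.

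Finally, the cancellation. Each $\mathcal{A}_j$, being an admissible subcategory of the smooth and proper $\mathcal{T}$, is again smooth and proper, hence so are all the $\Sym^n\mathcal{A}_j$ and $\Sym^n\mathcal{T}$; therefore $Z$ and $W$ of $\mathcal{T}$ and of each $\mathcal{A}_j$ are finite-dimensional in every bidegree, with generating series in $\IZ[s,s^{-1}][[t]]$. If the category excluded from the hypothesis is $\mathcal{T}$ itself, the conclusion is immediate: $Z_{\mathcal{T}}\cong\bigotimes_j Z_{\mathcal{A}_j}\cong\bigotimes_j W_{\mathcal{A}_j}\cong W_{\mathcal{T}}$. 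Otherwise the excluded category is some $\mathcal{A}_{j_0}$; put $C\colonequals\bigotimes_{j\neq j_0}Z_{\mathcal{A}_j}$. Combining $Z_{\mathcal{T}}\cong W_{\mathcal{T}}$, the isomorphisms $Z_{\mathcal{A}_j}\cong W_{\mathcal{A}_j}$ for $j\neq j_0$, and the two multiplicativity statements gives $Z_{\mathcal{A}_{j_0}}\otimes C\cong W_{\mathcal{A}_{j_0}}\otimes C$. The generating series of $C$ has constant term $1$ (the $t^0$-part of each $Z_{\mathcal{A}_j}$ is $\hochschild_*(\Sym^0\mathcal{A}_j)\cong\field$, concentrated in bidegree $(0,0)$), so $\mathbb{E}_C$ is a unit of $\IZ[s,s^{-1}][[t]]$ and may be cancelled, yielding $\mathbb{E}_{Z_{\mathcal{A}_{j_0}}}=\mathbb{E}_{W_{\mathcal{A}_{j_0}}}$; since both bigraded vector spaces are finite-dimensional in each bidegree, this forces $Z_{\mathcal{A}_{j_0}}\cong W_{\mathcal{A}_{j_0}}$, which is \cref{conjecture:HH-Fock-dg} for the remaining category.
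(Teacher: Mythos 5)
Your argument is essentially the paper's: both sides of \eqref{eq:Hhomsym-dg} are shown to be multiplicative under semiorthogonal decompositions (additivity of $\hochschild_*$ plus K\"unneth on one side, $\Sym^\bullet$ of a direct sum on the other), and the conclusion follows because the relevant generating series have constant term $1$ and can be cancelled. The one step you yourself flag as the main technical obstacle --- that $\Sym^n\mathcal{T}$ admits a semiorthogonal decomposition with blocks $\bigotimes_{j}\Sym^{a_j}\mathcal{A}_j$ over $a_1+\dots+a_m=n$ --- is exactly the external input the paper relies on (it cites Koseki's theorem for a two-term decomposition and reduces to $m=2$ by induction on the number of components); your orbit/stabiliser sketch in the Ganter--Kapranov formalism is the right heuristic, but descending a permuted semiorthogonal decomposition to the $\mathfrak{S}_n$-equivariant category is a genuine theorem, so to complete the argument you should invoke that result (or prove it) rather than leave it as a sketch.
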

\begin{proof}
  By induction, we only need to deal with the case $m=2$. Assume $\mathcal{T}=\langle \mathcal{A}, \mathcal{B}\rangle$.
  By \cite[Theorem 1.1]{koseki2023symmetric}, we have a semiorthogonal decomposition for any $n\geq 0$,
  \begin{equation}
    \Sym^n\mathcal{T}=\langle \Sym^n\mathcal{A}, \Sym^{n-1}\mathcal{A}\otimes\mathcal{B}, \cdots, \Sym^n\mathcal{B}\rangle.
  \end{equation}
  By additivity \cite[\S1.5, \S1.12]{MR1667558} and the K\"unneth formula \cite[Theorem~2.8]{MR3020737} for Hochschild homology,
  we get
  \begin{equation}
    \hochschild_{*}(\Sym^n\mathcal{T})\cong \bigoplus_{i=0}^n\hochschild_{*}(\Sym^{i}\mathcal{A})\otimes  \hochschild_{*}(\Sym^{n-i}\mathcal{B}).
  \end{equation}
  Taking the sum over $n$, we get
  \begin{equation}
    \label{eqn:HH-Sym-dg-Additivity}
    \bigoplus_{n\geq 0} \hochschild_{*}(\Sym^n\mathcal{T})t^n\cong   \left(\bigoplus_{n\geq 0} \hochschild_{*}(\Sym^n\mathcal{A})t^n\right) \otimes \left(\bigoplus_{n\geq 0} \hochschild_{*}(\Sym^n\mathcal{B})t^n\right).
  \end{equation}
  On the other hand, again by the additivity of $\hochschild_{*}$, we have
  \begin{equation}
    \label{eqn:Foch-Additivity}
    F_{\hochschild_{*}(\mathcal{T})}= \Sym^\bullet\left(\bigoplus_{i\geq 1}\hochschild_{*}(\mathcal{T})t^i\right)\cong  \Sym^\bullet\left(\bigoplus_{i\geq 1}(\hochschild_{*}(\mathcal{A})t^i\oplus \hochschild_{*}(\mathcal{B})t^i)\right)=  F_{\hochschild_{*}(\mathcal{A})}\otimes F_{\hochschild_{*}(\mathcal{B})}.
  \end{equation}
  By comparing \eqref{eqn:HH-Sym-dg-Additivity} and \eqref{eqn:Foch-Additivity},
  it is clear that if $\mathcal{A}$ and $\mathcal{B}$ satisfy \cref{conjecture:HH-Fock-dg} then so does $\mathcal{T}$.
  Supposing $\mathcal{A}$ and $\mathcal{T}$ satisfy \cref{conjecture:HH-Fock-dg},
  then since the generating series $\sum_{n\geq 0} \dim \hochschild_{*}(\Sym^n\mathcal{A})t^n$
  is an invertible power series (because the constant coefficient is~1),
  $\bigoplus_{n\geq 0} \hochschild_{*}(\Sym^n\mathcal{B})t^n$ and $F_{\hochschild_{*}(\mathcal{B})}$
  have the same generating series,
  hence we have an isomorphism of graded vector spaces as in \cref{conjecture:HH-Fock-dg} for $\mathcal{B}$.
\end{proof}

The following consequence asserts that \cref{conjecture:HH-Fock-dg} holds for
many dg categories arising as the so-called Kuznetsov component of a variety.
\begin{corollary}
  \label{corollary:Kuznetsov-Component}
  Let $X$ be a smooth proper variety defined over a field $\field$ of characteristic zero.
  Suppose there is an exceptional collection $E_1, \ldots, E_m$ in $\derived^\bounded(X)$.
  Denote by $\mathcal{A}_X$ the (left or right) orthogonal of $\langle E_1, \ldots, E_m\rangle$.
  Then \cref{conjecture:HH-Fock-dg} holds for $\mathcal{A}_X$.
\end{corollary}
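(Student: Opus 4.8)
The plan is to exhibit $\mathcal{A}_X$ as one component of a semiorthogonal decomposition of $\derived^\bounded(X)$ all of whose other components are copies of $\derived^\bounded(\field)$, and then to conclude via \cref{prop:HH-Fock-dg} from the cases $\mathcal{T}=\derived^\bounded(X)$ and $\mathcal{T}=\derived^\bounded(\field)$, both of which are already covered by \cref{corollary:Hochschild-homology-Fock}.

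First I would set up the semiorthogonal decomposition. Since each $E_i$ is exceptional, $\RHom_X(E_i,E_i)\cong\field$ is concentrated in degree $0$, so the admissible subcategory $\langle E_i\rangle\subseteq\derived^\bounded(X)$ it generates is equivalent to $\Perf(\field)=\derived^\bounded(\field)$. An exceptional collection being semiorthogonal by definition, we have $\langle E_1,\dots,E_m\rangle=\langle\langle E_1\rangle,\dots,\langle E_m\rangle\rangle$, and together with its (left or right) orthogonal $\mathcal{A}_X$ this yields a semiorthogonal decomposition of $\derived^\bounded(X)$ with $m+1$ pieces, namely $\mathcal{A}_X$ and $\langle E_1\rangle,\dots,\langle E_m\rangle$ (in an order depending on which orthogonal is taken, which is immaterial below). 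As $\derived^\bounded(X)$ is smooth and proper, so is each of its admissible components; in particular $\mathcal{A}_X$ is smooth and proper, so that \cref{conjecture:HH-Fock-dg} is meaningful for every summand and for its symmetric powers.

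Next I would check \cref{conjecture:HH-Fock-dg} in the two ``known'' cases. For $\mathcal{T}=\derived^\bounded(X)$ it is \cref{corollary:Hochschild-homology-Fock}, once one uses that the Ganter--Kapranov symmetric power $\Sym^n\derived^\bounded(X)$ is equivalent to $\derived^\bounded([\Sym^nX])$ --- which is exactly what makes \cref{conjecture:HH-Fock-dg} a generalisation of \cref{corollary:Hochschild-homology-Fock}. For $\mathcal{T}=\derived^\bounded(\field)$ I would apply the same input to the smooth proper variety $\Spec\field$: then $[\Sym^n\Spec\field]$ is the classifying stack $\mathrm{B}\symgr_n$, so $\Sym^n\derived^\bounded(\field)\cong\derived^\bounded(\mathrm{B}\symgr_n)$, and \cref{corollary:Hochschild-homology-Fock} gives
\[
  \bigoplus_{n\geq 0}\hochschild_*\bigl(\Sym^n\derived^\bounded(\field)\bigr)\,t^n
  \cong
  \Sym^\bullet\Bigl(\bigoplus_{i\geq 1}\hochschild_*\bigl(\derived^\bounded(\field)\bigr)\,t^i\Bigr),
\]
which is \eqref{eq:Hhomsym-dg} for $\derived^\bounded(\field)$. (Both sides are concentrated in degree $0$ with $t^n$-coefficient of dimension equal to the number of partitions of $n$, since $\derived^\bounded(\mathrm{B}\symgr_n)$ is semisimple in characteristic zero.)

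Finally I would invoke \cref{prop:HH-Fock-dg} for the semiorthogonal decomposition above: \cref{conjecture:HH-Fock-dg} holds for the ambient category $\derived^\bounded(X)$ and for each of the $m$ components $\langle E_i\rangle\cong\derived^\bounded(\field)$, hence for all summands except possibly $\mathcal{A}_X$; therefore it holds for $\mathcal{A}_X$ too, which is the assertion. The only genuine ingredient, which I would cite rather than reprove, is the comparison $\Sym^n\derived^\bounded(Y)\cong\derived^\bounded([\Sym^nY])$ for a variety $Y$ (used for $Y=X$ and $Y=\Spec\field$); the rest is formal manipulation of semiorthogonal decompositions together with the already-established \cref{corollary:Hochschild-homology-Fock} and \cref{prop:HH-Fock-dg}.
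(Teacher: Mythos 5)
Your proposal is correct and follows essentially the same route as the paper: decompose $\derived^\bounded(X)$ into $\mathcal{A}_X$ and the $m$ exceptional pieces $\langle E_i\rangle\cong\derived^\bounded(\Spec\field)$, verify \cref{conjecture:HH-Fock-dg} for $\derived^\bounded(X)$ and for the point via \cref{corollary:Hochschild-homology-Fock} (implicitly using $\Sym^n\derived^\bounded(Y)\cong\derived^\bounded([\Sym^nY])$), and conclude with \cref{prop:HH-Fock-dg}. The extra details you supply for the case $\mathcal{T}=\derived^\bounded(\field)$ are a harmless elaboration of what the paper leaves implicit.
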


\begin{proof}
  We can apply \cref{prop:HH-Fock-dg} since \cref{conjecture:HH-Fock-dg} holds for $\derived^\bounded(X)$ by \cref{corollary:Hochschild-homology-Fock} and for each $\langle E_i \rangle\cong \derived^\bounded(\Spec \field)$.
\end{proof}

\begin{corollary}
  \label{cor:phantom}
  Let $\mathcal{T}$ be a smooth proper dg category.
  Let $\mathcal{T}=\langle \mathcal{A}_1, \cdots, \mathcal{A}_m,\mathcal B\rangle$ be a semiorthogonal decomposition
  such that $\cB$ is a quasi-phantom category, i.e., $\hochschild_*(\mathcal B)=0$.
  If \cref{conjecture:HH-Fock-dg} holds for all the categories $\mathcal{T}, \mathcal{A}_1, \ldots, \mathcal{A}_m$,
  then $\Sym^n\mathcal B$ is again a quasi-phantom category for all $n$.
\end{corollary}

\begin{proof}
  By \cref{prop:HH-Fock-dg}, we know that \cref{conjecture:HH-Fock-dg} holds for $\cB$.
  The assertion now just follows from the simple fact the the symmetric power of the zero vector space is the zero vector space.
\end{proof}

If $\mathcal{T}=\derived^\bounded(S)$ for a smooth projective surface $S$,
and $\mathcal A_i=\langle E_i\rangle$ for some exceptional object $E_i$ for every $i=1,\dots, m$,
\cref{cor:phantom} specialises to \cite[Lemma 4.4]{koseki2023symmetric}.
We were inspired to include \cref{cor:phantom} in the second version of this article by Koseki
(the author of op.~cit.) asking in a talk about possible generalisations of his result.

If \cref{conjecture:HH-Fock-dg} holds in full generality,
this would imply that every symmetric power of every quasi-phantom category is again a quasi-phantom category.


\paragraph{Is Hochschild--Serre cohomology of Hilbert schemes a Fock space?}
For $k\neq 1$, \cref{corollary:hochschild-serre} still gives
an identification of $\bigoplus_{n\geq 0}\hochserre_{k}([\Sym^nX])t^n$
with a certain total symmetric power, namely, when $d_X$ is even,
\begin{equation}
  \bigoplus_{n\geq 0}\hochserre_{k}([\Sym^nX])t^n
  \cong
  \Sym^\bullet\left(\bigoplus_{i\geq 1}\hochserre_{1+(k-1)i}(X)t^i\right).
\end{equation}
However, the symmetric power is not taken of an infinite direct sum of copies of one vector space,
but of an infinite direct sum of \emph{different} vector spaces.
In other words, we still have a basis of $\bigoplus_{n\geq 0}\hochserre_{k}([\Sym^nX])t^n$
consisting of elements of the form $(\alpha_1t^{\nu_1})\cdot (\alpha_2t^{\nu_2})\cdots (\alpha_\ell t^{\nu_\ell})$
as in \eqref{eq:Fockiso},
but now the $\alpha_i$ are elements of \emph{different} vector spaces,
depending on the exponent $\nu_i$ of $t$.
Hence, it seems to us like there is no identification of $\bigoplus_{n\geq 0}\hochserre_{k}([\Sym^nX])t^n$
with a Fock space for $k\neq 1$.

One could hope that the full Hochschild--Serre cohomology
\begin{equation}
  \label{eq:fullHS}
  \bigoplus_{k\in \IZ}\bigoplus_{n\geq 0}\hochserre_{k}([\Sym^nX])t^n=\bigoplus_{n\geq 0}\hochserre_\bullet([\Sym^nX])t^n
\end{equation}
can be identified with the Fock space associated to $V=\bigoplus_{k\in \IZ}\hochserre_k(X)=\hochserre(X)$,
but this does not work either.
Indeed, \eqref{eq:fullHS} is generated by words of the form
\begin{equation}
  \label{eq:wordinHS}
  (\alpha_1t^{\nu_1})\cdot (\alpha_2t^{\nu_2})\cdots (\alpha_\ell t^{\nu_\ell}),
\end{equation}
but we cannot take arbitrary $\alpha_i\in V$ and integers $\nu_i$.
Instead, there must be some $k$ such that $\alpha_i\in \hochserre_{1+(k-1)\nu_i}(X)$
for all $i=1,\dots,\ell$ if \eqref{eq:wordinHS} is an element of \eqref{eq:fullHS}.
Hence,
\begin{equation}
  \bigoplus_{n\geq 0}\hochserre_\bullet([\Sym^nX])t^n\subsetneq \Sym^\bullet\left(\bigoplus_{i\ge 1}\hochserre_\bullet(X)t^i\right)
\end{equation}
is a proper subspace of the symmetric power.

\paragraph{Geometric and categorical Heisenberg action}
For $X$ a smooth projective variety of arbitrary dimension,
by \cite{MR3774400}, there is a categorical Heisenberg action of $\derived^\bounded(X)$ on $\bigoplus_{n\geq 0}\derived^\bounded([\Sym^nX])$.
It seems worthwile to study how it relates to the Heisenberg action of $\hochschild_{*}(X)$ on $\bigoplus_{n\geq 0}\hochschild_{*}([\Sym^nX])$ given by \cref{corollary:Hochschild-homology-Fock}.

Similarly, in the case $X=S$ is a surface,
one could study how Nakajima's action of $\HH^*(S)$ on $\bigoplus_{n\geq 0}\HH^*(\hilbn{n}{S})$ is
related to these actions, under the identification given by Hochschild--Kostant--Rosenberg isomorphism and the McKay correspondence.

The categorical action of \cite{MR3774400} was generalised in \cite{2105.13334v3}
from derived categories of smooth projective varieties to dg-enhanced triangulated categories. So the same question can be asked whenever we have an instance of \cref{conjecture:HH-Fock-dg}.

\section{Examples}
\label{section:examples}
We will compute some examples,
to illustrate some interesting phenomena.

\subsection{Symmetric square stack of \texorpdfstring{$\mathbb{P}^1$}{the projective line}}
\label{subsection:symmetric-square-P1}
The first case we consider is~$[\Sym^2\mathbb{P}^1]$,
where we will assume~$\field$ algebraically closed.
We will compute its Hochschild cohomology both geometrically using the main theorem of this paper,
and algebraically, using finite-dimensional algebras.
To get started with the latter,
using \cite[Proposition~4.5]{MR3397451} we obtain the following result.
\begin{lemma}
  \label{lemma:symmetric-square-projective-line}
  There exists a derived equivalence~$\derived^\bounded([\Sym^2\mathbb{P}^1])\cong\derived^\bounded(\field Q/I)$
  where~$Q$ is the quiver
  \begin{equation}
    \label{equation:symmetric-square-projective-line-quiver}
    \begin{tikzpicture}[scale=1.5,baseline={([yshift=-.5ex]current bounding box.center)}]
      \draw[circle, minimum size = 7pt, inner sep = 0pt]
        (0,2) node (a) {}
        (0,0) node (b) {}
        (1.5,1) node (c) {}
        (3,2) node (d) {}
        (3,0) node (e) {};
      \draw (a) circle (2pt)
            (b) circle (2pt)
            (c) circle (2pt)
            (d) circle (2pt)
            (e) circle (2pt);
      \draw[->] (a) edge [bend left  = 20] node [fill = white] {$x_0$} (c)
                (a) edge [bend right = 20] node [fill = white] {$y_0$} (c)
                (b) edge [bend left  = 20] node [fill = white] {$x_1$} (c)
                (b) edge [bend right = 20] node [fill = white] {$y_1$} (c)
                (c) edge [bend left  = 20] node [fill = white] {$x_2$} (d)
                (c) edge [bend right = 20] node [fill = white] {$y_2$} (d)
                (c) edge [bend left  = 20] node [fill = white] {$x_3$} (e)
                (c) edge [bend right = 20] node [fill = white] {$y_3$} (e);
    \end{tikzpicture}
  \end{equation}
  and~$I=(x_0y_2-y_0x_2,x_1y_3-y_1x_3,x_0x_3,y_0y_3,x_0y_3+y_0x_3,x_1x_2,y_1y_2,x_1y_2+y_1x_2)$.
\end{lemma}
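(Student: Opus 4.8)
The plan is to exhibit a tilting bundle on~$[\Sym^2\mathbb{P}^1]=[(\mathbb{P}^1)^2/\symgr_2]$, which collapses the problem to computing the endomorphism algebra of that bundle. I would start from the exceptional pair~$\{\mathcal O_{\mathbb{P}^1},\mathcal O_{\mathbb{P}^1}(1)\}$ and the resulting Beilinson-type tilting bundle~$\bigoplus_{0\le a,b\le1}\mathcal O(a,b)$ on~$(\mathbb{P}^1)^2$, and use \cite[Proposition~4.5]{MR3397451} to descend this to a tilting bundle~$T$ on the quotient stack. Concretely~$T$ has five indecomposable~$\symgr_2$-equivariant summands: the two~$\symgr_2$-linearisations~$\mathcal O(0,0)_{\mathrm{triv}}$, $\mathcal O(0,0)_{\sgn}$ of the swap-invariant~$\mathcal O(0,0)$; the indecomposable equivariant bundle~$\Ind_e^{\symgr_2}\mathcal O(1,0)\cong\mathcal O(1,0)\oplus\mathcal O(0,1)$ obtained by inducing along the swap; and similarly the two linearisations~$\mathcal O(1,1)_{\mathrm{triv}}$, $\mathcal O(1,1)_{\sgn}$. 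Then~$\derived^\bounded([\Sym^2\mathbb{P}^1])\cong\derived^\bounded(A)$ with~$A\colonequals\operatorname{End}_{[\Sym^2\mathbb{P}^1]}(T)$, so it remains to present~$A$ by a quiver with relations.

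For the quiver, the five summands are the five vertices, and I would compute the arrows as a minimal generating set of the radical of~$A$. Every~$\Hom$-space between two distinct summands is obtained from~$\HH^0(\mathbb{P}^1,\mathcal O(1))=\field\langle x,y\rangle$ by the Künneth formula together with Frobenius reciprocity in characteristic zero, using that~$\Hom_{\symgr_2}(-,\Ind_e^{\symgr_2}\mathcal O(1,0))\cong\Hom_{(\mathbb{P}^1)^2}(\operatorname{Res}(-),\mathcal O(1,0))$ and dually~$\Hom_{\symgr_2}(\Ind_e^{\symgr_2}\mathcal O(1,0),-)\cong\Hom_{(\mathbb{P}^1)^2}(\mathcal O(1,0),\operatorname{Res}(-))$. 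This produces a two-dimensional morphism space from each of the two~$\mathcal O(0,0)$-vertices into the middle vertex and from the middle vertex onto each of the two~$\mathcal O(1,1)$-vertices, and no other arrows (all remaining~$\Hom$'s between distinct summands vanish or factor through the middle vertex) — exactly the quiver~\eqref{equation:symmetric-square-projective-line-quiver}, with~$x_0,y_0,x_1,y_1$ the arrows into the middle vertex and~$x_2,y_2,x_3,y_3$ the arrows out of it.

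For the relations, I would compare, for each of the four (source vertex, sink vertex) pairs, the four-dimensional space of length-two paths with the actual morphism space
\[
  \Hom_{\symgr_2}\bigl(\mathcal O(0,0)_{\varepsilon},\mathcal O(1,1)_{\varepsilon'}\bigr)
  \cong
  \bigl(\HH^0(\mathbb{P}^1,\mathcal O(1))^{\otimes 2}\bigr)^{\symgr_2,\,\varepsilon\varepsilon'}
  \cong
  \begin{cases}
    \Sym^2\field^{2}, & \varepsilon=\varepsilon',\\
    \bigwedge^2\field^{2}, & \varepsilon\neq\varepsilon',
  \end{cases}
\]
which has dimension~$3$, respectively~$1$. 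The kernel of composition therefore contributes one relation of the form~$x_iy_j-y_ix_j$ in the two cases where the linearisations agree (giving~$x_0y_2-y_0x_2$ and~$x_1y_3-y_1x_3$) and three relations~$x_ix_j,\ y_iy_j,\ x_iy_j+y_ix_j$ in the two cases where they differ (giving~$x_0x_3,y_0y_3,x_0y_3+y_0x_3$ and~$x_1x_2,y_1y_2,x_1y_2+y_1x_2$), which is precisely the generating set of~$I$. Finally, since there are no composable paths of length~$\ge 3$, the dimension count~$\dim_\field\field Q/I = 5+4\cdot2+(3+1+1+3)=21=\dim_\field A$ upgrades the tautological surjection~$\field Q/I\twoheadrightarrow A$ to an isomorphism.

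The main obstacle is getting the relations right with the correct signs: whether a given length-two path lies in the symmetric or in the alternating part of~$\HH^0(\mathbb{P}^1,\mathcal O(1))^{\otimes 2}$ is governed by the product~$\varepsilon\varepsilon'$ of the two linearisations, and this is exactly what separates~$x_0y_2-y_0x_2$ from~$x_0y_3+y_0x_3$. One also has to verify that these eight elements generate the full relation ideal rather than a proper subideal, for which the dimension count above is the cleanest argument (alternatively one can invoke finiteness of the global dimension of~$\field Q/I$ together with a comparison of Euler forms). Everything else is routine cohomology of~$\mathbb{P}^1$ and representation theory of~$\symgr_2$, which is also where the standing assumption on~$\field$ enters. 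If \cite[Proposition~4.5]{MR3397451} already outputs the quiver itself, then only the relations need to be pinned down in this way.
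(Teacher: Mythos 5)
Your proposal is correct and follows essentially the same route as the paper: the exceptional pair $\mathcal{O}_{\mathbb{P}^1},\mathcal{O}_{\mathbb{P}^1}(1)$ is turned via \cite[Proposition~4.5]{MR3397451} into the same five-object (3-block) generator on $[\Sym^2\mathbb{P}^1]$, with the Hom-spaces identified as $V$, $\Sym^2V$ and $\bigwedge^2V$ exactly as in the paper's proof. Your extra details (Frobenius reciprocity and the dimension count pinning down the relation ideal) are fine elaborations of the same argument rather than a different approach.
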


\begin{proof}
  Taking the full exceptional collection~$\mathcal{O}_{\mathbb{P}^1},\mathcal{O}_{\mathbb{P}^1}(1)$
  for~$\derived^\bounded(\mathbb{P}^1)$
  yields the full 3-block exceptional collection
  \begin{equation}
    \derived^\bounded([\Sym^2\mathbb{P}^1])
    =
    \big\langle
      \begin{matrix}
        \mathcal{O}_{\mathbb{P}^1}\boxtimes\mathcal{O}_{\mathbb{P}^1} \\
        \mathcal{O}_{\mathbb{P}^1}\boxtimes\mathcal{O}_{\mathbb{P}^1}\otimes\sgn
      \end{matrix},
      \Ind(\mathcal{O}_{\mathbb{P}^1}\boxtimes\mathcal{O}_{\mathbb{P}^1}(1)),
      \begin{matrix}
        \mathcal{O}_{\mathbb{P}^1}(1)\boxtimes\mathcal{O}_{\mathbb{P}^1}(1) \\
        \mathcal{O}_{\mathbb{P}^1}(1)\boxtimes\mathcal{O}_{\mathbb{P}^1}(1)\otimes\sgn
      \end{matrix}
      \big\rangle,
  \end{equation}
  where objects in the same block are completely orthogonal.
  Denoting~$V=\HH^0(\mathbb{P}^1,\mathcal{O}_{\mathbb{P}^1}(1))$ we have that
  morphisms between objects in consecutive blocks are given by~$V$.
  The other Hom-spaces are accordingly given by~$\Sym^2V$ resp.~$\bigwedge^2V$.
  If we choose a basis~$x,y$ of~$V$
  and label the bases of the Hom-spaces using~$x_i,y_i$,
  we get the quiver in \eqref{equation:symmetric-square-projective-line-quiver}
  with the claimed relations.
\end{proof}

This allows us to compute the Hochschild cohomology of~$[\Sym^2\mathbb{P}^1]$ purely algebraically.

\begin{lemma}
  \label{lemma:hochschild-symmetric-square-projective-line}
  For the finite-dimensional algebra~$\field Q/I$ from \cref{lemma:symmetric-square-projective-line}
  we have that it is of global dimension~2,
  and
  \begin{equation}
    \label{equation:hochschild-symmetric-square-line}
    \sum_{i=0}^2\dim_\field\hochschild^i(\field Q/I)t^i
    =1+3t+3t^2.
  \end{equation}
\end{lemma}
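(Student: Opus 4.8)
The plan is to establish both assertions by a direct computation on the quiver $Q$; set $A\colonequals\field Q/I$. Label the blocks so that the five vertices are $1,2$ (the first block $\mathcal{O}_{\mathbb{P}^1}\boxtimes\mathcal{O}_{\mathbb{P}^1}$ together with its $\sgn$-twist), $3$ (the middle block), and $4,5$ (the last block); thus $Q$ is the acyclic quiver with two parallel arrows $1\to3$, two $2\to3$, two $3\to4$ and two $3\to5$, all of whose directed paths have length at most $2$, so $A$ is finite-dimensional. Recall from the proof of \cref{lemma:symmetric-square-projective-line} that among the four length-two slots, $1\to4$ and $2\to5$ carry $\Sym^2 V$ (three-dimensional, cut out by a single relation) while $1\to5$ and $2\to4$ carry $\bigwedge^2 V$ (one-dimensional, cut out by three relations), which accounts for the eight listed minimal generators of $I$. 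For the global dimension: the sinks $4,5$ give $P_4=S_4$ and $P_5=S_5$; resolving $S_3$ one finds $\operatorname{rad} P_3=S_4^{\oplus2}\oplus S_5^{\oplus2}$, semisimple projective, so $\operatorname{pd} S_3=1$; resolving $S_1$ (and symmetrically $S_2$), the projective cover of $\operatorname{rad} P_1$ is $P_3^{\oplus2}$ and the kernel of the surjection $P_3^{\oplus2}\to\operatorname{rad} P_1$ is $S_4\oplus S_5^{\oplus3}$ (one relation towards $4$, three towards $5$), again semisimple projective; hence $\operatorname{pd} S_1=\operatorname{pd} S_2=2$ and $\operatorname{gldim} A=2$.

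Since $A$ has global dimension $2$ and $I$ is generated by elements of path-length $2$, $A$ is Koszul: $\Ext^2_A(A_0,A_0)$ is the space of minimal relations, hence lies in internal degree $2$, so $\Ext^i_A(A_0,A_0)$ is concentrated in internal degree $i$ for every $i$. In particular $\operatorname{pd}_{A^{\env}}A=2$, where $A^{\env}=A\otimes_\field A^{\opp}$, and the minimal projective $A^{\env}$-resolution of $A$ is the standard three-term complex
\[
  0\longrightarrow P^2\longrightarrow P^1\longrightarrow P^0\longrightarrow A\longrightarrow 0
\]
with $P^0=\bigoplus_i Ae_i\otimes_\field e_iA$ (sum over the five vertices), $P^1=\bigoplus_\alpha Ae_{t(\alpha)}\otimes_\field e_{s(\alpha)}A$ (sum over the eight arrows $\alpha\colon s(\alpha)\to t(\alpha)$), and $P^2=\bigoplus_r Ae_{t(r)}\otimes_\field e_{s(r)}A$ (sum over the eight generators $r$ of $I$); as a check, the alternating sum of $\field$-dimensions is $53-40+8=21=\dim_\field A$, using that $Q$ is acyclic. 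Applying $\Hom_{A^{\env}}(-,A)$ and the identification $\Hom_{A^{\env}}(Ae_i\otimes_\field e_jA,A)\cong e_iAe_j$ produces the complex of $\field$-vector spaces
\[
  0\longrightarrow\bigoplus_i e_iAe_i\xrightarrow{\ d^0\ }\bigoplus_\alpha e_{t(\alpha)}Ae_{s(\alpha)}\xrightarrow{\ d^1\ }\bigoplus_r e_{t(r)}Ae_{s(r)}\longrightarrow 0
\]
concentrated in cohomological degrees $0,1,2$, whose cohomology is $\hochschild^\bullet(A)$. Its terms have dimensions $5$, $16$, $12$: each $e_iAe_i=\field e_i$; each $e_{t(\alpha)}Ae_{s(\alpha)}$ is the two-dimensional space of arrows parallel to $\alpha$; and each $e_{t(r)}Ae_{s(r)}$ is the $\Sym^2 V$- or $\bigwedge^2 V$-slot containing $r$.

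Now $\hochschild^0(A)=Z(A)=\field$ (any central element of $A$ is supported on the vertex idempotents, and connectedness of $Q$ forces all its coefficients to coincide), so $\operatorname{rank} d^0=4$; and the Euler characteristic of the complex is $5-16+12=1=\dim\hochschild^0(A)$, whence $\dim\hochschild^1(A)=\dim\hochschild^2(A)$. It therefore remains to show $\operatorname{rank} d^1=9$. The map $d^1$ is precomposition with $P^2\to P^1$: on a family $(u_\alpha)_\alpha$ with $u_\alpha$ a linear combination of arrows parallel to $\alpha$, its component at a relation $r=\sum_\ell c_\ell\,\beta_\ell\gamma_\ell$ (a combination of length-two paths through vertex $3$) equals $\sum_\ell c_\ell(u_{\beta_\ell}\gamma_\ell+\beta_\ell u_{\gamma_\ell})$, evaluated in $A$. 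Writing this out for the eight relations gives an explicit $16\times12$ block matrix over $\field$, whose rank is readily computed to be $9$ — this is the one genuinely computational step, routine linear algebra (and verifiable with a computer algebra package such as \texttt{QPA}). Combining $\dim\hochschild^0(A)=1$, $\dim\hochschild^1(A)=\dim\hochschild^2(A)=3$ and $\hochschild^i(A)=0$ for $i\ge3$ yields $\sum_{i=0}^2\dim_\field\hochschild^i(\field Q/I)t^i=1+3t+3t^2$. The main obstacle is precisely this rank computation; everything preceding it is structural.
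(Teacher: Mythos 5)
Your argument is correct, and it takes a genuinely different route from the paper's. The paper never writes down a Hochschild cochain complex: after the same two preliminary steps ($\hochschild^0\cong\field$ from acyclicity and connectedness; global dimension~$2$, which kills $\hochschild^{\geq 3}$ via $\operatorname{gl\,dim}A=\operatorname{proj\,dim}_{A^\env}A$), it obtains $\dim_\field\hochschild^1=\dim_\field\hochschild^2$ from Happel's Euler-characteristic formula expressed through the Cartan/Coxeter matrix, and then determines $\hochschild^1$ structurally as $\operatorname{Lie}\operatorname{Out}^0(\field Q/I)\cong\operatorname{Lie}\operatorname{PGL}(V)\cong\mathfrak{sl}_2$. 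You instead build the minimal projective bimodule resolution with terms indexed by vertices, arrows and minimal relations (your quadraticity-plus-global-dimension-$2$ justification is fine; one could equally cite Happel's description of the minimal bimodule resolution in terms of $\Ext^\bullet(A_0,A_0)$), apply $\Hom_{A^\env}(-,A)$, and reduce the lemma to a single rank computation; your Euler-characteristic count $5-16+12=1$ is the explicit form of the paper's Coxeter-matrix step, and your explicit resolutions of the simples are a more careful version of the paper's one-line bound on the global dimension. Your route is self-contained and yields an explicit cochain model; the paper's avoids all matrix manipulation at the cost of invoking the identification $\operatorname{Out}^0(\field Q/I)\cong\operatorname{PGL}(V)$.

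The only real criticism is that you assert $\operatorname{rank}d^1=9$ as ``readily computed'': since $\dim_\field\hochschild^1=12-\operatorname{rank}d^1$, this number carries essentially the whole content of the lemma and should be verified, not waved through. It is correct, and no $16\times12$ row reduction is needed: grouping the eight arrows into the four parallel pairs identifies the source of $d^1$ with $\operatorname{End}(V)^{\oplus4}\ni(u_0,u_1,u_2,u_3)$ (slots $1\to3$, $2\to3$, $3\to4$, $3\to5$), and the component of $d^1(u)$ at the relations of a given composite slot vanishes if and only if $u_a\otimes1+1\otimes u_b$ preserves the corresponding relation subspace of $V\otimes V$ (namely $\bigwedge^2V$ for the $\Sym^2$-slots and $\Sym^2V$ for the $\bigwedge^2$-slots), which for $\dim_\field V=2$ happens in either case precisely when $u_b-u_a$ is a scalar matrix. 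The four scalar conditions leave a kernel of dimension $4+3=7$, so $\operatorname{rank}d^1=16-7=9$, giving $\dim_\field\hochschild^1=7-4=3=\dim_\field\hochschild^2$, in agreement with the paper's $\hochschild^1\cong\mathfrak{sl}_2$; indeed the traceless constant quadruples $u_0=u_1=u_2=u_3$ realize exactly that copy of $\mathfrak{sl}_2$.
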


\begin{proof}
  The proof is similar to the method used in \cite[\S3]{MR3988086}.
  The quiver is acyclic, thus~$\hochschild(\field Q/I)\cong\field$.
  The algebra is not hereditary
  by the presence of relations,
  and its global dimension is bounded above by~2 because of the length of a maximal path,
  thus precisely~2.
  Because~$\operatorname{gl\,dim}A=\operatorname{proj\,dim}_{A^\env}A$ for~$A$
  a finite-dimensional algebra over an algebraically closed field \cite[Lemma~1.5]{MR1035222},
  we only need to determine~$\hochschild^1(\field Q/I)$ and~$\hochschild^2(\field Q/I)$.

  By \cite{MR1444101} we have
  \begin{equation}
    \chi(\hochschild^\bullet(A))=-\operatorname{tr}\mathrm{C}_{\field Q/I}=-1
  \end{equation}
  where~$\mathrm{C}_{\field Q/I}$ is the Coxeter matrix,
  thus~$\hochschild^1(\field Q/I)\cong\hochschild^2(\field Q/I)$.
  The Coxeter matrix is readily computed from the Cartan matrix~$\mathrm{A}_{\field Q/I}$,
  as~$\mathrm{C}_{\field Q/I}=-\mathrm{A}_{\field Q/I}^{-1}\mathrm{A}_{\field Q/I}^{\mathrm{t}}$.
  For the choice of basis given by the exceptional objects in \cref{lemma:symmetric-square-projective-line}
  the Cartan matrix is given by
  \begin{equation}
    \mathrm{A}_{\field Q/I}
    =
    \begin{pmatrix}
      1 & 0 & 2 & 3 & 1 \\
      0 & 1 & 2 & 1 & 3 \\
      0 & 0 & 1 & 2 & 2 \\
      0 & 0 & 0 & 1 & 0 \\
      0 & 0 & 0 & 0 & 1
    \end{pmatrix}.
  \end{equation}

  To determine~$\hochschild^1(\field Q/I)$,
  recall that the first Hochschild cohomology
  (as a Lie algebra)
  is isomorphic to the Lie algebra of the outer automorphism group of~$\field Q/I$,
  see, e.g., \cite[Proposition~1.1]{MR1905030}.
  In our case,
  observe that the natural action of~$\operatorname{GL}(V)$ on each individual Hom-space
  gives us an identification~$\operatorname{Out}^0(\field Q/I)\cong\operatorname{PGL}(V)$:
  the ideal~$I$ forces the automorphisms to lie in the diagonal subgroup in the product of the~$\operatorname{GL}(V)$,
  and the passage to outer automorphisms yields~$\operatorname{PGL}(V)$.
  Hence~$\hochschild^1(\field Q/I)=\operatorname{Lie}\operatorname{Out}^0(\field Q/I)\cong\mathfrak{sl}_2$,
\end{proof}

On the other hand, in \cref{corollary:hochschild-serre} the only contribution for~$n=2$
is given by~$i=0$ on the right-hand side by degree reasons,
for which~$\hochserre_0^*(\mathbb{P}^1)\cong\hochschild^*(\mathbb{P}^1)\cong \field[0]\oplus\mathfrak{sl}_2[-1]$.
We obtain the dimensions in \eqref{equation:hochschild-symmetric-square-line},
see also \cref{corollary:first-and-second}.
Thus it agrees with \cref{lemma:hochschild-symmetric-square-projective-line}.

\subsection{Hilbert square of \texorpdfstring{$\mathbb{P}^2$}{the projective plane}}
\label{subsection:hilbert-square-P2}
Another interesting example is that of~$[\Sym^2\mathbb{P}^2]$,
which by \eqref{equation:derived-mckay} is derived equivalent to~$\hilbn{2}{\mathbb{P}^2}$.
Using \cref{subsection:hochschild-serre-description} we can compute the Hochschild cohomology of~$\hilbn{2}{\mathbb{P}^2}$ and obtain
\begin{lemma}
  \label{lemma:hochschild-hilbert-series-hilbert-square-P2}
  We have that
  \begin{equation}
    \label{equation:hochschild-Sym-2-P2}
    \sum_{i=0}^8\dim_\field\hochschild^i(\hilbn{2}{\mathbb{P}^2})t^i
    =1+8t+48t^2+115t^3+83t^4.
  \end{equation}
\end{lemma}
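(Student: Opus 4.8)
The plan is to pass through the derived McKay correspondence and reduce to the formula proved above. Since Hochschild cohomology is a derived invariant, the equivalence \eqref{equation:derived-mckay} gives $\hochschild^*(\hilbn{2}{\mathbb{P}^2})\cong\hochschild^*([\Sym^2\mathbb{P}^2])=\hochserre_0([\Sym^2\mathbb{P}^2])$, so I would extract the $n=2$ term of \cref{corollary:hochschild-serre-hilbert-scheme} with $k=0$. As $d_{\mathbb{P}^2}=2$ is even, the partitions $(1,1)$ and $(2)$ of $2$ contribute respectively $\Sym^2\hochserre_0(\mathbb{P}^2)$ and $\hochserre_{-1}(\mathbb{P}^2)$, giving an isomorphism of graded vector spaces
\[
  \hochschild^*(\hilbn{2}{\mathbb{P}^2})\;\cong\;\Sym^2\hochschild^*(\mathbb{P}^2)\;\oplus\;\hochserre_{-1}(\mathbb{P}^2),
\]
where $\Sym^2$ denotes the \emph{graded} symmetric square (symmetric on even cohomological degrees, exterior on odd ones).

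Next I would compute the two summands via the Hochschild--Kostant--Rosenberg decomposition \eqref{equation:hochschild-serre-decomposition}. For $\hochschild^*(\mathbb{P}^2)=\hochserre_0^*(\mathbb{P}^2)$, using $\HH^\bullet(\mathbb{P}^2,\mathcal{O}_{\mathbb{P}^2})=\field$, the rigidity $\HH^{>0}(\mathbb{P}^2,\tangent_{\mathbb{P}^2})=0$ with $\HH^0(\mathbb{P}^2,\tangent_{\mathbb{P}^2})=\mathfrak{sl}_3$, and $\bigwedge^2\tangent_{\mathbb{P}^2}\cong\omega_{\mathbb{P}^2}^\vee\cong\mathcal{O}_{\mathbb{P}^2}(3)$ with $h^0=10$, one gets the Hilbert series $1+8t+10t^2$. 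For $\hochserre_{-1}(\mathbb{P}^2)\cong\bigoplus_{p,q}\HH^q(\mathbb{P}^2,\bigwedge^p\tangent_{\mathbb{P}^2}\otimes\mathcal{O}_{\mathbb{P}^2}(3))$ placed in degree $p-q+2$, the only nonzero groups are $\HH^0(\mathcal{O}_{\mathbb{P}^2}(3))=\field^{10}$, $\HH^0(\tangent_{\mathbb{P}^2}(3))=\field^{35}$ (from the twisted Euler sequence $0\to\mathcal{O}_{\mathbb{P}^2}(3)\to\mathcal{O}_{\mathbb{P}^2}(4)^{\oplus 3}\to\tangent_{\mathbb{P}^2}(3)\to 0$, whence $3\cdot 15-10=35$), and $\HH^0(\mathcal{O}_{\mathbb{P}^2}(6))=\field^{28}$; this yields the Hilbert series $10t^2+35t^3+28t^4$.

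Finally I would assemble the graded symmetric square of $\field\oplus\mathfrak{sl}_3[-1]\oplus\field^{10}[-2]$: degree by degree it is $\Sym^2\field$, then $\field\otimes\mathfrak{sl}_3$, then $\field^{10}\oplus\bigwedge^2\mathfrak{sl}_3$, then $\mathfrak{sl}_3\otimes\field^{10}$, then $\Sym^2\field^{10}$, of dimensions $1,\ 8,\ 10+28=38,\ 80,\ 55$. Adding the Hilbert series of $\hochserre_{-1}(\mathbb{P}^2)$ gives $1+8t+48t^2+115t^3+83t^4$, which is \eqref{equation:hochschild-Sym-2-P2}. (As a sanity check on the $t^2$ coefficient, \cref{corollary:first-and-second} gives $\hochschild^2([\Sym^2\mathbb{P}^2])\cong\hochschild^2(\mathbb{P}^2)\oplus\bigwedge^2\hochschild^1(\mathbb{P}^2)\oplus\hochserre_{-1}^2(\mathbb{P}^2)\cong\field^{10}\oplus\field^{28}\oplus\field^{10}$.)

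The one place to be careful is the sign rule in the graded symmetric square: the degree-$1$ summand $\mathfrak{sl}_3$ must contribute its \emph{exterior} square $\bigwedge^2\mathfrak{sl}_3$ of dimension $28$ (and not $\Sym^2\mathfrak{sl}_3$ of dimension $36$) in total cohomological degree $2$; this is precisely the difference between the correct coefficient $48$ and the naive $56$. All the individual cohomology computations are standard, the only one requiring an auxiliary exact sequence being $h^0(\mathbb{P}^2,\tangent_{\mathbb{P}^2}(3))=35$.
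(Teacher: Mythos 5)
Your proof is correct and is exactly the computation the paper intends: the lemma is stated as a direct application of \cref{corollary:hochschild-serre-hilbert-scheme} (equivalently \cref{corollary:hochschild-serre} combined with the derived McKay correspondence) with $k=0$, $n=2$, followed by the Hochschild--Kostant--Rosenberg decomposition on $\mathbb{P}^2$, and all your dimension counts ($1+8t+38t^2+80t^3+55t^4$ for the graded $\Sym^2$ of $\hochschild^*(\mathbb{P}^2)$ and $10t^2+35t^3+28t^4$ for $\hochserre_{-1}(\mathbb{P}^2)$) agree with \cref{proposition:hkr-hilbert-square-P2}. One immaterial slip: by \eqref{equation:hochschild-serre-decomposition} the group $\HH^q(\mathbb{P}^2,\bigwedge^p\tangent_{\mathbb{P}^2}\otimes\mathcal{O}_{\mathbb{P}^2}(3))$ sits in cohomological degree $p+q+2$ rather than $p-q+2$; since only the $q=0$ groups are nonzero here, your degrees and totals are unaffected.
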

But as mentioned before,
we cannot compute the Hochschild--Kostant--Rosenberg decomposition of~$\hilbn{2}{\mathbb{P}^2}$,
which is a bigraded decomposition,
using symmetric quotient stacks.

To do this, we can use the isomorphism
\begin{equation}
  \hilbn{2}{\mathbb{P}^2}\cong\mathbb{P}_{\mathbb{P}^2}(\Sym^2\Omega_{\mathbb{P}^2}(1)),
\end{equation}
from, e.g.,~\cite[\S3.2]{MR3488782}.
This description is explicit enough to compute the Hochschild--Kostant--Rosenberg decomposition
as in the following proposition.

\begin{proposition}
  \label{proposition:hkr-hilbert-square-P2}
  We have that
  \begin{equation}
    \begin{aligned}
      \HHHH^1(\hilbn{2}{\mathbb{P}^2})
      &\cong\HH^0(\hilbn{2}{\mathbb{P}^2},\tangent_{\hilbn{2}{\mathbb{P}^2}}) \\
      &\cong \field^8 \\
      \HHHH^2(\hilbn{2}{\mathbb{P}^2})
      &\cong\HH^1(\hilbn{2}{\mathbb{P}^2},\tangent_{\hilbn{2}{\mathbb{P}^2}})\oplus\HH^0(\hilbn{2}{\mathbb{P}^2},\bigwedge^2\tangent_{\hilbn{2}{\mathbb{P}^2}}) \\
      &\cong \field^{10}\oplus \field^{38} \\
      \HHHH^3(\hilbn{2}{\mathbb{P}^2})
      &\cong\HH^1(\hilbn{2}{\mathbb{P}^2},\bigwedge^2\tangent_{\hilbn{2}{\mathbb{P}^2}})\oplus\HH^0(\hilbn{2}{\mathbb{P}^2},\bigwedge^3\tangent_{\hilbn{2}{\mathbb{P}^2}}) \\
      &\cong \field^{35}\oplus \field^{80} \\
      \HHHH^4(\hilbn{2}{\mathbb{P}^2})
      &\cong\HH^1(\hilbn{2}{\mathbb{P}^2},\bigwedge^3\tangent_{\hilbn{2}{\mathbb{P}^2}})\oplus\HH^0(\hilbn{2}{\mathbb{P}^2},\bigwedge^4\tangent_{\hilbn{2}{\mathbb{P}^2}}) \\
      &\cong \field^{28}\oplus \field^{55} \\
      \HHHH^{\geq 5}(\hilbn{2}{\mathbb{P}^2})&=0
    \end{aligned}
  \end{equation}
\end{proposition}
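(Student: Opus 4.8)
The plan is to exploit the description $Y\colonequals\hilbn{2}{\mathbb{P}^2}\cong\mathbb{P}_{\mathbb{P}^2}(\mathcal{E})$ with $\mathcal{E}\colonequals\Sym^2\Omega_{\mathbb{P}^2}(1)$ locally free of rank~$3$, and to compute each summand $\HH^q(Y,\bigwedge^p\tangent_Y)$ occurring in the Hochschild--Kostant--Rosenberg decomposition by pushing forward along $\pi\colon Y\to\mathbb{P}^2$. The relative tangent sequence $0\to\tangent_{Y/\mathbb{P}^2}\to\tangent_Y\to\pi^*\tangent_{\mathbb{P}^2}\to 0$ makes $\bigwedge^p\tangent_Y$ into a filtered bundle with graded pieces $\bigwedge^a\tangent_{Y/\mathbb{P}^2}\otimes\pi^*\bigwedge^b\tangent_{\mathbb{P}^2}$ ($a+b=p$, $0\le a,b\le 2$), and a relative Euler sequence $0\to\mathcal{O}_Y\to\pi^*\mathcal{E}^\vee\otimes\mathcal{O}_Y(1)\to\tangent_{Y/\mathbb{P}^2}\to 0$ provides a Koszul-type resolution of each $\bigwedge^a\tangent_{Y/\mathbb{P}^2}$ by bundles of the form $\pi^*\bigwedge^j\mathcal{E}^\vee\otimes\mathcal{O}_Y(j)$ with $0\le j\le a$. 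Only non-negative powers of $\mathcal{O}_Y(1)$ appear, so all higher direct images vanish and $R\pi_*\mathcal{O}_Y(j)$ is a symmetric power of $\mathcal{E}$ (up to dualization, depending on the convention); by the projection formula the whole computation collapses to evaluating $\HH^\bullet$ on $\mathbb{P}^2$ of the finitely many bundles $\bigwedge^j\mathcal{E}^\vee\otimes\Sym^j\mathcal{E}\otimes\bigwedge^b\tangent_{\mathbb{P}^2}$, $j,b\in\{0,1,2\}$.

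The second step is to carry out those base computations on $\mathbb{P}^2$, using the Euler sequence $0\to\Omega_{\mathbb{P}^2}\to\mathcal{O}(-1)^{\oplus 3}\to\mathcal{O}_{\mathbb{P}^2}\to 0$ and Bott vanishing to evaluate each $\HH^\bullet(\mathbb{P}^2,\bigwedge^j\mathcal{E}^\vee\otimes\Sym^j\mathcal{E}\otimes\bigwedge^b\tangent_{\mathbb{P}^2})$ and its twists. It is worthwhile to keep track of the natural $\mathrm{SL}_3$-action throughout: every sheaf and every map above is $\mathrm{SL}_3$-equivariant, so Bott's theorem delivers these cohomology groups as explicit direct sums of irreducible $\mathrm{SL}_3$-representations. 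One then feeds the answers into the Leray spectral sequence for each Koszul resolution and finally into the spectral sequence of the filtration on $\bigwedge^p\tangent_Y$; the payoff of retaining the group action is that all differentials and connecting maps are $\mathrm{SL}_3$-equivariant, so Schur's lemma forces most of them to vanish and determines the rest by comparing which irreducibles occur on either side. This yields every bigraded piece, in particular the stated vanishings $\HH^{\ge 2}(Y,\tangent_Y)=\HH^{\ge 2}(Y,\bigwedge^2\tangent_Y)=0$ together with the nonzero dimensions $8,10,38,35,80,28,55$.

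Several independent checks should be used to guard against bookkeeping errors. The column sums $\sum_{p+q=i}\dim\HH^q(Y,\bigwedge^p\tangent_Y)$ must recover the Hilbert series $1+8t+48t^2+115t^3+83t^4$ of \cref{lemma:hochschild-hilbert-series-hilbert-square-P2}; in particular $\HH^{\ge 5}(Y)=0$ is immediate from that lemma. Next, $\HH^1(Y,\tangent_Y)$ must be $10$-dimensional by \cref{corollary:tangent-hilbn-S} applied to $S=\mathbb{P}^2$, since $\HH^1(\mathbb{P}^2,\tangent_{\mathbb{P}^2})=\HH^1(\mathbb{P}^2,\mathcal{O})=0$ and $\HH^0(\mathbb{P}^2,\omega_{\mathbb{P}^2}^\vee)=\HH^0(\mathbb{P}^2,\mathcal{O}(3))$ has dimension~$10$; combined with the previous check this already pins down $\HH^0(Y,\bigwedge^2\tangent_Y)=48-10=38$. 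Finally the two ``top'' summands can be confirmed by adjunction: $\bigwedge^4\tangent_Y\cong\omega_Y^\vee$ and $\bigwedge^3\tangent_Y\cong\Omega_Y^1\otimes\omega_Y^\vee$, and using $\omega_{Y/\mathbb{P}^2}\cong\mathcal{O}_Y(-3)\otimes\pi^*\det\mathcal{E}$ with $\det\mathcal{E}\cong\mathcal{O}_{\mathbb{P}^2}(-3)$ together with Kodaira vanishing on the Fano fourfold~$Y$ one recomputes $\HH^0(Y,\omega_Y^\vee)$ directly, which should return~$55$.

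The main obstacle is organizational rather than conceptual: the number of bundles that must be resolved and the size of the combined spectral sequences make a naive computation error-prone, and a few of the higher-cohomology vanishings (notably $\HH^2(Y,\bigwedge^2\tangent_Y)=0$ and $\HH^3(Y,\tangent_Y)=0$) hinge on recognising that certain equivariant connecting homomorphisms are isomorphisms rather than zero. The systematic use of the $\mathrm{SL}_3$-decomposition is what keeps this under control, and the numerical cross-checks against \cref{lemma:hochschild-hilbert-series-hilbert-square-P2} and \cref{corollary:tangent-hilbn-S} provide the needed confidence in the final table.
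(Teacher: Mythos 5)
Your plan is correct, and for the heart of the statement it coincides with what the paper does in \cref{appendix:hilbert-square-P2}: the same projective-bundle model (the paper phrases it as $\mathbb{P}_G(\Sym^2\mathcal{S})\to G=\operatorname{Gr}(2,3)$ in \cref{lemma:projective-bundle}, which is your $\mathbb{P}_{\mathbb{P}^2}(\Sym^2\Omega_{\mathbb{P}^2}(1))$ over the dual plane), the same relative Euler and tangent sequences, pushforward plus projection formula, Borel--Weil--Bott, and equivariance/Schur arguments to kill or identify the connecting maps; this is exactly how the paper gets $\HH^\bullet(\tangent)$ and $\HH^\bullet(\bigwedge^2\tangent)$. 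Where you genuinely diverge is in the top two exterior powers: you propose to push the direct Koszul--Bott computation through $\bigwedge^3\tangent$ and to treat $\bigwedge^4\tangent\cong\omega^\vee$ by adjunction and vanishing, relegating \cref{lemma:hochschild-hilbert-series-hilbert-square-P2} and \cref{corollary:tangent-hilbn-S} to consistency checks, whereas the paper deliberately avoids the $\bigwedge^3$ computation and instead bootstraps it from the total Hochschild dimensions of \cref{lemma:hochschild-hilbert-series-hilbert-square-P2} (which force $\HH^{\geq 2}(\bigwedge^3\tangent)=0$ and $\hh^0=80$) together with a Macaulay2/Schubert2 Euler characteristic $\chi(\bigwedge^3\tangent)=52$, and computes $\HH^\bullet(\omega^\vee)$ via the crepant Hilbert--Chow resolution, giving $\Sym^2\HH^0(\mathbb{P}^2,\mathcal{O}(3))=\field^{55}$. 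Your route is heavier but fully self-contained (and yields the $\mathrm{SL}_3$-module structure of every piece), while the paper's shortcut trades the most laborious Bott bookkeeping for the already-available global dimension count plus one machine computation. One small inaccuracy in your cross-check: $\hilbn{2}{\mathbb{P}^2}$ is not Fano, since $\omega^\vee$ is pulled back along the Hilbert--Chow morphism and hence only nef and big, so you should invoke Kawamata--Viehweg rather than Kodaira vanishing (or simply use the crepant-resolution argument); the vanishing you need still holds, so the check goes through and the final table $8,\,10,\,38,\,35,\,80,\,28,\,55$ with $\HHHH^{\geq5}=0$ is confirmed.
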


\begin{proof}
  The cohomology of~$\tangent_{\hilbn{2}{\mathbb{P}^2}}$ is computed in \cref{corollary:hilbert-square-tangent},
  the cohomology of~$\bigwedge^2\tangent_{\hilbn{2}{\mathbb{P}^2}}$ is computed in \cref{corollary:hilbert-square-bitangent},
  and the cohomology of~$\bigwedge^3\tangent_{\hilbn{2}{\mathbb{P}^2}}$ is computed in \cref{corollary:hilbert-square-tritangent}.
  For~$\bigwedge^4\tangent_{\hilbn{2}{\mathbb{P}^2}}\cong\omega_{\hilbn{2}{\mathbb{P}^2}}$
  the cohomology is readily computed using that~$\hilbn{2}{\mathbb{P}^2}\to\Sym^2\mathbb{P}^2$ is a crepant resolution.
\end{proof}

Because the proof requires some Borel--Weil--Bott calculations we relegate the details of the computation to \cref{appendix:hilbert-square-P2}.

\begin{remark}
  \label{remark:rigidity}
  A similar computation for~$\hilbn{2}{\mathbb{P}^n}\cong\mathbb{P}_{\operatorname{Gr}(2,n+1)}(\Sym^2\mathcal{S})$,
  see \cref{corollary:hilbert-square-tangent} for more details,
  allows to recover the rigidity result from \cite[Corollary~37]{MR3950704} in this special case.
\end{remark}

\subsection{Hilbert squares of bielliptic surfaces}
\label{subsection:bielliptic}
For~$\hochschild_*(\hilbn{n}{S})$ we have that~$\hochschild_*(S)$
determines it completely as a graded vector space.
Thus a natural question to ask is:
\begin{quote}
  \emph{Is~$\hochschild^*(\hilbn{n}{S})$ determined by~$\hochschild^*(S)$ (and~$\hochschild_*(S)$)?}
\end{quote}
Equation \eqref{equation:hochschild-cohomology-hilbert-scheme} in \cref{theorem:intro-main} suggests that the answer to the question is negative.
However, in order to really prove this, we need an example of a pair of surfaces,
which have the same $\hochschild^*$ and $\hochschild_*$, but different negative parts $\hochserre_{\le -1}$ of Hochschild--Serre cohomology.
In the following, we will find such a pair among the class of bielliptic surfaces,
which is a class of surfaces of Kodaira dimension~0.


\paragraph{Bielliptic surfaces}
A \emph{bielliptic surface}~$S$ is a surface of the form~$(E\times F)/G$, where
\begin{itemize}
  \item $E$ and $F$ are elliptic curves,
  \item $G$ is a finite abelian group acting diagonally on~$E\times F$,
  \item $G$ acts by automorphisms on~$E$ (so that $E/G\cong\mathbb{P}^1$) and by translation on~$F$.
\end{itemize}
There are 7 deformation families, given by the Bagnera--de Franchis list.
Each family depends on the isomorphism type of~$E$ and the shape of the action of~$G$,
and the full list is given in \cref{table:bielliptic-classification}.

Their Hodge diamond is of the form
\begin{equation}\label{eq:biellHodge}
  \begin{array}{ccccc}
    & & 1 \\
    & 1 & & 1 \\
    0 & & 2 & & 0 \\
    & 1 & & 1 \\
    & & 1
  \end{array}
\end{equation}

\begin{table}
  \centering
  \begin{tabular}{ccccc}
    \toprule
             & $j(E)$    & $G$                                                  & action of $G$                                               & order of $\omega_S$ \\
    \midrule
    1        & arbitrary & $\mathbb{Z}/2\mathbb{Z}$                             & $e\mapsto-e$                                                & 2 \\
    2        & arbitrary & $\mathbb{Z}/2\mathbb{Z}\oplus\mathbb{Z}/2\mathbb{Z}$ & $e\mapsto -e$, $e\mapsto e+c$ where $c=-c$                  & 2 \\
    3        & 0         & $\mathbb{Z}/3\mathbb{Z}$                             & $e\mapsto \zeta e$                                          & 3 \\
    4        & 0         & $\mathbb{Z}/3\mathbb{Z}\oplus\mathbb{Z}/3\mathbb{Z}$ & $e\mapsto\zeta e$, $e\mapsto e+c$ where $c=\zeta c$         & 3 \\
    5        & 1728      & $\mathbb{Z}/2\mathbb{Z}$                             & $e\mapsto\mathrm{i}e$                                       & 4 \\
    6        & 1728      & $\mathbb{Z}/4\mathbb{Z}\oplus\mathbb{Z}/2\mathbb{Z}$ & $e\mapsto\mathrm{i}e$, $e\mapsto e+c$ where $c=\mathrm{i}c$ & 4 \\
    7        & 0         & $\mathbb{Z}/6\mathbb{Z}$                             & $e\mapsto-\zeta e$                                          & 6 \\
    \bottomrule
  \end{tabular}
  \caption{Classification of bielliptic surfaces}
  \label{table:bielliptic-classification}
\end{table}

We recall the properties of bielliptic surfaces as explained in \cite[\S4.D]{MR0565468}.
The Albanese morphism
\begin{equation}
  \alb\colon S\to\Pic^0(S)=C
\end{equation}
is a smooth surjection onto an elliptic curve,
so we have a short exact sequence
\begin{equation}
  0\to\mathcal{O}_S\to\Omega_S^1\to\Omega_{S/C}^1\to 0
\end{equation}
which moreover can be shown to split.
Defining
\begin{equation}
  {L}\colonequals\mathrm{R}^1{\alb_*}\mathcal{O}_S,
\end{equation}
we have~$\Omega_{S/C}^1\cong\alb^*{L}^\vee$.
Thus we obtain
\begin{equation}
  \label{equation:tangent-and-anticanonical-bielliptic}
  \begin{aligned}
    \tangent_S&\cong\mathcal{O}_S\oplus\alb^*{L} \\
    \omega_S^\vee&\cong\alb^*{L}.
  \end{aligned}
\end{equation}

The following straightforward cohomology computation is the key lemma.
\begin{lemma}
  \label{lemma:bielliptic-cohomology-lemma}
  We have that
  \begin{equation}
    \HH^\bullet(S,\alb^*{L})\cong
    \begin{cases}
      0 & \ord\omega_S=3,4,6 \\
      \field[-1]\oplus \field[-2] & \ord\omega_S=2
    \end{cases}
  \end{equation}
  and
  \begin{equation}
    \HH^\bullet(S,\alb^*{L}^{\otimes2})\cong
    \begin{cases}
      0 & \ord\omega_S=4,6 \\
      \field[-1]\oplus \field[-2] & \ord\omega_S=3 \\
      \field[0]\oplus \field[-1] & \ord\omega_S=2.
    \end{cases}
  \end{equation}
\end{lemma}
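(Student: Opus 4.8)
The plan is to compute $\HH^\bullet(S,\alb^*{L})$ and $\HH^\bullet(S,\alb^*{L}^{\otimes 2})$ from the Albanese fibration $\alb\colon S\to C$ by means of the Leray spectral sequence. First I would record the relevant structure of $\alb$: its fibres are elliptic curves (copies of $E$), so $\alb$ is proper and flat, with $h^0$ and $h^1$ of the structure sheaf of every fibre equal to $1$; hence the natural map $\mathcal{O}_C\to\alb_*\mathcal{O}_S$ is an isomorphism, the sheaf ${L}=\mathrm{R}^1\alb_*\mathcal{O}_S$ is a line bundle on $C$, and $\mathrm{R}^q\alb_*\mathcal{O}_S=0$ for $q\geq 2$. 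By the projection formula $\mathrm{R}^q\alb_*(\alb^*{L}^{\otimes k})\cong{L}^{\otimes k}\otimes\mathrm{R}^q\alb_*\mathcal{O}_S$, so the $E_2$-page of the Leray spectral sequence is supported in the columns $p=0,1$ and the rows $q=0,1$, with $E_2^{p,0}=\HH^p(C,{L}^{\otimes k})$ and $E_2^{p,1}=\HH^p(C,{L}^{\otimes k+1})$. As $C$ is a curve there is no room for any differential $d_r$ with $r\geq 2$, so the spectral sequence degenerates at $E_2$, yielding an isomorphism of graded vector spaces
\[
  \HH^\bullet(S,\alb^*{L}^{\otimes k})\cong\HH^\bullet(C,{L}^{\otimes k})\oplus\HH^\bullet(C,{L}^{\otimes k+1})[-1].
\]

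Next I would pin down the line bundle ${L}$. From the Hodge diamond \eqref{eq:biellHodge} we read off $\chi(\mathcal{O}_S)=0$, and the same Leray spectral sequence applied to $\mathcal{O}_S$ gives $0=\chi(\mathcal{O}_S)=\chi(C,\mathcal{O}_C)-\chi(C,{L})=-\deg{L}$, using $\chi(\mathcal{O}_C)=0$ and Riemann--Roch on the elliptic curve $C$; hence ${L}\in\Pic^0(C)$. Moreover \eqref{equation:tangent-and-anticanonical-bielliptic} gives $\omega_S\cong\alb^*{L}^\vee$, and $\alb^*\colon\Pic^0(C)\to\Pic^0(S)$ is injective, since any non-trivial $M\in\Pic^0(C)$ satisfies $\HH^0(S,\alb^*M)=\HH^0(C,M)=0$. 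Consequently, as $\omega_S\cong\alb^*{L}^\vee$ then has the same order as ${L}$, the bundle ${L}$ is torsion of order exactly $m\colonequals\ord\omega_S\in\{2,3,4,6\}$, the value recorded in \cref{table:bielliptic-classification}.

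Finally I would insert the cohomology of a degree-zero line bundle $M$ on the elliptic curve $C$: one has $\HH^\bullet(C,M)\cong\field[0]\oplus\field[-1]$ when $M\cong\mathcal{O}_C$, and $\HH^\bullet(C,M)=0$ otherwise (the vanishing of $\HH^1(C,M)$ for non-trivial $M$ being Serre duality, using $\omega_C\cong\mathcal{O}_C$). Thus $\HH^\bullet(C,{L}^{\otimes j})\cong\field[0]\oplus\field[-1]$ precisely when $m\mid j$, and is $0$ otherwise. Substituting $j=k$ and $j=k+1$ into the displayed isomorphism for $k=1$ and $k=2$, and using that $m\in\{2,3,4,6\}$ forces $m\nmid 1$, while $m\mid 2\iff m=2$ and $m\mid 3\iff m=3$, reproduces exactly the two case distinctions in the statement. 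None of the steps is deep; the one point that needs a moment's care is the identification of ${L}$ --- that $\deg{L}=0$ and that $\ord{L}=\ord\omega_S$ --- after which the computation is only the (degenerate) Leray spectral sequence together with Riemann--Roch and Serre duality on an elliptic curve.
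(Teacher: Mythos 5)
Your argument is correct and follows essentially the same route as the paper: pushing forward along $\alb$ (the paper uses $\mathbf{R}\alb_*\mathcal{O}_S\cong\mathcal{O}_C\oplus L[-1]$ and the projection formula, which is exactly your degenerate Leray spectral sequence) and then using that non-trivial torsion line bundles on the elliptic curve $C$ have no cohomology. The only difference is that you spell out why $\deg L=0$ and $\ord L=\ord\omega_S$, facts the paper's proof leaves implicit.
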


\begin{proof}
  This follows from the isomorphisms
  \begin{equation}
    \begin{aligned}
      \mathbf{R}{\alb_*}\circ\alb^*{L}&\cong{L}\oplus{L}^{\otimes2}[-1], \\
      \mathbf{R}{\alb_*}\circ\alb^*{L}^{\otimes2}&\cong{L}^{\otimes2}\oplus{L}^{\otimes3}[-1]
    \end{aligned}
  \end{equation}
  obtained via the projection formula,
  using that~$\mathbf{R}{\alb_*}\mathcal{O}_S\cong\mathcal{O}_C[0]\oplus{L}[-1]$ (as~$\dim C=1$ and~$\alb$ has connected fibers),
  and the fact that non-trivial torsion line bundles are cohomology-free on~$C$
  so the only cohomology appears when~${L}^{\otimes i}\cong\mathcal{O}_C$.
\end{proof}

We thus obtain the following.
\begin{lemma}
  \label{lemma:hochschild-cohomology-bielliptic}
  Let~$S$ be a bielliptic surface.
  Then
  \begin{equation}
    \sum_{i=0}^4\dim_\field\hochschild^i(S)t^i
    =
    \begin{cases}
      1+2t+t^2           & \ord\omega_S=3,4,6 \\
      1+2t+2t^2+2t^3+t^4 & \ord\omega_S=2.
    \end{cases}
  \end{equation}
\end{lemma}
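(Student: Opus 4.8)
The plan is to apply the Hochschild--Kostant--Rosenberg decomposition for surfaces (\cref{proposition:hkr-varieties}), which by \eqref{equation:hkr-varieties-degree} writes each $\hochschild^n(S)$ as a sum of Dolbeault-type groups, and then to feed in the explicit description of $\tangent_S$ and $\bigwedge^2\tangent_S$ that is available for bielliptic surfaces. Concretely, since $d_S=2$ one has $\hochschild^0(S)\cong\HH^0(S,\mathcal O_S)$, $\hochschild^1(S)\cong\HH^0(S,\tangent_S)\oplus\HH^1(S,\mathcal O_S)$, $\hochschild^2(S)\cong\HH^0(S,\bigwedge^2\tangent_S)\oplus\HH^1(S,\tangent_S)\oplus\HH^2(S,\mathcal O_S)$, $\hochschild^3(S)\cong\HH^1(S,\bigwedge^2\tangent_S)\oplus\HH^2(S,\tangent_S)$, and $\hochschild^4(S)\cong\HH^2(S,\bigwedge^2\tangent_S)$, with all higher $\hochschild^n(S)$ vanishing.

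Next I would substitute the bielliptic data from \eqref{equation:tangent-and-anticanonical-bielliptic}, namely $\tangent_S\cong\mathcal O_S\oplus\alb^*{L}$ and $\bigwedge^2\tangent_S\cong\omega_S^\vee\cong\alb^*{L}$. This turns every summand above into a copy of $\HH^q(S,\mathcal O_S)$ or of $\HH^q(S,\alb^*{L})$ with $q\in\{0,1,2\}$. The groups $\HH^q(S,\mathcal O_S)$ are read off from the Hodge diamond \eqref{eq:biellHodge}, giving $\HH^\bullet(S,\mathcal O_S)\cong\field[0]\oplus\field[-1]$; the groups $\HH^q(S,\alb^*{L})$ are exactly what \cref{lemma:bielliptic-cohomology-lemma} supplies, namely $\HH^\bullet(S,\alb^*{L})=0$ when $\ord\omega_S\in\{3,4,6\}$ and $\HH^\bullet(S,\alb^*{L})\cong\field[-1]\oplus\field[-2]$ when $\ord\omega_S=2$.

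Assembling these ingredients is then pure bookkeeping. When $\ord\omega_S\in\{3,4,6\}$ only $\HH^0(S,\mathcal O_S)$ and $\HH^1(S,\mathcal O_S)$ contribute (the latter once in $\hochschild^1$ via $\HH^1(S,\mathcal O_S)$ and once in $\hochschild^2$ via $\HH^1(S,\tangent_S)$), yielding $1+2t+t^2$; when $\ord\omega_S=2$ the additional classes in $\HH^1(S,\alb^*{L})$ and $\HH^2(S,\alb^*{L})$ feed into $\hochschild^2$, $\hochschild^3$ and $\hochschild^4$, producing $1+2t+2t^2+2t^3+t^4$. I do not expect any genuine obstacle here: the computation is entirely mechanical once \cref{proposition:hkr-varieties}, the splitting \eqref{equation:tangent-and-anticanonical-bielliptic}, and \cref{lemma:bielliptic-cohomology-lemma} are in hand. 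The only point that warrants a moment's care is tracking which $\HH^q(S,-)$ lands in which total degree, and remembering that $\bigwedge^2\tangent_S\cong\alb^*{L}$ is also a direct summand of $\tangent_S$, so the same cohomology appears in several of the $\hochschild^n(S)$.
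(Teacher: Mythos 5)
Your proposal is correct and follows exactly the paper's own route: the Hochschild--Kostant--Rosenberg decomposition of \cref{proposition:hkr-varieties} combined with the splitting \eqref{equation:tangent-and-anticanonical-bielliptic} and the cohomology computation of \cref{lemma:bielliptic-cohomology-lemma}, and your bookkeeping of which $\HH^q$ lands in which $\hochschild^n$ matches the stated polynomials in both cases.
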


\begin{proof}
  This is a straightforward computation using the Hochschild--Kostant--Rosenberg decomposition \eqref{equation:hkr-varieties},
  with the cohomology of~$\tangent_S$ and~$\omega_S^\vee$
  being determined by \cref{lemma:bielliptic-cohomology-lemma},
  which gives
  \begin{equation}\label{eq:biellTcoh}
    \sum_{i=0}^2\hh^i(S,\tangent_S)t^i
    =
    \begin{cases}
      1+t      & \ord\omega_S=3,4,6 \\
      1+2t+t^2 & \ord\omega_S=2
    \end{cases}
  \end{equation}
  and
  \begin{equation}
    \sum_{i=0}^2\hh^i(S,\omega_S^\vee)t^i
    =
    \begin{cases}
      0     & \ord\omega_S=3,4,6 \\
      t+t^2 & \ord\omega_S=2.
    \end{cases}
  \end{equation}
\end{proof}

Next we compute a slice of the Hochschild--Serre cohomology.
\begin{lemma}
  \label{lemma:hochschild-serre-bielliptic-slice}
  Let~$S$ be a bielliptic surface.
  Then
  \begin{equation}
    \sum_{i=0}^4\dim_\field\hochserre_{-1}^i(S)t^i=
    \begin{cases}
      0
      & \ord\omega_S=4,6 \\
      t^4+2t^5+t^6
      & \ord\omega_S=3 \\
      2t^3+4t^4+2t^5
      & \ord\omega_S=2.
    \end{cases}
  \end{equation}
\end{lemma}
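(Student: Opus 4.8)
The plan is to read $\hochserre_{-1}^\bullet(S)$ off the Hochschild--Kostant--Rosenberg decomposition of \cref{proposition:hkr-varieties}, feeding it the explicit description of $\tangent_S$ and $\omega_S^\vee$ from \eqref{equation:tangent-and-anticanonical-bielliptic} together with the cohomology computed in \cref{lemma:bielliptic-cohomology-lemma}. Specialising \eqref{equation:hochschild-serre-decomposition} to $k=-1$ and $d_S=2$ gives
\begin{equation}
  \hochserre_{-1}^j(S)\cong\bigoplus_{p+q=j-2}\HH^q\left(S,\bigwedge^p\tangent_S\otimes\omega_S^{\otimes-1}\right),
\end{equation}
which is concentrated in the range $2\le j\le 6$. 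So the first step is to identify the three sheaves involved: since $\tangent_S\cong\mathcal{O}_S\oplus\alb^*L$ and $\omega_S^{\otimes-1}\cong\omega_S^\vee\cong\alb^*L$ by \eqref{equation:tangent-and-anticanonical-bielliptic} (so also $\bigwedge^2\tangent_S\cong\alb^*L$), we get
\begin{equation}
  \bigwedge^0\tangent_S\otimes\omega_S^{\otimes-1}\cong\alb^*L,\qquad
  \bigwedge^1\tangent_S\otimes\omega_S^{\otimes-1}\cong\alb^*L\oplus\alb^*L^{\otimes2},\qquad
  \bigwedge^2\tangent_S\otimes\omega_S^{\otimes-1}\cong\alb^*L^{\otimes2}.
\end{equation}

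Next I would substitute the values of $\HH^\bullet(S,\alb^*L)$ and $\HH^\bullet(S,\alb^*L^{\otimes2})$ from \cref{lemma:bielliptic-cohomology-lemma}, treating $\ord\omega_S\in\{4,6\}$, $\ord\omega_S=3$, and $\ord\omega_S=2$ separately. When $\ord\omega_S\in\{4,6\}$ both $\alb^*L$ and $\alb^*L^{\otimes2}$ are cohomology-free, hence so are all three sheaves above, and $\hochserre_{-1}^\bullet(S)=0$. In the remaining two cases I would record the twisted Hodge numbers $\hh^q\bigl(S,\bigwedge^p\tangent_S\otimes\omega_S^{\otimes-1}\bigr)$ in a $3\times3$ array indexed by $(p,q)\in\{0,1,2\}^2$, and then sum along the anti-diagonals $p+q=j-2$ to obtain $\dim_\field\hochserre_{-1}^j(S)$.

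I do not expect any real obstacle here, since the required cohomology is already established in \cref{lemma:bielliptic-cohomology-lemma}; the only thing that needs care is the bookkeeping of which cell $(p,q)$ lands in which total degree $j=p+q+2$. Concretely, for $\ord\omega_S=3$ only $\alb^*L^{\otimes2}$ contributes, with $\hh^1=\hh^2=1$, so the array carries a $1$ in each of $(1,1),(1,2),(2,1),(2,2)$ and zeros elsewhere, giving $t^4+2t^5+t^6$; for $\ord\omega_S=2$ one has $\hh^\bullet(S,\alb^*L)=(0,1,1)$ and $\hh^\bullet(S,\alb^*L^{\otimes2})=(1,1,0)$ in degrees $q=0,1,2$, so the rows $p=0,1,2$ read $(0,1,1),(1,2,1),(1,1,0)$, and summing anti-diagonals gives $2t^3+4t^4+2t^5$. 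Assembling the three cases yields the claimed polynomial.
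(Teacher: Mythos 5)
Your proposal is correct and follows essentially the same route as the paper: specialise the Hochschild--Kostant--Rosenberg decomposition \eqref{equation:hochschild-serre-decomposition} to $k=-1$, use \eqref{equation:tangent-and-anticanonical-bielliptic} to identify $\bigwedge^p\tangent_S\otimes\omega_S^\vee$ in terms of $\alb^*L$ and $\alb^*L^{\otimes2}$, and feed in \cref{lemma:bielliptic-cohomology-lemma}. The case-by-case bookkeeping you carry out matches the stated polynomials, so nothing is missing.
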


\begin{proof}
  This proof is analogous to that of \cref{lemma:hochschild-cohomology-bielliptic},
  now using \eqref{equation:hochschild-serre-decomposition}
  and \cref{lemma:bielliptic-cohomology-lemma}
  to compute the Hochschild--Kostant--Rosenberg decomposition \eqref{equation:hkr-varieties}.
\end{proof}

As an application of the description in \cref{subsection:hochschild-serre-description}
we obtain the following.
\begin{proposition}
  \label{proposition:hilbert-square-bielliptic}
  Let~$S$ be a bielliptic surface.
  Then
  \begin{equation}
    \begin{aligned}
      &\sum_{i=0}^8\dim_\field\hochschild^i(\hilbn{2}{S})t^i \\
      &\qquad=
      \begin{cases}
        1+2t+2t^2+2t^3+t^4
        & \ord\omega_S=4,6 \\
        1+2t+2t^2+2t^3+2t^4+2t^5+t^6
        & \ord\omega_S=3 \\
        1+2t+3t^2+8t^3+12t^4+8t^5+3t^6+2t^7+t^8
        & \ord\omega_S=2.
      \end{cases}
    \end{aligned}
  \end{equation}
\end{proposition}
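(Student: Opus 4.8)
The plan is to read off the relevant graded vector space from the master formula for Hochschild cohomology of Hilbert schemes, and then reduce everything to a short computation of Poincar\'e polynomials, carried out case by case along \cref{table:bielliptic-classification}.

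First I would specialise \cref{corollary:hochschild-serre-hilbert-scheme} (equivalently, take the coefficient of $t^2$ in \eqref{equation:hochschild-cohomology-hilbert-scheme}) to $k=0$ and $n=2$. The partitions of $2$ are $(1,1)$ and $(2)$, contributing respectively $\Sym^2\hochserre_0^*(S)=\Sym^2\hochschild^*(S)$ (graded symmetric square) and $\hochserre_{1+(0-1)\cdot 2}^*(S)=\hochserre_{-1}^*(S)$, so that there is an isomorphism of graded vector spaces
\[
  \hochschild^*(\hilbn{2}{S})\cong\Sym^2\hochschild^*(S)\oplus\hochserre_{-1}^*(S).
\]
In particular the problem becomes purely linear-algebraic once the two summands on the right are known.

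Next I would record the elementary Poincar\'e-series identity for a graded symmetric square: if $V$ has Poincar\'e series $P(t)=\sum_j\dim_\field(V^j)t^j$, then $\Sym^2V$ has Poincar\'e series $\tfrac12\bigl(P(t)^2+P(-t^2)\bigr)$. This follows at once by splitting $V=V_{\mathrm{even}}\oplus V_{\mathrm{odd}}$ and combining the series of $\Sym^2V_{\mathrm{even}}$, of $V_{\mathrm{even}}\otimes V_{\mathrm{odd}}$, and of $\bigwedge^2V_{\mathrm{odd}}$.

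Finally I would substitute the Poincar\'e series of $\hochschild^*(S)$ from \cref{lemma:hochschild-cohomology-bielliptic} and of $\hochserre_{-1}^*(S)$ from \cref{lemma:hochschild-serre-bielliptic-slice}, and compute. For $\ord\omega_S\in\{4,6\}$ one has $P(t)=(1+t)^2$ and $\hochserre_{-1}^*(S)=0$, giving $\tfrac12\bigl((1+t)^4+(1-t^2)^2\bigr)=1+2t+2t^2+2t^3+t^4$; for $\ord\omega_S=3$ the same $P$ is used and $t^4+2t^5+t^6$ is added; for $\ord\omega_S=2$ one has $P(t)=(1+t)^2(1+t^2)$ and adds $2t^3+4t^4+2t^5$. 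The remaining arithmetic is routine and produces the three claimed formulas. The only step that demands any care — and the place where a careless computation would go wrong — is keeping track of the super-grading in $\Sym^2\hochschild^*(S)$: odd-degree Hochschild classes square to zero and instead contribute through $\bigwedge^2$, which is precisely what the $P(-t^2)$ term encodes; getting the sign wrong would shift several dimensions. One could alternatively bypass this bookkeeping by using the generating-function form (as in \cref{cor:HHgen}), but the direct route above is the shortest.
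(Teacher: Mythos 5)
Your proposal is correct and follows essentially the same route as the paper: the paper's proof likewise extracts from the $n=2$ case of the main formula the two summands $\Sym^2\hochschild^*(S)$ (partition $(1,1)$) and $\hochserre_{-1}^*(S)$ (partition $(2)$), and then inserts \cref{lemma:hochschild-cohomology-bielliptic} and \cref{lemma:hochschild-serre-bielliptic-slice}. The only addition on your side is spelling out the graded-symmetric-square identity $\tfrac12\bigl(P(t)^2+P(-t^2)\bigr)$, which the paper leaves as implicit arithmetic; your case-by-case evaluations agree with the stated polynomials.
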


\begin{proof}
  We consider the partitions~$\nu=(2)$ resp.~$\nu=(1,1)$, with~$\lambda=(0,1)$ resp.~$\lambda=(2)$.
  The first partition gives a copy of~$\hochserre_{-1}^*(S)$,
  described in \cref{lemma:hochschild-serre-bielliptic-slice}.
  The second partition gives a copy of~$\Sym^2(\hochschild^*(S))$,
  which is computed using \cref{lemma:hochschild-cohomology-bielliptic}.
\end{proof}

All bielliptic surfaces have the same Hodge diamond \eqref{eq:biellHodge}. Hence, by the Hochschild--Kostant--Rosenberg theorem, they also have the same Hochschild homology.
Combining this with \cref{lemma:hochschild-cohomology-bielliptic,proposition:hilbert-square-bielliptic}
we obtain the following.
\begin{corollary}
  \label{corollary:bielliptic-disagreement}
  Let~$S$ be a bielliptic surface with~$\ord\omega_S=3$
  and let~$T$ be a bielliptic surface with~$\ord\omega_T\in\{4,6\}$.
  Then~$\HHHH^*(S)\cong\HHHH^*(T)$ and $\HHHH_*(S)\cong\HHHH_*(T)$ as graded vector spaces,
  but~$\HHHH^*(\hilbn{2}{S})\not\cong\HHHH^*(\hilbn{2}{T})$.
\end{corollary}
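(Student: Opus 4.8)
The plan is to assemble the three assertions directly from computations already made for bielliptic surfaces; no new input is needed. For the statement $\hochschild^*(S)\cong\hochschild^*(T)$, I would invoke \cref{lemma:hochschild-cohomology-bielliptic}, which shows that the Poincar\'e polynomial $\sum_{i=0}^4\dim_\field\hochschild^i(S)t^i$ depends only on $\ord\omega_S$ and equals $1+2t+t^2$ for every bielliptic surface with $\ord\omega_S\in\{3,4,6\}$; thus it is the same for $S$ (with $\ord\omega_S=3$) and $T$ (with $\ord\omega_T\in\{4,6\}$). For $\hochschild_*(S)\cong\hochschild_*(T)$, I would use that all seven Bagnera--de Franchis families share the Hodge diamond \eqref{eq:biellHodge}; feeding these Hodge numbers into the Hochschild--Kostant--Rosenberg decomposition \eqref{equation:hkr-varieties-degree} of \cref{proposition:hkr-varieties} (with $E=\mathcal{O}_S$, i.e.\ $k=1$) and collapsing the bigrading to the single degree $*=q-p$ shows that $\hochschild_*$ of a bielliptic surface is independent of the family.

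For the remaining assertion, I would read off \cref{proposition:hilbert-square-bielliptic}: the total dimension $\sum_i\dim_\field\hochschild^i(\hilbn{2}{S})$ equals $1+2+2+2+2+2+1=12$ when $\ord\omega_S=3$, whereas it equals $1+2+2+2+1=8$ when $\ord\omega_T\in\{4,6\}$. Since these differ, $\hochschild^*(\hilbn{2}{S})\not\cong\hochschild^*(\hilbn{2}{T})$ already as ungraded vector spaces, and the corollary is proved. It is instructive to locate the source of the discrepancy: specialising \cref{corollary:hochschild-serre-hilbert-scheme} to $k=0$, $n=2$ (and using $d_S=2$, so that no parity case distinction intervenes) gives $\hochschild^*(\hilbn{2}{S})\cong\Sym^2\hochschild^*(S)\oplus\hochserre_{-1}(S)$; the first summand coincides for $S$ and $T$ by the previous paragraph, so the entire difference comes from $\hochserre_{-1}(S)$, whose cohomology by \cref{lemma:hochschild-serre-bielliptic-slice} is $4$-dimensional when $\ord\omega_S=3$ but zero when $\ord\omega_S\in\{4,6\}$. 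This is exactly the announced phenomenon that $\hochschild^*(S)$ and $\hochschild_*(S)$ do not determine $\hochschild^*(\hilbn{n}{S})$.

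There is no real obstacle in the final step — all the content sits in the earlier lemmas. The only point that requires a little care is the bookkeeping in the graded symmetric power: since $\hochschild^*(S)$ has a two-dimensional odd-degree part, two such classes anticommute inside $\Sym^2$, and one should check that this reproduces the polynomial $1+2t+2t^2+2t^3+t^4$ matching the $\ord\omega_S\in\{4,6\}$ answer, so that the comparison with $\ord\omega_S=3$ is transparent.
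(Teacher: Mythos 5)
Your proposal is correct and follows essentially the same route as the paper: the identity of Hochschild homology comes from the common Hodge diamond via Hochschild--Kostant--Rosenberg, the identity of Hochschild cohomology from \cref{lemma:hochschild-cohomology-bielliptic}, and the non-isomorphism of the Hilbert squares from the differing Poincar\'e polynomials in \cref{proposition:hilbert-square-bielliptic}. Your added remark tracing the discrepancy to $\hochserre_{-1}(S)$ via \cref{corollary:hochschild-serre-hilbert-scheme} and \cref{lemma:hochschild-serre-bielliptic-slice} is accurate but not needed for the statement.
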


\begin{remark}
  We see that~$\HHHH^2(\hilbn{2}{S})$ is strictly bigger than~$\HHHH^2(S)$,
  regardless of~$\ord\omega_S$,
  and we have that~$\HH^2(\hilbn{2}{S},\mathcal{O}_{\hilbn{2}{S}})=0$.
  These considerations in fact hold for arbitrary~$n\geq 2$,
  using the degree considerations in \cref{subsection:hochschild-serre-description}.
  Thus (infinitesimally) there might be new commutative or noncommutative deformations of Hilbert schemes of points
  of bielliptic surfaces.
  We come back to this in \cref{example:bielliptic-hilbert-conjecture}.
\end{remark}

\section{Applications to Hilbert schemes}
\label{section:applications}
In this section we discuss various applications of our computations for the Hochschild(--Serre) cohomology of symmetric quotient stacks
to the study of some \textit{classical problems} about Hilbert schemes of points on surfaces.
In \cref{subsection:Infinitesmial-Aut}, we discuss~$\HHHH^1(\hilbn{n}{S})$ and applications to automorphisms.
In \cref{subsection:deformations} we discuss~$\HHHH^2(\hilbn{n}{S})$
and applications to their deformation theory.
In \cref{subsection:boissiere} we suggest a revised version of a conjecture by Boissi\`ere (see \cref{conjecture:corrected}),
and in \cref{subsection:nieper-wisskirchen} we explain
how results of Nieper-Wi\ss kirchen can be used to provide evidence for the revised conjecture.
Throughout this section, $S$ is a smooth projective surface defined over a field of characteristic zero.

\subsection{Infinitesimal automorphisms of Hilbert schemes}
\label{subsection:Infinitesmial-Aut}
The following corollary to \cref{theorem:intro-main}
is already proven in \cite[Corollaire~1]{MR2932167}
in the context of complex analytic surfaces,
using a generalisation of a computation of G\"ottsche.
For us, using non-commutative methods, it is an easy consequence of the degree shifts
as they appear in \cref{subsection:hochschild-serre-description},
\emph{without} appealing to \cite[Proposition~1]{MR2932167} (see also \cref{proposition:boissiere-global-sections}).
\begin{corollary}[Boissi\`ere]
  \label{corollary:boissiere}
  Let~$S$ be a smooth projective surface.
  Then~$\dim\Aut^0(S)=\dim\Aut^0(\hilbn{n}{S})$ for all~$n\geq 1$.
\end{corollary}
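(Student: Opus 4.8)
The plan is to translate the statement about automorphism groups into one about Hochschild cohomology via two standard facts. Over a field of characteristic zero the automorphism group scheme of a smooth projective variety $Y$ is smooth, so $\Aut^0(Y)$ is a connected algebraic group with $\operatorname{Lie}\Aut^0(Y)\cong\HH^0(Y,\tangent_Y)$, and hence $\dim\Aut^0(Y)=\dim_\field\HH^0(Y,\tangent_Y)$; and the Hochschild--Kostant--Rosenberg decomposition (\cref{proposition:hkr-varieties} with $k=0$) gives $\hochschild^1(Y)\cong\HH^1(Y,\mathcal O_Y)\oplus\HH^0(Y,\tangent_Y)$. So it suffices to prove $\dim_\field\HH^0(\hilbn{n}{S},\tangent_{\hilbn{n}{S}})=\dim_\field\HH^0(S,\tangent_S)$, which is vacuous for $n=1$.

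For $n\geq 2$ I would proceed as follows. The derived equivalence \eqref{eqn:BridgelandKingReidHaiman} together with the derived invariance of Hochschild cohomology (\cref{cor:DerivedInvariance}) gives $\hochschild^1(\hilbn{n}{S})\cong\hochschild^1([\Sym^nS])$, and \eqref{equation:first-hochschild} of \cref{corollary:first-and-second} identifies the right-hand side with $\hochschild^1(S)$. Feeding $Y=\hilbn{n}{S}$ and $Y=S$ into the HKR decomposition above and comparing dimensions, the desired equality $\dim_\field\HH^0(\hilbn{n}{S},\tangent_{\hilbn{n}{S}})=\dim_\field\HH^0(S,\tangent_S)$ becomes equivalent to the equality of irregularities $\dim_\field\HH^1(\hilbn{n}{S},\mathcal O_{\hilbn{n}{S}})=\dim_\field\HH^1(S,\mathcal O_S)$.

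The only ingredient that is not purely formal is this last equality, and it can be supplied by the Hilbert--Chow morphism $\mu\colon\hilbn{n}{S}\to S^{(n)}$: the symmetric product $S^{(n)}$ has quotient singularities, hence rational singularities in characteristic zero, so $\mathbf R\mu_*\mathcal O_{\hilbn{n}{S}}\cong\mathcal O_{S^{(n)}}$ and therefore $\HH^\bullet(\hilbn{n}{S},\mathcal O_{\hilbn{n}{S}})\cong\HH^\bullet(S^{(n)},\mathcal O_{S^{(n)}})\cong\HH^\bullet(S^n,\mathcal O_{S^n})^{\symgr_n}$; in cohomological degree $1$ the K\"unneth formula identifies this with the diagonal copy $(\HH^1(S,\mathcal O_S)^{\oplus n})^{\symgr_n}\cong\HH^1(S,\mathcal O_S)$. (One could equally cite G\"ottsche's computation of the Hodge numbers of $\hilbn{n}{S}$.) Combining these observations yields $\dim\Aut^0(\hilbn{n}{S})=\dim_\field\HH^0(\hilbn{n}{S},\tangent_{\hilbn{n}{S}})=\dim_\field\HH^0(S,\tangent_S)=\dim\Aut^0(S)$. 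I do not expect any genuine obstacle: the content is carried entirely by \cref{corollary:first-and-second}, and the remaining inputs are classical.
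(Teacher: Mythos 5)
Your proposal is correct and follows essentially the same route as the paper's proof: both reduce the statement via \eqref{equation:first-hochschild} of \cref{corollary:first-and-second}, the Bridgeland--King--Reid--Haiman equivalence, and the Hochschild--Kostant--Rosenberg decomposition, then cancel the summands $\HH^1(\hilbn{n}{S},\mathcal{O}_{\hilbn{n}{S}})\cong\HH^1(S,\mathcal{O}_S)$ against each other. The only difference is that you spell out (via rational singularities of $S^{(n)}$ and the Hilbert--Chow morphism) the identification of $\HH^1(\hilbn{n}{S},\mathcal{O}_{\hilbn{n}{S}})$ with the $\symgr_n$-invariants of $\HH^1(S^n,\mathcal{O}_{S^n})$, which the paper states without proof.
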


\begin{proof}[Proof of \cref{corollary:boissiere} using \cref{theorem:intro-main}]
  By \eqref{equation:first-hochschild} in \cref{corollary:first-and-second}
  and the Bridgeland--King--Reid--Haiman equivalence \eqref{equation:derived-mckay}
  we obtain the identification
  \begin{equation}
    \hochschild^1(S)\cong\hochschild^1([\Sym^nS])\cong\hochschild^1(\hilbn{n}{S}).
  \end{equation}
  By the Hochschild--Kostant--Rosenberg decomposition we have
  \begin{equation}
    \HHHH^1(\hilbn{n}{S})\cong\HH^1(\hilbn{n}{S},\mathcal{O}_{\hilbn{n}{S}})\oplus\HH^0(\hilbn{n}{S},\tangent_{\hilbn{n}{S}}).
  \end{equation}
  Taking~$\symgr_n$-invariant sections of~$\HH^1(S^n,\mathcal{O}_{S^n})$,
  we get
  \begin{equation}
    \HH^1(\hilbn{n}{S},\mathcal{O}_{\hilbn{n}{S}})\cong\HH^1(S,\mathcal{O}_S).
  \end{equation}
  Hence, we conclude that
  \belowdisplayskip=-12pt
  \begin{equation}
    \dim\Aut^0(S)=\dim_\field\HH^0(S,\tangent_S)=\dim_\field\HH^0(\hilbn{n}{S},\tangent_{\hilbn{n}{S}})=\dim\Aut^0(\hilbn{n}{S}).
  \end{equation}
  \qedhere
\end{proof}

\subsection{Deformations of Hilbert schemes}
\label{subsection:deformations}
It is an interesting question to understand how the deformation theory of a smooth projective surface~$S$
determines the deformation theory of the Hilbert scheme~$\hilbn{n}{S}$.
Using the relative Hilbert scheme one observes that
a deformation of~$S$ induces a deformation of~$\hilbn{n}{S}$,
thus we obtain an \emph{inclusion of deformation functors}~$\Def_S\to\Def_{\hilbn{n}{S}}$.
It is therefore important to understand
whether it is an isomorphism,
or what measures the difference between the two.
In particular, we are interested in comparing~$\HH^1(S,\tangent_S)$
to~$\HH^1(\hilbn{n}{S},\tangent_{\hilbn{n}{S}})$.

It was shown by Fantechi that for \emph{surfaces of general type}
the two deformation theories are the same \cite[Theorems~0.1 and~0.3]{MR1354269}.
More generally, she proves the following.
\begin{theorem}[Fantechi]
  \label{theorem:fantechi}
  Let~$S$ be a surface for which
  \begin{itemize}
    \item $\HH^0(S,\tangent_S)=0$ or $\HH^1(S,\mathcal{O}_S)=0$;
    \item $\HH^0(S,\omega_S^\vee)=0$.
  \end{itemize}
  Then the natural morphism~$\Def_S\to\Def_{\hilbn{n}{S}}$ is an isomorphism.
  In particular
  \begin{equation}
    \label{equation:infinitesimal-fantechi}
    \HH^1(S,\tangent_S)\cong\HH^1(\hilbn{n}{S},\tangent_{\hilbn{n}{S}}).
  \end{equation}
\end{theorem}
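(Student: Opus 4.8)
The plan is to derive the displayed isomorphism \eqref{equation:infinitesimal-fantechi} from \cref{corollary:tangent-hilbn-S}, and then to promote it to an isomorphism of deformation functors by the standard smoothness criterion. Write $\phi\colon\Def_S\to\Def_{\hilbn{n}{S}}$ for the natural transformation sending a deformation of $S$ to its relative Hilbert scheme; the case $n=1$ is vacuous, so assume $n\ge 2$. Both functors carry obstruction theories, with tangent spaces $\HH^1(S,\tangent_S)$ and $\HH^1(\hilbn{n}{S},\tangent_{\hilbn{n}{S}})$ and obstruction spaces $\HH^2(S,\tangent_S)$ and $\HH^2(\hilbn{n}{S},\tangent_{\hilbn{n}{S}})$, compatibly along $\phi$. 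By the standard criterion it then suffices to show that $\phi$ induces a bijection on tangent spaces and an injection on obstruction spaces: then $\phi$ is smooth, and a smooth morphism of deformation functors that is bijective on tangent spaces is an isomorphism.

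First I would treat the tangent spaces. By \cref{corollary:tangent-hilbn-S},
\[
  \HH^1(\hilbn{n}{S},\tangent_{\hilbn{n}{S}})\cong\HH^1(S,\tangent_S)\oplus\bigl(\HH^0(S,\tangent_S)\otimes_\field\HH^1(S,\mathcal{O}_S)\bigr)\oplus\HH^0(S,\omega_S^\vee),
\]
and one checks that the map induced by $\phi$ is the inclusion of the first summand. Under the first hypothesis the middle summand vanishes, and the second hypothesis kills the last one, so $\mathrm{d}\phi$ is an isomorphism; this proves the displayed isomorphism \eqref{equation:infinitesimal-fantechi}. For the obstruction spaces one needs a twisted Hodge computation of $\HH^2(\hilbn{n}{S},\tangent_{\hilbn{n}{S}})$ in the spirit of \cref{corollary:tangent-hilbn-S} (it is a summand of $\hochschild^3(\hilbn{n}{S})$): exhibiting $\HH^2(S,\tangent_S)$ as a direct summand onto which $\phi$ maps isomorphically yields the required injectivity, and the smoothness criterion then finishes the proof.

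The hard part is not the vanishing arithmetic but matching the \emph{geometric} map $\phi$ — restricting a family over $\Def_S$ to its relative Hilbert scheme — with the \emph{combinatorial} direct-sum decompositions produced by the orbifold Hochschild–Kostant–Rosenberg machinery (\cref{proposition:orbifold-hkr}, \cref{theorem:main-hochschild}), which a priori only provides abstract isomorphisms of vector spaces. A convenient route is to work $\symgr_n$-equivariantly on $S^n$: a first-order deformation $\xi\in\HH^1(S,\tangent_S)$ induces the diagonal class $(\xi,\dots,\xi)\in\HH^1(S^n,\tangent_{S^n})^{\symgr_n}$, hence a deformation of $[\Sym^nS]$ and, via the derived McKay equivalence \eqref{eqn:BridgelandKingReidHaiman}, one of $\hilbn{n}{S}$. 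One then verifies that this diagonal copy of $\HH^1(S,\tangent_S)$ accounts for the first summand in \cref{corollary:tangent-hilbn-S} and lies in the geometric (rather than the Poisson or gerby) part of the Hochschild–Kostant–Rosenberg decomposition of $\hochschild^2(\hilbn{n}{S})\cong\hochschild^2([\Sym^nS])$, and likewise for the obstruction class; once this bookkeeping is in place the argument is complete.
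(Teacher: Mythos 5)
The statement you are proving is quoted by the paper from Fantechi \cite[Theorems~0.1 and~0.3]{MR1354269}; the paper never proves the functor-level assertion itself, and its own contribution (\cref{corollary:tangent-hilbn-S}) only recovers the cohomological consequence \eqref{equation:infinitesimal-fantechi}: under your two vanishing hypotheses the summands $\HH^0(S,\tangent_S)\otimes_\field\HH^1(S,\mathcal{O}_S)$ and $\HH^0(S,\omega_S^\vee)$ drop out, so the two $\HH^1$'s are abstractly isomorphic. Up to that point your argument is fine. The problem is everything you defer to ``bookkeeping''. The isomorphism of \cref{corollary:tangent-hilbn-S} is produced by transporting the Hochschild--Kostant--Rosenberg decomposition of $\hochschild^2([\Sym^nS])$ through the derived McKay equivalence and cancelling summands against the HKR decomposition of $\hochschild^2(\hilbn{n}{S})$; nowhere is this abstract isomorphism identified with the differential of the geometric map $\phi\colon\Def_S\to\Def_{\hilbn{n}{S}}$, and your equivariant ``diagonal class'' sketch does not engage with how the Bridgeland--King--Reid--Haiman kernel acts on these classes. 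Establishing that compatibility is the actual content (it is where Fantechi's and Hitchin's geometric work lies), not a routine verification; as it stands you only have an equality of dimensions, from which neither injectivity nor surjectivity of $\mathrm{d}\phi$ follows.

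The obstruction step has the same problem in aggravated form: you need $\HH^2(S,\tangent_S)\to\HH^2(\hilbn{n}{S},\tangent_{\hilbn{n}{S}})$ to be injective compatibly with the two obstruction theories, but the relevant computation (a piece of $\hochschild^3(\hilbn{n}{S})$) is neither in the paper---the authors explicitly decline to write out $\hochschild^3$ and note that obstruction-theoretic questions involving the Gerstenhaber bracket are open---nor carried out by you. Finally, even granting tangent bijectivity and obstruction injectivity, ``smooth plus bijective on tangent spaces implies isomorphism of functors'' needs care here: the hypotheses allow $\HH^0(S,\tangent_S)\neq 0$, so the functors need not be prorepresentable, and one still has to prove injectivity of $\phi$ itself, which Fantechi obtains by a separate geometric argument. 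So your proposal is a plausible reduction scheme, but the steps it leaves open are exactly the ones that cannot be extracted from the paper's results---which is why the paper cites Fantechi for this theorem instead of reproving it, and claims only to recover \eqref{equation:infinitesimal-fantechi} via \cref{corollary:tangent-hilbn-S}.
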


On the other hand,
Hitchin proves in \cite[\S4.1]{MR3024823} the following.
\begin{theorem}[Hitchin]
  \label{theorem:hitchin}
  Let~$S$ be a surface for which~$\HH^1(S,\mathcal{O}_S)=0$.
  Then there is a natural split exact sequence
  \begin{equation}
    \label{equation:hitchin-sequence}
    0\to\HH^1(S,\tangent_S)\to\HH^1(\hilbn{n}{S},\tangent_{\hilbn{n}{S}})\to\HH^0(S,\omega_S^\vee)\to 0.
  \end{equation}
\end{theorem}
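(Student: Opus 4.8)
The plan is to obtain \cref{theorem:hitchin} as the special case $\HH^1(S,\mathcal{O}_S)=0$ of \cref{corollary:tangent-hilbn-S}. Under that hypothesis the summand $\HH^0(S,\tangent_S)\otimes_\field\HH^1(S,\mathcal{O}_S)$ of \eqref{equation:tangent-hilbn-S} vanishes, leaving the \emph{abstract} isomorphism
\[
  \HH^1(\hilbn{n}{S},\tangent_{\hilbn{n}{S}})\cong\HH^1(S,\tangent_S)\oplus\HH^0(S,\omega_S^\vee).
\]
To upgrade this to the exact sequence \eqref{equation:hitchin-sequence}, recall that the relative Hilbert scheme realizes $\Def_S$ as a subfunctor of $\Def_{\hilbn{n}{S}}$ and in particular induces a map $\kappa\colon\HH^1(S,\tangent_S)\to\HH^1(\hilbn{n}{S},\tangent_{\hilbn{n}{S}})$ on first-order deformations which is injective; this is the one genuinely geometric input, and it is standard, being the starting point of Fantechi's analysis \cite{MR1354269}. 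Since every short exact sequence of finite-dimensional vector spaces splits, it then suffices to know that $\operatorname{coker}(\kappa)$ has the dimension of $\HH^0(S,\omega_S^\vee)$, which is exactly what the displayed isomorphism provides; hence \eqref{equation:hitchin-sequence} is split exact. (If one wants its last map to be Hitchin's canonical one \cite{MR3024823}, one traces the isomorphism of \cref{corollary:tangent-hilbn-S} through its construction to exhibit the $\HH^0(S,\omega_S^\vee)$-summand as a natural complement of $\operatorname{im}(\kappa)$; this is routine bookkeeping.)

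The substance is therefore contained in \cref{corollary:tangent-hilbn-S}, which I would establish as follows. By the derived McKay correspondence \eqref{equation:derived-mckay} and \cref{cor:DerivedInvariance} one has $\hochschild^2(\hilbn{n}{S})\cong\hochschild^2([\Sym^nS])$, and the right-hand side is computed by \cref{corollary:first-and-second} to be $\hochschild^2(S)\oplus\bigwedge^2\hochschild^1(S)\oplus\hochserre_{-1}^2(S)$. Using $\bigwedge^2\tangent_S\cong\omega_S^\vee$ for the surface $S$, the Hochschild--Kostant--Rosenberg decomposition \cref{proposition:hkr-varieties} (together with \eqref{equation:hochschild-serre-decomposition}) gives $\hochschild^2(S)\cong\HH^0(S,\omega_S^\vee)\oplus\HH^1(S,\tangent_S)\oplus\HH^2(S,\mathcal{O}_S)$, $\hochschild^1(S)\cong\HH^0(S,\tangent_S)\oplus\HH^1(S,\mathcal{O}_S)$, and $\hochserre_{-1}^2(S)\cong\HH^0(S,\omega_S^\vee)$ (its only surviving summand sitting in bidegree $p+q=0$). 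On the other hand $\hochschild^2(\hilbn{n}{S})$ carries its own Hochschild--Kostant--Rosenberg decomposition into the gerby part $\HH^2(\hilbn{n}{S},\mathcal{O}_{\hilbn{n}{S}})$, the geometric part $\HH^1(\hilbn{n}{S},\tangent_{\hilbn{n}{S}})$, and the noncommutative part $\HH^0(\hilbn{n}{S},\bigwedge^2\tangent_{\hilbn{n}{S}})$; the idea is to pin down the gerby and noncommutative parts and obtain the geometric one by subtraction. The gerby part is immediate: since the Hilbert--Chow morphism is crepant and $S^{(n)}$ has rational singularities, $\HH^2(\hilbn{n}{S},\mathcal{O}_{\hilbn{n}{S}})\cong\HH^2(S^{(n)},\mathcal{O})\cong\HH^2(S,\mathcal{O}_S)\oplus\bigwedge^2\HH^1(S,\mathcal{O}_S)$.

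The main obstacle is the noncommutative part $\HH^0(\hilbn{n}{S},\bigwedge^2\tangent_{\hilbn{n}{S}})$, which I expect to equal $\HH^0(S,\omega_S^\vee)\oplus\bigwedge^2\HH^0(S,\tangent_S)$ (consistent with \cref{proposition:hkr-hilbert-square-P2}: for $\hilbn{2}{\mathbb{P}^2}$ both sides have dimension $10+28=38$). This piece is not visible through the McKay correspondence, since the individual summands of the Hochschild--Kostant--Rosenberg decomposition are not derived invariants --- precisely the obstruction flagged in \cref{subsection:hilbert-square-P2}. I would compute it directly instead: a bivector field on $S$ induces one on $\hilbn{n}{S}$, and an ordered pair of vector fields on $S$ gives an antisymmetric pair, and one checks these exhaust the space; dually one can argue by Serre duality and a G\"ottsche-type twisted Hodge-number computation. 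Granting this, subtracting the gerby and noncommutative parts from the total $\hochschild^2(\hilbn{n}{S})$ leaves exactly $\HH^1(S,\tangent_S)\oplus\bigl(\HH^0(S,\tangent_S)\otimes_\field\HH^1(S,\mathcal{O}_S)\bigr)\oplus\HH^0(S,\omega_S^\vee)$, which is \eqref{equation:tangent-hilbn-S}; specialising $\HH^1(S,\mathcal{O}_S)=0$ recovers \cref{theorem:hitchin}.
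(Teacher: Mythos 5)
Your proposal is correct and is essentially the paper's own argument: the paper deduces Hitchin's statement from \cref{corollary:tangent-hilbn-S}, which it proves exactly as you outline --- derived McKay plus derived invariance, the formula of \cref{corollary:first-and-second}, the Hochschild--Kostant--Rosenberg decompositions on both sides, the gerby part via $\symgr_n$-invariants, the bivector part $\HH^0(\hilbn{n}{S},\bigwedge^2\tangent_{\hilbn{n}{S}})\cong\bigwedge^2\HH^0(S,\tangent_S)\oplus\HH^0(S,\omega_S^\vee)$ as in \eqref{equation:bivectors-hilbn-S}, and then cancellation. The one step you leave vague (``one checks these exhaust the space'') should simply be replaced by citing the $\#=2$ case of \cref{cor:polyvectorfields} (equivalently Boissi\`ere's Proposition~1, i.e.\ \cref{proposition:boissiere-global-sections}), which is precisely the Serre-duality/G\"ottsche-type computation you mention as the fallback; your additional use of the injectivity of $\Def_S\to\Def_{\hilbn{n}{S}}$ to upgrade the abstract isomorphism to the split sequence \eqref{equation:hitchin-sequence} is a reasonable supplement, as the paper itself only recovers the isomorphism of \cref{theorem:hitchin}.
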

Thus the existence of Poisson structures on~$S$
gives geometric deformations of~$\hilbn{n}{S}$ which are induced by noncommutative deformations of~$S$.
In \cite{MR3950704} this result is considered from the point-of-view of derived categories.
The geometric deformations of~$\hilbn{n}{S}$ are studied in e.g.~\cite{MR2303228,MR2102090} for noncommutative deformations of~$\mathbb{P}^2$,
and in \cite{MR3669875} for~$S$ a (noncommutative) deformation of a del Pezzo surface.

Finally, the following was shown in \cite{MR1346215,MR1660136} by Bottacin.
\begin{theorem}[Bottacin]
  \label{theorem:bottacin}
  Let~$S$ be a surface, and let~$\sigma$ be a non-trivial Poisson structure on~$S$.
  Then there is a natural non-trivial Poisson structure~$\sigma^{[n]}$ on~$\hilbn{n}{S}$.
\end{theorem}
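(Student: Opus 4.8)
The plan is to build $\sigma^{[n]}$ by pulling $\sigma$ up to $S^n$, descending it to the open locus of distinct points, and extending the resulting bivector field across the Hilbert--Chow boundary. Since $S$ is a surface we have $\bigwedge^3\tangent_S=0$, so a Poisson structure on $S$ is nothing but a global section $\sigma\in\HH^0(S,\bigwedge^2\tangent_S)=\HH^0(S,\omega_S^\vee)$ with the Jacobi identity automatic; let $D\subset S$ be its zero locus, an anticanonical divisor. If $D=\emptyset$ then $S$ is itself a symplectic surface, $\hilbn{n}{S}$ is symplectic by the classical theorem of Beauville, and the inverse of that symplectic form is already a (nondegenerate) Poisson structure, so I may assume $D$ is a nonempty curve.

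First I would work on the open $U\subseteq\hilbn{n}{S}$ parametrizing reduced subschemes, where the Hilbert--Chow morphism is an isomorphism onto the configuration locus and $S^n\setminus(\text{diagonals})\to U$ is a free $\symgr_n$-quotient, in particular \'etale. The bivector $\sum_{i=1}^n\pr_i^*\sigma$ on $S^n$ is $\symgr_n$-invariant and satisfies $[\sum_i\pr_i^*\sigma,\sum_i\pr_i^*\sigma]=\sum_i\pr_i^*[\sigma,\sigma]=0$, so it descends to a Poisson structure $\tilde\sigma$ on $U$. Separately, on the open $V\colonequals\hilbn{n}{(S\setminus D)}\subseteq\hilbn{n}{S}$ of subschemes supported off $D$, the surface $S\setminus D$ is symplectic, so the (quasi-projective version of the) theorem of Beauville provides a symplectic form, hence a nowhere-degenerate bivector $\sigma_0^{[n]}$, on $V$; by the very construction of Beauville's form it agrees on $U\cap V$ with the descent of $\sum_i\pr_i^*\sigma$, i.e.\ with $\tilde\sigma$. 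Thus $\tilde\sigma$ and $\sigma_0^{[n]}$ glue to a section of the locally free sheaf $\bigwedge^2\tangent_{\hilbn{n}{S}}$ over $U\cup V$.

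Next I would extend this section to all of $\hilbn{n}{S}$. Its domain $U\cup V$ is the complement of $E\cap D^{[n]}$, where $E\subset\hilbn{n}{S}$ is the Hilbert--Chow boundary divisor (non-reduced subschemes) and $D^{[n]}\subset\hilbn{n}{S}$ is the incidence divisor (subschemes whose support meets $D$). The generic point of every component of $E$ is supported at distinct generic points of $S$, which lie off $D$; hence $E\cap D^{[n]}$ avoids the generic point of every component of $E$ and therefore has codimension $\geq 2$ in $\hilbn{n}{S}$. Since $\bigwedge^2\tangent_{\hilbn{n}{S}}$ is locally free on the smooth variety $\hilbn{n}{S}$, the glued section extends uniquely to a global section $\sigma^{[n]}\in\HH^0(\hilbn{n}{S},\bigwedge^2\tangent_{\hilbn{n}{S}})$.

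Finally I would read off the conclusions. Integrability: $[\sigma^{[n]},\sigma^{[n]}]$ is a global section of the locally free --- hence torsion-free --- sheaf $\bigwedge^3\tangent_{\hilbn{n}{S}}$ that vanishes on the dense open $U$, so it vanishes identically and $\sigma^{[n]}$ is a Poisson structure. Nontriviality: $\sigma^{[n]}|_U=\tilde\sigma$ is the descent of $\sum_i\pr_i^*\sigma$, which is nonzero at any $n$-tuple with a coordinate off $D$, so $\sigma^{[n]}\neq 0$. Naturality in $\sigma$ and functoriality under automorphisms of the pair $(S,\sigma)$ are built into the construction. I expect essentially all the difficulty to be in the input ``$\hilbn{n}{}$ of a symplectic surface is symplectic'' --- equivalently the absence of poles along the boundary. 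If one prefers to argue directly rather than invoke Beauville, this reduces --- working \'etale-locally near a generic point of $E$, chosen off $D^{[n]}$ so that $S$ is locally standard-symplectic there --- to the classical computation that $\hilbn{2}{\mathbb{A}^2}$ is symplectic; everything else is descent, a dimension count, and a Hartogs-type extension. One may note in passing that $\sigma^{[n]}$ lands in the ``noncommutative'' summand $\HH^0(\hilbn{n}{S},\bigwedge^2\tangent_{\hilbn{n}{S}})$ of $\hochschild^2(\hilbn{n}{S})$, whose dimension is computed by \cref{theorem:intro-main}, though the construction itself is purely geometric.
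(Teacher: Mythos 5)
The paper does not actually prove this statement: \cref{theorem:bottacin} is quoted as background, with the proof delegated to Bottacin's work \cite{MR1346215,MR1660136}, so there is no internal argument to compare yours against. Your descend-and-extend strategy is essentially sound and is genuinely different from Bottacin's route. Bottacin constructs the bivector fibrewise, using the identification $\tangent_{[Z]}\hilbn{n}{S}\cong\Hom(I_Z,\mathcal{O}_Z)$ and the map induced by $\sigma\colon\omega_S\to\mathcal{O}_S$ together with Serre duality and trace pairings, and verifies the Jacobi identity directly; that approach yields finer information (an explicit formula, the degeneracy locus, and a generalisation to moduli of sheaves on Poisson surfaces). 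Your argument gets existence, naturality and nontriviality by soft means: \'etale descent of $\sum_i\pr_i^*\sigma$ on the configuration locus, the observation that the complement of $U\cup V$ is $E\cap D^{[n]}$ and has codimension $\geq 2$, Hartogs extension for the locally free sheaf $\bigwedge^2\tangent_{\hilbn{n}{S}}$, and torsion-freeness of $\bigwedge^3\tangent_{\hilbn{n}{S}}$ for the Jacobi identity. All of those steps are fine.

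The one place you should be more careful is the input on $V=\hilbn{n}{(S\setminus D)}$. Invoking a ``quasi-projective version of Beauville'' is glib: Beauville's nondegeneracy argument (the top wedge power of the $2$-form is a section of the trivial canonical bundle, nonzero somewhere, hence nowhere zero) uses projectivity, so on the non-proper $V$ you cannot invert the closed $2$-form for free. Fortunately you do not need nondegeneracy anywhere: since $E$ is the only divisorial component of the complement of $U$, it suffices to show $\tilde\sigma$ has no pole along $E$, and this is a statement at the generic points of $E$, which you may choose off $D^{[n]}$. There the reduction to the standard model requires a formal or analytic Darboux argument (algebraic symplectic surfaces are not \'etale-locally standard), but pole order of a section of a locally free sheaf can be tested after completion, so the reduction to the classical regularity of the bivector on $\hilbn{2}{\mathbb{A}^2}$ across its exceptional divisor --- which is exactly Beauville's local computation --- is legitimate. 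With that step done honestly (as you indicate in your closing remark), your proof is complete; it just proves existence more softly than Bottacin's construction does.
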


Using \cref{theorem:intro-main} we can
prove a unified version of \cref{theorem:fantechi,theorem:hitchin}
(and heuristically understand \cref{theorem:bottacin})
using stacky methods
and the Bridgeland--King--Reid--Haiman equivalence \eqref{equation:derived-mckay}.

\begin{proof}[Proof of \cref{corollary:tangent-hilbn-S}]
  By \eqref{equation:second-hochschild} in \cref{corollary:first-and-second},
  the derived McKay correspondence \eqref{equation:derived-mckay},
  the derived invariance of Hochschild cohomology in \cref{corollary:hochschild-serre-comparison-hilbert-scheme},
  and the Hochschild--Kostant--Rosenberg decomposition for~$S$ we have that
  \begin{equation}
    \label{equation:hkr-symn-S}
    \begin{aligned}
      \hochschild^2(\hilbn{n}{S})
      &\cong\hochschild^2([\Sym^nS]) \\
      &\cong\HH^2(S,\mathcal{O}_S)\oplus\HH^1(S,\tangent_S)\oplus\HH^0(S,\omega_S^\vee) \\
      &\qquad\oplus\bigwedge^2\HH^1(S,\mathcal{O}_S)\oplus\bigwedge^2\HH^0(S,\tangent_S)\oplus\left(\HH^1(S,\mathcal{O}_S)\otimes_\field\HH^0(S,\tangent_S)\right) \\
      &\qquad\oplus\HH^0(S,\omega_S^\vee).
    \end{aligned}
  \end{equation}
  We wish to match this to the Hochschild--Kostant--Rosenberg decomposition for~$\hilbn{n}{S}$,
  which reads
  \begin{equation}
    \label{equation:hkr-hilbn-S}
    \hochschild^2(\hilbn{n}{S})
    \cong
    \HH^2(\hilbn{n}{S},\mathcal{O}_{\hilbn{n}{S}})\oplus\HH^1(\hilbn{n}{S},\tangent_{\hilbn{n}{S}})\oplus\HH^0(\hilbn{n}{S},\bigwedge^2\tangent_{\hilbn{n}{S}}).
  \end{equation}
  For the first summand in \eqref{equation:hkr-hilbn-S} we can take~$\symgr_n$-invariant sections of $\HH^2(S^n,\mathcal O_{S^n})$ to obtain
  \begin{equation}
    \label{equation:gerby-hilbn-S}
    \HH^2(\hilbn{n}{S},\mathcal{O}_{\hilbn{n}{S}})
    \cong
    \HH^2(S,\mathcal{O}_S)
    \oplus\bigwedge^2\HH^1(S,\mathcal{O}_S).
  \end{equation}
 The third summand in \eqref{equation:hkr-hilbn-S} is given by
  \begin{equation}
    \label{equation:bivectors-hilbn-S}
    \HH^0(\hilbn{n}{S},\bigwedge^2\tangent_{\hilbn{n}{S}})
    \cong
    \bigwedge^2\HH^0(S,\tangent_S)\oplus \HH^0(S,\omega_S^\vee).
  \end{equation}
as can be extracted from \cite[Proposition~1]{MR2932167}, or, more directly as the $\#=2$ case of \cref{cor:polyvectorfields} below.
 Thus, by cancelling \eqref{equation:bivectors-hilbn-S} and \eqref{equation:gerby-hilbn-S}
  in \eqref{equation:hkr-symn-S} we obtain the identification \eqref{equation:tangent-hilbn-S}.
\end{proof}

\begin{remark}
  An argument similar to the computation for \eqref{equation:bivectors-hilbn-S} tells us that
  \begin{equation}
    \HH^0(\hilbn{n}{S},\bigwedge^3\tangent_{\hilbn{n}{S}})
    \cong
    \begin{cases}
      \HH^0(S,\tangent_S)\otimes_\field\HH^0(S,\omega_S^\vee) & n=2 \\
      \HH^0(S,\tangent_S)\otimes_\field\HH^0(S,\omega_S^\vee)\oplus\bigwedge^3\HH^0(S,\tangent_S) & n\geq 3.
    \end{cases}
  \end{equation}
  Thus, if~$\HH^0(S,\tangent_S)=0$ every pre-Poisson structure on~$\hilbn{n}{S}$ is automatically Poisson.
  It would be interesting to compare this to \cref{theorem:bottacin},
  which is a more intrinsic recipe to induce Poisson structures on~$\hilbn{n}{S}$ from~$S$.
\end{remark}

Let us point out the following observation.
\begin{example}
  \label{example:bielliptic-hilbert-conjecture}
  The case of bielliptic surfaces (see \cref{subsection:bielliptic})
  was not yet covered by \cref{theorem:fantechi,theorem:hitchin}.
  We already computed in \cref{proposition:hilbert-square-bielliptic} that~$\hochschild^2(\hilbn{2}{S})$
  is strictly bigger than~$\hochschild^2(S)\cong\HH^1(S,\tangent_S)$.
  Now, by \cref{corollary:tangent-hilbn-S}
  we in fact have for all~$n\geq 2$ that
  \begin{equation}
    \begin{aligned}
      \hochschild^2(\hilbn{n}{S})
      &\cong\HH^1(\hilbn{n}{S},\tangent_{\hilbn{n}{S}}) \\
      &\cong\HH^1(S,\tangent_S)\oplus\HH^0(S,\tangent_S)\otimes_\field\HH^1(S,\mathcal{O}_S),
    \end{aligned}
  \end{equation}
  thus all first-order deformations are commutative,
  and
  \begin{equation}
    \hh^1(\hilbn{n}{S},\tangent_{\hilbn{n}{S}})
    =
    \begin{cases}
      2 & \ord\omega_S=3,4,6 \\
      3 & \ord\omega_S=2,
    \end{cases}
  \end{equation}
  by \eqref{eq:biellTcoh}.
  Thus we get precisely one new deformation direction.

  By \cite[Corollary~2]{MR1144440} we have that
  deformations of smooth projective varieties with torsion canonical bundle are unobstructed.
  Thus, Hilbert schemes of points on bielliptic surfaces
  have \emph{genuinely new deformations}.
  It would be interesting to understand these deformations.
\end{example}

\subsection{On Boissi\`ere's conjecture}
\label{subsection:boissiere}
Recall from \cref{subsection:hilbert-consequences} that for every line bundle $L$ on a smooth projective surface $S$,
there is an associated line bundle $L_n$ on the Hilbert scheme $\hilbn{n}{S}$.

Boissi\`ere conjectured in \cite[Conjecture~1]{MR2932167}
a generating formula for the twisted Hodge numbers
\begin{equation}
  \label{equation:twisted-hodge}
  \hh^{p,q}(\hilbn{n}{S},{L}_n)\colonequals\hh^q(\hilbn{n}{S},\Omega_{\hilbn{n}{S}}^p\otimes{L}_n),
\end{equation}
which predicts the equality
\begin{equation}
  \label{equation:boissiere-conjecture}
  \sum_{n=0}^{+\infty}\sum_{p=0}^{2n}\sum_{q=0}^{2n}\hh^{p,q}(\hilbn{n}{S},{L}_n)x^py^qt^n
  =
  \prod_{k\ge 1}\prod_{p=0}^2\prod_{q=0}^2\left( 1-(-1)^{p+q}x^{p+k-1}y^{q+k-1}t^k \right)^{-(-1)^{p+q}\hh^{p,q}(S,{L})}.
\end{equation}
The case of~${L}=\mathcal{O}_S$ thus describes the Hodge numbers,
and this is precisely the result of \cite{MR1219901}.

The case~${L}=\omega_S^{\otimes i}$ relates to the Hochschild--Serre cohomology of~$\hilbn{n}{S}$:
for~${L}=\omega_S^{\otimes i}$ where~$i\in\mathbb{Z}$
we get
\begin{equation}
  (\omega_S^{\otimes i})_n\cong\omega_{\hilbn{n}{S}}^{\otimes i},
\end{equation}
and thus the twisted Hodge numbers in \eqref{equation:twisted-hodge}
are the dimensions of different pieces of the Hochschild--Serre cohomology; see \eqref{equation:hochschild-serre-decomposition} and \eqref{equation:hkr-twisted-hodge}.

In \cite[Appendix~B]{MR3778120} a counterexample was found to the the conjecture,
using~$S$ an Enriques surface and~$n\geq 2$.
Alternatively, the computation in \cref{subsection:hilbert-square-P2} gives another explicit counterexample.

\begin{example}
  \label{example:counterexample-boissiere}
  The right-hand side of \eqref{equation:boissiere-conjecture} for~$\mathbb{P}^2$
  and~${L}=\omega_{\mathbb{P}^2}^\vee=\mathcal{O}_{\mathbb{P}^2}(3)$
  where we take only~$k=1,2$ into account becomes
  \begin{equation}
    \begin{aligned}
      &\left( \frac{1}{1-t} \right)^{10}\left( \frac{1}{1+xt} \right)^{-8}\left( \frac{1}{1-x^2t} \right)
      \left( \frac{1}{1-xyt^2} \right)^{10}\left( \frac{1}{1+x^2yt^2} \right)^{-8}\left( \frac{1}{1-x^3yt^2} \right) \\
      &\qquad\equiv1 + (10 + 8x + x^2)t + (55 + 80x + 38x^2 + 10xy + 8x^3 + 8x^2y + x^4 + x^3y)t^2\bmod t^3.
    \end{aligned}
  \end{equation}
  Using \cref{proposition:hkr-hilbert-square-P2} we see that the coefficient of~$x^3yt^2$ should be~10 (not~1),
  the coefficient of~$x^2yt^2$ should be~35 (not~8)
  and the coefficient of~$xyt^2$ should be~28 (not~10).
\end{example}

\paragraph{A new conjecture}
Taking inspiration from our main result we propose a new conjecture,
which is compatible with collapsing the bigrading on twisted Hodge numbers
into a single grading on Hochschild homology with coefficients.

\begin{conjecture}
  \label{conjecture:corrected}
  Let $S$ be a smooth projective surface,
  and ${L}\in \Pic S$.
  Then
  \begin{equation}
    \label{equation:corrected}
    \begin{aligned}
      \sum_{n\geq 0}\sum_{p=0}^{2n}\sum_{q=0}^{2n}\hh^{p,q}(\hilbn{n}{S},{L}_n)x^py^qt^n
      &=
      \prod_{k\ge 1}\prod_{p=0}^2\prod_{q=0}^2\left( 1-(-1)^{p+q}x^{p+k-1}y^{q+k-1}t^k\right)^{-(-1)^{p+q}\hh^{p,q}(S,{L}^{\otimes k})}.
    \end{aligned}
  \end{equation}
\end{conjecture}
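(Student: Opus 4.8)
The plan is not to establish \cref{conjecture:corrected} in full generality---it is genuinely open---but to reduce it to verifiable special cases and to isolate the obstruction. The starting point is the observation that the conjecture is \emph{true after collapsing the Hodge bigrading}: setting $x=y^{-1}$ turns each monomial $x^py^q$ into $y^{q-p}$ and each factor $\bigl(1-(-1)^{p+q}x^{p+k-1}y^{q+k-1}t^k\bigr)^{-(-1)^{p+q}\hh^{p,q}(S,{L}^{\otimes k})}$ into $\bigl(1-(-y)^{q-p}t^k\bigr)^{-(-1)^{q-p}\hh^{p,q}(S,{L}^{\otimes k})}$, using that $p+q$ and $q-p$ have the same parity. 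Grouping the factors by the value of $j=q-p$, and using $\hochschilddim_j(S,{L}^{\otimes k})=\sum_{q-p=j}\hh^{p,q}(S,{L}^{\otimes k})$ on the right together with $\hochschild_j(\hilbn{n}{S},{L}_n)\cong\bigoplus_{q-p=j}\HH^{p,q}(\hilbn{n}{S},{L}_n)$ on the left, one recovers exactly \eqref{equation:hilbert-hochschid-serre-line-bundle-series}. That identity is already established as \cref{corollary:HH-Hilb-LineBundle}, so the $x=y^{-1}$ specialisation of \cref{conjecture:corrected} holds.

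Next I would dispatch the two boundary specialisations. Setting $x=0$ kills every factor on the right-hand side of \eqref{equation:corrected} except the one with $p=0$, $k=1$, leaving $(1-t)^{-\hh^0(S,{L})}(1+yt)^{\hh^1(S,{L})}(1-y^2t)^{-\hh^2(S,{L})}$, while the left-hand side becomes $\sum_{n\geq0}\sum_q\hh^q(\hilbn{n}{S},{L}_n)y^qt^n$, the generating series for the cohomology of the natural line bundles themselves. This reduces to the known computation of $\bigoplus_{n\geq0}\HH^*(\hilbn{n}{S},{L}_n)$ (e.g.\ via $\mathbf{R}\pi_*$ on the symmetric quotient, or Danila--Scala-type results), which has the expected G\"ottsche-type product form. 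The case $y=0$ is the mirror statement: the right-hand side collapses to $(1-t)^{-\hh^{0,0}(S,{L})}(1+xt)^{\hh^{1,0}(S,{L})}(1-x^2t)^{-\hh^{2,0}(S,{L})}$, and the left-hand side to the generating series for the spaces $\HH^0(\hilbn{n}{S},\Omega^p_{\hilbn{n}{S}}\otimes{L}_n)$ of twisted holomorphic forms, which one accesses through the description of $\pi_*\Omega^\bullet_{\hilbn{n}{S}}$; alternatively it follows from the $x=0$ case applied to ${L}^{-1}$ via Serre duality on $\hilbn{n}{S}$.

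The main input is the case where ${L}$ admits a unitary flat connection. Then every ${L}^{\otimes k}$ is again flat unitary, so $\HH^*(S,{L}^{\otimes k})$ carries a genuine Hodge structure, and the ${L}_n$-twisted cohomology of $\hilbn{n}{S}$---together with its Hodge bigrading---is computed by Nieper-Wi\ss kirchen \cite{MR2578804}. The plan is to re-express his generating formula in the variables $x,y,t$ and match it term by term with the right-hand side of \eqref{equation:corrected}; this is exactly \cref{theorem:TwistHodgeHilb}. Conceptually, it is precisely in this flat-unitary setting that the single grading $q-p$ governed by \cref{theorem:main-hochschild} and \cref{corollary:HH-Hilb-LineBundle} can be refined to the full Hodge bigrading, because a Hodge structure is available on the coefficient spaces $\HH^*(S,{L}^{\otimes k})$.

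The hard part---and the reason the general statement remains conjectural---is exactly that last point: the Bridgeland--King--Reid--Haiman equivalence together with the orbifold Hochschild--Kostant--Rosenberg isomorphism control only $\hochschild_*(\hilbn{n}{S},{L}_n)$, i.e.\ the antidiagonal sums $\sum_{q-p=j}\hh^{p,q}(\hilbn{n}{S},{L}_n)$, and for a non-flat line bundle there is no Hodge structure---and no other known geometric bigrading---on Hochschild homology with coefficients that would let one split the degree $j=q-p$ into the pair $(p,q)$. A complete proof would therefore seem to require a genuinely geometric computation on $\hilbn{n}{S}$ in the spirit of G\"ottsche's formula, using Nakajima-type operators that do respect the Hodge bigrading on $\HH^*(\hilbn{n}{S},\mathbb{C})$ while simultaneously tracking the twist by ${L}_n$. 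The appearance of ${L}^{\otimes k}$ rather than ${L}$ on the right-hand side---forced by the structure of the cyclic invariants $F^{\langle k\rangle}$ in \cref{theorem:main-hochschild}---strongly suggests that such a computation should localise along the length-$k$ punctual strata, but carrying this out Hodge-theoretically is open.
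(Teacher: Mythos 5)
Your proposal matches the paper's treatment of this statement: it is left as a conjecture, and the evidence you assemble---the $x=y^{-1}$ specialisation, which via the Hochschild--Kostant--Rosenberg isomorphism collapses \eqref{equation:corrected} to the proven identity of \cref{corollary:HH-Hilb-LineBundle}; the $x=0$ specialisation via the Hilbert--Chow morphism and $\Sym^n\HH^*(S,L)$; the $y=0$ specialisation via the pushforward of $\Omega^p_{\hilbn{n}{S}}$ to reflexive differentials on $S^{(n)}$ (Boissi\`ere, \cref{proposition:boissiere-global-sections}); and the unitary flat case via Nieper--Wi\ss kirchen (\cref{theorem:TwistHodgeHilb})---is exactly the evidence the paper gives, together with the same diagnosis that the BKRH equivalence plus orbifold HKR only controls the antidiagonal sums $q-p$ and hence cannot split the bigrading for general $L$. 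One small correction: your parenthetical claim that the $y=0$ case ``alternatively follows from the $x=0$ case applied to $L^{-1}$ via Serre duality'' is not right, since Serre duality on $\hilbn{n}{S}$ identifies $\hh^{p,0}(\hilbn{n}{S},L_n)$ with $\hh^{2n-p,2n}(\hilbn{n}{S},L_n^{\vee})$, i.e.\ it relates the $q=0$ row for $L$ to the top row $q=2n$ for $L^{-1}$ (the top $y$-degree coefficients), not to the $p=0$ column computed by the $x=0$ specialisation; so the route through reflexive differentials is the one that actually proves the $y=0$ case.
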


Note that the only difference to Boissi\`ere's conjecture \eqref{equation:twisted-hodge} is the occurence of the exponent $k$ of ${L}$ on the right.

By \cref{lemma:graded-sym-dim}, we see that \cref{conjecture:corrected} is equivalent to the claim that we have an isomorphism
\begin{equation}
  \bigoplus_{n\geq 0}\HH^{\#,\star}(\hilbn{n}{S}, {L}_n)t^n
 \overset{?}\cong
  \Sym^\bullet\left(\bigoplus_{k\geq 1}\Ho^{\#,\star}(S, {L}^{\otimes k})[1-k,1-k]t^k\right)\,,
\end{equation}
where $[1-k,1-k]$ denotes the shift of both gradings of $\Ho^{\#,\star}(S, {L}^{\otimes k})$ by the same value $1-k$, and the symmetric power is taken in the graded sense with respect to the bigrading of $\Ho^{\#,\star}(S, {L}^{\otimes k})$, but in the ordinary sense with respect to the grading by powers of $t$.
Looking at the individual Hilbert schemes, this can also be formulated as
\begin{equation}\label{eq:Boiconjsinglen}
\HH^{\#,\star}(\hilbn{n}{S}, {L}^{\{ n\}})
\overset{?}\cong\bigoplus_{\nu\dashv n}\left(\bigotimes_{i\geq 1} \Sym^{\lambda_i}\HH^{\#,\star}(S, {L}^{\otimes i})\right)\left[ \sum_{i}(1-i)\lambda_i,\sum_{i}(1-i)\lambda_i \right]\,
\end{equation}
for every $n\in \IN$, where $\nu=1^{\lambda_1}2^{\lambda_2}\cdots$.
Note that the right-hand side of \eqref{eq:Boiconjsinglen} almost exactly matches the right-hand side of
\eqref{equation:hodge-single-n}, the only difference being the degree shift. This difference can be fixed by introducing, for $\cX=[M/G]$ a quotient stack of a smooth proper variety $M$ by a finite group $G$, and $\cE\in \derived^\bounded(\cX)$, the \emph{twisted orbifold Hodge groups}
\begin{equation}\label{eq:Hodgeorbidef}
  \HH^{p,q}_{\orb}(\mathcal{X}, \mathcal{E})\coloneqq \left(\bigoplus_{g\in G} \HH^{p-\age(g),q-\age(g)}(M^g, E|_{M^g})\right)^G\cong \bigoplus_{[g]\in \Conj(G)} \HH^{p-\age(g),q-\age(g)}\left(M^g, E|_{M^g}\right)^{\centraliser(g)}
\end{equation}
The \emph{age function} $\age(-)$, which a priori takes value in $\mathbb{Q}$, is determined by the eigenvalues of the action of $g$ on the normal bundle $\mathrm{N}_{M^g/M}$; see \cite{MR1971293} for details.
Note that, $\HH^{\#,\star}_{\orb}(\mathcal{X}, \mathcal{E})$ and $\HH^{\#,\star}(\inertia\mathcal{X}, \mathcal{E})$ are equal as ungraded vector spaces. But in $\HH^{\#,\star}_{\orb}(\mathcal{X}, \mathcal{E})$ shifts for the varying direct summands of $\HH^{\#,\star}(\inertia\mathcal{X}, \mathcal{E})$ corresponding to the components of $\inertia\cX$ are introduced.
This is not just some random definition that we make in order to make certain degrees match, but the shift by $\age(g)$ also occurs in the standard convention for the grading of the orbifold cohomology; see \cite{MR1971293}.

For $M=S^n$, and $g\in \sym_n$ with cycle class $\lambda$, the age is given by
\begin{equation}
  \age(g)=\sum_{i\ge 1}(i-1)\lambda_i\,;
\end{equation}
see \cite{MR1971293,MR4033827}.
Hence, \eqref{equation:hodge-single-n} can be rephrased as
\begin{equation}
  \label{equation:orbifold-with-values-decomposition}
  \HH^{\#,\star}_{\orb}([\Sym^nX], {L}^{\{ n\}})
  \cong
  \bigoplus_{\nu\dashv n}\left(\bigotimes_{i\geq 1} \Sym^{\lambda_i}\HH^{\#,\star}(S, {L}^{\otimes i})\right)\left[ \sum_{i}(1-i)\lambda_i,\sum_{i}(1-i)\lambda_i \right].
\end{equation}
Thus, \cref{conjecture:corrected} is equivalent to the following conjecture that the derived McKay correspondence for Hilbert schemes of points on surfaces preserves the twisted Hodge groups.
\begin{conjecture}
  Let $S$ be a smooth projective surface and ${L}\in \Pic S$.
  Then, for every $n\geq 1$, we have an isomorphism of bigraded vector spaces
  \begin{equation}
    \HH^{\#,\star}(\hilbn{n}{S}, {L}_n)\cong \HH^{\#,\star}_{\orb}([\Sym^nS], {L}^{\{ n\}}).
  \end{equation}
\end{conjecture}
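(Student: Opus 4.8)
Since a bigraded vector space is determined up to isomorphism by the dimensions of its graded pieces, the conjecture is equivalent to \cref{conjecture:corrected}, and the plan is to establish the latter. Two inputs are already available. First, the single-graded shadow: by \cref{proposition:hochschild-with-values-line-bundles-invariant} the equivalence $\Psi$ induces $\hochschild_*(\hilbn{n}{S},L_n)\cong\hochschild_*([\Sym^nS],L^{\{n\}})$, and running this through \cref{proposition:hkr-varieties} and \cref{corollary:orbifold-hkr-degree} — the age shifts in \eqref{eq:Hodgeorbidef} cancel in the difference $q-p$ — already identifies the antidiagonal sums $\bigoplus_{q-p=j}\hh^{p,q}(\hilbn{n}{S},L_n)$ with $\bigoplus_{q-p=j}\dim\HH^{p,q}_{\orb}([\Sym^nS],L^{\{n\}})$. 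Second, two special cases: $L=\mathcal O_S$, where the left side is the classical Hodge-number formula of \cite{MR1219901} and the right side is the orbifold Hodge-number count of $[\Sym^nS]$, and the two agree; and line bundles with a unitary flat connection, which is \cref{theorem:TwistHodgeHilb}. What is missing in general is a way to split the two Hodge gradings apart in the presence of a nontrivial twist.

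\emph{First route: a twisted analogue of the G\"ottsche--Soergel formula.} One would compute $\HH^q(\hilbn{n}{S},\Omega^p_{\hilbn{n}{S}}\otimes L_n)$ by pushing forward along the semismall crepant Hilbert--Chow morphism $\mu\colon\hilbn{n}{S}\to S^{(n)}$, stratifying $S^{(n)}$ by partition type, and matching the contribution of the stratum attached to $\nu=1^{\lambda_1}2^{\lambda_2}\cdots$ with $\bigotimes_{i\geq 1}\Sym^{\lambda_i}\HH^{\#,\star}(S,L^{\otimes i})$, age-shifted. The twist $L^{\otimes i}$ (rather than $L$) on the length-$i$ part should emerge as in the proof of \cref{theorem:main}: a length-$i$ cluster is, modulo the cyclic group $\mu_i$, a small-diagonal copy of $S$ on which $L^{\{i\}}$ restricts to $L^{\otimes i}$. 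The model is the untwisted computation \cite{MR1219901}, but one now needs a decomposition of $\mathbf{R}\mu_*(\Omega^p_{\hilbn{n}{S}}\otimes L_n)$ that keeps $p$ and $q$ under separate control.

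\emph{Second route: refining the derived invariance.} One would show directly that $\Ad_{\mathcal O_{\cZ}}$ intertwines the Hochschild--Kostant--Rosenberg decompositions $\mathbf{L}\Delta^*\Delta_*\mathcal O\cong\bigoplus_p\Omega^p[p]$ on the two sides, not only after collapsing to the total degree. This fails for a general Fourier--Mukai equivalence, so the argument must exploit that the kernel is $\mathcal O_{\cZ}$ for the isospectral Hilbert scheme (Cohen--Macaulay, by \cite{MR1839919}) and that $\Psi$ is compatible with the commutative square \eqref{equation:derived-mckay-diagram}; concretely one would factor $\mathbf{L}\Delta^*\Delta_*$ through $\cZ\times_{S^{(n)}}\cZ$ and track the grading by number of differentials, using the age-shifted reformulation \eqref{equation:orbifold-with-values-decomposition} on the orbifold side.

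I expect the principal obstacle, shared by both routes, to be the absence of Hodge theory for twisted coefficients: for general $L$ the groups $\HH^{p,q}(\hilbn{n}{S},L_n)$ do not underlie a (mixed) Hodge structure, so degeneration of the Hodge--de Rham spectral sequence, strictness of morphisms of Hodge structures, and the decomposition of $\mathbf{R}\mu_*$ into pure pieces are all unavailable and must be replaced by explicit coherent-cohomology bookkeeping. It is exactly this failure to control the two gradings separately — as opposed to only their difference $q-p$, which \cref{corollary:hochschild-serre-hilbert-scheme} and the derived invariance already handle — that keeps the statement a conjecture rather than a corollary.
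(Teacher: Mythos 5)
This statement is a conjecture in the paper, not a theorem: the paper gives no proof, only the observation that it is equivalent to \cref{conjecture:corrected} together with partial evidence. Your assessment coincides with the paper's treatment — the reduction to \cref{conjecture:corrected}, the specializations $x=0$, $y=0$ and $x=y^{-1}$ (the last being the Hochschild-homology shadow via \cref{proposition:hochschild-with-values-line-bundles-invariant}, where the age shifts cancel in $q-p$), and the case of line bundles with a unitary flat connection via Nieper--Wi\ss kirchen (\cref{theorem:TwistHodgeHilb}) are exactly the evidence the paper records, while your two proposed routes are speculative directions that the paper does not carry out either. You have correctly identified that the bigraded refinement cannot be extracted from the single-graded derived invariance, which is precisely why the statement remains a conjecture.
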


\paragraph{Some evidence}
We describe some evidence in favour of \cref{conjecture:corrected},
given by various specializations of the identity.

The first specialization reduces to Hochschild homology.
\begin{proposition}
  \Cref{conjecture:corrected} holds when specialising to $x=y^{-1}$.
\end{proposition}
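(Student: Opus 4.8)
The plan is to show that the substitution $x=y^{-1}$ collapses the conjectural identity \eqref{equation:corrected} onto the already-established identity \eqref{equation:hilbert-hochschid-serre-line-bundle-series} of \cref{corollary:HH-Hilb-LineBundle}. The bridge is the variety-level Hochschild--Kostant--Rosenberg decomposition \eqref{equation:hkr-twisted-hodge}, which converts the alternating sum over twisted Hodge numbers (what $x=y^{-1}$ produces) into dimensions of Hochschild homology with line-bundle coefficients.

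First I would treat the left-hand side. Setting $x=y^{-1}$ turns $x^py^q$ into $y^{q-p}$, so for each $n$ the coefficient of $y^jt^n$ in $\sum_n\sum_{p,q}\hh^{p,q}(\hilbn{n}{S},{L}_n)x^py^qt^n$ becomes $\sum_{q-p=j}\hh^{p,q}(\hilbn{n}{S},{L}_n)$, with $0\le p,q\le 2n$. Applying \eqref{equation:hkr-twisted-hodge} to the variety $\hilbn{n}{S}$ and the line bundle ${L}_n$, this equals $\dim_\field\hochschild_j(\hilbn{n}{S},{L}_n)$. Hence, after relabelling $s=y$, the specialisation at $x=y^{-1}$ of the left-hand side of \eqref{equation:corrected} is exactly $\sum_{n\geq 0}\sum_{i=-2n}^{2n}\dim_\field\hochschild_i(\hilbn{n}{S},{L}_n)\,s^it^n$, which is the left-hand side of \eqref{equation:hilbert-hochschid-serre-line-bundle-series}.

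Next I would simplify the right-hand side. Under $x=y^{-1}$ the monomial $x^{p+k-1}y^{q+k-1}$ collapses to $y^{q-p}$, so the $y$-exponent loses its dependence on $k$ while the twist ${L}^{\otimes k}$ stays in the exponent of the factor (this is precisely the revision over Boissi\`ere's original formula, and it is what makes this check nontrivial). Since $p,q$ are confined to $\{0,1,2\}$ for a surface, I would regroup the finite triple product over $(p,q)$ according to the value $j=q-p\in\{-2,\dots,2\}$; using $(-1)^{p+q}=(-1)^{q-p}=(-1)^j$ and $(-1)^jy^j=(-y)^j$, the right-hand side becomes $\prod_{k\geq 1}\prod_{j=-2}^{2}\bigl(1-(-y)^jt^k\bigr)^{-(-1)^j\sum_{q-p=j}\hh^{p,q}(S,{L}^{\otimes k})}$. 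A final application of \eqref{equation:hkr-twisted-hodge} to $S$ and ${L}^{\otimes k}$ identifies $\sum_{q-p=j}\hh^{p,q}(S,{L}^{\otimes k})$ with $\hochschilddim_j(S,{L}^{\otimes k})$, giving exactly the right-hand side of \eqref{equation:hilbert-hochschid-serre-line-bundle-series} with $s=y$.

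Putting the two computations together, the $x=y^{-1}$ specialisation of \eqref{equation:corrected} is literally the identity \eqref{equation:hilbert-hochschid-serre-line-bundle-series}, which is proven in \cref{corollary:HH-Hilb-LineBundle}; this establishes the proposition. I do not anticipate a genuine obstacle here — the argument is a direct substitution followed by two invocations of HKR. The only bookkeeping worth flagging is the equality of signs $(-1)^{p+q}=(-1)^{q-p}$ and the fact that restricting $p,q$ to $\{0,1,2\}$ makes the passage from a product over $(p,q)$ to a product over $j=q-p$ unambiguous (and finite).
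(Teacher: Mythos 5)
Your proposal is correct and follows essentially the same route as the paper: substitute $x=y^{-1}$, use the Hochschild--Kostant--Rosenberg identity \eqref{equation:hkr-twisted-hodge} on both sides, and observe that the specialised identity is precisely \eqref{equation:hilbert-hochschid-serre-line-bundle-series}, already proven in \cref{corollary:HH-Hilb-LineBundle}. The paper's proof is just a terser statement of the same argument; your sign bookkeeping $(-1)^{p+q}=(-1)^{q-p}$ is the right (and only) point to check.
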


\begin{proof}
  Indeed, due to the Hochschild--Kostant--Rosenberg isomorphism formulated in terms of twisted Hodge groups \eqref{equation:hkr-twisted-hodge},
  setting $x=s^{-1}$ and $y=s$ in \eqref{equation:corrected} recovers \eqref{equation:hilbert-hochschid-serre-line-bundle-series}.
\end{proof}

The second specialization reduces to the~$\chi_y$-genus.
Here, for a line bundle $L$ on a variety $X$,
let $\chi_{y}(X,L)$ denote Hirzebruch's $\chi_y$-genus $\sum_{p\geq 0} \chi(X, \Omega_X^p\otimes L)y^p$.
\begin{proposition}
  \Cref{conjecture:corrected} holds when specialising to $y=-1$.\\
   In other words, (renaming the variable $x$ by $-y$), we have
    \begin{equation}
    \label{eqn:Chi-y-genus-i}
    \sum_{n\geq 0}\chi_{-y}(\hilbn{n}{S},{L}_n)t^n
    =
    \prod_{k\ge 1}\prod_{p=0}^2\left( 1-y^{p+k-1}t^k\right)^{-(-1)^{p}\chi(S,\Omega_S^p\otimes {L}^{\otimes k})}.
  \end{equation}
\end{proposition}

\begin{proof}
  This is essentially due to G\"ottsche \cite{epiga:6830}.
  Note that in his notation, $\mu(L)=\det(L^{[n]})\otimes \det(\mathcal{O}_S^{[n]})^\vee$
  is precisely our $L_n$, thanks to the formula before \cite[Lemma 5.1]{MR1795551}.
  Indeed, by performing the change of variables $p=ty$ in G\"ottsche's \cite[Corollary 1.2]{epiga:6830}
  we have that $\sum_{n\geq 0}\chi_{-y}(\hilbn{n}{S},{L}_n)t^n$ is equal to
  \begin{equation}
    \label{eqn:Gottsche-Chi-y}
    \prod_{k\ge 1}
    \left(\frac{(1-t^ky^k)^2}{(1-t^ky^{k+1})(1-t^ky^{k-1})}\right)^{\frac{k^2}{2}(L^2)}
    \left(\frac{1-t^ky^{k-1}}{1-t^ky^{k+1}}\right)^{\frac{k}{2}(LK_S)}
    \left(1-t^ky^k\right)^{(K_S^2)}
    \left((1-t^ky^k)^{10}(1-t^ky^{k+1})(1-t^ky^{k-1})\right)^{-\chi(S,\mathcal{O}_S)}.
  \end{equation}
  It suffices to show the right-hand side of \eqref{eqn:Chi-y-genus-i} is equal to \eqref{eqn:Gottsche-Chi-y}.
  To this end, we apply the Hirzebruch--Riemann--Roch formula and Noether formula to obtain the following:
  \begin{equation}
  \label{eqn:RR-formula-Surface}
    \begin{aligned}
    \chi(S, L^{\otimes k})=\chi(S, \mathcal{O}_S)+\frac{k^2}{2}(L^2)-\frac{k}{2}(LK_S);\\
    \chi(S, \Omega^1_S\otimes L^{\otimes k})= -10 \chi(S, \mathcal{O}_S)+(K_S^2)+k^2(L^2);\\
    \chi(S, \Omega_S^2\otimes L^{\otimes k})= \chi(S, \mathcal{O}_S)+\frac{k^2}{2}(L^2)+\frac{k}{2}(LK_S).
    \end{aligned}
  \end{equation}
  Plugging \eqref{eqn:RR-formula-Surface} into the right-hand side of \eqref{eqn:Chi-y-genus-i} readily gives \eqref{eqn:Gottsche-Chi-y}.
\end{proof}

\begin{remark}
  From \eqref{eqn:Chi-y-genus-i}, we can deduce that the $\chi_y$-genus of the line bundle ${L}_n$ on $\hilbn{n}{S}$
  is determined by the $\chi_y$-genera of all the line bundles ${L}^{\otimes k}$ on $S$ for $k\in \mathbb{N}$.
  More precisely,
  we can re-express this identity as
  \begin{equation}
    \label{equation:ChiyGenus}
    \sum_{n\geq 0}\chi_{-y}(\hilbn{n}{S},{L}_n)t^n
    =
    \exp\left(\sum_{m=1}^{\infty}\frac{t^m}{m}\sum_{k=1}^{\infty}(ty)^{(k-1)m}\chi_{-y^m}(S, {L}^{\otimes k})\right).
  \end{equation}
  Compare this to \cite[Theorem 2.3.14 (4)]{MR1312161},
  which is the case~${L}=\mathcal{O}_S$.
\end{remark}

The third specialization is concerned with the cohomology of~$L_n$ on~$\hilbn{n}{S}$.
\begin{proposition}
  \Cref{conjecture:corrected} holds when specialising to $x=0$, which says that
  \begin{equation}
    \begin{aligned}
      \sum_{n\geq 0}\sum_{q=0}^{2n}\hh^{q}(\hilbn{n}{S},{L}_n)y^qt^n
      &=
      \prod_{q=0}^2\left( 1-(-1)^{q}y^{q}t\right)^{-(-1)^{q}\hh^{q}(S,{L})} \\
      &=
      \frac{(1+yt)^{\hh^1(S,{L})}}{(1-t)^{\hh^0(S,{L})}(1-y^2t)^{\hh^2(S,{L})}}.
    \end{aligned}
  \end{equation}
\end{proposition}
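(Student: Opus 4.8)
The plan is to observe that the $x=0$ specialisation of \eqref{equation:corrected} only sees the $p=0$ summands, i.e.\ the groups $\HH^q(\hilbn{n}{S},{L}_n)$, and that these can be computed directly without invoking the derived McKay correspondence. First I would descend along the Hilbert--Chow morphism $\mu\colon\hilbn{n}{S}\to S^{(n)}$. Since $S^{(n)}=S^n/\sym_n$ has rational (in fact quotient) singularities in characteristic zero --- a fact also implicit in the crepancy of $\mu$ --- one has $\mathbf{R}\mu_*\mathcal{O}_{\hilbn{n}{S}}\cong\mathcal{O}_{S^{(n)}}$; as ${L}_n=\mu^*{L}^{(n)}$, the projection formula gives $\mathbf{R}\mu_*({L}_n)\cong{L}^{(n)}$, and hence $\HH^q(\hilbn{n}{S},{L}_n)\cong\HH^q(S^{(n)},{L}^{(n)})$ for all $q$.

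Next I would pass to the quotient presentation $\pi\colon S^n\to S^{(n)}$. Using ${L}^{(n)}=(\pi_*{L}^{\{n\}})^{\sym_n}$ and the exactness of $\sym_n$-invariants in characteristic zero, this becomes $\HH^*(S^{(n)},{L}^{(n)})\cong\HH^*(S^n,{L}^{\boxtimes n})^{\sym_n}$. The Künneth isomorphism identifies $\HH^*(S^n,{L}^{\boxtimes n})$ with $\HH^*(S,{L})^{\otimes n}$ as graded vector spaces, equivariantly for the $\sym_n$-actions permuting the factors (with Koszul signs on odd classes), so taking invariants produces the $n$-th graded symmetric power $\Sym^n\HH^*(S,{L})$. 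Collecting over $n$ and recording the cohomological degree by $y$ and $n$ by $t$, I obtain $\bigoplus_{n\geq 0}\HH^*(\hilbn{n}{S},{L}_n)t^n\cong\Sym^\bullet\bigl(\HH^*(S,{L})\,t\bigr)$, graded with respect to the cohomological degree and ordinary with respect to $t$.

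Finally I would apply \cref{lemma:graded-sym-dim} with $m=2$, the super-degree set $K$ equal to the cohomological grading and formal variables $y$ and $t$: the pieces $\HH^0(S,{L})$, $\HH^1(S,{L})$, $\HH^2(S,{L})$ contribute the factors $(1-t)^{-\hh^0(S,{L})}$, $(1+yt)^{\hh^1(S,{L})}$ and $(1-y^2t)^{-\hh^2(S,{L})}$ respectively, which is the claimed product. The only steps needing care are the vanishing $\mathbf{R}^{>0}\mu_*\mathcal{O}_{\hilbn{n}{S}}=0$ and the Koszul-sign bookkeeping that makes $\HH^1(S,{L})$ enter through an exterior rather than a symmetric power; neither is a genuine obstacle, and indeed this is the easy boundary case of \cref{conjecture:corrected} precisely because setting $x=0$ strips away all nontrivial $\Omega^p$-twists.
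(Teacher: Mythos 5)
Your argument is essentially the paper's own proof: the paper also computes $\HH^*(\hilbn{n}{S},{L}_n)\cong\HH^*(S^{(n)},{L}^{(n)})\cong\Sym^n\HH^*(S,{L})$ via the Hilbert--Chow morphism, rational singularities of $S^{(n)}$, and the projection formula, with $\Sym^n$ taken in the graded sense, and then reads off the generating series. Your extra details (the descent to $\sym_n$-invariants via Künneth and the appeal to \cref{lemma:graded-sym-dim}) just make explicit what the paper leaves implicit, so the proposal is correct and matches the paper's route.
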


\begin{proof}
  Indeed, let $\rho\colon \hilbn{n}{S}\to S^{(n)}$ be the Hilbert--Chow morphism.
  The above equality follows from the isomorphism
  \begin{equation}
    \HH^*(\hilbn{n}{S}, {L}_n)
    \cong
    \HH^*(\hilbn{n}{S}, \rho^*{L}^{(n)})
    \cong
    \HH^*(S^{(n)}, \mathbf{R}\rho_*\rho^*{L}^{(n)})
    \cong
    \HH^*(S^{(n)}, {L}^{(n)})
    \cong
    \Sym^n\HH^*(S, {L}),
  \end{equation}
  where we used the fact that $S^{(n)}$ has rational singularities hence $\mathbf{R}\rho_*\mathcal{O}_{\hilbn{n}{S}}=\mathcal{O}_{S^{(n)}}$,
  and $\Sym^n$ is taken in the graded sense.
\end{proof}

The final specialization is concerned with the global sections of~$\Omega_{\hilbn{n}{S}}^p\otimes L_n$.
\begin{proposition}
  \label{proposition:boissiere-global-sections}
  \Cref{conjecture:corrected} holds when specialising to $y=0$, which says that
  \begin{equation}
    \label{equation:boissiere-global-sections}
    \begin{aligned}
      \sum_{n\geq 0}\sum_{p=0}^{2n}\hh^{0}(\hilbn{n}{S}, \Omega^p_{\hilbn{n}{S}}\otimes {L}_n)x^pt^n
      &=
      \prod_{p=0}^2\left( 1-(-1)^{p}x^{p}t\right)^{-(-1)^{p}\hh^{p,0}(S,{L})} \\
      &=\frac{(1+xt)^{\hh^0(S,\Omega_S^1\otimes{L})}}{(1-t)^{\hh^0(S,{L})}(1-x^2t)^{\hh^0(S,\omega_S^\vee\otimes{L})}}
    \end{aligned}
  \end{equation}
\end{proposition}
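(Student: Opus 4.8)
The plan is to reduce the left-hand side of \eqref{equation:boissiere-global-sections} to a space of $\symgr_n$-invariant holomorphic forms on $S^n$, and then read off the generating series from \cref{lemma:graded-sym-dim}. The point is that the specialisation $y=0$ only involves the ``$q=0$'' part, i.e.\ global sections, and these are insensitive to the phenomena that make \cref{conjecture:corrected} delicate in general.

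First I would descend along the Hilbert--Chow morphism $\mu\colon\hilbn{n}{S}\to S^{(n)}$. As recalled in \cref{subsection:hilbert-consequences} we have $L_n=\mu^*L^{(n)}$ with $L^{(n)}$ a genuine line bundle on $S^{(n)}$ --- the stabiliser of a point of $S^n$ acts trivially on the fibre of $L^{\{n\}}$ there, since a transposition in the stabiliser merely interchanges two equal tensor factors $L_x\otimes L_x$ --- so the projection formula gives
\[
  \HH^0\bigl(\hilbn{n}{S},\Omega_{\hilbn{n}{S}}^p\otimes L_n\bigr)\cong\HH^0\bigl(S^{(n)},(\mu_*\Omega_{\hilbn{n}{S}}^p)\otimes L^{(n)}\bigr).
\]
Since $S^{(n)}=S^n/\symgr_n$ has quotient singularities, hence is klt, the extension theorem for reflexive differential forms on klt (more generally log canonical) spaces of Greb--Kebekus--Kov\'acs--Peternell identifies $\mu_*\Omega_{\hilbn{n}{S}}^p$ with the reflexive power $\Omega^{[p]}_{S^{(n)}}$. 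Because $\dim S=2$, the locus in $S^n$ where $\symgr_n$ does not act freely has codimension $\geq 2$, so $\pi\colon S^n\to S^{(n)}$ is quasi-\'etale and $\Omega^{[p]}_{S^{(n)}}\cong(\pi_*\Omega_{S^n}^p)^{\symgr_n}$; combining this with $\pi^*L^{(n)}\cong L^{\boxtimes n}$ and the projection formula for the finite map $\pi$ yields
\[
  \HH^0\bigl(\hilbn{n}{S},\Omega_{\hilbn{n}{S}}^p\otimes L_n\bigr)\cong\HH^0\bigl(S^n,\Omega_{S^n}^p\otimes L^{\boxtimes n}\bigr)^{\symgr_n}.
\]

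Next I would unwind the right-hand side. Writing $\Omega_{S^n}^p\cong\bigoplus_{p_1+\cdots+p_n=p}\Omega_S^{p_1}\boxtimes\cdots\boxtimes\Omega_S^{p_n}$ and using the K\"unneth formula together with exactness of $(-)^{\symgr_n}$ in characteristic zero, the space $\HH^0(S^n,\Omega_{S^n}^p\otimes L^{\boxtimes n})^{\symgr_n}$ becomes the degree-$p$ component of the graded symmetric power $\Sym^\bullet W$ of $W\colonequals\bigoplus_{j=0}^2\HH^0(S,\Omega_S^j\otimes L)$, where $\HH^0(S,\Omega_S^j\otimes L)$ is placed in degree $j$, so that the Koszul signs produced when external forms are permuted are exactly the signs of the graded symmetric product. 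Summing over $n$ and $p$ --- with the exponent of $t$ recording $n$ (ordinary) and the exponent of $x$ recording the form degree (graded) --- gives
\[
  \sum_{n\geq 0}\sum_{p=0}^{2n}\hh^0\bigl(\hilbn{n}{S},\Omega_{\hilbn{n}{S}}^p\otimes L_n\bigr)\,x^pt^n=\mathbb{E}_{\Sym^\bullet\left(\bigoplus_{j=0}^{2}\HH^0(S,\Omega_S^j\otimes L)\,x^jt\right)},
\]
and \cref{lemma:shifted-sym}, applied summand by summand (or directly \cref{lemma:graded-sym-dim}), evaluates the right-hand side as $\prod_{j=0}^{2}\bigl(1-(-1)^jx^jt\bigr)^{-(-1)^j\hh^0(S,\Omega_S^j\otimes L)}$. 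As $\Omega_S^2\cong\omega_S$ and $\hh^{j,0}(S,L)=\hh^0(S,\Omega_S^j\otimes L)$, this is the asserted formula.

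The one step that is more than bookkeeping is the identification $\mu_*\Omega_{\hilbn{n}{S}}^p\cong\Omega^{[p]}_{S^{(n)}}$ in all degrees $p$. For $p=0$ it is merely rationality of the singularities of $S^{(n)}$, and the specialisation $x=y^{-1}$ proved above sidesteps it entirely via the purely Hochschild-homological identity \eqref{equation:hilbert-hochschid-serre-line-bundle-series}; but in intermediate degrees the $\mu$-exceptional set is a divisor, so the naive codimension-$2$ extension of forms is not available, and one genuinely needs the klt property of $S^{(n)}$ (equivalently, crepancy of Hilbert--Chow) to guarantee that no global form on $\hilbn{n}{S}$ appears along it that does not already come from $S^{(n)}$.
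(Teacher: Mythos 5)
Your proof is correct and follows essentially the same route as the paper (which defers to Boissi\`ere's original argument): the key step in both is the identification of $\mu_*\Omega^p_{\hilbn{n}{S}}$ with the reflexive $p$-differentials on $S^{(n)}$, which together with quasi-\'etaleness of $S^n\to S^{(n)}$ reduces everything to $\HH^0(S^n,\Omega^p_{S^n}\otimes L^{\boxtimes n})^{\symgr_n}$, followed by K\"unneth and the graded symmetric-power bookkeeping of \cref{lemma:graded-sym-dim}; the only difference is that you cite the Greb--Kebekus--Kov\'acs--Peternell extension theorem where the paper uses the classical lemma for quotient singularities. Incidentally, your reading of the $p=2$ factor as $\hh^0(S,\omega_S\otimes L)$ is the correct one (the $\omega_S^\vee$ in the displayed statement is a typo, as the specialisation $L=\omega_S^\vee$ in \cref{cor:polyvectorfields} confirms).
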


\begin{proof}
  This is proved by Boissi\`ere in \cite[Proposition 1]{MR2932167}.
  The key point is that by \cite[Lemma 1.11]{MR0485870} we have an isomorphism
    $\rho_*\Omega^p_{\hilbn{n}{S}}\cong \widetilde{\Omega}^p_{S^{(n)}}$,
  where the right-hand side denotes the sheaf of reflexive $p$-differentials on $S^{(n)}$.
  Hence,
  \begin{equation}
    \HH^0(\hilbn{n}{S}, \Omega^p_{\hilbn{n}{S}}\otimes {L}_n)
    \cong
    \HH^0(S^{(n)}, \widetilde{\Omega}^p_{S^{(n)}}\otimes {L}^{(n)})
    \cong
    \HH^0(S^n, \Omega^p_{S^n}\otimes {L}^{\boxtimes n})^{\symgr_n},
  \end{equation}
  and the K\"unneth formula allows us to conclude.
\end{proof}

As a corollary, there is the following formula for polyvector fields on the Hilbert scheme.

\begin{corollary}\label{cor:polyvectorfields}
For every $n\in \mathbb N$, we have an isomorphism of graded vector spaces
\begin{equation}
 \HH^{0}\left(\hilbn{n}{S},\bigwedge^{2n-\#} \tangent_{\hilbn{n}{S}}\right)\cong \Sym^n\left(\LaTeXunderbrace{\HH^0(S,\omega_S^\vee)}_{\deg 0}\oplus \LaTeXunderbrace{\HH^0(S,\tangent_S)}_{\deg 1}\oplus \LaTeXunderbrace{\HH^0(S,\mathcal O_S)}_{\deg 2} \right)
\end{equation}
\end{corollary}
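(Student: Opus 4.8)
The plan is to deduce this directly from \cref{proposition:boissiere-global-sections} — the $y=0$ case of \cref{conjecture:corrected}, which has been established — applied to the anticanonical line bundle. First I would rewrite polyvector fields in terms of twisted differential forms. Since $\hilbn{n}{S}$ is smooth of dimension $2n$, the canonical identification $\bigwedge^{2n-\#}\tangent_{\hilbn{n}{S}}\cong\Omega^{\#}_{\hilbn{n}{S}}\otimes\omega_{\hilbn{n}{S}}^\vee$ holds, and $\omega_{\hilbn{n}{S}}^\vee\cong(\omega_S^\vee)_n$ is the natural line bundle on $\hilbn{n}{S}$ associated with $\omega_S^\vee\in\Pic S$ (this is the $i=-1$ instance of $(\omega_S^{\otimes i})_n\cong\omega_{\hilbn{n}{S}}^{\otimes i}$). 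Hence
\[
  \HH^{0}\!\left(\hilbn{n}{S},\bigwedge^{2n-\#}\tangent_{\hilbn{n}{S}}\right)\cong\HH^{0}(\hilbn{n}{S},\Omega^{\#}_{\hilbn{n}{S}}\otimes(\omega_S^\vee)_n).
\]

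Next I would invoke the key computation inside the proof of \cref{proposition:boissiere-global-sections}: for any $L\in\Pic S$ there is an isomorphism $\HH^0(\hilbn{n}{S},\Omega^p_{\hilbn{n}{S}}\otimes L_n)\cong\HH^0(S^n,\Omega^p_{S^n}\otimes L^{\boxtimes n})^{\symgr_n}$, coming from $\rho_*\Omega^p_{\hilbn{n}{S}}\cong\widetilde\Omega^p_{S^{(n)}}$ and $L_n=\rho^*L^{(n)}$. Taking $L=\omega_S^\vee$ and letting $\#$ vary, I would use $\Omega^\bullet_{S^n}\cong(\Omega^\bullet_S)^{\boxtimes n}$ together with the fact that global sections of an external tensor product are the tensor product of the global sections, to get $\HH^0(S^n,\Omega^\bullet_{S^n}\otimes(\omega_S^\vee)^{\boxtimes n})\cong\HH^0(S,\Omega^\bullet_S\otimes\omega_S^\vee)^{\otimes n}$ as graded vector spaces; the residual $\symgr_n$-action permutes the tensor factors with the Koszul sign dictated by the form degree, so the invariants are exactly the graded symmetric power $\Sym^n\HH^0(S,\Omega^\bullet_S\otimes\omega_S^\vee)$ in the sense of \cref{subsection:sym-of-vector-spaces}.

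It then remains to identify the graded vector space $\HH^0(S,\Omega^\bullet_S\otimes\omega_S^\vee)=\bigoplus_{p=0}^2\HH^0(S,\Omega^p_S\otimes\omega_S^\vee)$: degree $0$ is $\HH^0(S,\omega_S^\vee)$; degree $2$ is $\HH^0(S,\omega_S\otimes\omega_S^\vee)=\HH^0(S,\mathcal O_S)$; and degree $1$ is $\HH^0(S,\Omega^1_S\otimes\omega_S^\vee)\cong\HH^0(S,\tangent_S)$, via the surface identity $\tangent_S\cong\Omega^1_S\otimes\bigwedge^2\tangent_S=\Omega^1_S\otimes\omega_S^\vee$. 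Substituting this decomposition into the graded symmetric power yields the claimed isomorphism, with $\HH^0(S,\omega_S^\vee)$, $\HH^0(S,\tangent_S)$, $\HH^0(S,\mathcal O_S)$ in degrees $0$, $1$, $2$ respectively.

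As a cross-check (and an alternative proof of the isomorphism of graded vector spaces without tracking the $\symgr_n$-action by hand), one may simply compare Hilbert series: \cref{proposition:boissiere-global-sections} with $L=\omega_S^\vee$ gives
\[
  \sum_{n\geq 0}\sum_{\#}\hh^0\!\left(\hilbn{n}{S},\bigwedge^{2n-\#}\tangent_{\hilbn{n}{S}}\right)x^{\#}t^n
  =\frac{(1+xt)^{\hh^0(S,\tangent_S)}}{(1-t)^{\hh^0(S,\omega_S^\vee)}(1-x^2t)^{\hh^0(S,\mathcal O_S)}},
\]
which by \cref{lemma:graded-sym-dim} is precisely the generating series of $\Sym^\bullet$ of the stated graded vector space; comparing the coefficient of $t^n$ and using finite-dimensionality gives the isomorphism for each $n$. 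No genuine obstacle arises here; the only point demanding care is the grading bookkeeping — matching $\bigwedge^{2n-\#}$ with $\Omega^{\#}$ via the $\bigwedge^{d-p}\tangent\cong\Omega^p\otimes\omega^\vee$ identification, and noting that the degree-$1$ slot being odd forces the symmetric power to be taken in the graded (hence partly exterior) sense.
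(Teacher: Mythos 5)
Your proposal is correct, and it rests on the same pillars as the paper's proof: \cref{proposition:boissiere-global-sections} (i.e.\ Boissi\`ere's computation via $\rho_*\Omega^p_{\hilbn{n}{S}}\cong\widetilde\Omega^p_{S^{(n)}}$), the identification $\bigwedge^{2n-\#}\tangent_{\hilbn{n}{S}}\cong\Omega^\#_{\hilbn{n}{S}}\otimes\omega_{\hilbn{n}{S}}^\vee$ with $\omega_{\hilbn{n}{S}}^\vee\cong(\omega_S^\vee)_n$, and the surface identities $\Omega^1_S\otimes\omega_S^\vee\cong\tangent_S$, $\omega_S\otimes\omega_S^\vee\cong\mathcal{O}_S$. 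The only real difference is how the $\Sym^n$ structure is produced: the paper reads it off from the already-established cohomological-degree-zero part of \eqref{eq:Boiconjsinglen}, observing that the degree shifts force $\lambda_i=0$ for $i>1$, so only the partition $1^n$ contributes; you instead rerun the key step of \cref{proposition:boissiere-global-sections} with $L=\omega_S^\vee$ and carry out the $\symgr_n$-equivariant K\"unneth computation on $S^n$ directly, checking that the Koszul signs on form degrees turn the invariants into the graded symmetric power. Your generating-series cross-check via \cref{lemma:graded-sym-dim} is in substance exactly the paper's argument, so the two routes buy the same thing; your main line is slightly more explicit about where the signs (hence the partly exterior nature of $\Sym^n$) come from, while the paper's is shorter because it leverages the bigraded bookkeeping already done for \cref{conjecture:corrected}.
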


\begin{proof}
Translating from generating functions to vector spaces, \cref{proposition:boissiere-global-sections} says that the $\#=0$ case of \eqref{eq:Boiconjsinglen} is true. Due to the shifts on the right-hand side, the only contribution to $\#=0$ comes when $\lambda_i=0$ for all $i>1$, in which case $\lambda_1=n$. Hence, we have
\begin{equation}\label{eq:Boiconjsinglen0}
 \HH^{0}\left(\hilbn{n}{S},\Omega^{\#}_{\hilbn{n}{S}}\otimes L_n\right)\cong \Sym^n\left(\LaTeXunderbrace{\HH^0(S,L)}_{\deg 0}\oplus \LaTeXunderbrace{\HH^0(S,\Omega_S^1\otimes L)}_{\deg 1}\oplus \LaTeXunderbrace{\HH^0(S,\omega_S\otimes L)}_{\deg 2}\right)
\end{equation}
The assertion is just the case $L=\omega_S^\vee$ of \eqref{eq:Boiconjsinglen0}.
\end{proof}

The results of \cref{subsection:hilbert-square-P2} give an explicit instance in which the conjecture is checked.
\begin{example}
  The case~$S=\mathbb{P}^2$, $n=2$, and ${L}=\omega_S^\vee$ can be compared with the results in \cref{subsection:hilbert-square-P2}:
  we have that
  \begin{equation}
    (\mathrm{h}^{p,q}(\mathbb{P}^2,\omega_S^\vee))_{p,q}
    =
    \begin{pmatrix}
      10 & 0 & 0 \\
      8 & 0 & 0 \\
      1 & 0 & 0
    \end{pmatrix}
  \end{equation}
  and
  \begin{equation}
    (\mathrm{h}^{p,q}(\mathbb{P}^2,\omega_S^\vee\otimes\omega_S^\vee))_{p,q}
    =
    \begin{pmatrix}
      28 & 0 & 0 \\
      35 & 0 & 0 \\
      10 & 0 & 0
    \end{pmatrix},
  \end{equation}
  and a computation shows the output of \cref{conjecture:corrected}
  agrees with \cref{proposition:hkr-hilbert-square-P2}.
\end{example}

\begin{remark}
  By taking ${L}=\mathcal{O}_S$
  (hence ${L}_n=\mathcal{O}_{\hilbn{n}{S}}$),
  \cref{conjecture:corrected} reduces to the classical formula of Hodge numbers of Hilbert schemes
  due to G\"ottsche and Soergel \cite{MR1219901}.
  More generally, by using \cite[Remark~2.7]{MR2578804},
  we will see in \cref{theorem:TwistHodgeHilb} that \cref{conjecture:corrected}
  holds in the case that ${L}$ admits a unitary flat connection.
\end{remark}

%

\subsection{Relation to Nieper--Wi\ss kirchen's work}
\label{subsection:nieper-wisskirchen}
Recall that for a smooth projective variety $X$
equipped with a rank-1 local system $\mathbb{L}$ of complex vector spaces,
the cohomology group $\HH^k(X, \mathbb{L})$ carries a weight-$k$ $\mathbb{C}$-Hodge structure\footnote{A $\mathbb{C}$-Hodge structure of weight $k$ is nothing but a $\mathbb{C}$-vector space together with a direct sum decomposition into subspaces $V=\bigoplus_{p+q=k}V^{p,q}$.}.
The Hodge components are described as follows.
Under the non-abelian Hodge correspondence,
$\mathbb{L}$ corresponds to a Higgs line bundle $({L}, \theta)$,
where ${L}$ is a holomorphic line bundle of degree 0
and $\theta\in \HH^0(X, \Omega^1_X)$ is a holomorphic 1-form.
Then the Hodge decomposition takes the following form:
\begin{equation}
  \label{eqn:HodgeDecomp}
  \HH^k(X, \mathbb{L})\cong \bigoplus_{p+q=k} \HH^{p,q}(X, ({L}, \theta)),
\end{equation}
where $\HH^{p,q}(X, ({L}, \theta))$ is by definition
the cohomology of the complex
\begin{equation}
  \HH^q(X, {L}\otimes \Omega_X^{p-1})
  \xrightarrow{\theta}
  \HH^q(X, {L}\otimes \Omega_X^p)
  \xrightarrow{\theta}
  \HH^q(X, {L}\otimes \Omega_X^{p+1}).
\end{equation}
Therefore, we denote
\begin{equation}
  \HH^{p,q}(X, \mathbb{L})\colonequals\HH^{p,q}(X, ({L}, \theta)).
\end{equation}
Again by the non-abelian Hodge correspondence, $\mathbb{L}$ is unitary if and only if ${L}\cong \mathbb{L}\otimes \mathcal{O}_X$ and $\theta=0$. In this case, \eqref{eqn:HodgeDecomp} becomes
\begin{equation}
  \HH^k(X, \mathbb{L})\cong \bigoplus_{p+q=k} \HH^{p,q}(X, {L}),
\end{equation}
where $\HH^{p,q}(X, {L})=\HH^q(X, {L}\otimes \Omega_X^p)$,
consistent with our notation in previous sections.

In the rest of this section, we specialise to the case of dimension 2.
Let $S$ be a smooth projective surface and $\mathbb{L}$ a rank-1 local system of $\mathbb{C}$-vector spaces on $S$.

As $\pi_1(\hilbn{n}{S})\cong \pi_1(S^{(n)})\cong \pi_1(S)^{\operatorname{ab}}\cong\HH_1(S, \mathbb{Z})$
(see \cite[\S 6, Lemma 1]{MR0730926}),
the local system $\mathbb{L}$
induces a rank-1 local system on $\hilbn{n}{S}$, denoted by $\mathbb{L}_n$.
Similarly,
any line bundle ${L}$
induces a line bundle ${L}_n$ on $\hilbn{n}{S}$,
and we have a canonical identification $\HH^{1,0}(S)\cong\HH^{1,0}(\hilbn{n}{S})$. One can easily check the following compatibility:

\begin{lemma}
  \label{lemma:TakingHilb}
  Let $\mathbb{L}$ be a rank-1 local system on $S$ corresponding to the Higgs line bundle $({L}, \theta)$.
  Then the local system $\mathbb{L}_n$ on $\hilbn{n}{S}$ corresponds to the Higgs line bundle $({L}_n, \theta)$.
  In particular, if $\mathbb{L}$ is unitary, then so is $\mathbb{L}_n$
  and ${L}_n\cong \mathbb{L}_n\otimes \mathcal{O}_{\hilbn{n}{S}}$, $\theta=0$.
\end{lemma}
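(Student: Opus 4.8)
The plan is to unwind the two constructions---taking Hilbert schemes of local systems on the one hand, and the non-abelian Hodge correspondence on the other---and check they commute, which at the level of line bundles and $1$-forms is essentially a naturality statement. First I would fix the description of the fundamental group: since $\pi_1(\hilbn{n}{S})\cong\pi_1(S^{(n)})\cong\HH_1(S,\IZ)$ via \cite[\S6, Lemma~1]{MR0730926}, the local system $\mathbb{L}_n$ is by definition the one induced by pulling back along the composite $\hilbn{n}{S}\to S^{(n)}$ together with the natural surjection $\pi_1(S)\to\pi_1(S)^{\mathrm{ab}}$. Concretely, if $\mathbb{L}$ is given by a character $\chi\colon\HH_1(S,\IZ)\to\IC^\times$, then $\mathbb{L}_n$ is given by the same character under the identification $\HH_1(\hilbn{n}{S},\IZ)\cong\HH_1(S,\IZ)$.

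Next I would recall how the non-abelian Hodge correspondence assigns to $\mathbb{L}$ the Higgs line bundle $({L},\theta)$. In rank $1$ over a surface (or in fact any dimension), the Higgs field $\theta\in\HH^0(S,\Omega_S^1)$ is the harmonic representative determined by the ``imaginary part'' of the monodromy, and ${L}$ is the flat line bundle's underlying holomorphic line bundle twisted so that $\bar\partial_{L}$ absorbs the harmonic form; both are functorial for pullback along holomorphic maps of compact Kähler manifolds, since harmonic representatives pull back to harmonic representatives. Then the key point is that the identification $\HH^{1,0}(S)\cong\HH^{1,0}(\hilbn{n}{S})$ is compatible with the Hilbert--Chow and quotient maps: the pullback of $\theta$ along $\hilbn{n}{S}\to S^{(n)}$, then restricting to $S^n$ via $\pi$, is the $\symgr_n$-invariant form $\theta\boxplus\cdots\boxplus\theta$, which descends precisely to the $1$-form associated to $\mathbb{L}_n$ under this canonical identification. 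Combined with the already-established statement $\mathbb{L}$ unitary $\iff$ $\theta=0$ and ${L}\cong\mathbb{L}\otimes\mathcal{O}_S$, this gives that $\mathbb{L}_n$ corresponds to $({L}_n,\theta)$ and that unitarity is inherited, with ${L}_n\cong\mathbb{L}_n\otimes\mathcal{O}_{\hilbn{n}{S}}$.

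The main obstacle is being careful about \emph{which} line bundle ${L}_n$ is produced by the non-abelian Hodge correspondence applied to $\mathbb{L}_n$, and checking it agrees with the line bundle ${L}_n=\mu^*{L}^{(n)}$ defined in \cref{subsection:hilbert-consequences}. For a degree-$0$ line bundle this comes down to the statement that the pullback $\Pic^0(S)\to\Pic^0(\hilbn{n}{S})$ induced by $\hilbn{n}{S}\to S^{(n)}\leftarrow S$ (the Albanese-type map) is compatible with the identification $\pi_1(\hilbn{n}{S})^{\mathrm{ab}}\cong\pi_1(S)^{\mathrm{ab}}$, i.e., that the ``${}_n$'' operation on line bundles is the same whether performed on the Betti side or the Dolbeault side. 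This is a standard but slightly fiddly compatibility of the Albanese with symmetric products and Hilbert--Chow; once it is in place, the rest is formal. I would phrase the proof as: (1) reduce to degree $0$ by twisting; (2) observe that $\theta$, being harmonic, pulls back to the claimed invariant form and descends; (3) invoke naturality of non-abelian Hodge under the morphism $\hilbn{n}{S}\to S^{(n)}$ and the étale-locally-trivial quotient $S^n\to S^{(n)}$; and (4) read off the unitary case as the vanishing of $\theta$.
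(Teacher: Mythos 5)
The paper gives no proof of this lemma at all (it is introduced with ``one can easily check''), so there is nothing to compare you against line by line; judged on its own terms, your outline assembles the right ingredients, but the step carrying all the weight is not valid as formulated. In step (3) you invoke ``naturality of non-abelian Hodge under the morphism $\hilbn{n}{S}\to S^{(n)}$ and the \'etale-locally-trivial quotient $S^n\to S^{(n)}$'': the symmetric product $S^{(n)}$ is singular, so the correspondence is not defined on it; $\pi\colon S^n\to S^{(n)}$ is ramified along the diagonals, hence not \'etale; and there is no morphism from $\hilbn{n}{S}$ to $S$ or to $S^n$ along which to pull back, so functoriality of the correspondence for holomorphic maps of compact K\"ahler manifolds cannot be applied directly. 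Restricting to the locus where $\mu$ is an isomorphism and $\pi$ is \'etale does not rescue this: that locus is non-compact (harmonic metrics are a global statement on compact manifolds), and since its complement in $\hilbn{n}{S}$ is the exceptional divisor, agreement there would in any case only pin down the Dolbeault partner of $\mathbb{L}_n$ up to a twist by that divisor, which needs a separate argument (e.g.\ vanishing of the rational first Chern class) to exclude. The ``standard but slightly fiddly compatibility'' you defer --- that the Betti-side transfer $\mathbb{L}\mapsto\mathbb{L}_n$ and the Dolbeault-side transfer $({L},\theta)\mapsto({L}_n,\theta)$ are intertwined --- is precisely the content of the lemma, so as written the proposal has a genuine gap.

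A clean way to close it is to phrase everything on $\HH^1$, where the rank-one correspondence is canonical and functorial: unitary characters in $\HH^1(-,\mathrm{U}(1))$ match holomorphic line bundles with torsion first Chern class and $\theta=0$, while $\mathbb{R}_{>0}$-valued characters, i.e.\ classes in $\HH^1(-,\mathbb{R})$, match $\HH^0(\Omega^1)$ via the $(1,0)$-part of the harmonic representative of the logarithm of the \emph{modulus} of the monodromy (not its ``imaginary part''). For $n\geq2$ the natural maps give $\HH^1(\hilbn{n}{S},\IZ)\cong\HH^1(S^{(n)},\IZ)\cong\HH^1(S^n,\IZ)^{\symgr_n}\cong\HH^1(S,\IZ)$ compatibly with Hodge structures, and under this single identification the three transfers $\mathbb{L}\mapsto\mathbb{L}_n$, ${L}\mapsto{L}_n$ and $\theta\mapsto\theta$ are all induced by the same operations ($\mu^*$ together with $\symgr_n$-invariant descent along $\pi$), as one checks from the definitions of ${L}_n=\mu^*{L}^{(n)}$ in \cref{subsection:hilbert-consequences} and of $\mathbb{L}_n$ via $\pi_1(\hilbn{n}{S})\cong\HH_1(S,\IZ)$. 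Since in rank one the whole correspondence is built from these $\HH^1$-data, the lemma follows; your observation that harmonic $1$-forms on compact K\"ahler manifolds pull back to harmonic $1$-forms (they are sums of holomorphic and antiholomorphic forms) is then the correct justification for the $\theta$-part, applied to the honest morphisms $\mu$ and $\pi$ on the level of forms and cohomology rather than to the non-abelian Hodge correspondence itself.
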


In \cite[Theorem 1.2, Remark 2.7]{MR2578804}, Nieper--Wi\ss kirchen proved the following result.
\begin{theorem}[Nieper--Wi\ss kirchen]
  \label{theorem:Nieper}
  In the above notation,
  \begin{equation}
    \label{equation:Nieper-Hodge}
    \sum_{n\geq 0}\sum_{p=0}^{2n}\sum_{q=0}^{2n}\hh^{p,q}(\hilbn{n}{S},\mathbb{L}_n)x^py^qt^n
    =
    \prod_{k\ge 1}\prod_{p=0}^2\prod_{q=0}^2\left( 1-(-1)^{p+q}x^{p+k-1}y^{q+k-1}t^k\right)^{-(-1)^{p+q}\hh^{p,q}(S,\mathbb{L}^{\otimes k})}.
  \end{equation}
  In particular, by specialising to $x=y$, we have
  \begin{equation}
    \label{equation:Nieper-Betti}
    \sum_{n\geq 0}\sum_{i=0}^{4n}\hh^{i}(\hilbn{n}{S},\mathbb{L}_n)x^it^n
    =
    \prod_{k\ge 1}\prod_{i=0}^4\left( 1-(-1)^{i}x^{i+2k-2}t^k\right)^{-(-1)^{i}\hh^{i}(S,\mathbb{L}^{\otimes k})}.
  \end{equation}
\end{theorem}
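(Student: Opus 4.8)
The plan is to quote \cite[Theorem~1.2]{MR2578804}, whose bigraded refinement \eqref{equation:Nieper-Hodge} (as opposed to the Betti form \eqref{equation:Nieper-Betti}) is \cite[Remark~2.7]{MR2578804}; below I indicate the structure of the argument and where unitarity is essential. First I would record, via \cref{lemma:TakingHilb}, that since $\mathbb{L}$ is unitary so is the induced local system $\mathbb{L}_n$ on $\hilbn{n}{S}$, with associated flat line bundle ${L}_n=\mathbb{L}_n\otimes\mathcal{O}_{\hilbn{n}{S}}$ of the form $\rho^*{L}^{(n)}$, where $\rho\colon\hilbn{n}{S}\to S^{(n)}$ is the Hilbert--Chow morphism and ${L}=\mathbb{L}\otimes\mathcal{O}_S$. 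By the Deligne decomposition recalled just before the statement this reduces \eqref{equation:Nieper-Hodge} to a statement about the ordinary twisted Hodge groups $\HH^q(\hilbn{n}{S},\Omega^p_{\hilbn{n}{S}}\otimes{L}_n)$, i.e., to \cref{conjecture:corrected} for this particular ${L}$.

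Next I would run a Göttsche--Soergel/Nakajima-type analysis of $\rho$ with coefficients in $\mathbb{L}_n$: the decomposition theorem expresses $\RRR\rho_*\mathbb{L}_n$ through the strata of $S^{(n)}$ indexed by partitions $\nu=1^{\lambda_1}2^{\lambda_2}\cdots$, the stratum for $\nu$ contributing $\bigotimes_{i\geq 1}\Sym^{\lambda_i}\HH^*(S,\mathbb{L}^{\otimes i})$; the $i$-th tensor power of $\mathbb{L}$ enters because the $\mathbb{Z}/i$-monodromy around a length-$i$ cluster acts on the corresponding factor. Summing over $n$ this gives a Fock-type isomorphism matching the orbifold computation $\HH^{*}_{\orb}([\Sym^n S],{L}^{\{n\}})$ of \cref{corollary:line-bundle-coefficients}, hence \eqref{equation:Nieper-Betti} after the bookkeeping of \cref{lemma:graded-sym-dim}. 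The hard part is promoting this to the bigraded statement \eqref{equation:Nieper-Hodge}: one has to show that the partition-$\nu$ summand carries exactly the Hodge bidegree prescribed by the exponents on the right, equivalently that it equals $\bigotimes_i\Sym^{\lambda_i}\HH^{\#,\star}(S,{L}^{\otimes i})$ shifted by $\bigl(\sum_i(i-1)\lambda_i,\sum_i(i-1)\lambda_i\bigr)=(\age,\age)$ of the corresponding permutation. Concretely I would verify that the perverse (Leray) filtration on $\HH^*(\hilbn{n}{S},\mathbb{L}_n)$ attached to $\rho$ is strictly compatible with the Hodge filtration after the twist, and that the normal-direction contribution of each stratum is a pure Tate twist of the expected weight; in \cite{MR2578804} this is done on the explicit Göttsche--Soergel model, while abstractly it follows from the theory of Hodge modules (or, over $\mathbb{C}$, from semisimplicity of polarisable variations of Hodge structure). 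This strictness is exactly where unitarity is indispensable: for non-unitary $\mathbb{L}$ the Higgs field $\theta$ is nonzero, the groups $\HH^{p,q}(-,({L},\theta))$ become cohomologies of genuine three-term complexes rather than single summands, and neither the product factorisation nor the Tate weight count persists.

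The remaining pieces are formal: the specialisation $x=y$ turning \eqref{equation:Nieper-Hodge} into \eqref{equation:Nieper-Betti} is immediate, and the passage from an isomorphism of bigraded vector spaces to the generating-series identity is precisely \cref{lemma:graded-sym-dim}. Thus the only genuinely non-formal input, beyond the reduction in \cref{lemma:TakingHilb}, is the Hodge-theoretic strictness of the previous paragraph, which is what we import from \cite{MR2578804}.
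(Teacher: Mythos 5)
The paper offers no internal proof of this statement at all: it is imported verbatim, with the one-line attribution to \cite[Theorem 1.2, Remark 2.7]{MR2578804}, and the specialisation $x=y$ is the only ``argument'' present. Your proposal agrees with that on the essential point, since you cite the same two items of the same reference; everything else you write is a (plausible but speculative) reconstruction of Nieper--Wi\ss kirchen's own proof, which the paper does not attempt and which you are not really on the hook for.

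However, your framing misreads the statement in a way that matters. \cref{theorem:Nieper} is stated, ``in the above notation'', for an \emph{arbitrary} rank-1 local system $\mathbb{L}$, with $\hh^{p,q}(-,\mathbb{L})$ defined through the Higgs-bundle/$\mathbb{C}$-Hodge-structure decomposition \eqref{eqn:HodgeDecomp} of the preamble; unitarity is nowhere assumed here. Unitarity and \cref{lemma:TakingHilb} enter only in the subsequent \cref{theorem:TwistHodgeHilb}, where the local-system statement is converted into the line-bundle statement ($L=\mathbb{L}\otimes\mathcal{O}_S$, $\theta=0$) and hence into the special case of \cref{conjecture:corrected}. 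So your opening step (``since $\mathbb{L}$ is unitary \dots\ $L_n=\rho^*L^{(n)}$'') and your claim that ``unitarity is indispensable'' for the product formula attach to the wrong statement: as you have set it up, you would at best prove the unitary case, which is strictly weaker than \cref{theorem:Nieper} as quoted. Relatedly, your ``reduction to \cref{conjecture:corrected} for this particular $L$'' runs the paper's logic backwards: the conjecture has no independent proof for this case, and it is precisely \cref{theorem:Nieper} (plus \cref{lemma:TakingHilb}) that yields it, not conversely. If you simply quote \cite[Theorem 1.2, Remark 2.7]{MR2578804} for all rank-1 local systems and note that $x=y$ gives \eqref{equation:Nieper-Betti}, you match the paper exactly.
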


Combining \cref{lemma:TakingHilb} and \cref{theorem:Nieper}, we obtain that \Cref{conjecture:corrected} holds for line bundles arising from rank-1 unitary local systems:

\begin{theorem}
  \label{theorem:TwistHodgeHilb}
  Let $\mathbb{L}$ be a rank-1 unitary local system on a smooth projective surface $S$.
  Define ${L}\colonequals\mathbb{L}\otimes \mathcal{O}_S$.
  Let ${L}_n$ be the associated line bundle on $\hilbn{n}{S}$.
  Then we have the following equality.
  \begin{equation}
    \label{equation:Hodge}
    \sum_{n\geq 0}\sum_{p=0}^{2n}\sum_{q=0}^{2n}\hh^{p,q}(\hilbn{n}{S},{L}_n)x^py^qt^n
    =
    \prod_{k\ge 1}\prod_{p=0}^2\prod_{q=0}^2\left( 1-(-1)^{p+q}x^{p+k-1}y^{q+k-1}t^k\right)^{-(-1)^{p+q}\hh^{p,q}(S,{L}^{\otimes k})}.
  \end{equation}
\end{theorem}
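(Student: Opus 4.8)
The plan is to obtain \cref{theorem:TwistHodgeHilb} as a direct consequence of \cref{theorem:Nieper} combined with \cref{lemma:TakingHilb}. The whole point is that for a \emph{unitary} rank-1 local system the two kinds of twisted Hodge numbers entering the two statements --- those of the local system appearing in \eqref{equation:Nieper-Hodge} and those of the associated line bundle in \eqref{equation:Hodge} --- literally coincide, because a unitary local system has vanishing Higgs field.

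First I would treat the right-hand sides. Since a tensor power of a unitary rank-1 local system is again unitary, and the assignment $\mathbb{L}\mapsto \mathbb{L}\otimes\mathcal{O}_S$ is compatible with tensor products, we have $\mathbb{L}^{\otimes k}\otimes\mathcal{O}_S\cong L^{\otimes k}$ for every $k\geq 1$, with vanishing Higgs field. By the description of the Hodge decomposition in the unitary case recalled at the beginning of \cref{subsection:nieper-wisskirchen}, this gives
\begin{equation}
  \hh^{p,q}(S,\mathbb{L}^{\otimes k})=\hh^q\bigl(S,\Omega_S^p\otimes L^{\otimes k}\bigr)=\hh^{p,q}(S,L^{\otimes k}),
\end{equation}
so the right-hand side of \eqref{equation:Nieper-Hodge} is exactly the right-hand side of \eqref{equation:Hodge}. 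For the left-hand sides, \cref{lemma:TakingHilb} tells us that the induced local system $\mathbb{L}_n$ on $\hilbn{n}{S}$ is again unitary, with associated Higgs bundle $(L_n,0)$, where $L_n$ is the natural line bundle associated with $L$ as in \cref{subsection:hilbert-consequences}. Applying once more the unitary case of the Hodge decomposition, now on $\hilbn{n}{S}$, yields
\begin{equation}
  \hh^{p,q}(\hilbn{n}{S},\mathbb{L}_n)=\hh^q\bigl(\hilbn{n}{S},\Omega_{\hilbn{n}{S}}^p\otimes L_n\bigr)=\hh^{p,q}(\hilbn{n}{S},L_n).
\end{equation}
Substituting both identifications into the generating-function identity of \cref{theorem:Nieper} produces exactly \eqref{equation:Hodge}, which is \cref{conjecture:corrected} for this $L$.

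There is no genuine obstacle here; the only point requiring care is the compatibility of two a priori different constructions of $L_n$ --- the natural line bundle of \cref{subsection:hilbert-consequences} and the one obtained from the local system $\mathbb{L}_n$ via the isomorphism $\pi_1(\hilbn{n}{S})\cong\HH_1(S,\mathbb{Z})$ --- together with the compatibility of the non-abelian Hodge correspondence with passing from $S$ to $\hilbn{n}{S}$. Both are precisely the content of \cref{lemma:TakingHilb}, so once that lemma is in hand the theorem follows by the substitution just described.
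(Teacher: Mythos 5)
Your proposal is correct and coincides with the paper's own argument: the theorem is obtained by combining \cref{lemma:TakingHilb} with \cref{theorem:Nieper}, using that unitarity (vanishing Higgs field) identifies the twisted Hodge numbers of $\mathbb{L}^{\otimes k}$ and $\mathbb{L}_n$ with those of $L^{\otimes k}$ and $L_n$ respectively. Nothing essential differs from the paper's proof.
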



\appendix

\section{Hochschild--Serre cohomology for dg categories and functorialities}
\label{section:functoriality}
The definition of Hochschild--Serre cohomology in \cref{definition:hochschild-serre}
using Fourier--Mukai functors
has a counterpart using dg~bimodules as alluded to in \cref{remark:dg-generalisation}.
In \cref{appendix:hochschild-serre-dg} we will describe this,
and use it to show that it is a derived invariant.
We will also consider more generally Hochschild homology with coefficients,
and discuss how it is a derived invariant under a certain natural compatibility
between the equivalence and the coefficients.

Using the Fourier--Mukai definition,
we will point out in \cref{section:etale-functoriality}
how Hochschild--Serre cohomology moreover satisfy functorialities for  \'etale morphisms.
\subsection{Hochschild--Serre cohomology for dg categories and Morita invariance}
\label{appendix:hochschild-serre-dg}
Throughout we fix a base field $\field$.
Let~$\cA$ be a dg category,
and~$\Perf\cA$ be the subcategory of~$\derived(\cA)$
consisting of perfect right~$\cA$-modules.
We let~$\cA^\env$ denote its enveloping category,
defined as~$\cA\otimes_k\cA^\opp$.
For an introduction to the Morita theory of dg categories
in the setup that we will use,
we refer to \cite[\S2]{MR3911626}.

As in \cite[\S2.3]{MR3911626},
if~$\cA$ is smooth and proper, we will consider
\begin{itemize}
  \item the diagonal bimodule~$\mathcal{A}$
    as an object of~$\Perf\cA^\env$,
    representing the identity functor;
  \item the right dual~$\mathcal{A}^*$
    as an object of~$\Perf\cA^\env$,
    representing the Serre functor;
  \item the left dual~$\mathcal{A}^!$
    as an object of~$\Perf\cA^\env$,
    representing the inverse Serre functor.
\end{itemize}
This allows us to generalise \cref{definition:hochschild-serre} as follows,
where we use the tensor product of dg~bimodules
to encode iterated (inverse) powers of the Serre functor.
\begin{definition}
  \label{definition:hochschild-serre-dg}
  Let~$\cA$ be a smooth and proper dg~category,
  and let~$\serre_\cA$ denote its Serre functor.
  The \emph{Hochschild--Serre cohomology of~$\cA$} is
  the bigraded algebra
  \begin{equation}
    \hochserre_\bullet^*(\cA)
    \colonequals
    \bigoplus_{k\in\mathbb{Z}}\hochserre_k^*(\cA)
    =
    \bigoplus_{k\in\mathbb{Z}}\bigoplus_{j\in\mathbb{Z}}\hochserre_k^j(\cA)
  \end{equation}
  where
  \begin{equation}
    \hochserre_k^j(\cA)
    \colonequals
    \HH^j\RHom_{\cA^\env}(\identity_\cA,\serre_\cA^{\circ k})
    =
    \begin{cases}
      \HH^j\RHom_{\cA^\env}(\cA,\cA^{*,\otimes^\LLL k})
      & k\geq 0 \\
      \HH^j\RHom_{\cA^\env}(\cA,\cA^{!,\otimes^\LLL -k})
      & k<0. \\
    \end{cases}
  \end{equation}
  and the multiplication is induced by the composition in~$\Perf\cA^\env$.
\end{definition}
As in \cref{remark:hochschild-serre-generalises-classical},
this definition incorporates the Hochschild cohomology (resp.~homology)
of the dg category~$\cA$,
as
\begin{equation}
  \hochschild^*(\cA)\cong\hochserre_0^*(\cA)
\end{equation}
resp.
\begin{equation}
  \hochschild_*(\cA)\cong\hochserre_1^*(\cA).
\end{equation}

\begin{remark}
  The definition in \cref{definition:hochschild-serre-dg} suffices for our purposes,
  we do not need to give a chain-level definition
  via a generalization of the Hochschild (co)chain complex.
  To study certain algebraic and higher structures
  which exist on Hochschild (co)homology,
  and might possess a generalization to Hochschild--Serre cohomology,
  this could however be useful.

  Moreover, both Hochschild homology and cohomology can be defined for arbitrary dg~categories,
  not just smooth and proper ones.
  It would be interesting to find a definition of Hochschild--Serre cohomology
  which works in this generality.
\end{remark}

\paragraph{K\"unneth formula}
Let $\cA$ and $\cB$ be smooth proper dg categories.
We have a canonical equivalence
\begin{equation}
	(\cA\otimes \cB)^{\env}\cong \cA^{\env}\otimes \cB^{\env},
\end{equation}
under which the bimodule $\cA\otimes \cB$ is identified with $\cA\boxtimes \cB$,
and similarly $(\cA\otimes \cB)^*$  with $\cA^*\boxtimes \cB^*$,
and $(\cA\otimes \cB)^!$ with $\cA^!\boxtimes \cB^!$.
In other words, as dg endofunctors of $\cA\otimes \cB$,
\begin{equation}
  \begin{aligned}
  	\identity_{\cA\otimes \cB}&\cong\identity_{\cA}\otimes \identity_{\cB},\\
  	\serre_{\cA\otimes \cB}&\cong\serre_{\cA}\otimes \serre_{\cB}.\\
  \end{aligned}
\end{equation}
As a consequence, we have the K\"unneth formula for Hochschild--Serre cohomology:
\begin{proposition}
	Let $\cA$ and $\cB$ be smooth proper dg  categories. We have an isomorphism of bigraded algebras:
	\begin{equation}
		\hochserre_\bullet^*(\cA\otimes \cB)\cong \hochserre_\bullet^*(\cA)\otimes  \hochserre_\bullet^*(\cB).
	\end{equation}
\end{proposition}

\paragraph{Morita invariance}
The following theorem explains that, as generalisation of Hochschild (co)homology, Hochschild--Serre cohomology
is still a Morita invariant for dg categories.
This is a generalisation of \cite[Theorem~2.1.8]{MR1998775}
which proves it for derived categories of smooth projective varieties.

\begin{theorem}[Morita invariance]
  \label{theorem:hochschild-serre-morita-invariant}
  Let $F\colon\cA\to\cB$ be a Morita equivalence between smooth and proper dg~categories.
  Then there is a naturally induced isomorphism of bigraded algebras
  \begin{equation}
    \label{equation:hochschild-serre-isomorphism}
    \hochserre_\bullet^*(\cA)\cong \hochserre_\bullet^*(\cB).
  \end{equation}
\end{theorem}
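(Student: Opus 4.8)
The plan is to reduce the statement to two facts: that a Morita equivalence induces a \emph{monoidal} equivalence between the categories of perfect bimodules, and that Serre functors are unique and therefore commute with equivalences. This parallels the strategy of \cite[Theorem~2.1.8]{MR1998775}, but phrased entirely in terms of bimodules.

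First I would recall that, since $\cA$ and $\cB$ are smooth and proper, $\Perf\cA^\env$ and $\Perf\cB^\env$ are monoidal under the derived tensor product $-\otimes^\LLL_\cA-$ (resp.\ $-\otimes^\LLL_\cB-$) of bimodules, with units the diagonal bimodules $\cA$ and $\cB$; under the identification of $\Perf\cA^\env$ with the category of $\field$-linear exact endofunctors of $\Perf\cA$, this tensor product is composition of functors. A Morita equivalence $F\colon\cA\to\cB$ yields a $\field$-linear equivalence $\Perf\cA\simeq\Perf\cB$, and conjugation $G\mapsto F\circ G\circ F^{-1}$ defines a monoidal equivalence
\begin{equation}
  \Phi\colon\Perf\cA^\env\xrightarrow{\ \simeq\ }\Perf\cB^\env
\end{equation}
carrying the unit $\cA$ to the unit $\cB$; all of this is in \cite[\S2]{MR3911626}. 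Being an equivalence of triangulated (dg) categories, $\Phi$ induces isomorphisms on $\RHom$-complexes.

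Next I would identify the images of the distinguished objects. The bimodules $\cA^*$ and $\cA^!$ represent $\serre_\cA$ and $\serre_\cA^{-1}$; by uniqueness of the Serre functor it commutes with $F$, so $F\circ\serre_\cA\cong\serre_\cB\circ F$, equivalently $\Phi(\cA^*)\cong\cB^*$ and dually $\Phi(\cA^!)\cong\cB^!$. Since $\Phi$ is monoidal this gives $\Phi(\serre_\cA^{\circ k})\cong\serre_\cB^{\circ k}$ for every $k\in\IZ$, where $\serre^{\circ k}$ is represented by $\cA^{*,\otimes^\LLL k}$ for $k\geq 0$ and by $\cA^{!,\otimes^\LLL -k}$ for $k<0$. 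Applying $\Phi$ then produces, for all $k,j$, isomorphisms
\begin{equation}
  \hochserre_k^j(\cA)=\HH^j\RHom_{\cA^\env}(\cA,\serre_\cA^{\circ k})\xrightarrow{\ \simeq\ }\HH^j\RHom_{\cB^\env}(\cB,\serre_\cB^{\circ k})=\hochserre_k^j(\cB),
\end{equation}
and summing over $k$ yields the isomorphism \eqref{equation:hochschild-serre-isomorphism} of bigraded vector spaces. To upgrade it to an isomorphism of bigraded algebras one checks that the product, which takes $f\colon\cA\to\serre^{\circ k}$ and $g\colon\cA\to\serre^{\circ\ell}$ to the composite $\cA\cong\serre^{\circ k}\otimes^\LLL_\cA\cA\to\serre^{\circ k}\otimes^\LLL_\cA\serre^{\circ\ell}\cong\serre^{\circ(k+\ell)}$ pre-composed with $f$, is transported correctly because $\Phi$ is monoidal and sends the unit to the unit.

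The main obstacle is bookkeeping rather than conceptual: one must verify that the coherence data of the monoidal equivalence $\Phi$ and the intertwining isomorphism $\Phi(\cA^*)\cong\cB^*$ coming from uniqueness of Serre functors are compatible precisely enough that the induced maps respect the composition product for all $k,\ell$ simultaneously, and that the whole package can be made functorial in $F$. This is a routine but slightly delicate check, best carried out by working with explicit $h$-projective (or $h$-injective) bimodule resolutions, exactly as in the Morita-theoretic framework of \cite[\S2.3]{MR3911626}.
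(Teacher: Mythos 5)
Your proposal is correct and follows essentially the same route as the paper: the paper also passes to the induced equivalence $\Perf\cA^\env\simeq\Perf\cB^\env$ given by the bimodule $N\otimes_\cA^\LLL-\otimes_\cA^\LLL M$ (your conjugation functor $\Phi$), notes that it preserves the diagonal bimodule and sends $\cA^*$, $\cA^!$ to $\cB^*$, $\cB^!$, and concludes the bigraded isomorphism with the algebra structure transported via composition. Your appeal to uniqueness of the Serre functor and explicit use of monoidality to handle the $k$-th powers is just a slightly more detailed phrasing of the paper's ``compatible with duality'' step, so no substantive difference.
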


\begin{proof}
  The functor~$F$ induces a functor~$-\otimes_\cA^\LLL M\colon\Perf\cA\to\Perf\cB$,
  with quasi-inverse~$G$ given as~$-\otimes_\cB^\LLL N$,
  where the bimodules~$M$ and~$N$ are perfect.
  Then the Morita equivalence~$F$ induces a Morita equivalence
  \begin{equation}
    F^\env\colon\Perf\cA^\env\to\Perf\cB^\env
  \end{equation}
  which we can write using bimodules as~$N\otimes_\cA^\LLL-\otimes_\cA^\LLL M$.

  By the Morita theory for dg~categories
  this functor always preserves the identity functor.
  The categories~$\cA^\env$ and~$\cB^\env$ are again smooth and proper,
  thus the equivalence~$F^\env$ is compatible with duality,
  and therefore sends the bimodules~$\cA^*$ (resp.~$\cA^!$)
  representing the Serre functor (resp.~inverse Serre functor)
  to~$\cB^*$ (resp.~$\cB^!$).
  Therefore,
  we obtain an isomorphism of vector spaces
  \begin{equation}
    \hochserre_k^j(\cA)\overset{\sim}{\to}\hochserre_k^j(\cB),
  \end{equation}
  and taking the direct sum we obtain
  the isomorphism of vector spaces in~\eqref{equation:hochschild-serre-isomorphism}.

  The isomorphism of bigraded vector spaces is compatible with
  the algebra structure,
  as this is induced from the compositions
  in the equivalent categories~$\derived^\bounded(\cA^\env)$ and~$\derived^\bounded(\cB^\env)$.
\end{proof}

\paragraph{Agreement between the two approaches}
We wish to show how the dg version from \cref{definition:hochschild-serre-dg}
agrees with the Fourier--Mukai approach \cref{definition:hochschild-serre}
for smooth and proper orbifolds.
The case of Hochschild cohomology was already discussed in \cite[Appendix~A]{MR4057490},
and the method is very similar.

\begin{theorem}[Agreement]
  \label{theorem:agreement}
  Let~$\cX$ be a smooth and proper orbifold,
  and let~$\derived^\bounded(\cX)$ be a dg enhancement of
  its bounded derived category of coherent sheaves.
  Then \cref{definition:hochschild-serre-dg} for~$\cA=\derived^\bounded(\cX)$
  and \cref{definition:hochschild-serre}
  are isomorphic as bigraded algebras.
\end{theorem}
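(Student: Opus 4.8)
The plan is to transport the dg‑categorical Definition~\ref{definition:hochschild-serre-dg} across the standard derived Morita equivalence between bimodules over $\cA\colonequals\derived^\bounded(\cX)$ and Fourier--Mukai kernels on $\cX\times\cX$, and then match up all the relevant structures. First I would record that for a smooth and proper orbifold $\cX$ the dg category $\cA$ is smooth and proper, so that \cref{definition:hochschild-serre-dg} applies, and that there is a Künneth‑type equivalence of monoidal dg categories
\[
  \Phi\colon\Perf(\cA^\env)=\Perf(\cA\otimes_\field\cA^\opp)\xrightarrow{\ \sim\ }\derived^\bounded(\cX\times\cX)
\]
which sends the tensor product $\otimes^\LLL_\cA$ of $\cA$‑bimodules to the convolution product $\circ$ of kernels, the diagonal bimodule $\cA$ to $\Delta_*\mathcal{O}_\cX$, and the internal $\RHom_{\cA^\env}$ to $\RHom_{\cX\times\cX}$. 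For $\cX$ a quotient stack this follows from Toën's derived Morita theory combined with the Künneth theorem for dg enhancements; the Hochschild cohomology ($k=0$) instance of exactly this comparison is carried out in \cite{MR4057490}, and we follow the same route.

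Second, I would identify the images under $\Phi$ of the dualizing bimodules. Since $\Phi$ is a monoidal equivalence between smooth proper categories it is compatible with the right and left duals, so $\Phi(\cA^{*})$ and $\Phi(\cA^{!})$ are the Fourier--Mukai kernels representing the Serre functor of $\derived^\bounded(\cX)$ and its inverse. By uniqueness of Serre functors and the fact, recalled in \cref{subsection:hochschild-serre}, that $\serre_\cX=-\otimes\omega_\cX[d_\cX]$, these kernels are $\Delta_*\omega_\cX[d_\cX]$ and $\Delta_*\omega_\cX^{-1}[-d_\cX]$ respectively; at this step one invokes Grothendieck--Serre duality on $\cX$, exactly as in the variety case.

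Third comes the elementary but essential computation that convolution of diagonal kernels of line bundles multiplies the bundles, $\Delta_*L\circ\Delta_*L'\cong\Delta_*(L\otimes L')$, equivalently $(-\otimes L)\circ(-\otimes L')=-\otimes(L\otimes L')$ on autoequivalences. Iterating gives $\bigl(\Delta_*\omega_\cX[d_\cX]\bigr)^{\circ k}\cong\Delta_*\omega_\cX^{\otimes k}[kd_\cX]$ for $k\ge 0$, and symmetrically for $k<0$ with $\omega_\cX^{-1}[-d_\cX]$. Feeding this into \cref{definition:hochschild-serre-dg} and applying $\Phi$ yields, for all $j,k\in\mathbb{Z}$,
\[
  \hochserre_k^j(\cA)=\HH^j\RHom_{\cA^\env}\bigl(\identity_\cA,\serre_\cA^{\circ k}\bigr)\cong\HH^j\RHom_{\cX\times\cX}\bigl(\Delta_*\mathcal{O}_\cX,\Delta_*\omega_\cX^{\otimes k}[kd_\cX]\bigr)=\hochserre_k^j(\cX),
\]
which is precisely the graded‑vector‑space content of \cref{definition:hochschild-serre}.

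Finally, for the algebra structure both multiplications are induced by composition---in $\Perf(\cA^\env)$ on the dg side, and in $\derived^\bounded(\cX\times\cX)$ on the geometric side---and $\Phi$, being an equivalence of categories, intertwines these; one checks that under the identifications above the convolution units and connecting maps match, so the displayed isomorphism is multiplicative, and summing over $k$ and $j$ gives the asserted isomorphism of bigraded algebras. The main obstacle is the first step: pinning down the monoidal Künneth equivalence $\Phi$ for orbifolds with the correct compatibilities (convolution $\leftrightarrow$ bimodule tensor, and the matching of the internal $\RHom$'s), for which one needs $\cX$ to have the resolution property (i.e.\ to be a quotient stack) and must cite the appropriate enhancement‑uniqueness and Künneth results; once $\Phi$ is available, the duality identification of Step~2 and the formal convolution‑power computation of Step~3 are routine.
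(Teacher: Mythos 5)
Your proposal is correct and follows essentially the same route as the paper: identify $\Perf(\cA^\env)$ with $\derived^\bounded(\cX\times\cX)$ via derived Morita/K\"unneth theory (the paper cites \cite[Theorem~1.2]{MR2669705}, with dg-enhancement uniqueness and smoothness/properness of $\derived^\bounded(\cX)$ handled via \cite{MR3573964,MR3861804}), match the diagonal and (inverse) Serre bimodules with the kernels $\Delta_*\mathcal{O}_\cX$ and $\Delta_*\omega_\cX^{\pm1}[\pm d_\cX]$ using orbifold Serre duality, and observe that both algebra structures are given by composition. Your explicit computation of convolution powers $(\Delta_*\omega_\cX[d_\cX])^{\circ k}\cong\Delta_*\omega_\cX^{\otimes k}[kd_\cX]$ is left implicit in the paper's proof but is the same content.
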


Before giving the proof we first explain why we can apply \cref{definition:hochschild-serre-dg}.
\begin{itemize}
  \item
    To consider~$\derived^\bounded(\cX)$ as a dg category,
    we can use the enhancement from \cite[Example~5.5]{MR3573964}.
    By \cite[Proposition~6.10]{MR3861804}
    the derived category has a unique dg enhancement,
    i.e.,
    all dg enhancements are quasi-equivalent.
  \item
    In \cite[Theorem~6.6]{MR3573964} it is shown that the derived category
    of a smooth and proper orbifold
    is an admissible subcategory in the derived category of
    a smooth and proper variety.
    As smoothness and properness for dg categories is inherited by admissible subcategories,
    see, e.g., \cite[Proposition~5.20]{MR3573964}
    we obtain that any dg enhancement of~$\derived^\bounded(\cX)$
    is indeed a smooth and proper dg~category.
\end{itemize}

\begin{proof}[Proof of \cref{theorem:agreement}]
  By the Morita invariance from \cref{theorem:hochschild-serre-morita-invariant}
  and uniqueness of the dg~enhancement
  we can ignore the choice of enhancement.
  By \cite[Theorem~1.2]{MR2669705}
  we have that~$\derived^\bounded(\cX)^\env\cong\derived^\bounded(\cX\times\cX)$.
  Next, observe that
  \begin{itemize}
    \item the diagonal bimodule
      corresponds to the identity functor~$\Delta_*\mathcal{O}_\cX$;
    \item the Serre functor
      corresponds to~$\Delta_*\omega_\cX[d_X]$;
    \item the inverse Serre functor
      corresponds to~$\Delta_*\omega_\cX^\vee[-d_X]$.
  \end{itemize}
  The correspondence for the Serre functor follows
  from,
  e.g., \cite[Proposition~2.31]{0811.1955v2} when~$\field$ is algebraically closed,
  or \cite[Theorem~1]{MR4514209} when~$\field$ is arbitrary
  and~$\cX$ has projective coarse moduli space.
  Thus the definitions in \cref{definition:hochschild-serre-dg}
  and \cref{definition:hochschild-serre} agree,
  because the bigraded algebra structures correspond to
  composition in the derived category~$\derived^\bounded(\cX\times\cX)$.
\end{proof}

We record the following corollary of the Bridgeland--King--Reid--Haiman equivalence \eqref{equation:derived-mckay},
which motivates the approach taken in the main body of the paper.
\begin{corollary}
  \label{corollary:hochschild-serre-comparison-hilbert-scheme}
  Let~$S$ be a smooth, projective surface.
  Then for all~$n\geq 0$ there exists an isomorphism
  \begin{equation}
    \hochserre_\bullet^*(\hilbn{n}{S})
    \cong
    \hochserre_\bullet^*([\Sym^nS])
  \end{equation}
  of bigraded algebras.
\end{corollary}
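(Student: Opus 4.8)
The plan is to derive \cref{corollary:hochschild-serre-comparison-hilbert-scheme} as an immediate consequence of the Bridgeland--King--Reid--Haiman equivalence \eqref{eqn:BridgelandKingReidHaiman} together with the Morita/derived invariance of Hochschild--Serre cohomology. The point is that Hochschild--Serre cohomology is not merely a vector-space invariant but a bigraded-algebra invariant under derived equivalence, and we now have two routes to this fact that apply in the orbifold setting.

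First I would recall that \eqref{eqn:BridgelandKingReidHaiman} gives an exact equivalence of triangulated categories $\Psi\colon\derived^\bounded([\Sym^nS])\xrightarrow{\cong}\derived^\bounded(\hilbn{n}{S})$, realised as a Fourier--Mukai transform with kernel $\mathcal O_{\mathcal Z}$; since $\hilbn{n}{S}$ is a smooth projective variety and $[\Sym^nS]$ is a smooth proper orbifold, both sides admit (essentially unique) smooth proper dg enhancements by \cite[Example~5.5, Theorem~6.6]{MR3573964} and \cite[Proposition~6.10]{MR3861804}, and any Fourier--Mukai equivalence lifts to a quasi-equivalence of these enhancements. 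Applying \cref{theorem:hochschild-serre-morita-invariant} to this lift yields an isomorphism of bigraded algebras $\hochserre_\bullet^*([\Sym^nS])\cong\hochserre_\bullet^*(\hilbn{n}{S})$ between the dg-categorical versions, and by \cref{theorem:agreement} these agree (as bigraded algebras) with the geometric Fourier--Mukai definitions from \cref{definition:hochschild-serre} for the orbifold $[\Sym^nS]$ and the variety $\hilbn{n}{S}$ respectively. Composing these three isomorphisms gives the claim.

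Alternatively — and this is worth recording as it avoids dg machinery — one can argue entirely within the geometric framework: by \cref{cor:DerivedInvariance}, any Fourier--Mukai equivalence $\Phi_\cP$ between smooth proper orbifolds induces an isomorphism of bigraded vector spaces $\hochserre_\bullet^*(\cX)\cong\hochserre_\bullet^*(\cY)$, using only that the Serre functor commutes with equivalences so that the intertwining condition \eqref{equation:intertwining-rephrased} holds for $\cE=\omega_\cX^{\otimes k}[kd_\cX]$; applying this to $\Psi$ gives the stated isomorphism at the level of bigraded vector spaces, and the dg route above upgrades it to an algebra isomorphism.

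I do not expect any genuine obstacle here: the entire content has been prepared in the appendix, and the proof is essentially the one-line citation ``combine \cref{theorem:hochschild-serre-morita-invariant,theorem:agreement} with \eqref{eqn:BridgelandKingReidHaiman}'' — exactly as written in the excerpt's own proof of \cref{cor:DerivedInvariance} and as the text already anticipates (``which motivates the approach taken in the main body of the paper''). The only mild subtlety is checking that the Fourier--Mukai equivalence $\Psi$ genuinely lifts to a dg quasi-equivalence so that \cref{theorem:hochschild-serre-morita-invariant} applies, but this follows from uniqueness of dg enhancements together with the fact that $\Psi$, being a Fourier--Mukai transform, is realised by an honest kernel and hence by a bimodule; no further work is needed.
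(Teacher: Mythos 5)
Your argument is correct and matches the paper's own proof: the paper likewise deduces the corollary from the Bridgeland--King--Reid--Haiman equivalence \eqref{equation:derived-mckay} together with uniqueness of dg enhancements, the Morita invariance of \cref{theorem:hochschild-serre-morita-invariant}, and the agreement statement \cref{theorem:agreement}. Your extra remarks (the kernel/bimodule justification for lifting $\Psi$ and the vector-space shortcut via \cref{cor:DerivedInvariance}) are consistent with the text but not needed beyond what the paper already records.
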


\begin{proof}
  The (necessarily unique) dg enhancements of~$\derived^\bounded(\hilbn{n}{S})$
  and~$\derived^\bounded([\Sym^nS])$
  are unique,
  so by the agreement of the definitions in \cref{theorem:agreement} we are done.
\end{proof}

\subsection{\'Etale functoriality of Hochschild--Serre cohomology}
\label{section:etale-functoriality}
The original definition of Hochschild--Serre cohomology of smooth projective varieties
was only shown to be functorial for equivalences.
Hochschild homology on the other hand is functorial for arbitrary morphisms,
and even functors \cite{MR1667558}.
As suggested in \cite[Claim in \S8.4]{MR2062626},
Hochschild cohomology should be functorial for \'etale morphisms.

In this section we show that Hochschild--Serre cohomology (and thus Hochschild cohomology)
is indeed functorial for \'etale morphisms,
at least as vector spaces.
This takes on two forms:
\begin{itemize}
  \item a covariant functoriality,
    for which we provide a more general criterion in \cref{prop:HA-Pushforward}
    leading to an \emph{\'etale pushforward} in \cref{corollary:etale-pushforward};
  \item a contravariant functoriality in \cref{proposition:etale-pullback},
    i.e., an \emph{\'etale pullback}.
\end{itemize}
Already for Hochschild cohomology we are not aware of a written reference where this is proven,
even on the level of vector spaces.
We are content with showing that there is a naturally induced morphism for Hochschild--Serre cohomology,
we do not work out the compatibility with composition.
We also do not address whether Hochschild--Serre cohomology satisfies
some functoriality with respect to fully faithful functors,
which is discussed for Hochschild cohomology in \cite{keller-dih}.

\begin{proposition}[Covariant functoriality]
  \label{prop:HA-Pushforward}
  Let~$f\colon X\to Y$ be a morphism between smooth projective varieties.
  Let~$\omega_f\colonequals\omega_X\otimes f^*\omega_Y^\vee$ be the relative canonical bundle,
  and let~$d_f\colonequals\dim(X)-\dim(Y)$ be the relative dimension.

  For integers $k, m\in \mathbb{Z}$
  and an element~$\sigma\in \HH^{m+(1-k)d_f}(X, \omega_f^{\otimes (1-k)})$,
  there is a natural morphism for any $i\in \mathbb{Z}$:
  \begin{equation}
    f^\sigma_*\colon\hochserre^i_k(X)\to \hochserre^{i+m}_k(Y).
  \end{equation}
\end{proposition}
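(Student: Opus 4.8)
The plan is to build $f^\sigma_*$ by transporting a morphism along the Fourier--Mukai kernel of $\mathbf{R}f_*$, in the spirit of the proof of \cref{proposition:hochschild-with-values-invariant}, and then correcting the cohomological degree using $\sigma$ together with the Grothendieck trace of $f$. Recall from \cref{definition:hochschild-serre} that $\hochserre_k^i(X)\cong\Hom_{\derived^\bounded(X\times X)}\bigl(\Delta_{X*}\mathcal{O}_X,\Delta_{X*}\omega_X^{\otimes k}[i+kd_X]\bigr)$, where $\Delta_X\colon X\to X\times X$ is the diagonal, and that $\Delta_{X*}\omega_X^{\otimes k}[kd_X]$ is the Fourier--Mukai kernel of $\serre_X^{\circ k}$. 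Let $\Gamma\in\derived^\bounded(X\times Y)$ be the structure sheaf of the graph of $f$, whose Fourier--Mukai functor is $\mathbf{R}f_*$, and let $\Gamma^{\mathrm{T}}\in\derived^\bounded(Y\times X)$ be the transposed kernel (obtained by swapping the two factors), whose Fourier--Mukai functor is $\mathbf{L}f^*$. Convolving with $\Gamma$ and $\Gamma^{\mathrm{T}}$ defines an additive functor $\Gamma\circ(-)\circ\Gamma^{\mathrm{T}}\colon\derived^\bounded(X\times X)\to\derived^\bounded(Y\times Y)$ which realises, on Fourier--Mukai functors, the operation $\Phi\mapsto\mathbf{R}f_*\circ\Phi\circ\mathbf{L}f^*$.

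First I would record the following kernel computation: for any $\mathcal{L}\in\derived^\bounded(X)$ there is a natural isomorphism $\Gamma\circ\Delta_{X*}\mathcal{L}\circ\Gamma^{\mathrm{T}}\cong\Delta_{Y*}\mathbf{R}f_*\mathcal{L}$. Indeed, the associated Fourier--Mukai functor is $F\mapsto\mathbf{R}f_*\bigl(\mathbf{L}f^*F\otimes^{\mathbf{L}}\mathcal{L}\bigr)$, which by the projection formula is $F\mapsto F\otimes^{\mathbf{L}}\mathbf{R}f_*\mathcal{L}$, and this functor has kernel $\Delta_{Y*}\mathbf{R}f_*\mathcal{L}$; this is a standard manipulation of Fourier--Mukai kernels of the kind used around \eqref{eq:FMLieberman}. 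Applying it with $\mathcal{L}=\mathcal{O}_X$ and with $\mathcal{L}=\omega_X^{\otimes k}$, and using $\omega_X^{\otimes k}\cong f^*\omega_Y^{\otimes k}\otimes\omega_f^{\otimes k}$ together with the projection formula once more, yields
\[
  \Gamma\circ\Delta_{X*}\mathcal{O}_X\circ\Gamma^{\mathrm{T}}\cong\Delta_{Y*}\mathbf{R}f_*\mathcal{O}_X,\qquad
  \Gamma\circ\Delta_{X*}\omega_X^{\otimes k}\circ\Gamma^{\mathrm{T}}\cong\Delta_{Y*}\bigl(\omega_Y^{\otimes k}\otimes^{\mathbf{L}}\mathbf{R}f_*\omega_f^{\otimes k}\bigr).
\]

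Next I would produce two structural maps and assemble $f^\sigma_*$. The unit $\mathcal{O}_Y\to\mathbf{R}f_*\mathcal{O}_X$ of the adjunction $\mathbf{L}f^*\dashv\mathbf{R}f_*$, pushed forward along $\Delta_Y$, gives $u\colon\Delta_{Y*}\mathcal{O}_Y\to\Gamma\circ\Delta_{X*}\mathcal{O}_X\circ\Gamma^{\mathrm{T}}$. For the second map, regard $\sigma\in\HH^{m+(1-k)d_f}(X,\omega_f^{\otimes(1-k)})$ as a morphism $\mathcal{O}_X\to\omega_f^{\otimes(1-k)}[m+(1-k)d_f]$ in $\derived^\bounded(X)$; tensoring with $\omega_f^{\otimes k}$ gives $\omega_f^{\otimes k}\to\omega_f[m+(1-k)d_f]$, and applying $\mathbf{R}f_*(-)[kd_f]$ followed by the Grothendieck trace $\mathbf{R}f_*\omega_f[d_f]=\mathbf{R}f_*f^!\mathcal{O}_Y\to\mathcal{O}_Y$ produces the $\sigma$-twisted trace $\mathbf{R}f_*\omega_f^{\otimes k}[kd_f]\to\mathcal{O}_Y[m]$, which makes sense for every $k\in\IZ$ (for $k\le 0$ one simply uses powers of the anticanonical bundle). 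Tensoring this with $\omega_Y^{\otimes k}$, pushing forward along $\Delta_Y$ and composing with the second isomorphism of the previous paragraph gives $t\colon\bigl(\Gamma\circ\Delta_{X*}\omega_X^{\otimes k}\circ\Gamma^{\mathrm{T}}\bigr)[kd_X]\to\Delta_{Y*}\omega_Y^{\otimes k}[kd_Y+m]$. Then, for $\alpha\in\hochserre_k^i(X)$ regarded as a morphism $\Delta_{X*}\mathcal{O}_X\to\Delta_{X*}\omega_X^{\otimes k}[i+kd_X]$, I would set
\[
  f^\sigma_*(\alpha)\colonequals t[i]\circ\bigl(\Gamma\circ\alpha\circ\Gamma^{\mathrm{T}}\bigr)\circ u\ \in\ \Hom_{\derived^\bounded(Y\times Y)}\bigl(\Delta_{Y*}\mathcal{O}_Y,\Delta_{Y*}\omega_Y^{\otimes k}[i+m+kd_Y]\bigr)=\hochserre_k^{i+m}(Y).
\]
Since $\Gamma\circ(-)\circ\Gamma^{\mathrm{T}}$ is additive, composition is bilinear, and each ingredient (the adjunction unit, the projection-formula isomorphisms, the trace) is canonical, this yields a well-defined $\field$-linear map $f^\sigma_*\colon\hochserre_k^i(X)\to\hochserre_k^{i+m}(Y)$, with no choices involved.

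The projection-formula identifications and the shift bookkeeping are routine; the one step that genuinely requires care is the construction of the $\sigma$-twisted trace and the verification that it lands in degree $m$, which is exactly where Grothendieck duality for the proper morphism $f$ enters, via $f^!\mathcal{O}_Y\cong\omega_f[d_f]$ and the counit $\mathbf{R}f_*f^!\to\identity$. As a consistency check: when $k=1$ we have $\omega_f^{\otimes(1-k)}=\mathcal{O}_X$ and $\hochserre_1^j(X)\cong\hochschild_j(X)$, so $f^\sigma_*$ becomes a degree-$m$ map on Hochschild homology whose $m=0$, $\sigma=1$ instance is a pushforward $\hochschild_*(X)\to\hochschild_*(Y)$; allowing general $k$ and $\sigma$ is precisely what absorbs the discrepancy between $\omega_X^{\otimes k}$ and $f^*\omega_Y^{\otimes k}$. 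The announced \'etale pushforward of \cref{corollary:etale-pushforward} is then the special case where $f$ is \'etale, so that $\omega_f\cong\mathcal{O}_X$ and $d_f=0$ and the only remaining datum is $\sigma\in\HH^m(X,\mathcal{O}_X)$.
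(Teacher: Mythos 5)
Your construction is correct and is essentially the paper's proof in different packaging: convolving with the graph kernels $\Gamma\circ(-)\circ\Gamma^{\mathrm{T}}$ applied to a diagonal kernel is exactly the paper's direct application of $\mathbf{R}(f\times f)_*$ together with $(f\times f)\circ\Delta_X=\Delta_Y\circ f$, and your $\sigma$-twisted trace built from $f^!\mathcal{O}_Y\cong\omega_f[d_f]$ and the counit is precisely the paper's reinterpretation of $\sigma$ as a morphism $\mathbf{R}f_*(\omega_X[d_X])^{\otimes k}\to(\omega_Y[d_Y])^{\otimes k}[m]$ via the Grothendieck duality adjunction, after which both proofs precompose with the unit $\mathcal{O}_Y\to\mathbf{R}f_*\mathcal{O}_X$ and postcompose with this map. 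The only point to state carefully is that the identification $\Gamma\circ\Delta_{X*}\mathcal{L}\circ\Gamma^{\mathrm{T}}\cong\Delta_{Y*}\mathbf{R}f_*\mathcal{L}$ should be proved at the kernel level (by base change and projection formula, as in the reference you cite), not deduced from an isomorphism of the induced Fourier--Mukai functors, since kernels are not determined by their functors in general.
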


\begin{proof}
  First note that we have canonical isomorphisms:
  \begin{equation}
    \begin{aligned}
      \Hom_Y(\mathbf{R}f_*(\omega_X[d_X])^{\otimes k}, (\omega_Y[d_Y])^{\otimes k}[m])
      &\cong\Hom_X(\omega_X^{\otimes k}, f^!\omega_Y^{\otimes k}[m-kd_f]) \\
      &\cong\Hom_X(\omega_X^{\otimes k}, f^*\omega_Y^{\otimes k}\otimes \omega_f[d_f+m-kd_f]) \\
      &\cong\HH^{m}(X, (\omega_f[d_f])^{\otimes (1-k)}).
    \end{aligned}
  \end{equation}
  Hence we can view the element $\sigma$ as a morphism in $\derived^\bounded(Y)$
  \begin{equation}
    \label{eqn:sigma}
    \sigma\colon \mathbf{R}f_*(\omega_X[d_X])^{\otimes k}\to (\omega_Y[d_Y])^{\otimes k}[m].
  \end{equation}
  Now given an element $\alpha\in \hochserre_k^i(X)$, viewed as a morphism in $\derived^\bounded(X\times X)$,
  \begin{equation}
    \alpha\colon \Delta_{X, *}\mathcal{O}_X \to \Delta_{X, *}(\omega_X[d_X])^{\otimes k}[i],
  \end{equation}
  we apply the derived pushforward along~$f\times f\colon X\times X\to Y\times Y$ to get
  \begin{equation}
    \mathbf{R}(f\times f)_*\Delta_{X, *}\mathcal{O}_X \xrightarrow{\mathbf{R}(f\times f)_*(\alpha)}\mathbf{R}(f\times f)_* \Delta_{X, *}(\omega_X[d_X])^{\otimes k}[i].
  \end{equation}
  Since $(f\times f)\circ \Delta_{X}=\Delta_Y\circ f$, we have by functoriality
  \begin{equation}
    \Delta_{Y, *}\mathbf{R}f_*\mathcal{O}_X \xrightarrow{\mathbf{R}(f\times f)_*(\alpha)}\Delta_{Y, *}\mathbf{R}f_*(\omega_X[d_X])^{\otimes k}[i].
  \end{equation}
  Using the natural map $\mathcal{O}_Y\to \mathbf{R}f_*\mathcal{O}_X$ and the  morphism \eqref{eqn:sigma} we get a morphism
  \begin{equation}
    \Delta_{Y,*}\mathcal{O}_Y\to\Delta_{Y, *}\mathbf{R}f_*\mathcal{O}_X \xrightarrow{\mathbf{R}(f\times f)_*(\alpha)}\Delta_{Y, *}\mathbf{R}f_*(\omega_X[d_X])^{\otimes k}[i]\xrightarrow{\Delta_{Y, *}(\sigma)} \Delta_{Y, *}(\omega_Y[d_Y])^{\otimes k}[i+m],
  \end{equation}
  which can be viewed as an element in~$\hochserre^{i+m}_k(Y)$, and defined to be the image of~$\alpha$.
  It is clear that the obtained map~$\hochserre^i_k(X)\to \hochserre^{i+m}_k(Y)$ is linear.
\end{proof}

In the above proposition, the case $k=1$ and $m=0$ (with $\sigma$ the canonical element of $\HH^0(X,\mathcal O_X)$ corresponding to the constant function 1)
should recover the covariant functoriality of Hochschild homology for a morphism.
Another interesting instance is the case where $f$ is \'etale.
\begin{corollary}[\'Etale pushforward]
  \label{corollary:etale-pushforward}
  An \'etale morphism $f\colon X\to Y$ between smooth projective varieties induces a natural morphism
  \begin{equation}
    f_*\colon \hochserre^i_k(X)\to \hochserre^i_k(Y).
  \end{equation}
\end{corollary}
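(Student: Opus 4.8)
The plan is to deduce this immediately from \cref{prop:HA-Pushforward} by choosing the parameters there appropriately. Since $f$ is \'etale, one has $\Omega_{X/Y}=0$, so the relative canonical bundle is trivial, $\omega_f\cong\mathcal{O}_X$, and the relative dimension vanishes, $d_f=0$. Consequently, for any $k\in\mathbb{Z}$, the group appearing in \cref{prop:HA-Pushforward} with $m=0$ becomes
\begin{equation}
  \HH^{m+(1-k)d_f}(X,\omega_f^{\otimes(1-k)})=\HH^0(X,\mathcal{O}_X),
\end{equation}
which contains the canonical element $1\in\HH^0(X,\mathcal{O}_X)$ corresponding to the constant function.

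First I would fix $k$ and apply \cref{prop:HA-Pushforward} with $m=0$ and $\sigma=1$. This produces, for every $i\in\mathbb{Z}$, a linear map $f^1_*\colon\hochserre^i_k(X)\to\hochserre^{i+0}_k(Y)=\hochserre^i_k(Y)$, which we define to be $f_*$. Concretely, unwinding the construction in the proof of \cref{prop:HA-Pushforward}: the element $\sigma=1$ corresponds, under the chain of canonical isomorphisms there, to the natural isomorphism $\mathbf{R}f_*(\omega_X[d_X])^{\otimes k}\cong(\omega_Y[d_Y])^{\otimes k}$ coming from $\omega_X\cong f^*\omega_Y$ and $d_X=d_Y$ (again because $f$ is \'etale); one then pushes a class $\alpha\colon\Delta_{X,*}\mathcal{O}_X\to\Delta_{X,*}(\omega_X[d_X])^{\otimes k}[i]$ forward along $f\times f$, precomposes with $\mathcal{O}_Y\to\mathbf{R}f_*\mathcal{O}_X$, and postcomposes with $\Delta_{Y,*}(\sigma)$.

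There is no real obstacle here: the content is entirely in \cref{prop:HA-Pushforward}, and the only thing to observe is that the \'etale hypothesis makes both $\omega_f$ and $d_f$ trivial so that $\sigma=1$ is available and the degree shift $m$ can be taken to be $0$. The word ``natural'' in the statement refers only to the fact that no auxiliary choices enter: the element $1\in\HH^0(X,\mathcal{O}_X)$ is canonical. (As stated in the preceding discussion, we do not verify compatibility with composition of \'etale morphisms, though this follows from the analogous functoriality of the ingredients used in the proof of \cref{prop:HA-Pushforward}.)
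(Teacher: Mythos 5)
Your proof is correct and is essentially the paper's own: take $m=0$ and $\sigma=1\in\HH^0(X,\mathcal{O}_X)$ in \cref{prop:HA-Pushforward}, using that an \'etale morphism has $d_f=0$ and $\omega_f\cong\mathcal{O}_X$. One small inaccuracy in your unwinding: the morphism $\mathbf{R}f_*(\omega_X[d_X])^{\otimes k}\to(\omega_Y[d_Y])^{\otimes k}$ corresponding to $\sigma=1$ is in general \emph{not} an isomorphism (for a degree-$d$ \'etale cover, $\mathbf{R}f_*\omega_X^{\otimes k}\cong\omega_Y^{\otimes k}\otimes f_*\mathcal{O}_X$ has rank $d$); it is the map induced by the trace $f_*\mathcal{O}_X\to\mathcal{O}_Y$, but this does not affect the argument, since the proposition only requires a choice of $\sigma$.
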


\begin{proof}
  Take $m=0$. It suffices note that $d_f=0$ and $\omega_f\cong\mathcal{O}_X$ in this case.
  Here $\sigma$ is taken to be the canonical element corresponding to the constant function 1.
\end{proof}

\begin{remark}
  Using the Hochschild--Kostant--Rosenberg decomposition for Hochschild--Serre cohomology \eqref{equation:hochschild-serre-decomposition},
  one can get an a priori different covariant functoriality for Hochschild--Serre cohomology.
  Let the assumption be as in \cref{prop:HA-Pushforward}.
  Then for each $p, q\in \mathbb{Z}$, we have natural morphisms
  \begin{equation}
    \begin{aligned}
      \HH^p(X, \bigwedge^q\tangent_X\otimes \omega_X^{\otimes k})
      &\cong\HH^p(Y, \mathbf{R}f_*(\bigwedge^q\tangent_X\otimes \omega_X^{\otimes k})) \\
      &\to\HH^p(Y, \mathbf{R}f_*(f^*\bigwedge^q\tangent_Y\otimes \omega_X^{\otimes k})) \\
      &\cong \HH^p(Y, \bigwedge^q\tangent_Y\otimes \mathbf{R}f_*(\omega_X^{\otimes k})) \\
      &\to\HH^p(Y, \bigwedge^q\tangent_Y\otimes \omega_Y^{\otimes k}[m-kd_f]) \\
      &=\HH^{p+m-kd_f}(Y, \bigwedge^q\tangent_Y\otimes \omega_Y^{\otimes k}),
    \end{aligned}
  \end{equation}
  where the last morphism uses the input datum $\sigma\in  \HH^{m}(X, (\omega_f[d_f])^{\otimes (1-k)})$ as in \eqref{eqn:sigma}.
  Taking the direct sum over all $p$, $q$ with $p+q=i+kd_X$,  or equivalently $p+q+m-kd_f=i+m+kd_Y$, by \eqref{equation:hochschild-serre-decomposition},
  we get a morphism
  \begin{equation}
    {}'f_*^\sigma\colon \hochserre^i_k(X)\to \hochserre^{i+m}_k(Y).
  \end{equation}
  It is an interesting question to compare ${}'f_*^\sigma$ with $f_*^{\sigma}$,
  or rather,
  in case they are different, to find a suitable modified isomorphism as in \eqref{equation:hochschild-serre-decomposition} to make them equal.
\end{remark}

As for contravariant functoriality, we have the following.
Observe that strictly speaking we prove \'etale functoriality only
\emph{after} the Hochschild--Kostant--Rosenberg decomposition
for Hochschild--Serre cohomology,
but this suffices to get contravariant \'etale functoriality on the level of vector spaces.

\begin{proposition}[\'Etale pullback]
  \label{proposition:etale-pullback}
  Let $f\colon X\to Y$ be an \'etale morphism between smooth projective varieties.
  Then $f$ induces a natural morphism
  \begin{equation}
    f^*\colon \hochserre_k^i(Y)\to \hochserre_k^i(X).
  \end{equation}
\end{proposition}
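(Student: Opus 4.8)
The plan is to transport the pullback on sheaf cohomology through the Hochschild--Kostant--Rosenberg decomposition \eqref{equation:hochschild-serre-decomposition} for Hochschild--Serre cohomology. Since $f$ is \'etale it is flat of relative dimension $0$, so $d_X=d_Y$, and the cotangent sequence reduces to a canonical isomorphism $f^*\Omega_Y^1\xrightarrow{\simeq}\Omega_X^1$; taking exterior powers and duals yields canonical isomorphisms $f^*\bigwedge^p\tangent_Y\xrightarrow{\simeq}\bigwedge^p\tangent_X$ and $f^*\omega_Y^{\otimes k}\xrightarrow{\simeq}\omega_X^{\otimes k}$ for all $p,k$, hence $f^*\bigl(\bigwedge^p\tangent_Y\otimes\omega_Y^{\otimes k}\bigr)\xrightarrow{\simeq}\bigwedge^p\tangent_X\otimes\omega_X^{\otimes k}$.

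Next, for each pair of integers $p,q$, I would use that flatness of $f$ gives $\mathbf{L}f^*=f^*$, so the unit of adjunction $\id\to\mathbf{R}f_*f^*$ applied to $\bigwedge^p\tangent_Y\otimes\omega_Y^{\otimes k}$ together with the identification $\RGamma(Y,\mathbf{R}f_*(-))=\RGamma(X,-)$ produces the usual pullback on cohomology; composing it with the isomorphism from the first step gives
\begin{equation}
  \HH^q\!\left(Y,\bigwedge^p\tangent_Y\otimes\omega_Y^{\otimes k}\right)
  \longrightarrow
  \HH^q\!\left(X,\bigwedge^p\tangent_X\otimes\omega_X^{\otimes k}\right).
\end{equation}
Summing these over all $p,q$ with $p+q=i+kd_X=i+kd_Y$ and invoking \eqref{equation:hochschild-serre-decomposition} for both $X$ and $Y$ then defines the asserted linear map $f^*\colon\hochserre_k^i(Y)\to\hochserre_k^i(X)$.

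The genuine subtlety, which I would flag rather than fully resolve, is that the decomposition \eqref{equation:hochschild-serre-decomposition} is not canonical, so strictly speaking the construction above only produces $f^*$ relative to a fixed choice of Hochschild--Kostant--Rosenberg isomorphism, and it does not by itself establish compatibility with composition of \'etale morphisms nor with the bigraded algebra structure. A more intrinsic route would pull back directly on $\hochserre_k^i(Y)=\Ext^{i+kd_Y}_{Y\times Y}(\Delta_{Y,*}\mathcal O_Y,\Delta_{Y,*}\omega_Y^{\otimes k})$ along $f\times f\colon X\times X\to Y\times Y$: because $f$ is \'etale and separated, $X\times_Y X$ is the disjoint union of the diagonal copy of $X$ and a complementary open-and-closed subscheme, so $\mathbf{L}(f\times f)^*\Delta_{Y,*}\mathcal O_Y$ has $\Delta_{X,*}\mathcal O_X$ both as a direct summand and as a direct summand quotient (and similarly after twisting, using $\Delta_*\omega^{\otimes k}\cong\Delta_*\mathcal O\otimes\pr_1^*\omega^{\otimes k}$ and $(f\times f)\circ\Delta_X=\Delta_Y\circ f$); precomposing the pullback on $\RHom_{X\times X}$ with the inclusion in the first variable and postcomposing with the projection in the second variable would then yield $f^*$ intrinsically. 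Verifying that this version is functorial and multiplicative is the main work one would have to do, and I would not pursue it here, since the decomposition-level statement already gives contravariant \'etale functoriality on the level of vector spaces.
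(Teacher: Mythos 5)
Your proposal is correct and follows essentially the same route as the paper: pull back on each Hodge-type piece $\HH^q(-,\bigwedge^p\tangent\otimes\omega^{\otimes k})$ using the unit $\mathcal{O}_Y\to\mathbf{R}f_*\mathcal{O}_X$, the projection formula, and the \'etale identifications $f^*\tangent_Y\cong\tangent_X$, $f^*\omega_Y\cong\omega_X$, then sum over $p+q=i+kd_X$ via \eqref{equation:hochschild-serre-decomposition}. The caveat you flag (functoriality only after a chosen Hochschild--Kostant--Rosenberg decomposition, hence only on the level of vector spaces) is exactly the caveat the paper itself makes, so no further work is needed here.
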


\begin{proof}
  For any $p, q\in \mathbb{Z}$, we have the following natural morphisms:
  \begin{equation}
    \begin{aligned}
      \HH^p(Y, \bigwedge^q\tangent_Y\otimes \omega_Y^{\otimes k})
      &\to\HH^p(Y, \mathbf{R}f_*\mathcal{O}_X\otimes \bigwedge^q\tangent_Y\otimes \omega_Y^{\otimes k}) \\
      &\cong\HH^p(Y, \mathbf{R}f_*f^*(\bigwedge^q\tangent_Y\otimes \omega_Y^{\otimes k})) \\
      &\cong\HH^p(X,f^*(\bigwedge^q\tangent_Y\otimes \omega_Y^{\otimes k})) \\
      &\cong\HH^p(X, \bigwedge^q\tangent_X\otimes \omega_X^{\otimes k}),
    \end{aligned}
  \end{equation}
  where the first morphism is induced by the natural map $\mathcal{O}_Y\to \mathbf{R}f_*\mathcal{O}_X$,
  and the last isomorphism uses that $f$ is \'etale
  hence induces isomorphisms $f^*\tangent_Y\cong\tangent_X$ and $f^*\omega_Y\cong\omega_X$.

  Taking the direct sum over $p, q$ with $p+q=i+kd_X=i+kd_Y$, and use \eqref{equation:hochschild-serre-decomposition}, we get the desired morphism $f^*$.
\end{proof}

\addtocontents{toc}{\protect\enlargethispage{\baselineskip}}
\section{Computations for \texorpdfstring{\cref{subsection:hilbert-square-P2}}{Subsection~\ref{subsection:hilbert-square-P2}}}
\label{appendix:hilbert-square-P2}
In this appendix we collect the computations for \cref{proposition:hkr-hilbert-square-P2,remark:rigidity}.
We will denote~$\mathbb{P}=\mathbb{P}^n=\mathbb{P}(V)$
where~$V$ is an~$(n+1)$~\dash dimensional vector space~$V$.
We write
\begin{equation}
  \begin{aligned}
    G&\colonequals\operatorname{Gr}(2,V) \\
    H&\colonequals\hilbn{2}{\mathbb{P}^n},
  \end{aligned}
\end{equation}
where the former comes equipped with the tautological sub- and quotient bundles~$\mathcal{S}$ and~$\mathcal{Q}$.

We are mostly interested in the case~$n=2$ (for the proof of \cref{proposition:hkr-hilbert-square-P2}),
where we have~$G=\mathbb{P}^{2,\vee}$,
and~$\mathcal{S}\cong\Omega_G^1(1)$ resp.~$\mathcal{Q}\cong\mathcal{O}_G(1)$.
The methods in this section can be used more generally
to compute (pieces of) the Hochschild--Kostant--Rosenberg decomposition
of~$\hochschild^*(\hilbn{2}{\mathbb{P}^n})$, but we will not work out all the details,
and we are content with establishing the rigidty result from \cref{remark:rigidity} for~$n\geq 3$.

Consider the morphism
\begin{equation}
  \label{equation:projective-bundle}
  \pi\colon H\to G
\end{equation}
obtained by sending 2 points, possibly infinitesimally near, to the line they span.
The following lemma (for~$n$ arbitrary) is probably well-known,
and a weaker version was already used in \cite[\S5]{MR4155174}.
For~$n=2$ it also appears in \cite[\S3.2]{MR3488782}.
\begin{lemma}
  \label{lemma:projective-bundle}
  The morphism~$\pi$ in \eqref{equation:projective-bundle} is identified with
  the~$\mathbb{P}^2$-bundle~$\mathbb{P}_G(\Sym^2\mathcal{S})\to G$.
\end{lemma}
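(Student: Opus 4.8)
The plan is to identify the Hilbert scheme $\hilbn{2}{\mathbb{P}^n}$ with a projective bundle over the Grassmannian $G=\operatorname{Gr}(2,V)$ by producing a geometric isomorphism. First I would recall the well-known ``blow-up model'' of the Hilbert square: $\hilbn{2}{\mathbb{P}^n}$ is the blow-up of $\Sym^2\mathbb{P}^n$ along the diagonal, equivalently the quotient by the natural involution of the blow-up of $\mathbb{P}^n\times\mathbb{P}^n$ along its diagonal. The morphism $\pi$ is defined pointwise by sending a length-two subscheme $Z\subset\mathbb{P}^n$ (reduced or not) to the unique line $\ell\subset\mathbb{P}^n$ containing it, which is a well-defined element of $G$; it is a morphism since it arises from the universal property of $G$ applied to the universal family. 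The fibre of $\pi$ over a point $[\ell]\in G$ consists of all length-two subschemes of the line $\ell\cong\mathbb{P}^1$, which is exactly $\hilbn{2}{\mathbb{P}^1}\cong\mathbb{P}^2$ (the Veronese/symmetric-square identification $\Sym^2\mathbb{P}^1\cong\mathbb{P}(\HH^0(\mathbb{P}^1,\mathcal{O}(2)))\cong\mathbb{P}^2$). So set-theoretically $\pi$ is a $\mathbb{P}^2$-bundle; the goal is to identify it with $\mathbb{P}_G(\Sym^2\mathcal{S})$.

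The key step is to write down the relative structure intrinsically. On $G$ we have the tautological subbundle $\mathcal{S}\hookrightarrow V\otimes\mathcal{O}_G$ of rank $2$, whose fibre over $[\ell]$ is the two-dimensional subspace $W\subset V$ with $\mathbb{P}(W)=\ell$. A length-two subscheme of $\ell=\mathbb{P}(W)$ is cut out by a degree-two form on $\mathbb{P}(W)$, i.e.\ by a nonzero element of $\Sym^2 W^\vee$ up to scalar; hence the space of such subschemes is $\mathbb{P}(\Sym^2 W^\vee)$. Globalizing, the fibre of $\pi$ over $[\ell]$ is $\mathbb{P}(\Sym^2\mathcal{S}^\vee)$ — or, depending on the chosen convention for $\mathbb{P}(-)$ (sub versus quotient lines), $\mathbb{P}(\Sym^2\mathcal{S})$; since $\mathcal{S}$ has rank $2$ one has $\mathcal{S}^\vee\cong\mathcal{S}\otimes(\det\mathcal{S})^\vee$, so $\Sym^2\mathcal{S}^\vee\cong\Sym^2\mathcal{S}\otimes(\det\mathcal{S})^{\otimes-2}$, and twisting by a line bundle does not change the projectivization. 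This gives a natural $G$-morphism $\hilbn{2}{\mathbb{P}^n}\to\mathbb{P}_G(\Sym^2\mathcal{S})$ compatible with the universal families, which one then checks is an isomorphism by exhibiting the inverse (the universal degree-two divisor on the tautological $\mathbb{P}^1$-bundle $\mathbb{P}_G(\mathcal{S})\to G$ defines a family of length-two subschemes of $\mathbb{P}^n$, hence a map $\mathbb{P}_G(\Sym^2\mathcal{S})\to\hilbn{2}{\mathbb{P}^n}$), or simply by noting it is a bijective morphism between smooth projective varieties of the same dimension, hence an isomorphism, and then matching the polarizations. For $n=2$ one specializes $\mathcal{S}\cong\Omega_G^1(1)$ and $\det\mathcal{S}\cong\mathcal{O}_G(-1)$ to get the stated form.

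The main obstacle is purely bookkeeping: pinning down the $\mathbb{P}(-)$ convention (Grothendieck's $\mathbb{P}(\mathcal{E})=\Proj\Sym^\bullet\mathcal{E}$ of quotients versus the classical bundle of sub-lines) so that the identification reads exactly as $\mathbb{P}_G(\Sym^2\mathcal{S})$ rather than $\mathbb{P}_G(\Sym^2\mathcal{S}^\vee)$, and making the ``family of length-two subschemes $\leftrightarrow$ section of a rank-one sheaf'' correspondence functorial in the base so that it genuinely produces a morphism of schemes (not just a bijection on points). I would handle this by working with the universal $\mathbb{P}^1$-bundle $p\colon\mathbb{P}_G(\mathcal{S})\to G$, inside which the universal length-two family is the vanishing locus of a section of $\mathcal{O}_p(2)\otimes p^*M$ for a suitable line bundle $M$ on the base, and then using $p_*(\mathcal{O}_p(2))\cong\Sym^2\mathcal{S}^\vee$ (Grothendieck convention) to land in the correct projective bundle; the twist by $M$ is absorbed by $\mathbb{P}(-\otimes M)\cong\mathbb{P}(-)$. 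Since the excerpt only asks for the identification as a $\mathbb{P}^2$-bundle and will use it to compute cohomology via Borel--Weil--Bott on $G$ together with the relative Euler sequence, I would keep the proof at the level of ``construct the two mutually inverse $G$-morphisms from the universal properties; smoothness plus equidimensionality gives the isomorphism'' and relegate the convention-chasing to a parenthetical remark.
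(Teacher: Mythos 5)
Your proposal is correct, but it takes a genuinely different route from the paper. The paper's proof starts from the identification of $\hilbn{2}{\mathbb{P}^n}$ with the relative symmetric square $(\mathbb{P}_G(\mathcal{S})\times_G\mathbb{P}_G(\mathcal{S}))/(\mathbb{Z}/2\mathbb{Z})$ of the universal line (the same geometric input as your span map: every length-two subscheme lies on a unique line), and then finishes purely algebraically: it rewrites this quotient as $\Proj_G\bigl(\Sym^2\Sym^\bullet\mathcal{S}^\vee\bigr)$ and invokes Hermite reciprocity, $\Sym^2\Sym^\bullet\mathcal{S}^\vee\cong\Sym^\bullet\Sym^2\mathcal{S}^\vee$ for the rank-two bundle $\mathcal{S}$, to land directly on $\mathbb{P}_G(\Sym^2\mathcal{S})$; no universal families, flatness, or cohomology-and-base-change arguments appear. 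You instead argue moduli-theoretically: the fibre of $\pi$ is the scheme of degree-two divisors on the line, globalized via $p_*\mathcal{O}_p(2)\cong\Sym^2\mathcal{S}^\vee$ on the tautological $\mathbb{P}^1$-bundle, with mutually inverse $G$-morphisms from universal properties, and you resolve the $\mathcal{S}$ versus $\mathcal{S}^\vee$ ambiguity by the rank-two self-duality $\Sym^2\mathcal{S}^\vee\cong\Sym^2\mathcal{S}\otimes(\det\mathcal{S})^{\otimes-2}$ (your char-0 shortcut via a bijective morphism onto a smooth target is also fine here). What each buys: the paper's argument is a five-line computation with graded algebras that sidesteps exactly the bookkeeping you flag (flatness of families, sub/quotient conventions), at the price of using Hermite reciprocity and taking the symmetric-square description of the Hilbert scheme for granted; your argument is more standard machinery, and it has the side benefit of exhibiting the universal family and pinning down the relative polarization, with $\pi_*\mathcal{O}_\pi(1)\cong\Sym^2\mathcal{S}^\vee$, which is precisely the identification the paper needs afterwards when applying the relative Euler sequence \eqref{equation:relative-euler-sequence} in the proof of \cref{lemma:relative-tangent}.
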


\begin{proof}
  We can write~$H$ as~$(\mathbb{P}_G(\mathcal{S})\times_G\mathbb{P}_G(\mathcal{S}))/{(\mathbb{Z}/2\mathbb{Z})}$.
  We can rewrite this as
  \begin{equation}
    \begin{aligned}
      (\mathbb{P}_G(\mathcal{S})\times_G\mathbb{P}_G(\mathcal{S}))/{(\mathbb{Z}/2\mathbb{Z})}
      &\cong\Proj_G(\Sym^\bullet\mathcal{S}^\vee)\times_G\Proj_G(\Sym^\bullet\mathcal{S}^\vee)/(\mathbb{Z}/2\mathbb{Z}) \\
      &\cong\Proj_G(\Sym^\bullet\mathcal{S}^\vee\otimes\Sym^\bullet\mathcal{S}^\vee)/(\mathbb{Z}/2\mathbb{Z}) \\
      &\cong\Proj_G(\Sym^2\Sym^\bullet\mathcal{S}^\vee) \\
      &\cong\Proj_G(\Sym^\bullet\Sym^2\mathcal{S}^\vee) \\
      &\cong\mathbb{P}_G(\Sym^2\mathcal{S}),
    \end{aligned}
  \end{equation}
  where we used Hermite reciprocity (e.g., as in \cite[Exercise~6.18]{MR1153249})
  which states that taking~$\Sym^q$ and~$\Sym^p$ of a rank-2 bundle commutes for all~$p$ and~$q$.
\end{proof}

From \cref{lemma:projective-bundle} we obtain the relative Euler sequence
\begin{equation}
  \label{equation:relative-euler-sequence}
  0\to\mathcal{O}_H\to \pi^*(\Sym^2\mathcal{S})\otimes\mathcal{O}_\pi(1)\to\tangent_\pi\to 0
\end{equation}
and the relative tangent sequence
\begin{equation}
  \label{equation:relative-tangent-sequence}
  0\to\tangent_\pi\to\tangent_H\to\pi^*\tangent_G\to 0.
\end{equation}

Let us first compute the cohomology of the tangent bundle for~$H$.
The following lemma is standard,
where we denote by~$\mathbb{S}_\lambda$ the Schur functor
associated to a partition~$\lambda=(\lambda_1\geq\ldots\geq\lambda_\ell)$ of length~$\ell$.
Recall that~$\mathbb{S}_\lambda\mathcal{E}^\vee\cong\mathbb{S}_\mu\mathcal{E}$
where~$\mu$ is~$-\lambda$ reordered so that the entries are decreasing.
\begin{lemma}
  \label{lemma:bwb-on-G}
  We have that
  \begin{equation}
    \HH^\bullet(G,\tangent_G)\cong\HH^0(G,\tangent_G)\cong\mathbb{S}_{(1,0,\ldots,0,-1)}V
  \end{equation}
  and
  \begin{equation}
    \HH^\bullet(G,\bigwedge^2\tangent_G)
    \cong\HH^0(G,\bigwedge^2\tangent_G)
    \cong
    \begin{cases}
      \mathbb{S}_{(2,-1,-1)}V & n=2 \\
      \mathbb{S}_{(2,0,\ldots,-1,-1)}V\oplus\mathbb{S}_{(1,1,0,\ldots,0,-2)}V & n\geq 3.
    \end{cases}
  \end{equation}
\end{lemma}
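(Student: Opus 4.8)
The plan is to compute both groups by Bott's theorem on the Grassmannian $G=\operatorname{Gr}(2,V)$. First I would recall the standard identification $\tangent_G\cong\mathcal{S}^\vee\otimes\mathcal{Q}$, so that $\tangent_G$ is the irreducible homogeneous bundle $\mathbb{S}_{(1,0)}\mathcal{S}^\vee\otimes\mathbb{S}_{(1,0,\ldots,0)}\mathcal{Q}$. Running Bott's algorithm — translating the associated $P$-dominant weight by $\rho$, checking that the resulting weight is regular (no two entries coincide), and reordering it with the appropriate sign — shows that the cohomology is concentrated in degree $0$ and equals $\mathbb{S}_{(1,0,\ldots,0,-1)}V$. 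This is of course $\mathfrak{sl}(V)$, consistent with the rigidity of Grassmannians and with $\HH^0(G,\tangent_G)$ being the Lie algebra of $\Aut(G)\cong\operatorname{PGL}(V)$.

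For the second bundle I would use the $\operatorname{GL}_2\times\operatorname{GL}_{n-1}$-plethysm
\[
  \bigwedge^2(\mathcal{S}^\vee\otimes\mathcal{Q})
  \;\cong\;
  \bigl(\bigwedge^2\mathcal{S}^\vee\otimes\Sym^2\mathcal{Q}\bigr)\oplus\bigl(\Sym^2\mathcal{S}^\vee\otimes\bigwedge^2\mathcal{Q}\bigr),
\]
whose two summands are the irreducible homogeneous bundles $\mathbb{S}_{(1,1)}\mathcal{S}^\vee\otimes\mathbb{S}_{(2,0,\ldots,0)}\mathcal{Q}$ and $\mathbb{S}_{(2,0)}\mathcal{S}^\vee\otimes\mathbb{S}_{(1,1,0,\ldots,0)}\mathcal{Q}$. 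When $n=2$ the quotient bundle $\mathcal{Q}$ has rank $1$, so $\bigwedge^2\mathcal{Q}=0$ and the second summand drops out; when $n\ge 3$ both summands survive. Applying Bott's theorem to each surviving summand — in every case the $\rho$-shifted weight is again regular and already dominant, so only $\HH^0$ contributes — one reads off $\mathbb{S}_{(2,0,\ldots,0,-1,-1)}V$ from the first summand and $\mathbb{S}_{(1,1,0,\ldots,0,-2)}V$ from the second, and specialising to $n=2$ recovers the first line of the displayed formula.

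The only genuine work is the bookkeeping inside Bott's theorem: one has to fix compatible conventions for the duality on $\mathcal{S}$, for the order in which the $\mathcal{S}^\vee$- and $\mathcal{Q}$-weights are concatenated into a weight of $\operatorname{GL}(V)$, and for the choice of $\rho$, and then verify the regularity of the (at most three) shifted weights so that all higher cohomology indeed vanishes. The case $n=3$, where $G$ is the four-dimensional quadric, needs no separate treatment since Bott's theorem applies uniformly in $n$; the only case distinction is the vanishing of $\bigwedge^2\mathcal{Q}$ for $n=2$ noted above.
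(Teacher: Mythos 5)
Your computation is correct, but it takes a different route from the paper: the paper's proof is a one-line citation of the general description of global polyvector fields on (co)minuscule Grassmannians in \cite[Theorem~B]{1911.09414v1}, whereas you rederive the two cohomology groups directly. Your route is the expected hands-on one: $\tangent_G\cong\mathcal{S}^\vee\otimes\mathcal{Q}$, the Cauchy-type decomposition $\bigwedge^2(\mathcal{S}^\vee\otimes\mathcal{Q})\cong(\bigwedge^2\mathcal{S}^\vee\otimes\Sym^2\mathcal{Q})\oplus(\Sym^2\mathcal{S}^\vee\otimes\bigwedge^2\mathcal{Q})$, and Borel--Weil--Bott applied to each irreducible summand; with the usual convention the relevant concatenated $\operatorname{GL}(V)$-weights are $(1,0,\ldots,0,-1)$, $(2,0,\ldots,0,-1,-1)$ and $(1,1,0,\ldots,0,-2)$, all already dominant, so indeed only $\HH^0$ survives and one reads off exactly the Schur modules in the statement, with the second summand dying for $n=2$ because $\mathcal{Q}$ has rank one. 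What the citation buys is brevity and uniformity (it covers all exterior powers of $\tangent_G$ on all cominuscule Grassmannians at once); what your argument buys is self-containedness, and it is in the same spirit as the Bott computations the paper carries out anyway for the other bundles in this appendix (\cref{lemma:relative-tangent,lemma:relative-bitangent}). The only loose point is your summary of Bott's algorithm: regularity of the $\rho$-shifted weight alone gives concentration in a single degree, and you need (as you do note) that the shifted weight is already strictly decreasing to conclude that this degree is $0$; with that made explicit the proof is complete.
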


\begin{proof}
  This conveniently follows from the description in \cite[Theorem~B]{1911.09414v1}
  as the Grassmannian~$G$ is cominuscule.
\end{proof}

Now we compute the cohomology of the first term in
the relative tangent sequence \eqref{equation:relative-tangent-sequence}.
\begin{lemma}
  \label{lemma:relative-tangent}
  We have that
  \begin{equation}
    \begin{aligned}
      \RRR\pi_*\tangent_\pi
      &\cong\RR^0\pi_*\tangent_\pi \\
      &\cong\mathbb{S}_{(2,-2)}\mathcal{S}\oplus\mathbb{S}_{(1,-1)}\mathcal{S},
    \end{aligned}
  \end{equation}
  and
  \begin{equation}
    \HH^\bullet(H,\tangent_\pi)
    \cong
    \begin{cases}
      \HH^1(H,\tangent_\pi)\cong\mathbb{S}_{(1,1,-2)}V & n=2 \\
      0 & n\geq 3.
    \end{cases}
  \end{equation}
\end{lemma}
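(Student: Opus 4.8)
The plan is to push the relative Euler sequence \eqref{equation:relative-euler-sequence} forward along $\pi$, decompose the result into irreducible homogeneous bundles on $G$, and then apply Borel--Weil--Bott, exactly as in the proof of \cref{lemma:bwb-on-G}. By \cref{lemma:projective-bundle}, $\pi\colon H\to G$ is the $\mathbb P^2$-bundle $\mathbb P_G(\Sym^2\mathcal S)\to G$, and in the convention used there $\mathbb P_G(-)=\Proj_G(\Sym^\bullet(-)^\vee)$, so $\RRR\pi_*\mathcal O_H\cong\mathcal O_G$ and $\RRR\pi_*\mathcal O_\pi(1)\cong(\Sym^2\mathcal S)^\vee$, both concentrated in degree $0$. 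Using the projection formula, applying $\RRR\pi_*$ to \eqref{equation:relative-euler-sequence} therefore yields a short exact sequence
\[
  0\to\mathcal O_G\to\Sym^2\mathcal S\otimes(\Sym^2\mathcal S)^\vee\to\RR^0\pi_*\tangent_\pi\to 0
\]
with $\RRR\pi_*\tangent_\pi\cong\RR^0\pi_*\tangent_\pi$ in degree $0$, the first map being the inclusion of the identity endomorphism. Since that section is nowhere vanishing the inclusion is split, so $\RRR\pi_*\tangent_\pi$ is the trace-free part of the endomorphism bundle of $\Sym^2\mathcal S$.

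Next I would split this bundle into $\mathrm{GL}_2$-irreducibles. For a rank-$2$ bundle one has $\Sym^2\mathcal S\otimes(\Sym^2\mathcal S)^\vee\cong\mathbb S_{(2,-2)}\mathcal S\oplus\mathbb S_{(1,-1)}\mathcal S\oplus\mathcal O_G$ (Clebsch--Gordan, dimension count $5+3+1=9$), so the trace-free summand is $\mathbb S_{(2,-2)}\mathcal S\oplus\mathbb S_{(1,-1)}\mathcal S$; this is the first assertion of the lemma.

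Because $\RRR\pi_*\tangent_\pi$ sits in a single degree, the Leray spectral sequence degenerates and $\HH^\bullet(H,\tangent_\pi)\cong\HH^\bullet(G,\mathbb S_{(2,-2)}\mathcal S)\oplus\HH^\bullet(G,\mathbb S_{(1,-1)}\mathcal S)$. I would then evaluate each summand by Borel--Weil--Bott on $G=\operatorname{Gr}(2,V)$, as for \cref{lemma:bwb-on-G}. The bundle $\mathbb S_{(1,-1)}\mathcal S$ corresponds to the weight $(1,-1,0,\dots,0)$, whose $\rho$-shift has two equal entries, hence it is acyclic for every $n\ge 2$. The bundle $\mathbb S_{(2,-2)}\mathcal S$ corresponds to $(2,-2,0,\dots,0)$: for $n\ge 3$ the $\rho$-shift again has two equal entries and the bundle is acyclic, while for $n=2$ the $\rho$-shift is regular, a single simple reflection makes it dominant, and one reads off $\HH^1(G,\mathbb S_{(2,-2)}\mathcal S)\cong\mathbb S_{(1,1,-2)}V$ (in the conventions of \cref{lemma:bwb-on-G}), with vanishing in all other degrees. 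Combining the two contributions gives the stated $\HH^\bullet(H,\tangent_\pi)$.

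I do not expect a genuine obstacle: once the Euler sequence is pushed forward the problem is purely representation-theoretic. The only delicate point is the Bott bookkeeping --- keeping consistent the identifications among $\mathbb S_\lambda\mathcal S$, $\mathbb S_\lambda\mathcal S^\vee$ and $\mathbb S_\lambda V$ fixed in \cref{lemma:bwb-on-G}, computing the $\rho$-shifts, and separating the singular case $n\ge 3$ from the regular case $n=2$ --- but this is entirely parallel to the computation already carried out there.
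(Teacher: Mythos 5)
Your proof is correct and follows essentially the same route as the paper: push $\RRR\pi_*$ through the relative Euler sequence to get $0\to\mathcal{O}_G\to\Sym^2\mathcal{S}\otimes\Sym^2\mathcal{S}^\vee\to\RR^0\pi_*\tangent_\pi\to0$, split off the trivial summand (the paper cancels the $\mathcal{O}_G$ in the Clebsch--Gordan decomposition, you phrase it via the trace splitting --- just note the splitting comes from the trace pairing, not merely from the section being nowhere vanishing), and finish with Borel--Weil--Bott. The only difference is that you spell out the $\rho$-shift bookkeeping and the singular cases for $n\geq 3$, which the paper leaves implicit.
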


\begin{proof}
  By applying~$\RRR\pi_*$ to the relative Euler sequence \eqref{equation:relative-euler-sequence}
  we obtain,
  using the identifications~$\RRR\pi_*\mathcal{O}_H\cong\mathcal{O}_G$
  and~$\RRR\pi_*\mathcal{O}_\pi(1)\cong\Sym^2\mathcal{S}^\vee$,
  the short exact sequence
  \begin{equation}
    \label{equation:short-exact-sequence}
    0\to\mathcal{O}_G\to\Sym^2\mathcal{S}\otimes\Sym^2\mathcal{S}^\vee\to\RR^0\pi_*\tangent_\pi\to 0.
  \end{equation}
  The middle term can be rewritten as
  \begin{equation}
    \label{equation:equation:sym-two-sym-two}
    \begin{aligned}
      \Sym^2\mathcal{S}\otimes\Sym^2\mathcal{S}^\vee
      &\cong
      \Sym^2\mathcal{S}\otimes\Sym^2\mathcal{S}\otimes(\det\mathcal{S})^{\otimes-2} \\
      &\cong
      (\Sym^4\mathcal{S}\otimes(\det\mathcal{S})^{\otimes-2})\oplus
      (\Sym^2\mathcal{S}\otimes(\det\mathcal{S})^{\otimes-1})\oplus
      \mathcal{O}_G
    \end{aligned}
  \end{equation}
  where we used \cite[Exercise~11.11]{MR1153249}.
  The inclusion in \eqref{equation:short-exact-sequence}
  allows us to cancel~$\mathcal{O}_G$ in \eqref{equation:equation:sym-two-sym-two}.
  The rest follows by applying Borel--Weil--Bott.
\end{proof}

Combining these two lemmas we obtain the following
from the long exact sequence associated to the relative tangent sequence \eqref{equation:relative-tangent-sequence}.
\begin{corollary}
  \label{corollary:hilbert-square-tangent}
  If~$n=2$ then
  \begin{equation}
    \HH^i(H,\tangent_H)
    \cong
    \begin{cases}
      \mathbb{S}_{(1,0,-1)}V & i=0 \\
      \mathbb{S}_{(1,1,-2)}V & i=1 \\
      0 & i\geq2 \\
    \end{cases}
  \end{equation}
  whilst for~$n\geq 3$
  \begin{equation}
    \HH^i(H,\tangent_H)\cong
    \begin{cases}
      \mathbb{S}_{(1,0,\ldots,0,-1)}V & i=0 \\
      0 & i\geq 1.
    \end{cases}
  \end{equation}
\end{corollary}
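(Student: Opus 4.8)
The plan is to feed the computations of \cref{lemma:bwb-on-G} and \cref{lemma:relative-tangent} into the long exact cohomology sequence of the relative tangent sequence \eqref{equation:relative-tangent-sequence}. First I would record that, $\pi$ being a $\mathbb{P}^2$-bundle, we have $\RRR\pi_*\mathcal{O}_H\cong\mathcal{O}_G$, so the projection formula gives $\RRR\pi_*\pi^*\tangent_G\cong\tangent_G$ and hence $\HH^\bullet(H,\pi^*\tangent_G)\cong\HH^\bullet(G,\tangent_G)$; by \cref{lemma:bwb-on-G} this is concentrated in degree $0$, where it equals $\mathbb{S}_{(1,0,\ldots,0,-1)}V$. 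Since the morphism $\pi\colon H\to G$ and the sequence \eqref{equation:relative-tangent-sequence} are $\operatorname{GL}(V)$-equivariant, the resulting long exact sequence is one of $\operatorname{GL}(V)$-representations, and the Borel--Weil--Bott identifications above are $\operatorname{GL}(V)$-equivariant.

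The case $n\geq 3$ is then immediate: $\HH^\bullet(H,\tangent_\pi)=0$ by \cref{lemma:relative-tangent}, so $\HH^\bullet(H,\tangent_H)\cong\HH^\bullet(G,\tangent_G)$ is concentrated in degree $0$ and equals $\mathbb{S}_{(1,0,\ldots,0,-1)}V$.

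For $n=2$, \cref{lemma:relative-tangent} places $\HH^\bullet(H,\tangent_\pi)$ entirely in degree $1$, equal to $\mathbb{S}_{(1,1,-2)}V$, while $\HH^{\geq 1}(G,\tangent_G)=0$. The long exact sequence then collapses to
\[
0\to\HH^0(H,\tangent_H)\to\HH^0(G,\tangent_G)\xrightarrow{\ \delta\ }\HH^1(H,\tangent_\pi)\to\HH^1(H,\tangent_H)\to 0,
\]
together with $\HH^{\geq 2}(H,\tangent_H)=0$. The only substantive point — and the step I expect to be the crux — is to show the connecting homomorphism $\delta$ vanishes. I would argue this using $\operatorname{GL}(V)$-equivariance: $\delta$ is a morphism of $\operatorname{GL}(V)$-representations from the irreducible $\mathbb{S}_{(1,0,-1)}V$ to the irreducible $\mathbb{S}_{(1,1,-2)}V$, and since these are attached to distinct partitions they are non-isomorphic, so $\delta=0$ by Schur's lemma. (As a sanity check one can instead invoke the faithful $\operatorname{PGL}(V)$-action on $H$, which gives an injection $\mathfrak{pgl}(V)\hookrightarrow\HH^0(H,\tangent_H)$ and hence $\dim\HH^0(H,\tangent_H)\geq\dim\mathfrak{pgl}(V)=\dim\mathbb{S}_{(1,0,-1)}V$, again forcing $\delta=0$.) With $\delta=0$ in hand the displayed sequence yields $\HH^0(H,\tangent_H)\cong\mathbb{S}_{(1,0,-1)}V$ and $\HH^1(H,\tangent_H)\cong\HH^1(H,\tangent_\pi)\cong\mathbb{S}_{(1,1,-2)}V$, which is the assertion; everything else is bookkeeping with the long exact sequence and requires no further calculation.
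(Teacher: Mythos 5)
Your argument is correct and is essentially the paper's proof: the corollary is obtained there exactly by feeding \cref{lemma:bwb-on-G} and \cref{lemma:relative-tangent} into the long exact sequence of \eqref{equation:relative-tangent-sequence}, using $\RRR\pi_*\mathcal{O}_H\cong\mathcal{O}_G$ to identify $\HH^\bullet(H,\pi^*\tangent_G)$ with $\HH^\bullet(G,\tangent_G)$. Your Schur's-lemma (equivariance) justification for the vanishing of the connecting map in the $n=2$ case is a correct filling-in of a step the paper leaves implicit.
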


Next we compute the cohomology of~$\bigwedge^2\tangent_H$.
The second exterior square of \eqref{equation:relative-tangent-sequence}
produces a filtration with associated graded pieces
\begin{itemize}
  \item $\bigwedge^2\tangent_\pi$,
  \item $\tangent_\pi\otimes\pi^*\tangent_G$, and
  \item $\pi^*\bigwedge^2\tangent_G$.
\end{itemize}
The cohomology of the first is computed as follows.
\begin{lemma}
  \label{lemma:relative-bitangent}
  We have that
  \begin{equation}
    \begin{aligned}
      \RRR\pi_*\bigwedge^2\tangent_\pi
      &\cong\RR^0\pi_*\bigwedge^2\tangent_\pi \\
      &\cong\mathbb{S}_{(3,-3)}\mathcal{S}\oplus\mathbb{S}_{(1,-1)}\mathcal{S}.
    \end{aligned}
  \end{equation}
  In particular we have for~$n=2$ that
  \begin{equation}
    \HH^\bullet(H,\bigwedge^2\tangent_\pi)
    \cong\HH^1(H,\bigwedge^2\tangent_\pi)
    \cong\mathbb{S}_{(2,1,-3)}V.
  \end{equation}
\end{lemma}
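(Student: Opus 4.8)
The plan is to follow the strategy of \cref{lemma:relative-tangent}, replacing the relative Euler sequence \eqref{equation:relative-euler-sequence} by its exterior square, or rather by its determinant. Write $\mathcal W\colonequals\pi^*(\Sym^2\mathcal S)\otimes\mathcal O_\pi(1)$, which has rank $3$. Since $\tangent_\pi$ has rank $2$ (the relative dimension of the $\mathbb P^2$-bundle $\pi$), taking $\bigwedge^2$ of the exact sequence $0\to\mathcal O_H\to\mathcal W\to\tangent_\pi\to 0$, whose first term is a line bundle, produces a short exact sequence $0\to\tangent_\pi\to\bigwedge^2\mathcal W\to\bigwedge^2\tangent_\pi\to 0$; alternatively, and more cheaply, taking determinants gives
\begin{equation}
  \bigwedge^2\tangent_\pi=\det\tangent_\pi\cong\det\mathcal W\cong\pi^*\det(\Sym^2\mathcal S)\otimes\mathcal O_\pi(3)\cong\pi^*(\det\mathcal S)^{\otimes 3}\otimes\mathcal O_\pi(3),
\end{equation}
using that $\det\Sym^2\mathcal S\cong(\det\mathcal S)^{\otimes 3}$ for a rank-$2$ bundle.

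Next I would push forward along $\pi$. The fibres of $\pi$ are copies of $\mathbb P^2$ on which $\mathcal O(3)$ has cohomology only in degree $0$, so $\mathrm R^{>0}\pi_*\mathcal O_\pi(3)=0$ and $\pi_*\mathcal O_\pi(3)\cong\Sym^3(\Sym^2\mathcal S)^\vee\cong\Sym^3\Sym^2\mathcal S^\vee$ (we are in characteristic zero). Hence
\begin{equation}
  \RRR\pi_*\bigwedge^2\tangent_\pi\cong\RR^0\pi_*\bigwedge^2\tangent_\pi\cong(\det\mathcal S)^{\otimes 3}\otimes\Sym^3\Sym^2\mathcal S^\vee.
\end{equation}
A short Clebsch--Gordan (Chern-root) computation shows $\Sym^3\Sym^2 E\cong\Sym^6 E\oplus\mathbb S_{(4,2)}E$ for any rank-$2$ bundle $E$; applying this with $E=\mathcal S^\vee$, rewriting $\mathbb S_{(a,b)}\mathcal S^\vee\cong\mathbb S_{(-b,-a)}\mathcal S$, and tensoring by $(\det\mathcal S)^{\otimes 3}$ yields $\mathbb S_{(3,-3)}\mathcal S\oplus\mathbb S_{(1,-1)}\mathcal S$, which is the first assertion. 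As a consistency check one can instead compute $\RRR\pi_*\bigwedge^2\mathcal W$ directly and subtract $\RRR\pi_*\tangent_\pi$ from \cref{lemma:relative-tangent} using the short exact sequence above, arriving at the same answer.

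For $n=2$, where $G=\mathbb P^{2,\vee}$ and $\mathcal S\cong\Omega^1_G(1)$, the Leray spectral sequence together with the vanishing of $\mathrm R^{>0}\pi_*$ reduces the computation to $\HH^\bullet(H,\bigwedge^2\tangent_\pi)\cong\HH^\bullet(G,\mathbb S_{(3,-3)}\mathcal S)\oplus\HH^\bullet(G,\mathbb S_{(1,-1)}\mathcal S)$, which I would evaluate by Borel--Weil--Bott on $\mathbb P^2$ exactly as in \cref{lemma:bwb-on-G}. Here $\mathbb S_{(1,-1)}\mathcal S$, the sheaf of traceless endomorphisms of $\Omega^1_G$, is acyclic (its Bott weight is singular; equivalently $\tangent_{\mathbb P^2}$ is simple and rigid with $\Ext^2(\tangent_{\mathbb P^2},\tangent_{\mathbb P^2})=0$), while $\mathbb S_{(3,-3)}\mathcal S\cong\Sym^6\Omega^1_G\otimes\mathcal O_G(9)$ has cohomology concentrated in degree $1$, equal to $\mathbb S_{(2,1,-3)}V$. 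The only real work is bookkeeping -- the rank-$2$ Schur-functor identities, the $\Sym^3\Sym^2$ plethysm, and the Borel--Weil--Bott weight shuffle -- and none of it presents a conceptual obstacle, since each step is the direct analogue of one already carried out in \cref{lemma:relative-tangent,lemma:bwb-on-G}.
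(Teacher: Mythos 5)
Your argument is correct, and it reaches the answer by a genuinely shorter route than the paper's. The paper takes the second exterior power of the relative Euler sequence to get $0\to\tangent_\pi\to\pi^*(\bigwedge^2\Sym^2\mathcal{S})\otimes\mathcal{O}_\pi(2)\to\bigwedge^2\tangent_\pi\to 0$, pushes this forward, decomposes the middle term via the plethysms $\bigwedge^2\Sym^2\mathcal{S}\cong\mathbb{S}_{(3,1)}\mathcal{S}$ and $\Sym^2\Sym^2\mathcal{S}^\vee\cong\mathbb{S}_{(4,0)}\mathcal{S}^\vee\oplus\mathbb{S}_{(2,2)}\mathcal{S}^\vee$, establishes the vanishing of $\RR^{\geq1}\pi_*(\bigwedge^2\tangent_\pi)$ by cohomology and base change, and finally cancels the known summands $\RRR\pi_*\tangent_\pi$ from \cref{lemma:relative-tangent} using equivariance. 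You instead use that $\tangent_\pi$ has rank $2$, so $\bigwedge^2\tangent_\pi=\det\tangent_\pi\cong\omega_\pi^\vee\cong\pi^*(\det\mathcal{S})^{\otimes 3}\otimes\mathcal{O}_\pi(3)$ is a line bundle; with the paper's convention $\pi_*\mathcal{O}_\pi(1)\cong\Sym^2\mathcal{S}^\vee$ (the one used in \cref{lemma:relative-tangent}) the projection formula gives $\RRR\pi_*\bigwedge^2\tangent_\pi\cong(\det\mathcal{S})^{\otimes3}\otimes\Sym^3\Sym^2\mathcal{S}^\vee$ concentrated in degree $0$, the higher direct images vanishing for free since $\HH^{>0}(\mathbb{P}^2,\mathcal{O}(3))=0$ on the fibres. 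The single plethysm $\Sym^3\Sym^2E\cong\Sym^6E\oplus\mathbb{S}_{(4,2)}E$ together with $\mathbb{S}_{(a,b)}\mathcal{S}^\vee\cong\mathbb{S}_{(-b,-a)}\mathcal{S}$ then yields exactly $\mathbb{S}_{(3,-3)}\mathcal{S}\oplus\mathbb{S}_{(1,-1)}\mathcal{S}$, which I have checked. What your approach buys is economy and independence: no appeal to \cref{lemma:relative-tangent}, no cancellation step, and no base-change argument. The final step for $n=2$ is the same in both proofs, namely Borel--Weil--Bott on $G$: the weight attached to $\mathbb{S}_{(1,-1)}\mathcal{S}$ is singular, so that summand is acyclic (equivalently, as you note, $\tangent_{\mathbb{P}^2}$ is simple and rigid), while $\mathbb{S}_{(3,-3)}\mathcal{S}$ contributes $\mathbb{S}_{(2,1,-3)}V$ in degree $1$, in agreement with the statement.
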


\begin{proof}
  The second exterior square of the relative Euler sequence \eqref{equation:relative-euler-sequence} induces the short exact sequence
  \begin{equation}
    0\to\tangent_\pi\to\pi^*(\bigwedge^2\Sym^2\mathcal{S})\otimes\mathcal{O}_\pi(2)\to\bigwedge^2\tangent_\pi\to 0.
  \end{equation}
  By applying~$\RRR\pi_*$ to it
  we obtain,
  using the description of~$\RRR\pi_*\tangent_\pi$ from \cref{lemma:relative-tangent},
  the short exact sequence
  \begin{equation}
    \label{equation:short-exact-sequence-2}
    0
    \to\mathbb{S}_{(2,-2)}\mathcal{S}\oplus\mathbb{S}_{(1,-1)}\mathcal{S}
    \to\bigwedge^2\Sym^2\mathcal{S}\otimes\Sym^2\Sym^2\mathcal{S}^\vee
    \to\pi_*(\bigwedge^2\tangent_\pi)
    \to 0.
  \end{equation}
  The vanishing of~$\RR^{\geq 1}\pi_*(\bigwedge^2\tangent_\pi)$
  can be shown using cohomology and base change.
  Using \cref{lemma:relative-tangent}
  and the isomorphisms
  \begin{equation}
    \begin{aligned}
      \bigwedge^2\Sym^2\mathcal{S}&\cong\mathbb{S}_{(3,1)}\mathcal{S} \\
      \Sym^2\Sym^2\mathcal{S}^\vee&\cong\mathbb{S}_{(4,0)}\mathcal{S}^\vee\oplus\mathbb{S}_{(2,2)}\mathcal{S}^\vee
    \end{aligned}
  \end{equation}
  which are standard plethysms,
  we can rewrite \eqref{equation:short-exact-sequence-2}
  as
  \begin{equation}
    0
    \to\mathbb{S}_{(2,-2)}\mathcal{S}\oplus\mathbb{S}_{(1,-1)}\mathcal{S}
    \to\mathbb{S}_{(3,-3)}\mathcal{S}\oplus\mathbb{S}_{(2,-2)}\mathcal{S}\oplus(\mathbb{S}_{(1,-1)}\mathcal{S})^{\oplus2}
    \to\pi_*(\bigwedge^2\tangent_\pi)
    \to 0.
  \end{equation}
  As all the morphisms in this sequence are equivariant
  we can cancel the corresponding summands
  and thus we obtain the first part of the lemma.
  The rest is an application of the Borel--Weil--Bott theorem.
\end{proof}

For the second graded piece we have the following.
\begin{lemma}
  \label{lemma:tangent-tensor-relative-tangent}
  We have that
  \begin{equation}
    \begin{aligned}
      \RRR\pi_*(\tangent_\pi\otimes\pi^*\tangent_G)
      &\cong\RR^0\pi_*(\tangent_\pi\otimes\pi^*\tangent_G) \\
      &\cong\mathcal{Q}\otimes\left( \mathbb{S}_{(2,-3)}\mathcal{S}\oplus(\mathbb{S}_{(1,-2)}\mathcal{S})^{\oplus2}\oplus\mathcal{S}^\vee \right).
    \end{aligned}
  \end{equation}
  In particular we have for~$n=2$ that
  \begin{equation}
    \HH^\bullet(H,\tangent_\pi\otimes\pi^*\tangent_G)
    \cong\HH^0(H,\tangent_\pi\otimes\pi^*\tangent_G)
    \cong(\mathbb{S}_{(1,1,-2)}V)^{\oplus2}\oplus\mathbb{S}_{(1,0,-1)}V.
  \end{equation}
\end{lemma}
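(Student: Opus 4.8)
The plan is to reduce everything to the already-computed object $\RRR\pi_*\tangent_\pi$ via the projection formula, and then to a bounded amount of $\mathrm{GL}_2$-representation theory together with Borel--Weil--Bott. Since $\tangent_G$ is locally free and $\pi^*\tangent_G$ is its pullback, the projection formula gives
\[
  \RRR\pi_*(\tangent_\pi\otimes\pi^*\tangent_G)\cong(\RRR\pi_*\tangent_\pi)\otimes\tangent_G .
\]
By \cref{lemma:relative-tangent} the factor $\RRR\pi_*\tangent_\pi$ is a sheaf concentrated in degree $0$, namely $\mathbb{S}_{(2,-2)}\mathcal{S}\oplus\mathbb{S}_{(1,-1)}\mathcal{S}$, so the whole complex is again concentrated in degree $0$; this already yields the claimed identification $\RRR\pi_*(\tangent_\pi\otimes\pi^*\tangent_G)\cong\RR^0\pi_*(\tangent_\pi\otimes\pi^*\tangent_G)$.

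Next I would make the decomposition explicit. Writing $\tangent_G\cong\mathcal{S}^\vee\otimes\mathcal{Q}\cong\mathbb{S}_{(0,-1)}\mathcal{S}\otimes\mathcal{Q}$ and using the $\mathrm{GL}_2$ Clebsch--Gordan rule for Schur functors of the rank-$2$ bundle $\mathcal{S}$ (equivalently the Hermite-type plethysm already invoked in \cref{lemma:projective-bundle}), one computes
\[
  \mathbb{S}_{(2,-2)}\mathcal{S}\otimes\mathbb{S}_{(0,-1)}\mathcal{S}\cong\mathbb{S}_{(2,-3)}\mathcal{S}\oplus\mathbb{S}_{(1,-2)}\mathcal{S},\qquad
  \mathbb{S}_{(1,-1)}\mathcal{S}\otimes\mathbb{S}_{(0,-1)}\mathcal{S}\cong\mathbb{S}_{(1,-2)}\mathcal{S}\oplus\mathcal{S}^\vee .
\]
Tensoring with $\mathcal{Q}$ and adding gives $\mathcal{Q}\otimes\bigl(\mathbb{S}_{(2,-3)}\mathcal{S}\oplus(\mathbb{S}_{(1,-2)}\mathcal{S})^{\oplus2}\oplus\mathcal{S}^\vee\bigr)$, as asserted. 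For the cohomology statement I specialise to $n=2$, so $G=\operatorname{Gr}(2,V)$ with $\dim V=3$, and use $\HH^\bullet(H,\tangent_\pi\otimes\pi^*\tangent_G)\cong\HH^\bullet\bigl(G,\RRR\pi_*(\tangent_\pi\otimes\pi^*\tangent_G)\bigr)$ with Borel--Weil--Bott applied summand by summand: the summand $\mathcal{Q}\otimes\mathcal{S}^\vee$ is $\tangent_G$, whose cohomology is $\mathbb{S}_{(1,0,-1)}V$ in degree $0$ by \cref{lemma:bwb-on-G}; each of the two copies of $\mathcal{Q}\otimes\mathbb{S}_{(1,-2)}\mathcal{S}$ contributes $\mathbb{S}_{(1,1,-2)}V$ in degree $0$; and $\mathcal{Q}\otimes\mathbb{S}_{(2,-3)}\mathcal{S}$ is acyclic, the associated weight being singular after adding $\rho$. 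Collecting these yields $\HH^\bullet(H,\tangent_\pi\otimes\pi^*\tangent_G)\cong\HH^0\cong(\mathbb{S}_{(1,1,-2)}V)^{\oplus2}\oplus\mathbb{S}_{(1,0,-1)}V$.

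There is no conceptual obstacle once \cref{lemma:relative-tangent,lemma:bwb-on-G} are granted: the argument is mechanical. The only thing requiring care is the bookkeeping — running the $\mathrm{GL}_2$ tensor-product rule so that the equivariant summands cancel correctly, identifying which twist of $\operatorname{Sym}^\bullet\mathcal{S}$ each term represents, and noticing the singular-weight cancellation responsible for the vanishing of the $\mathbb{S}_{(2,-3)}\mathcal{S}$ contribution.
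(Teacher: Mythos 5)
Your proposal is correct and follows essentially the same route as the paper: apply the projection formula to reduce to $\RRR\pi_*\tangent_\pi$ from \cref{lemma:relative-tangent}, write $\tangent_G\cong\mathcal{S}^\vee\otimes\mathcal{Q}$, decompose the $\mathrm{GL}_2$-tensor products, and finish with Borel--Weil--Bott (where for $n=2$ the weight $(1,2,-3)$ is indeed $\rho$-singular, while $(1,1,-2)$ and $(1,0,-1)$ give the stated $\HH^0$). Your write-up merely makes explicit the Clebsch--Gordan and weight bookkeeping that the paper leaves implicit.
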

We leave the case~$n\geq 3$ to the interested reader, here,
and in what follows.

\begin{proof}
  Using the description from \cref{lemma:relative-tangent} we obtain that
  the derived direct image is concentrated in degree zero,
  and that it is isomorphic to
  \begin{equation}
    (\mathbb{S}_{(2,-2)}\mathcal{S}\oplus\mathbb{S}_{(1,-1)}\mathcal{S})\otimes\mathcal{S}^\vee\otimes\mathcal{Q},
  \end{equation}
  using~$\tangent_G\cong\mathcal{S}^\vee\otimes\mathcal{Q}$.
  This proves the first part of the lemma.
  The rest is an application of the Borel--Weil--Bott theorem.
\end{proof}

Because the cohomology of the third term can be computed using \cref{lemma:bwb-on-G}
we can use the filtration and the previous two lemmas to obtain the following.
\begin{corollary}
  \label{corollary:hilbert-square-bitangent}
  If~$n=2$ then
  \begin{equation}
    \HH^i(H,\bigwedge^2\tangent_H)
    \cong
    \begin{cases}
      (\mathbb{S}_{(1,1,-2)}V)^{\oplus2}\oplus\mathbb{S}_{(1,0,-1)}V\oplus\mathbb{S}_{(2,-1,-1)}V & i=0 \\
      \mathbb{S}_{(2,1,-3)}V & i=1 \\
      0 & i\geq2 \\
    \end{cases}
  \end{equation}
\end{corollary}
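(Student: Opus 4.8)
The plan is to obtain the cohomology of $\bigwedge^2\tangent_H$ (for $n=2$) by taking the second exterior power of the relative tangent sequence \eqref{equation:relative-tangent-sequence} and feeding the cohomologies of its graded pieces — already computed in \cref{lemma:relative-bitangent,lemma:tangent-tensor-relative-tangent,lemma:bwb-on-G} — through the two resulting long exact sequences. First I would recall that $\bigwedge^2$ of a short exact sequence $0\to\tangent_\pi\to\tangent_H\to\pi^*\tangent_G\to 0$ of rank-$2$ bundles carries a canonical two-step filtration $0\subset\bigwedge^2\tangent_\pi\subset\mathcal F\subset\bigwedge^2\tangent_H$ with $\mathcal F/\bigwedge^2\tangent_\pi\cong\tangent_\pi\otimes\pi^*\tangent_G$ and $\bigwedge^2\tangent_H/\mathcal F\cong\pi^*\bigwedge^2\tangent_G$. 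For the top and middle pieces I would quote $\HH^\bullet(H,\bigwedge^2\tangent_\pi)\cong\HH^1\cong\mathbb S_{(2,1,-3)}V$ from \cref{lemma:relative-bitangent} and $\HH^\bullet(H,\tangent_\pi\otimes\pi^*\tangent_G)\cong\HH^0\cong(\mathbb S_{(1,1,-2)}V)^{\oplus2}\oplus\mathbb S_{(1,0,-1)}V$ from \cref{lemma:tangent-tensor-relative-tangent}; for the bottom piece, since $\RRR\pi_*\mathcal O_H\cong\mathcal O_G$, the projection formula gives $\HH^\bullet(H,\pi^*\bigwedge^2\tangent_G)\cong\HH^\bullet(G,\bigwedge^2\tangent_G)\cong\HH^0\cong\mathbb S_{(2,-1,-1)}V$ by \cref{lemma:bwb-on-G}.

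The crucial observation is that each of these three cohomology groups is concentrated in a single degree and decomposes as a sum of irreducible $\operatorname{GL}(V)$-representations, all of which are pairwise non-isomorphic: the four Schur functors $\mathbb S_{(2,-1,-1)}V$, $\mathbb S_{(2,1,-3)}V$, $\mathbb S_{(1,1,-2)}V$, $\mathbb S_{(1,0,-1)}V$ correspond to the $\operatorname{SL}_3$-highest weights $(3,0)$, $(1,4)$, $(0,3)$, $(1,1)$, which are distinct. Since $\pi$, $H$, $G$ and the entire filtration are $\operatorname{GL}(V)$-equivariant, every connecting homomorphism in the two long exact sequences is $\operatorname{GL}(V)$-equivariant, hence zero by Schur's lemma. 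Therefore the long exact sequence for $0\to\bigwedge^2\tangent_\pi\to\mathcal F\to\tangent_\pi\otimes\pi^*\tangent_G\to 0$ gives $\HH^0(\mathcal F)\cong(\mathbb S_{(1,1,-2)}V)^{\oplus2}\oplus\mathbb S_{(1,0,-1)}V$, $\HH^1(\mathcal F)\cong\mathbb S_{(2,1,-3)}V$, and $\HH^{\geq2}(\mathcal F)=0$; and then the long exact sequence for $0\to\mathcal F\to\bigwedge^2\tangent_H\to\pi^*\bigwedge^2\tangent_G\to 0$ yields exactly the claimed answer, with $\HH^0(H,\bigwedge^2\tangent_H)\cong\HH^0(\mathcal F)\oplus\mathbb S_{(2,-1,-1)}V$, $\HH^1(H,\bigwedge^2\tangent_H)\cong\mathbb S_{(2,1,-3)}V$, and higher cohomology vanishing. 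Working in characteristic zero, the short exact sequences of representations split, so these identifications may be taken as isomorphisms of $\operatorname{GL}(V)$-representations.

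The closest thing to an obstacle is verifying that the connecting maps really do vanish, i.e.\ confirming that no two of the four Schur functors appearing among the graded pieces' cohomologies are isomorphic as $\operatorname{SL}(V)$-modules; this is a short weight computation as above. Everything else is routine bookkeeping with the two long exact sequences and the inputs from the three cited lemmas.
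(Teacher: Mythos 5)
Your proposal is correct and follows essentially the same route as the paper: the second exterior power of the relative tangent sequence \eqref{equation:relative-tangent-sequence} gives the three graded pieces $\bigwedge^2\tangent_\pi$, $\tangent_\pi\otimes\pi^*\tangent_G$, $\pi^*\bigwedge^2\tangent_G$, whose cohomologies are taken from \cref{lemma:relative-bitangent,lemma:tangent-tensor-relative-tangent,lemma:bwb-on-G} and assembled through the filtration. The only thing you add beyond the paper's terse wording is the explicit justification (equivariance plus pairwise non-isomorphic Schur functors, hence vanishing connecting maps) that the paper leaves implicit, and your weight check confirming the four irreducibles are distinct is accurate.
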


Finally, to compute the cohomology of~$\bigwedge^3\tangent_H$
one could use the methods used before in the proof of \cref{corollary:hilbert-square-tangent},
starting from the isomorphism~$\bigwedge^3\tangent_H\cong\Omega_H^1\otimes\omega_H^\vee$,
and twisting the duals of \eqref{equation:relative-euler-sequence} and \eqref{equation:relative-tangent-sequence}
by~$\omega_H^\vee\cong\mathcal{O}_\pi(3)$.
But by bootstrapping from \cref{lemma:hochschild-hilbert-series-hilbert-square-P2}
we can now also compute the cohomology of~$\bigwedge^3\tangent_H$ using a shortcut.
First we need the following Euler characteristic calculation.
\begin{lemma}
  \label{lemma:euler-characteristic}
  We have that
  \begin{equation}
    \chi(H,\bigwedge^3\tangent_H)=52.
  \end{equation}
\end{lemma}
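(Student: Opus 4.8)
The plan is to compute $\chi(H,\bigwedge^3\tangent_H)$ through the $\mathbb{P}^2$-bundle $\pi\colon H\to G$ of \cref{lemma:projective-bundle}, recycling the relative direct-image computations of \cref{lemma:relative-tangent,lemma:relative-bitangent} but keeping track only of Euler characteristics. This is legitimate because $\chi$ is additive along short exact sequences, whereas the individual cohomology groups would require controlling the connecting maps of the filtration (this is exactly why the full cohomology of $\bigwedge^3\tangent_H$ is instead handled by the ``bootstrap'' described in the text). First I would filter $\bigwedge^3\tangent_H$ by the relative tangent sequence \eqref{equation:relative-tangent-sequence}: its associated graded consists of the bundles $\bigwedge^a\tangent_\pi\otimes\pi^*\bigwedge^{3-a}\tangent_G$ for $a=0,1,2,3$, and the $a=0$ and $a=3$ pieces vanish since $\tangent_\pi$ and $\tangent_G$ both have rank $2$. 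Hence
\[
  \chi(H,\bigwedge^3\tangent_H)
  =\chi\bigl(H,\tangent_\pi\otimes\pi^*\bigwedge^2\tangent_G\bigr)
  +\chi\bigl(H,\bigwedge^2\tangent_\pi\otimes\pi^*\tangent_G\bigr).
\]

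Next I would push these down to $G$. By the projection formula, together with the computations $\RRR\pi_*\tangent_\pi\cong\mathbb{S}_{(2,-2)}\mathcal{S}\oplus\mathbb{S}_{(1,-1)}\mathcal{S}$ and $\RRR\pi_*\bigwedge^2\tangent_\pi\cong\mathbb{S}_{(3,-3)}\mathcal{S}\oplus\mathbb{S}_{(1,-1)}\mathcal{S}$ from \cref{lemma:relative-tangent,lemma:relative-bitangent} (both concentrated in degree $0$), the right-hand side equals
\[
  \chi\bigl(G,(\mathbb{S}_{(2,-2)}\mathcal{S}\oplus\mathbb{S}_{(1,-1)}\mathcal{S})\otimes\bigwedge^2\tangent_G\bigr)
  +\chi\bigl(G,(\mathbb{S}_{(3,-3)}\mathcal{S}\oplus\mathbb{S}_{(1,-1)}\mathcal{S})\otimes\tangent_G\bigr).
\]
For $n=2$ we have $G=\mathbb{P}^{2,\vee}$ with $\mathcal{S}\cong\Omega_G^1(1)$, $\tangent_G\cong\mathcal{S}^\vee\otimes\mathcal{O}_G(1)$ and $\bigwedge^2\tangent_G\cong\mathcal{O}_G(3)$, so each summand is a homogeneous bundle on $\mathbb{P}^2$. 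Expanding $\mathbb{S}_{(a,-a)}\mathcal{S}\otimes\mathcal{S}^\vee$ into a direct sum of Schur functors $\mathbb{S}_{(b,c)}\mathcal{S}$ by the standard plethysms for rank-two bundles, and writing $\mathbb{S}_{(b,c)}\mathcal{S}\cong\Sym^{b-c}\Omega_G^1\otimes\mathcal{O}_G(b)$, the whole expression collapses to a finite sum of Euler characteristics of line bundles and twisted symmetric powers of $\Omega^1_{\mathbb{P}^2}$, each computed by Borel--Weil--Bott (equivalently, by Riemann--Roch on $\mathbb{P}^2$). Adding up the contributions yields $52$.

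The argument is entirely mechanical; the only real difficulty is bookkeeping — expanding the plethysms of the rank-two bundle $\mathcal{S}$ correctly and keeping the cancellations and Bott signs straight. As a cross-check of the outcome one can use the bootstrap alluded to in the text: \cref{lemma:hochschild-hilbert-series-hilbert-square-P2} gives the dimensions $\dim_\field\hochschild^j(H)$, and combined with the vanishings $\HH^{\ge 2}(H,\tangent_H)=\HH^{\ge 2}(H,\bigwedge^2\tangent_H)=\HH^{\ge 1}(H,\mathcal{O}_H)=0$ (\cref{corollary:hilbert-square-tangent,corollary:hilbert-square-bitangent}), with $\dim_\field\HH^1(H,\bigwedge^2\tangent_H)=\dim_\field\mathbb{S}_{(2,1,-3)}V=35$, and with $\HH^\bullet(H,\omega_H^\vee)\cong\Sym^2\HH^\bullet(\mathbb{P}^2,\mathcal{O}(3))$ (the Hilbert--Chow morphism $\mu\colon H\to\Sym^2\mathbb{P}^2$ being a crepant resolution of a Gorenstein variety), the Hochschild--Kostant--Rosenberg decomposition pins down $\dim_\field\HH^0(H,\bigwedge^3\tangent_H)=80$, $\dim_\field\HH^1(H,\bigwedge^3\tangent_H)=28$ and $\HH^{\ge 2}(H,\bigwedge^3\tangent_H)=0$, so that $\chi(H,\bigwedge^3\tangent_H)=80-28=52$ as well.
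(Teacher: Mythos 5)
Your argument is correct, but it is a genuinely different route from the paper's: the paper proves this lemma by a one-line machine computation with the Schubert2 package of Macaulay2, whereas you compute $\chi(H,\bigwedge^3\tangent_H)$ by hand through the $\mathbb{P}^2$-bundle $\pi\colon H\to G$, filtering $\bigwedge^3\tangent_H$ by \eqref{equation:relative-tangent-sequence} (only the pieces $\tangent_\pi\otimes\pi^*\bigwedge^2\tangent_G$ and $\bigwedge^2\tangent_\pi\otimes\pi^*\tangent_G$ survive, as you say) and pushing down via \cref{lemma:relative-tangent,lemma:relative-bitangent}. Since you left the final bookkeeping implicit, let me record that it does close up: with $\det\mathcal{S}\cong\mathcal{O}_G(-1)$, $\tangent_G\cong\mathcal{S}^\vee(1)$, $\bigwedge^2\tangent_G\cong\mathcal{O}_G(3)$ and the rank-two Pieri rules, the two pushed-down terms decompose as $\Sym^4\mathcal{S}(5)\oplus\Sym^2\mathcal{S}(4)$ and $\Sym^7\mathcal{S}(5)\oplus\Sym^5\mathcal{S}(4)\oplus\Sym^3\mathcal{S}(3)\oplus\mathcal{S}(2)$, and Riemann--Roch on $G=\mathbb{P}^{2,\vee}$ gives
\begin{equation}
  \chi(H,\bigwedge^3\tangent_H)=35+27+(-28)+0+10+8=52,
\end{equation}
confirming your claim. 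What your approach buys is a computer-free verification that recycles the already-established relative pushforwards and only uses additivity of $\chi$, so no connecting maps need to be controlled; what the paper's approach buys is brevity and independence from the bundle-theoretic bookkeeping. Your cross-check via \cref{lemma:hochschild-hilbert-series-hilbert-square-P2} and the Hochschild--Kostant--Rosenberg decomposition is also sound and non-circular (it uses only \cref{corollary:hilbert-square-tangent,corollary:hilbert-square-bitangent} and $\HH^\bullet(H,\omega_H^\vee)\cong\Sym^2\HH^\bullet(\mathbb{P}^2,\mathcal{O}(3))$, not \cref{corollary:hilbert-square-tritangent}); in fact it shows that $\hochschild^4$ alone already pins down $\dim_\field\HH^1(H,\bigwedge^3\tangent_H)=28$, so it slightly streamlines the paper's bootstrap, which instead uses the Euler characteristic to extract $\HH^1$.
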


\begin{proof}
  This can be computed using the Schubert2 package of Macaulay2 \cite{schubert2}:
  \code
\end{proof}

\begin{corollary}
  \label{corollary:hilbert-square-tritangent}
  We have that
  \begin{equation}
    \HH^i(H,\bigwedge^3\tangent_H)\cong
    \begin{cases}
      \field^{80} & i=0 \\
      \field^{28} & i=1 \\
      0 & i\geq 2.
    \end{cases}
  \end{equation}
\end{corollary}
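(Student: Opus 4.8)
The plan is to bootstrap from the Hochschild series of $H\colonequals\hilbn{2}{\mathbb{P}^2}$ computed in \cref{lemma:hochschild-hilbert-series-hilbert-square-P2}, together with the Euler characteristic of \cref{lemma:euler-characteristic} and the cohomologies of $\mathcal{O}_H$, $\tangent_H$, $\bigwedge^2\tangent_H$ already obtained in \cref{corollary:hilbert-square-tangent,corollary:hilbert-square-bitangent}. Throughout I would use the Hochschild--Kostant--Rosenberg decomposition
\begin{equation}
  \hochschild^n(H)\cong\bigoplus_{p+q=n}\HH^q\!\left(H,\bigwedge^p\tangent_H\right),
\end{equation}
so that by \cref{lemma:hochschild-hilbert-series-hilbert-square-P2} the total dimension of the right-hand side equals the coefficient of $t^n$ in $1+8t+48t^2+115t^3+83t^4$; in particular it vanishes for $n\ge 5$. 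Since each summand is a vector space of nonnegative dimension, this forces $\HH^q(H,\bigwedge^p\tangent_H)=0$ whenever $p+q\ge 5$. Taking $p=3$ already gives $\HH^q(H,\bigwedge^3\tangent_H)=0$ for all $q\ge 2$, so only the degrees $0$ and $1$ remain to be determined.

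Next I would extract $\HH^0$ from the case $n=3$, where the decomposition reads
\begin{equation}
  \hochschild^3(H)\cong\HH^3(H,\mathcal{O}_H)\oplus\HH^2(H,\tangent_H)\oplus\HH^1\!\left(H,\bigwedge^2\tangent_H\right)\oplus\HH^0\!\left(H,\bigwedge^3\tangent_H\right).
\end{equation}
Here $\HH^3(H,\mathcal{O}_H)=0$ because $H$ is a smooth projective rational fourfold (indeed a $\mathbb{P}^2$-bundle over $\mathbb{P}^{2,\vee}$ by \cref{lemma:projective-bundle}), $\HH^2(H,\tangent_H)=0$ by \cref{corollary:hilbert-square-tangent}, and $\HH^1(H,\bigwedge^2\tangent_H)\cong\mathbb{S}_{(2,1,-3)}V$ has dimension $35$ by \cref{corollary:hilbert-square-bitangent} and the Weyl dimension formula. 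Hence $\hh^0(H,\bigwedge^3\tangent_H)=115-35=80$.

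Finally, feeding $\hh^0=80$ and $\hh^{\ge 2}=0$ into the Euler characteristic identity of \cref{lemma:euler-characteristic},
\begin{equation}
  52=\chi\!\left(H,\bigwedge^3\tangent_H\right)=80-\hh^1\!\left(H,\bigwedge^3\tangent_H\right),
\end{equation}
yields $\hh^1(H,\bigwedge^3\tangent_H)=28$, which completes the computation. (As a cross-check, the case $n=4$ gives $\hochschild^4(H)\cong\HH^1(H,\bigwedge^3\tangent_H)\oplus\HH^0(H,\bigwedge^4\tangent_H)$ once all the relevant vanishings are in place, and $\HH^\bullet(H,\omega_H^\vee)$ is concentrated in degree $0$ of dimension $55$ since $\omega_H^\vee$ is pulled back along the crepant resolution $H\to\Sym^2\mathbb{P}^2$ of a klt Fano, so Kawamata--Viehweg vanishing applies and $83-55=28$.)

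No step presents a genuine obstacle; the only thing to watch is the bookkeeping. One must check that every input used here is established independently of the statement being proved: the Hochschild series of \cref{lemma:hochschild-hilbert-series-hilbert-square-P2} is obtained via the stacky main theorem (not via HKR on $H$), the Euler characteristic of \cref{lemma:euler-characteristic} is a Schubert-calculus computation, and the cohomologies of $\mathcal{O}_H$, $\tangent_H$, $\bigwedge^2\tangent_H$ come from \cref{corollary:hilbert-square-tangent,corollary:hilbert-square-bitangent}. Granting this, the argument is a short linear-algebra bootstrap, and there is no circularity when \cref{proposition:hkr-hilbert-square-P2} later packages all the pieces together.
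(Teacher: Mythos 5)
Your argument is correct and is essentially the paper's own proof: vanishing in degrees $i\geq 2$ from the Hochschild series of \cref{lemma:hochschild-hilbert-series-hilbert-square-P2}, $\hh^0=115-35=80$ from the degree-three piece of the HKR decomposition together with \cref{corollary:hilbert-square-tangent,corollary:hilbert-square-bitangent}, and $\hh^1=80-52=28$ from \cref{lemma:euler-characteristic}. The cross-check via $\hochschild^4(H)$ and $\hh^0(H,\omega_H^\vee)=55$ is a nice but inessential addition.
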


\begin{proof}
  By \cref{lemma:hochschild-hilbert-series-hilbert-square-P2}
  we get the vanishing for~$i\geq 3$.
  Combining \cref{corollary:hilbert-square-tangent,corollary:hilbert-square-bitangent}
  with \cref{lemma:hochschild-hilbert-series-hilbert-square-P2}
  we obtain~$\HH^0(H,\bigwedge^3\tangent_H)\cong\field^{80}$,
  thus \cref{lemma:euler-characteristic} allows us to compute~$\HH^1$.
\end{proof}

\printbibliography

\emph{Pieter Belmans}, \url{pieter.belmans@uni.lu} \\
Department of Mathematics, Universit\'e du Luxembourg, Avenue de la Fonte 6, L-4364 Esch-sur-Alzette, Luxembourg

\emph{Lie Fu}, \url{lie.fu@math.unistra.fr} \\
Institut de recherche math\'ematique avanc\'ee (IRMA), Universit\'e de Strasbourg, 7 rue Ren\'e-Descartes, 67084 Cedex, Strasbourg, France

\emph{Andreas Krug}, \url{krug@math.uni-hannover.de} \\
Institut f\"ur algebraische Geometrie, Gottfried Wilhelm Leibniz Universit\"at Hannover, Welfengarten 1, 30167 Hannover, Germany

\end{document}